\documentclass[reqno]{amsart}

\usepackage{amssymb, amsmath,mathtools}
\usepackage{bbm}
\usepackage{mathrsfs}
\usepackage{amscd}
\usepackage{verbatim}
\usepackage{stmaryrd}
\usepackage[dvipsnames]{xcolor}
\usepackage{mathtools}
\mathtoolsset{showonlyrefs}

\usepackage{a4wide}

\usepackage{enumerate}

\usepackage[colorlinks,linkcolor={blue},citecolor={blue},urlcolor={purple},]{hyperref}

\usepackage[colorinlistoftodos,prependcaption,textsize=tiny]{todonotes}

\usepackage{float}

\usepackage{tabularx}
\newcolumntype{L}{>{\arraybackslash}X}
\usepackage{multirow}

\theoremstyle{plain}
\newtheorem{theorem}{Theorem}[section]
\theoremstyle{remark}
\newtheorem{remark}[theorem]{Remark}
\newtheorem{example}[theorem]{Example}
\theoremstyle{plain}
\newtheorem{corollary}[theorem]{Corollary}
\newtheorem{lemma}[theorem]{Lemma}
\newtheorem{proposition}[theorem]{Proposition}
\newtheorem{definition}[theorem]{Definition}

\newtheorem{assumption}[theorem]{Assumption}

\numberwithin{equation}{section}

\DeclareMathOperator{\loc}{loc}

\newcommand\bN{\mathbb{N}}

\newcommand\bR{\mathbb{R}}

\renewcommand\P{\mathbb{P}}

\newcommand\cF{\mathscr{F}}

\newcommand\calL{\mathcal{L}}

\newcommand\cO{\mathcal{O}}

\newcommand\cS{\mathcal{S}}
\newcommand\cU{\mathcal{U}}

\newcommand{\E}{\mathbb{E}}
\newcommand{\1}{\mathbbm{1}}
\newcommand{\one}{\mathbbm{1}}

\newcommand{\vp}{\varphi}

\newcommand{\wt}{\widetilde}

\DeclareMathOperator{\dv}{div}

\newcommand{\lb}{\langle}

\newcommand{\rb}{\rangle}

\newcommand{\vertiii}[1]{{\left\vert\kern-0.25ex\left\vert\kern-0.25ex\left\vert #1
\right\vert\kern-0.25ex\right\vert\kern-0.25ex\right\vert}}

\newcommand{\diff}{\,\mathrm{d}}

\newcommand{\Dom}{\mathcal{O}}
\newcommand{\R}{\mathbb{R}}

\def\N{{\mathbb N}}

\def\R{{\mathbb R}}

\newcommand{\g}{\gamma}

\renewcommand{\O}{\Omega}

\renewcommand{\a}{\kappa}

\newcommand{\D}{\mathscr{D}}

\newcommand{\HD}{\prescript{}{D}{H}}
\renewcommand{\CD}{\prescript{}{D}{C}}

\newcommand{\BD}{\prescript{}{D}{B}}

\renewcommand{\emptyset}{\varnothing}

\newcommand{\Progress}{\mathscr{P}}

\newcommand{\Borel}{\mathscr{B}}

\def\XXint#1#2#3{{\setbox0=\hbox{$#1{#2#3}{\int}$ }
\vcenter{\hbox{$#2#3$ }}\kern-.6\wd0}}

\newcommand{\ellip}{\nu}

\newcommand{\dd}{\mathrm{d}}
\newcommand{\Sf}{\mathcal{S}_\varphi}

\allowdisplaybreaks

\makeatletter
\renewcommand{\l@section}{\@tocline{1}{0pt}{1.5em}{}{}}
\renewcommand{\l@subsection}{\@tocline{2}{0em}{2.3em}{}{}}
\makeatother

\begin{document}

\author{Antonio Agresti}
\address{Department of Mathematics Guido Castelnuovo\\ Sapienza University of Rome\\ P.le
A. Moro 5\\ 00100 Roma\\ Italy.} \email{antonio.agresti@uniroma1.it}

\author{Fabian Germ}
\address{Delft Institute of Applied Mathematics\\
Delft University of Technology \\ P.O. Box 5031\\ 2600 GA Delft\\The
Netherlands.}
\email{f.germ@tudelft.nl}

\author{Mark Veraar}
\address{Delft Institute of Applied Mathematics\\
Delft University of Technology \\ P.O. Box 5031\\ 2600 GA Delft\\The
Netherlands.} \email{M.C.Veraar@tudelft.nl}

\thanks{The first author is a member of GNAMPA (INdAM). The second and third authors have received funding from the VICI subsidy VI.C.212.027 of the Netherlands Organisation for Scientific Research (NWO)}

\date\today

% Global existence for stochastic reaction diffusion equation with rough initial data
%\title[reaction-diffusion equations with non-trace class noise]{A new and comprehensive approach to reaction-diffusion equations with non-trace class noise}

\title[reaction-diffusion equations driven by non-trace-class noise]{An optimal local theory for reaction-diffusion equations driven by non-trace-class noise}

\keywords{stochastic reaction-diffusion equations, non-trace-class noise, multiplicative noise, critical spaces, rough initial data, maximal $L^p$-regularity, regularization, blow-up criteria, positivity}

\subjclass[2020]{Primary 35K57; Secondary 35A01, 35B44, 35B65, 35K90, 35R60, 47D06, 60H15}
%
%Primary: 60H15 — Stochastic partial differential equations (aspects of stochastic analysis). This is the clearest top-level fit for your paper.
%
%Then as secondary codes:
%
%35R60 — PDEs with randomness, stochastic partial differential equations. This is the PDE-side companion to 60H15 and fits the reaction-diffusion/SPDE framing well.
%35K57 — Reaction-diffusion equations. This is the most natural deterministic-PDE code for the model class itself.
%47D06 — One-parameter semigroups and linear evolution equations in Banach spaces. This fits the maximal-regularity / evolution-equation framework and abstract operator treatment.
%35R05 — PDEs with low regular coefficients and/or low regular data. This is a good supporting code because your paper emphasizes rough initial data and measurable leading coefficients.
%

%35A01 — Existence problems: global existence, local existence, non-existence.
%35B44 — Blow-up.
%35B65 — Smoothness and regularity of solutions.
%35K90 — Abstract parabolic equations.

\begin{abstract}
We study local well-posedness for a class of stochastic reaction-diffusion equations driven by multiplicative, possibly colored, noise. The interaction between rough stochastic forcing and polynomial nonlinearities naturally leads to solutions with low spatial regularity, making the treatment of the nonlinear terms delicate. Our main contribution is a general local existence and uniqueness theory for SPDEs with rough noise and highly irregular initial data.
The framework also yields new results in standard noise regimes, including trace-class noise and space-time white noise. We identify the critical initial-data spaces for a wide range of nonlinearities, and we establish instantaneous parabolic regularization, general blow-up criteria, and sufficient conditions for positivity preservation. We apply the abstract theory to several prototypical models, including the stochastic Allen-Cahn, Burgers, Fisher-KPP, and coupled Gray-Scott equations. Finally, in the one-dimensional space-time white-noise setting, we combine our local theory with existing global a priori results in a highly singular regime.
\end{abstract}

\maketitle

\vspace{-1cm}
\tableofcontents

\section{Introduction}
\label{s:intro}

Reaction-diffusion equations play a central role in the mathematical modeling of chemical reactions, population dynamics, phase separation, ecology, and pattern formation. 
Classical examples include the Allen--Cahn equation, the Fisher--KPP equation, the Lotka--Volterra system, and the Gray--Scott and Brusselator models. 
In the deterministic setting, the theory of reaction-diffusion equations is highly developed; see, for instance, \cite{CGV19,FMT20,K90,P10_survey,PSY19,R84_global} and the references therein. 
Depending on the structure of the nonlinearities, one has a rich body of results on local and global well-posedness, regularity, positivity, and asymptotic behavior. 
At the same time, even in the deterministic case, obtaining global well-posedness is delicate in general, and further structural assumptions are often indispensable.

Stochastic perturbations arise naturally in reaction-diffusion models when one wishes to account for uncertainty in parameters, unresolved microscopic interactions, or random forcing acting on the system. 
Stochastic reaction-diffusion equations have therefore received substantial attention in recent years; see, for instance, \cite{C03,Cer05,CR05,cerrai2025nonlinear, DKZ19,KN19,KvN12,S21,S21_dissipative,S21_superlinear_no_sign} and the references therein. 
However, compared to the deterministic literature, the stochastic theory is still much less complete, especially for systems and for equations with rough multiplicative noise. 
A large part of the available theory relies on strong coercivity or dissipativity assumptions, and many results concern equations or settings in which the noise or initial data are comparatively regular. 
In particular, for weakly dissipative systems of reaction-diffusion type, many natural questions remain open. In the recent work \cite{milesis2026global}, progress was made for weakly coercive
triangular systems driven by non-trace-class noise. 

Previous work on local and global well-posedness \cite{agresti2023reaction, agresti2024reaction} by the first and third authors treated several weakly dissipative stochastic reaction-diffusion systems with {\em trace-class noise} and {\em transport noise}, including Lotka--Volterra, Brusselator, and Gray--Scott type models. 
This showed that one can go substantially beyond the fully coercive setting in concrete examples. 
Nevertheless, many important problems remain open. 
A major obstacle is the lack of a sufficiently flexible local theory capable of handling rough multiplicative noise and highly irregular initial data.
Thus, such a local theory is not only of independent interest, but also forms the natural starting point for future developments on global well-posedness and, subsequently, on more refined dynamical questions. 

The purpose of the present paper is to develop such a local theory in the case of possibly non-trace-class noise. 
More precisely, we study stochastic reaction-diffusion equations of the form
\begin{equation}
\label{eq:intro_equation}
\left\{
\begin{aligned}
\dd u_i - \dv(a_i\cdot \nabla u_i)\,\dd t
    &= \Big[\dv(F_i(\cdot,u))+f_i(\cdot,u)\Big]\,\dd t + \sum_{j=1}^\ell g_{i}^j(\cdot,u)\,\dd W^j(t),\\
u_i(0)&=u_{0,i},
\end{aligned}
\right.
\end{equation}
on a bounded domain $\Dom\subset \R^d$, for $i\in\{1,\dots,\ell\}$, together with the Dirichlet boundary condition
\begin{align*}
u|_{\partial \cO}=0
\end{align*}
Our results also have versions for Neumann boundary conditions, but we will focus on the Dirichlet case for clarity. 

In the above, we assume without loss of generality that the dimension of the system is equal to the number of noise terms, which can be achieved by adding trivial rows or columns. Here $u=(u_i)_{i=1}^\ell$ is the unknown process, the nonlinearities may have polynomial growth, and the independent driving Brownian noises $(W^j)_{j=1}^\ell$ are allowed to be multiplicative and non-trace-class.

A central difficulty is that the combination of rough stochastic forcing and superlinear nonlinearities naturally leads to solution spaces of low spatial regularity.
In this context, the term \emph{critical} refers to the borderline regularity suggested by the natural scaling of the equation.
At this level, the linear part, the nonlinear drift terms, and the noise have the same strength under parabolic rescaling; below this level one should not expect a general well-posedness theory based on the same methods.
Thus, the critical spaces describe the largest natural classes of initial data for which the equation is still expected to be locally well posed.
As a consequence, the nonlinear terms must be treated at the level of distributions, and the identification of the correct critical initial-data spaces becomes a delicate issue.

The main novelty of the present work lies in the combination of two recent ingredients. The improved abstract theory from \cite{AV-ob} provides local well-posedness in critical spaces once the deterministic and stochastic nonlinearities satisfy suitable mapping estimates. The estimates of \cite{AGV} provide precisely the required bounds for multiplication operators composed with non-trace-class covariance operators.
Together, these tools allow us to treat rougher stochastic perturbations than in \cite{agresti2023reaction}, and to remove several technical restrictions on the noise which commonly appear in the literature. 
In particular, even in situations involving trace-class noise as considered in \cite{agresti2023reaction}, the improved abstract framework now enables us to treat more general classes of equations, and, in particular, identify larger critical classes of initial data.

Our main result is a local existence and uniqueness theorem for
stochastic reaction-diffusion equations with non-trace-class multiplicative
noise and highly irregular initial data. The theorem is formulated in a scale
of Sobolev--Besov spaces and identifies the scaling-critical initial-data
spaces for broad classes of polynomial nonlinearities. In addition to local
well-posedness, we prove instantaneous parabolic regularization, derive
blow-up criteria in low-regularity norms, and establish positivity preservation under natural structural assumptions. The coefficients of the leading elliptic operator are allowed to be merely measurable in time and in \(\omega\). In a one-dimensional space-time white-noise setting, we show how the local theory can be combined with recent results from \cite{DKZ19,FKN25} to obtain global well-posedness in a highly singular regime. This method is based on our instantaneous regularization results, blow-up criteria, and a suitable restarting procedure.  
The same method can be used to extend global well-posedness results to the case where the initial data have very low (negative) smoothness. In particular, this applies to the results in \cite{C03,KvN12,milesis2026global, S21_dissipative,S21_superlinear_no_sign} as long as one assumes that the local Lipschitz constants have polynomial growth, which holds in many physically relevant scenarios.

\subsection{Scaling and criticality for SPDEs with non-trace-class noise}
\label{ss:scaling_intro}
It is well known in the context of PDEs that, by using scaling arguments, one can identify natural, scaling-critical regularity thresholds for well-posedness, which are often \emph{sharp}. This is also possible for reaction-diffusion equations with non-trace-class noise such as \eqref{eq:intro_equation}. This reveals the maximal order of growth of the nonlinearities and the intrinsic scaling of the SPDE under consideration. Building on this, one can determine the lowest Sobolev regularity/index for which the system of SPDEs \eqref{eq:intro_equation} is well-posed. 
\smallskip

\emph{Scaling invariance.}
We now explain, at a heuristic level, how the critical exponents arise from the different nonlinear terms in \eqref{eq:intro_equation}.
The purpose of the following scalar equation is not to introduce an additional model, but to isolate the three mechanisms present in the general system: a reaction term $f(u)$, a divergence-form term $\operatorname{div}F(u)$, and a multiplicative noise term $g(u) \partial_t W$.
To make the scaling transparent, we replace these nonlinearities by pure powers with independent growth parameters. Thus, we consider
\begin{equation}
\label{eq:intro_scalar_equation_scaling}
\partial_t u - \Delta u  = |u|^{1+\rho_1} + \dv(e |u|^{1+\rho_2})+ |u|^{1+\rho_3}\,\partial_t \mathcal{W}^{(\zeta)}
\end{equation}
where $e\in\mathbb R^d$ is fixed.
Here the first term represents the scaling of $f$, the second that of
$\operatorname{div}F$, and the third that of the multiplicative noise.
The use of monomials is only for the scaling calculation; the results below
apply to general locally Lipschitz nonlinearities with the corresponding
polynomial growth. We assume that the driving noise satisfies
\begin{equation}
\label{eq:scaling_noise}
\partial_t \mathcal{W}^{(\zeta)}(\lambda^2 t,\lambda x) 
\stackrel{{\rm law}}{=}\lambda^{-1-d/2+d/\zeta}\partial_t \mathcal{W}^{(\zeta)}( t, x),
\quad \text{for a $\zeta\in [2,\infty]$.}
\end{equation}
The previous condition suggests that $\mathcal{W}^{(\zeta)}$ lies between a space-time white noise ($\zeta=\infty$), and the white-in-time noise ($\zeta=2$), depending on the value of the coloring $\zeta$. Details are given in Subsection \ref{sss:noise}. A noise satisfying the assumptions of our results and the scaling \eqref{eq:scaling_noise} up to a logarithmic correction is provided in \cite[Subsection 1.1]{AGV}. 

It is well known in the context of PDEs that scaling arguments often reveal
the borderline regularity for well-posedness. For stochastic reaction-diffusion equations of the form \eqref{eq:intro_equation} the same idea can be used to identify which growth rates of the drift and noise
are critical, and which initial-data spaces are naturally associated with them. The calculation below should therefore be understood as a guide to the structure of the assumptions and results that follow.

\smallskip

Suppose that $u$ is a solution to \eqref{eq:intro_scalar_equation_scaling} and define, for $\lambda>0$, the dilated process $u_\lambda$ under the  parabolic scaling:
\begin{equation}
\label{eq:ulambda_formula}
u_{\lambda}(t,x)=\lambda^{\sigma} u(\lambda^2 t, \lambda x).
\end{equation}
Clearly,
\begin{equation}
\label{eq:SPDE_rescaled}
\partial_t u_{\lambda}(t,x) - \Delta u_{\lambda}(t,x) 
= \lambda^{2+\sigma} \big[
\partial_t u( \lambda^{2}t,\lambda x) - \Delta u ( \lambda^{2}t,\lambda x)].
\end{equation}
Next, we analyze the scaling of the nonlinearities in \eqref{eq:intro_scalar_equation_scaling}. First, note that 
\begin{equation*}
 |u_\lambda|^{1+\rho_1} =
\lambda^{\sigma(\rho_1+1)} |u ( \lambda^{2}t,\lambda x)|^{1+\rho_1} 
\end{equation*}
where we see that the above has the same scaling as the linear operator \eqref{eq:SPDE_rescaled} if 
\begin{equation}
\label{eq:scaling_parameter1}
2+\sigma = \sigma(\rho_1 + 1)\quad \text{ or equivalently }\quad
\sigma=2/\rho_1.
\end{equation}
Second, for the divergence-type nonlinearity, we have 
$$
 \dv (e |u_\lambda|^{1+\rho_2} )(t,x)=
\lambda^{1+\sigma(1+\rho_2)} 
\dv (e |u|^{1+\rho_2} )(\lambda^2 t,\lambda x),
$$
the scaling of which corresponds to the linear part \eqref{eq:SPDE_rescaled} if 
\begin{equation}
\label{eq:scaling_parameter2}
2+\sigma = 1+\sigma(1+\rho_2)\quad \text{ or equivalently }\quad
\sigma=1/\rho_2.
\end{equation}
Finally, for the noise term in \eqref{eq:SPDE_rescaled}, from the scaling of the noise \eqref{eq:scaling_noise}, it follows that 
$$
|u_\lambda(t,x)|^{1+\rho_3}\,\partial_t \mathcal{W}^{(\zeta)}(t,x)
\stackrel{{\rm law}}{=} 
\lambda^{1+d/2-d/\zeta+(1+\rho_3)\sigma} |u(\lambda^2 t,\lambda x)|^{1+\rho_3}\,\partial_t \mathcal{W}^{(\zeta)}(\lambda^2 t,\lambda x),
$$
which coincides with the scaling in \eqref{eq:SPDE_rescaled} if
\begin{equation}
\label{eq:scaling_parameter3}
\sigma= \frac{1}{\rho_3}\Big( 1-\frac{d}{2}+\frac{d}{\zeta} \Big).
\end{equation}
Throughout this manuscript, we impose that $\zeta\in [2,\infty]$ is such that the right-hand side above is positive (see Assumption \ref{ass: zeta f}). 

Summarizing the previous scaling analysis in \eqref{eq:scaling_parameter1}-\eqref{eq:scaling_parameter3}, the various nonlinearities in \eqref{eq:SPDE_rescaled} have the same scaling if and only if we have
\begin{equation}
\label{eq:scaling_parameter4}
\text{fully critical relations:}\quad 
\rho_1= 2\rho_2, \quad \text{ and }\quad 
\rho_3= \frac{\rho_1}{2} \Big( 1-\frac{d}{2}+\frac{d}{\zeta}  \Big). 
\end{equation}

\emph{Critical spaces.}
The scaling invariance of local solutions $u\mapsto u_\lambda$ as in \eqref{eq:ulambda_formula} reveals the optimal regularity threshold for well-posedness of \eqref{eq:intro_equation}.
To compute critical spaces for the latter, one considers the map induced by \eqref{eq:ulambda_formula} on the initial data $u_0$:
\begin{equation}
\label{eq:map_induced_on_initial_data}
u_{0}\mapsto u_{0,\lambda}:=\lambda^\sigma u_0(\lambda \cdot),
\end{equation}
where $\sigma=2/\rho_1$ and \eqref{eq:scaling_parameter4} holds. In particular, the relations \eqref{eq:scaling_parameter1}, \eqref{eq:scaling_parameter2} and \eqref{eq:scaling_parameter3} are also satisfied.
A function space $X$ on $\cO$ for the initial data is \emph{critical} if it is locally invariant under the mapping \eqref{eq:map_induced_on_initial_data}. 
For the homogeneous version of $X$ this translates to a global invariance under the mapping $u_0\mapsto u_{0,\lambda}$. 
In particular, critical spaces are given by the following Lebesgue and Besov space
\begin{equation}
\label{eq:critical_spaces_examples}
L^{\frac{d\rho_1}{2}}(\cO) \qquad \text{ and }\qquad B^{\frac{d}{q}-\frac{2}{\rho_1}}_{q,p}(\cO) \ \text{ for } \ q,p\in (1,\infty).
\end{equation}
Indeed, the corresponding homogeneous versions satisfy $\|u_{0,\lambda}\|_{L^{\frac{d\rho_1}{2}}(\mathbb R^d)}= \|u_{0}\|_{L^{\frac{d\rho_1}{2}}(\mathbb R^d)}$, and 
$$
\|u_{0,\lambda}\|_{\dot B^{\frac{d}{q}-\frac{2}{\rho_1}}_{q,p}(\mathbb R^d)}
=
\lambda^{\sigma}\|u_0(\lambda \cdot)\|_{\dot B^{\frac{d}{q}-\frac{2}{\rho_1}}_{q,p}(\mathbb R^d)}
\eqsim 
\|u_{0}\|_{\dot B^{\frac{d}{q}-\frac{2}{\rho_1}}_{q,p}(\mathbb R^d)},
$$
where the implicit constant is independent of $\lambda>0$.
One can check that $\frac{d}{q}-\frac{2}{\rho_1}$ is the only value of the smoothness $s$ for which the homogeneous Besov $\dot{B}^{s}_{q,p}(\R^d)$ is scaling invariant under the mapping \eqref{eq:map_induced_on_initial_data}, and similarly for the integrability $\frac{d\rho_1}{2}$ among Lebesgue spaces.
Independently of the chosen structure of the function spaces, the Sobolev index of a critical space always amounts to 
\begin{equation}
\label{eq:critical_sobolev_index}
-2/\rho_1,\quad  \text{ (Critical Sobolev index)}
\end{equation}
which is shown to be uniquely identified from the above scaling argument.

\smallskip

If one of the scaling identities \eqref{eq:scaling_parameter1}, \eqref{eq:scaling_parameter2} and \eqref{eq:scaling_parameter3} is not satisfied, then the corresponding nonlinearity is \emph{not} critical. If that is the case and a strictly smaller $\rho_i$ is used, then the corresponding term vanishes in the limit
$\lambda\to\infty$. In this sense, the scaling procedure identifies
the nonlinearities of highest order, matching
the scaling of the principal part.

\subsection{Illustration: optimal local well-posedness for scalar SPDEs}
The preceding scaling calculation predicts the critical growth of the
nonlinearities and the corresponding initial-data spaces. The following
simplified statement (which follows from Corollary \ref{cor: critical localrho1}) shows how this prediction is realized by our main theorem in a more concrete scalar situation:
\begin{equation}
\label{eq:intro_scalar_equation}
\dd u - \Delta u\,\dd t =\big[ f(u)+\dv ( F(u))\big]\,\dd t + g(u)\,\dd W(t)
\end{equation}
on a bounded smooth domain $\Dom\subset \R^d$ with Dirichlet boundary condition. Here the intuitive meaning of the noise term is
\[W(t) = \sum_{n= 1}^\infty \mu_n \varphi_n w_n(t),\]
where $(w_n)_{n\geq 1}$ are standard i.i.d.\ Brownian motions. 
We assume $\cS_\vp = (\varphi_n)_{n\geq 1}$ is an orthonormal system in $L^2(\cO)$ with $\vp_n\in L^\infty(\cO)$, $n\in\bN$, and the coefficients satisfy $\mu = (\mu_n)_{n\geq 1}\in \ell^\zeta(\Sf)$, where
the space
$\ell^\zeta(\Sf)$ denotes the space of sequences $(\mu_n)_{n\geq 1}$ for which $\|\mu\|_{\ell^\zeta(\Sf)}<\infty$, with the norm
\[\|\mu\|_{\ell^\zeta(\Sf)}^\zeta:=\sum_{n=1}^\infty |\mu_n|^\zeta \|\varphi_n\|_{L^\infty(\cO)}^2,\]
and $\|\mu\|_{\ell^\infty(\Sf)} = \|\mu\|_{\ell^\infty}$. 
The parameter $\zeta\geq 2$ quantifies the spatial roughness of the covariance of the noise ($\zeta=2$ being trace-class noise, and $\zeta=\infty$ being space-time white noise). In the literature the orthonormal system $\varphi$ is often taken as the eigensystem of $-\Delta$. For general smooth domains one typically has $\sup_{n\geq 1}\|\varphi_n\|_{L^\infty(\cO)}=\infty$. For special domains such as an interval or a cube the eigenfunctions satisfy $\sup_{n\geq 1}\|\varphi_n\|_{L^\infty(\cO)}<\infty$. Note that in our setting one need not choose the eigensystem of $-\Delta$. 
\medskip

\begin{theorem}
\label{t:intro}
Suppose that for the noise coefficient $\mu\in \ell^\zeta(\Sf)$, the intensity $\zeta\geq 2$ satisfies the condition
\[
\zeta<\frac{2d}{d-2}\quad\text{ whenever } \quad d\ge 2.
\]
Suppose that the nonlinearities $f$, $F$ and $g$ satisfy the locally Lipschitz bounds 
\begin{align*}
|f(x)-f(y)|&\leq C (1+|x|^{\rho}+|y|^{\rho})|x-y|,\\
|F(x)-F(y)|&\leq C (1+|x|^{\rho/2}+|y|^{\rho/2})|x-y|, 
\\ |g(x)-g(y)|&\leq C (1+|x|^{\rho_3}+|y|^{\rho_3})|x-y|, 
\end{align*}
where  $\rho>\max\Bigl\{\frac{2}{d},\frac{4}{d+2}\Bigr\}$ and $\rho_3 = \frac{\rho}{2}(1-\frac{d}{2}+\frac{d}{\zeta})$. Suppose that $q\in [2, \infty)$ satisfies
\begin{equation}\label{eq:rhointro}
\frac{2}{\rho(\rho+1)}
<
\frac{d}{q}
<
1-\frac{d}{2}+\frac{d}{\zeta} + \min\Bigl\{\frac{d}{\rho+1},\frac{2}{\rho}
\Bigr\}.
\end{equation}
Then one can choose suitable temporal parameters $(p,\kappa)$ so that \eqref{eq:intro_scalar_equation} admits a unique maximal local solution $(u,\sigma)$ in a suitable space for initial data $u_0$ in a critical Besov space 
$$
\BD^{\,\frac{d}{q}-\frac{2}{\rho}}_{q,p}(\Dom).
$$

Moreover, $u$ a.s.\ regularizes to 
\[u\in C^{\theta_1,\theta_2}_{\loc}((0,\sigma)\times \overline{\cO}),
        \quad\text{for $\theta_1\in [0,\lambda/2)$ and $\theta_2\in [0,\lambda)$ with $\lambda = 1-\tfrac{d}{2}+\tfrac{d}{\zeta}$.}\]
\end{theorem}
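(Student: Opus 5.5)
The plan is to recast \eqref{eq:intro_scalar_equation} as a stochastic evolution equation
\[
\dd u + A u\,\dd t = \mathcal{F}(u)\,\dd t + \mathcal{G}(u)\,\dd W_{\ell^2}
\quad\text{on}\quad X_0=H^{-1,q}(\Dom),\qquad X_1=H^{1,q}_0(\Dom),
\]
with $A=-\Delta$ and Dirichlet boundary conditions. Here $\mathcal{F}(u)=f(u)+\dv F(u)$, and $\mathcal{G}(u)e_n = \mu_n g(u)\varphi_n$ is viewed as an operator into $\gamma(\ell^2,X_{1/2})$ with $X_{1/2}=L^q(\Dom)$. The conclusion should then follow from the abstract critical local well-posedness theorem of \cite{AV-ob}, in the form specialized to reaction-diffusion systems in Corollary \ref{cor: critical localrho1}. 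The route is to check its hypotheses for the scalar case.

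\textbf{Linear part.} Since $-\Delta$ with Dirichlet conditions has a bounded $H^\infty$-calculus on $H^{-1,q}$ with angle less than $\pi/2$, it has stochastic maximal $L^p_\kappa$-regularity for $p\in(2,\infty)$ and $\kappa\in[0,\frac p2-1)$. The trace space $(X_0,X_1)_{1-\frac{1+\kappa}{p},p}$ is $\BD^{1-2\frac{1+\kappa}{p}}_{q,p}(\Dom)$. Matching its smoothness to $\frac dq-\frac2\rho$ fixes $\kappa$ through $\frac{1+\kappa}{p}=\frac12-\frac d{2q}+\frac1\rho$. The requirement $0\le \kappa<\frac p2-1$ is where the lower bound in \eqref{eq:rhointro} and the assumption on $\rho$ enter, after choosing $p$ large.

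\textbf{Nonlinear estimates.} These have the form
\[
\|\mathcal{F}(u)-\mathcal{F}(v)\|_{X_0}+\|\mathcal{G}(u)-\mathcal{G}(v)\|_{\gamma(\ell^2,X_{1/2})}\lesssim (1+\|u\|_{X_{\beta}}^{\rho}+\|v\|_{X_{\beta}}^{\rho})\|u-v\|_{X_\beta},
\]
together with the criticality relation $\frac{1+\kappa}{p}=\frac{\rho+1}{\rho}(1-\beta)$, i.e.\ Assumption \ref{ass: zeta f}-type conditions. The three terms are handled as follows.
\begin{enumerate}[(i)]
\item For $f$: use Sobolev embeddings $H^{2\beta-1,q}\hookrightarrow L^{r(\rho+1)}$ and $L^r\hookrightarrow H^{-1,q}$.
\item For $\dv F$: use $L^{q}\to H^{-1,q}$ together with Hölder's inequality, where the power $\rho/2$ gives the same criticality.
\item For the noise: use the estimates of \cite{AGV} for multiplication operators composed with the covariance $\sum_n \mu_n\varphi_n$, $\mu\in\ell^\zeta(\Sf)$. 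These bound the $\gamma$-norm of $h\mapsto g(u)\sum h_n\mu_n\varphi_n$ in terms of $\|\mu\|_{\ell^\zeta(\Sf)}$ times an $L^{r}$-norm of $g(u)$ with an effective loss of $\frac d2-\frac d\zeta$ derivatives. This loss is exactly why $\rho_3=\frac\rho2(1-\frac d2+\frac d\zeta)$, and why $\zeta<\frac{2d}{d-2}$ is needed.
\end{enumerate}
The upper bound in \eqref{eq:rhointro} is what makes all the Sobolev exponents admissible, namely positive integrability and $\beta<1$.

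\textbf{Regularization.} This comes from the bootstrapping results of \cite{AV-ob}. First raise $p$ and $\kappa$ instantaneously on $(0,\sigma)$. Then move to $H^{\theta,q}$ with $q$ large, where the smoothness is capped by the noise regularity $\lambda=1-\frac d2+\frac d\zeta$. Finally, Sobolev embedding in space and time gives $C^{\theta_1,\theta_2}_{\loc}$.

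\textbf{Main obstacle.} I expect the main difficulty to be the noise estimate in the critical regime. The $\gamma$-radonifying norm must be controlled with a loss of regularity tied to $\zeta$, while staying compatible with the critical $\beta$ forced by $f$. The approach is to pick $\beta$ from the $f$-constraint and then verify that the \cite{AGV} bound with exponent $\rho_3$ yields the same $\beta$ (or a subcritical one). Most of the casework on the parameters is there.
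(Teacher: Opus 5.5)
\textbf{Gap: the unshifted scale.} You take $X_0=H^{-1,q}(\Dom)$, $X_1=H^{1,q}_0(\Dom)$, which is $s=0$ in the paper's notation. This fails for the noise and for the initial data.

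For the noise you need $\mathcal{G}(u)\in\gamma(\ell^2,X_{1/2})$ with $X_{1/2}=L^q(\Dom)$. By the square-function description of $\gamma(L^2,L^q)$,
\[
\|M_gR\|_{\gamma(L^2(\Dom),L^q(\Dom))}\eqsim\Big\|g\,\big(\textstyle\sum_n\mu_n^2|\varphi_n|^2\big)^{1/2}\Big\|_{L^q(\Dom)}.
\]
This is infinite once $\sum_n\mu_n^2|\varphi_n|^2=\infty$ on the set where $g\neq0$, which happens for admissible $\zeta>2$. For example, take $\Dom=(0,1)$, $\varphi_n=\sqrt2\sin(n\pi\cdot)$ and $\mu=(n^{-1/2})_{n\ge1}$; then $\mu\in\ell^\zeta(\Sf)$ for every $\zeta>2$, yet $\sum_n \frac{2}{n}\sin^2(n\pi x)=\infty$ for all $x\in(0,1)$. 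The loss of $\frac d2-\frac d\zeta$ derivatives in Theorem \ref{thm:MgTmu delta} is a loss in the \emph{target} space: $M_gR\in\gamma(L^2,\HD^{-s,q})$ only for $s>\frac d2-\frac d\zeta$. No choice of $\beta$ and no extra regularity of $u$ can make up for it. Your own regularization paragraph says the spatial smoothness of $u$ is capped by $\lambda<1$, so $u\in L^p(0,T,w_\kappa;H^{1,q}_0(\Dom))$ should not be expected.

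For the initial data, your trace space $\BD^{1-2\frac{1+\kappa}{p}}_{q,p}$ has strictly positive smoothness. Matching it with $\frac dq-\frac2\rho$ under $\kappa<\frac p2-1$ therefore forces $\frac dq>\frac2\rho$. It does not produce the lower bound $\frac{2}{\rho(\rho+1)}<\frac dq$ of \eqref{eq:rhointro}, as you claim. The whole negative-smoothness range is lost, and that range is the point of the theorem (for instance regularity close to $-\frac23$ for cubic $f$ in $d=3$). At best your setup covers trace-class noise with $\frac dq>\frac2\rho$, i.e.\ the case $s=0$ of Corollary \ref{cor: critical localrho1}.

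\textbf{What is missing.} You need to shift the whole maximal-regularity scale. Take $X_0=\HD^{-1-s,q}(\Dom)$ and $X_1=\HD^{1-s,q}(\Dom)$ with the critical choice $s=\frac{\rho+2}{\rho}-\frac dq-2\frac{1+\kappa}{p}$, required to satisfy $\frac d2-\frac d\zeta<s<\min\{1,d-\frac dq\}$. Then $X_{1/2}=\HD^{-s,q}$ absorbs the noise via Theorem \ref{thm:MgTmu delta}, and the trace space is exactly $\BD^{\frac dq-\frac2\rho}_{q,p}$, possibly of negative smoothness. This requires:
\begin{itemize}
\item maximal regularity on the negative Dirichlet scale (Theorem \ref{thm:LM-interpolation-fractional-dirichlet}, or the $H^\infty$-calculus of the Dirichlet Laplacian there in the scalar case);
\item the range-shift index $\alpha_1$ of Assumption \ref{ass:FGcritical}, because $X_0$ now has smoothness close to $-2$. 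It lets you estimate $f(u)$ in $X_{\alpha_1}$ rather than $X_0$. With $\alpha_1=0$, the embedding $L^\eta\hookrightarrow X_0$ would need $\eta<1$ in part of the range, e.g.\ cubic $f$ in $d=2$ (cf.\ Remark \ref{rem:rho1}).
\end{itemize}

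The conditions in \eqref{eq:rhointro} also have a different origin than you suggest:
\begin{itemize}
\item The lower bound $\frac{2}{\rho(\rho+1)}<\frac dq$ is the $f$-condition $s+2\frac{1+\kappa}{p}+\frac dq<1+\frac{d(\rho+1)}{q}$ in \eqref{eq: mainlocal combined ineq f 1}, evaluated at the critical $s$.
\item The upper bound expresses that the noise constraint $s>\frac d2-\frac d\zeta$, i.e.\ $2\frac{1+\kappa}{p}<\lambda+\frac2\rho-\frac dq$, leaves room above $0$ and above the $f$-induced lower bound $\frac2\rho-\frac{d}{\rho+1}$.
\end{itemize}
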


Note that the growth of the Lipschitz constants is in line with the scaling we found in Subsection \ref{ss:scaling_intro}. Moreover, the space of initial data is \emph{critical} for \eqref{eq:intro_scalar_equation} from a PDE point of view, and therefore the local well-posedness result in Theorem \ref{t:intro} is \emph{sharp}.

\smallskip

The condition in \eqref{eq:rhointro} is somewhat technical and we do not know whether it is optimal. It enables us to identify critical spaces of initial values. In concrete situations it turns out to be rather flexible. 

The expression $\lambda:=1-\frac{d}{2}+\frac{d}{\zeta}$ appears several times in the above result and is related to the regularity of the noise. Due to the condition on $\zeta$ one can check that $\lambda\in (0,1]$. Note that the space regularity of the solution $u$ is $C^{\lambda-}$, and the regularity of the noise is $C^{\lambda-1-}$. 

For concreteness of the conditions on the parameters, consider $d\geq 3$ and $\rho\geq 2$. Then the condition becomes  
\[\frac{2}{\rho(\rho+1)} < \frac{d}{q}<1+\frac{2}{\rho}-\frac{d}{2}+\frac d\zeta.
\]
Thus, we prove well-posedness with initial value in critical space $\BD^{d/q-2/\rho}_{q,p}(\cO)$ with smoothness 
$$
\frac{d}{q}-\frac{2}{\rho}\approx 
-\frac{2}{\rho+1}+\varepsilon \quad \text{ for }\quad \varepsilon>0.
$$
Finally, we point out that if $f=0$ we can allow for more irregular initial data, see Corollary \ref{cor: critical local rho2}.

\begin{example}[Cubic nonlinearity $f$]
For $\rho = 2$  and $d=3$, this simplifies to $\zeta\in [2, 6)$ with the following restriction on $q$: 
\begin{equation}
\label{eq:bound_q_allen_cahn_intro}
\frac{1}{3}<\frac{3}{q}<\frac{1}{2}+\frac{3}{\zeta},
\end{equation}
and ${\rho_3} = -\frac{1}{2}+\frac{3}{\zeta}$. 
Even in this special case, the admissible range of integrability exponents $q$ and noise intensities $\zeta$ appears to be completely new. The same holds for most other choices. Letting $q\uparrow 9$, one obtains the largest possible class of initial values. This can be summarized as 
$$
u_0\in \BD^{-\frac{2}{3}+}_{9-,\infty-}(\Dom).
$$
In the absence of noise, it is possible to prove well-posedness in $\BD^{3/q-1}_{q,p}(\cO)$ for all $q,p$ sufficiently large for parabolic PDEs with cubic type nonlinearity, see e.g.\ \cite[Theorem 3.2]{CriticalQuasilinear}. The upper bound on $q$ is due to the roughness of the noise in time and space. We do not know if the bound in \eqref{eq:bound_q_allen_cahn_intro} is generically optimal. 
A similar comment applies to the restrictions in Theorem \ref{t:intro}.
\end{example}

\medskip

The above theorem is representative for the general picture developed below. 
The identification of such critical spaces is important for at least two reasons. 
First, these spaces are natural from the point of view of scaling and therefore provide a conceptually satisfactory local well-posedness theory. 
Second, they form the right starting point for future global arguments. 
Indeed, once local solutions are available in critical spaces, the regularization results proved in this paper show that solutions immediately enter better spaces, and our blow-up criteria reduce global well-posedness questions to the derivation of suitable a priori bounds in norms that are often accessible in concrete models.

We apply the abstract theory to several non-trace-class stochastic perturbations of the prototypical examples:
\begin{itemize}
    \item Allen--Cahn equation and Gray--Scott system -- Subsection \ref{ss: AllenCahn}.
        \item Fisher--KPP equation and Lotka--Volterra system -- Subsection \ref{ss: FisherKPP}.
    \item  Burgers equation -- Subsection \ref{ss: Burgers}.
    \item 1D Reaction-diffusion equations with space-time white noise -- Subsection \ref{ss:applwhite}.
\end{itemize}
The novelties of our results include:
\begin{itemize}
    \item Critical initial data in the PDE sense -- Subsection \ref{ss:scaling_intro}, and Corollaries \ref{cor: critical local rho2} and \ref{cor: critical localrho1}.
    \item Instantaneous regularization of solutions -- Theorem \ref{thm: regularization}.
    \item Sharp blow-up criteria in spaces with low smoothness -- Theorem \ref{thm: blow up} and Corollary \ref{cor: blow up}. 
    \item Positivity for rough critical initial data -- Theorem \ref{thm:positity_nonlinear_eq}.
\end{itemize}
We expect that the framework developed here will be useful in a number of further directions, e.g.\ global well-posedness, and long-time behavior for stochastic reaction-diffusion systems with rough noise and rough initial data.

The paper is organized as follows. 
In Section~\ref{sec:Main} we state our main results on local well-posedness, regularization, blow-up criteria, and positivity for stochastic reaction-diffusion equations. Section~\ref{sec:appl} contains applications to several concrete models. 
Finally, the proofs of the main results are given in Section~\ref{sec:proofs}.

\subsubsection*{Acknowledgements}
The authors thank Esmée Theewis for helpful comments.

\section{Main results}\label{sec:Main}

In this section we collect the main results of the paper. Applications can be found in Section \ref{sec:appl}. The proofs of the main results will be given in Section \ref{sec:proofs}.

\subsection{Setting}

Consider the following system of SPDEs on $\Dom$:
\begin{equation}
\label{eq:reaction_diffusion_system}
\left\{
\begin{aligned}
\dd u_i -\dv(a_i\cdot\nabla u_i) \,\dd t& = \Big[\dv(F_i(\cdot, u)) +f_i(\cdot, u)\Big]\,\dd t + \sum_{j=1}^{\ell} g_{i}^j (\cdot,u) \,\dd W^j(t),\\
u_i(0)&=u_{0,i},
\end{aligned}\right.
\end{equation}
where $i\in \{1,\dots,\ell\}$ and $\ell\geq 1$ is an integer.
Here $u=(u_i)_{i=1}^{\ell}:[0,\infty)\times \O\times \Dom\to \R^\ell$ is the unknown process. We write
$$
\dv (a_i\cdot\nabla u_i):=\sum_{j,k=1}^d \partial_j(a^{j,k}_i \partial_k u_i)
$$
in distributional sense. Coupling only takes place through the nonlinear mappings $F$, $f$ and $g$.

\subsubsection{Function spaces with Dirichlet boundary conditions}
\label{ss:function_spaces_Dir}

In this subsection we fix the notation for the function spaces used below (see \cite{AV19_QSEE_1, AVsurvey} for details). Let
$\cO\subset \bR^d$ be a bounded $C^2$-domain and let $q\in (1,\infty)$. We
write $q'$ for the conjugate exponent of $q$.

Let $\Delta_D$ denote the strong Dirichlet Laplacian on $L^q(\cO)$, i.e.
\[
    \mathsf{D}(\Delta_D)=H^{2,q}(\cO)\cap H^{1,q}_0(\cO),
    \qquad
    \Delta_D u=\Delta u.
\]
Equivalently, we set
\[
    \HD^{2,q}(\cO):=H^{2,q}(\cO)\cap H^{1,q}_0(\cO),
    \qquad
    \HD^{0,q}(\cO):=L^q(\cO),
\]
and
\[
    \HD^{1,q}(\cO):=H^{1,q}_0(\cO)
    =\{u\in H^{1,q}(\cO): u|_{\partial\cO}=0\}.
\]
The negative part of the scale is defined by duality. More precisely,
\[
    \HD^{-2,q}(\cO):=(\HD^{2,q'}(\cO))^*,
\]
where the duality is understood with respect to the extension of the
$L^2$-pairing. The intermediate spaces are obtained by complex interpolation:
for $-2\le s_0<s_1\le 2$ and $\theta\in(0,1)$ we put
\[
    \HD^{s,q}(\cO)
    :=
    [\HD^{s_0,q}(\cO),\HD^{s_1,q}(\cO)]_\theta,
    \qquad
    s=(1-\theta)s_0+\theta s_1.
\]
In particular, this leads to $\HD^{0,q}(\cO)=L^q(\cO)$. The spaces $H^{s,q}(\cO)$ can be introduced in the same way. 

We shall also use the corresponding real interpolation spaces. For
$p\in[1,\infty]$, $-2\le s_0<s_1\le 2$, and $\theta\in(0,1)$, we define
\[
    \BD^{s}_{q,p}(\cO)
    :=
    (\HD^{s_0,q}(\cO),\HD^{s_1,q}(\cO))_{\theta,p},
    \qquad
    s=(1-\theta)s_0+\theta s_1.
\]
By the reiteration theorem this definition is independent of the particular
choice of $s_0,s_1$ and $\theta$.
The space $\BD^{s}_{q,p}(\cO)$ is continuously embedded into the usual
Besov space $B^s_{q,p}(\cO)$ whenever $s\in(0,2)$ and $s\ne 1/q$.

Let us recall the standard identifications with the usual Bessel potential and
Besov spaces on $\cO$. For $s\in(0,2)$ with $s\ne 1/q$ one has
\[
    \HD^{s,q}(\cO)
    =
    \begin{cases}
        H^{s,q}(\cO), & s\in(0,1/q),\\[2mm]
        \{u\in H^{s,q}(\cO): u|_{\partial\cO}=0\}, & s\in(1/q,2).
    \end{cases}
\]
Similarly, for $s\in(0,2)$ with $s\ne 1/q$,
\[
    \BD^{s}_{q,p}(\cO)
    =
    \begin{cases}
        B^{s}_{q,p}(\cO), & s\in(0,1/q),\\[2mm]
        \{u\in B^{s}_{q,p}(\cO): u|_{\partial\cO}=0\}, & s\in(1/q,2).
    \end{cases}
\]

We shall frequently use the following Sobolev embeddings for the Dirichlet
scale. If $-2\le s_1\le s_0\le 2$, $1<q_0\le q_1<\infty$, then
\[
    s_0-\frac{d}{q_0}\ge s_1-\frac{d}{q_1} \ \text{implies} \ 
    \HD^{s_0,q_0}(\cO)\hookrightarrow \HD^{s_1,q_1}(\cO).
\]
Moreover, for $s\ge0$ and $s-d/q>\alpha\ge0$ with $\alpha\notin\N$, one has
\[
    \HD^{s,q}(\cO)\hookrightarrow H^{s,q}(\cO)
    \hookrightarrow C^\alpha(\overline{\cO}) \ \text{and} \ 
    \BD^{s}_{q,p}(\cO)
    \hookrightarrow B^{s}_{q,p}(\cO)
    \hookrightarrow C^\alpha(\overline{\cO}).
\]

\subsubsection{Nonlinearities and coefficients}
The following is the main assumption on the coefficients and nonlinearities.
\begin{assumption}
\label{ass:reaction_diffusion_global} 
Let $d,\ell\geq 1$ be integers. We say that Assumption \ref{ass:reaction_diffusion_global}$(s,q,p,\rho_1,\rho_2,\rho_3)$ holds if $q\in [2,\infty)$, $p\in (2,\infty)$, $\rho_1,\rho_2,\rho_3>0$, $s\in [0, 1)$ and for all $i\in \{1,\dots,\ell \}$ the following hold:
\begin{enumerate}[{\rm(1)}]
\item\label{it:reaction_diffusion_global0} {\rm (Regularity of the coefficients)} $a_i: \R_+\times \O\times \cO\to \R^{d\times d}$ is $\Progress\otimes \Borel(\cO)$-measurable, and there exists an $M$ such that a.s.\ for all $t\in [0,\infty)$, 
\[\|a^{j,k}_i(t,\cdot)\|_{C^1(\overline{\mathcal{O}})}  \leq M,\]
\item\label{it:reaction_diffusion_global1} {\rm (Parabolicity)}
There exists $\ellip>0$ such that a.s.\ for all $t\in [0,\infty)$, $x\in \overline{\cO}$ and $\xi\in \R^d$, 
$$
\sum_{j,k=1}^d a_i^{j,k}(t,x)
 \xi_j \xi_k
\geq  \ellip |\xi|^2.
$$
\item\label{it:growth_nonlinearities} {\rm (Local Lipschitzness)} For all $i,j\in \{1, \ldots, \ell\}$, $k\in \{1, \ldots, d\}$, the maps
\begin{align*}
F_i^k, \ f_i:\R_+\times \O\times \Dom\times \R^{\ell}\to \R,\qquad
g_{i}^j:\R_+\times \O\times \Dom\times \R^{\ell}\to \bR,
\end{align*}
are $\Progress\otimes \Borel(\Dom)\otimes \Borel(\R^{\ell})$-measurable, and satisfy
\begin{equation*}
F_i^k(\cdot,0), \ f_i(\cdot,0)\in L^{\infty}(\R_+\times \O\times \Dom),\qquad
g_i^j(\cdot,0)\in L^{\infty}(\R_+\times \O\times \Dom),
\end{equation*}
and a.s.\ for all $t\in \R_+$, $x\in \Dom$ and $y,y'\in\R^{\ell}$, for some $\rho_1,\rho_2,\rho_3>0$,
\begin{align*}
|f_i(t,x,y)-f_i(t,x,y')|
&\lesssim (1+|y|^{\rho_1}+|y'|^{\rho_1})|y-y'|,\\
|F_i^k(t,x,y)-F_i^k(t,x,y')|
&\lesssim (1+|y|^{\rho_2}+|y'|^{\rho_2})|y-y'|.
\\ |g_i^j(t,x,y)-g_i^j(t,x,y')|
&\lesssim (1+|y|^{\rho_3}+|y'|^{\rho_3})|y-y'|.
\end{align*}
\end{enumerate}
\end{assumption}

The above assumption is in its most general form. It may help the reader to consider a special choice, which in most cases does not lead to any restrictions: 
\begin{remark}\label{rem:simplifyrho}
Let $\zeta\in [2, \infty]$ be the parameter related to the noise intensity which will be introduced in Assumption \ref{ass: zeta f} below. Let $\rho>0$. Recall from Subsection \ref{ss:scaling_intro} that the following choice for $\rho_1, \rho_2, \rho_3$ is natural as it leads to the same scaling behavior in the SPDE:
\begin{align*}
\rho_1 = \rho, \qquad \rho_2 = \frac{\rho}{2}, \qquad \rho_3 = \frac{\rho}{2}\Big(1-\frac{d}{2} + \frac{d}{\zeta}\Big). 
\end{align*}
The conditions on $\zeta$ ensure that $\rho_3\in (0,\rho/2]$. Here the end-point $\rho/2$ only  occurs if $\zeta=2$ (i.e.\ trace-class noise).
\end{remark}

\subsubsection{The noise term}\label{sss:noise}

Let us formalize the structure of the stochastic forcing driving the system. 
Let $W^j = R^j B^j$, where $B^1,\ldots, B^\ell$ are independent $L^2(\cO)$-cylindrical Brownian motions. Moreover, $R^j\in \calL(L^2(\cO))$ is given by
\begin{align}
\label{eq: def R}
    R^j = \sum_{n=1}^\infty \mu_{n}^j e_n^j\otimes \varphi_n^j,
\end{align}
for suitable intensities $(\mu_n^j)_{n\geq 1}$, an orthonormal basis $(e_{n}^j)_{n\geq 1}$ and an orthonormal system $(\varphi_n^j)_{n\geq 1}$ in $L^2(\cO)$. In other words, the covariance $R^j(R^j)^*$ of $W^j(1)$ is given by 
\[R^j(R^j)^* = \sum_{n=1}^\infty |\mu_{n}^j|^2 \varphi_n^j\otimes \varphi_n^j.\]

Formally, the above corresponds to having
\[W^j(t) = \sum_{n=1}^\infty \mu_n^j \varphi_n^j w^j_n(t),\]
where $(w^j_n)_{j,n}$ is a family of standard independent Brownian motions.

The stochastic integral with respect to $W^j$ is then defined as
\begin{align}\label{eq:stochintWB}
    \int_0^t g_i^j(\cdot, u) \,\dd W^j(r)  = \int_0^t g_i^j(\cdot, u) R^j \,\dd B^j(r).
\end{align}
Thus, we need that $g_i^j(\cdot, u) R^j\in L^2(0,t;\gamma(L^2(\cO),\HD^{-s,q}(\cO)))$ a.s.\ to  ensure the stochastic integrability. Here $\gamma(L^2(\cO),Z)$ denotes the $\gamma$-radonifying operators from $L^2(\cO)$ into a Banach space $Z$. For details on $\gamma$-radonifying operators we refer the reader to \cite[Chapter 9]{Analysis2}. For details on stochastic integration in spaces such as $H^{-s,q}(\cO)$, the reader is referred to \cite[Section 4]{NVW13}.

To formulate the following assumptions concerning the structure of the noise, we introduce, for an orthonormal system
$\Sf^j = (\varphi_n^j)_{n\geq 1}$ of $L^2(\cO)$, such that $\varphi_n^j\in L^\infty(\cO)$, the space
\begin{align}
\label{def:ellzeta}
    \ell^\zeta(\Sf^j) =
    \begin{cases}
    \{(\mu_n)_{n\geq 1}\in \bR^\bN\,:\, \|\mu\|_{\ell^\zeta(\Sf^j)}^\zeta:=\sum_{n=1}^\infty |\mu_n|^\zeta \|\varphi_n^j\|_{L^\infty(\cO)}^2<\infty\},
   & \zeta\in [2,\infty),\\
    \ell^\infty, & \zeta = \infty.
    \end{cases}
\end{align}

Henceforth we fix orthonormal systems $\Sf^j$ in \eqref{eq: def R}
and impose the following relation on $\zeta$ and $d$ whenever $\mu^j\in\ell^{\zeta}(\Sf^j)$ for $\zeta<\infty$.

\begin{assumption}
\label{ass: zeta f}
For each $j\in \{1, \ldots, \ell\}$, $(\varphi^j_{n})_{n\geq 1}$ form an orthonormal system in $L^2(\cO)$ such that $\varphi^j_{n}\in L^\infty(\cO)$ for each $n\geq 1$. The sequence $\mu^j = (\mu_n^j)_{n\geq 1}\in \ell^\zeta(\Sf^j)$, where
$\zeta\in [2,\infty]$ is arbitrary if $d=1$ and is such that
\begin{align}
\label{eq: condition zeta}
    \zeta<\frac{2d}{d-2} 
    \quad\text{whenever} \quad d\geq 2.
\end{align}

\end{assumption}

    In many places throughout the paper we need that, for $\zeta\geq 2$ the number
    \begin{align}
        1-\frac{d}{2}+\frac{d}{\zeta}\in(0,1],
    \end{align}
    which is always satisfied in $d=1$ and in $d\geq 2$ leads precisely to the condition \eqref{eq: condition zeta}.

A key theorem for handling the noise term is our following recent result \cite[Theorem 4.1]{AGV}  on sharp estimates associated with \eqref{eq:reaction_diffusion_system} provided Assumption \ref{ass: zeta f} is satisfied. We state the main result of that latter paper below, as it provides an important ingredient for several of the proofs of the results below. It is a key ingredient in estimating the $\gamma$-norm of $g_i^j(\cdot, u) R^j$ which  appears when bounding \eqref{eq:stochintWB}. Below we let $M_g h = gh$ denote multiplication by $g$ for $h\in L^2(\cO)$. 

\begin{theorem}[Sharp bounds for coloured noise]
\label{thm:MgTmu delta}
Let $R^j$ be as in \eqref{eq: def R} with associated orthonormal system $(\varphi_n^j)_{n\geq 1}$ in $L^2(\cO)$ such that $\varphi^j_{n}\in L^\infty(\cO)$ for each $n\geq 1$. 
Let $q\in (1, \infty)$ and $s\in [0,d)$. Suppose that either
\begin{enumerate}[\rm (1)]
\item\label{it1:MgTmu delta} $\zeta\in (2,\infty)$, $\eta\in (1, q)$, $
\frac{s}{d} + \frac{1}{q}  = \frac{1}{\eta} + \frac{1}{2} - \frac{1}{\zeta}$ and $\frac{1}{\eta} - \frac{1}{\zeta}<\frac{1}{2}$,
\item\label{it2:MgTmu delta} $\zeta = 2$, $\eta\in (1, q]$ and 
$\frac{s}{d} + \frac{1}{q} = \frac{1}{\eta}$;
\item\label{it3:MgTmu delta} $\zeta = \infty$, $2<\eta<q<\infty$ and 
$\frac{s}{d} + \frac{1}{q} = \frac{1}{\eta}+\frac{1}{2}$. 
\end{enumerate}
Then for each $g\in L^{\eta}(\mathcal{O})$ and $\mu^j\in \ell^\zeta(\Sf^j)$, the operator $M_g R^j:L^2(\mathcal{O})\to \HD^{-s,q}(\mathcal{O})$ is $\gamma$-radonifying and satisfies the estimate
\begin{align}
\label{eq:maingammabound}
\|M_g R^j\|_{\gamma(L^2(\mathcal{O}),\HD^{-s,q}(\mathcal{O}))}\lesssim \|\mu^j\|_{\ell^{\zeta}(\Sf^j)} \|g\|_{L^{\eta}(\mathcal{O})},
\end{align}
where the constant depends on the parameters $(d,s,q,\eta,\zeta)$. 
\end{theorem}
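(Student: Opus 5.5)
We plan to reduce the $\gamma$-norm to a square-function estimate and then interpolate between a trace-class endpoint and a white-noise endpoint. Since $\HD^{-s,q}(\cO)$ is isomorphic to a closed subspace of an $L^q$-space (via the Dirichlet Laplacian, $(-\Delta_D)^{-s/2}$ maps $L^q$ onto $\HD^{-s,q}$ isomorphically up to the usual shift), we use the square-function characterisation of $\gamma(L^2(\cO),L^q)$. With $e_n$ an orthonormal basis, $M_gR^j e_n = \mu_n g\varphi_n$, and we get
\[
\|M_gR^j\|_{\gamma(L^2,\HD^{-s,q})}\eqsim \Big\|\Big(\sum_n |\mu_n|^2 |(-\Delta_D)^{-s/2}(g\varphi_n)|^2\Big)^{1/2}\Big\|_{L^q(\cO)}.
\]
Everything reduces to bounding this square function.

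\emph{Case $\zeta=2$.} We use the Sobolev embedding $L^\eta\hookrightarrow \HD^{-s,q}$, valid when $s/d+1/q=1/\eta$. In the form of $\gamma$-norms this gives $\|M_gR\|_{\gamma(L^2,\HD^{-s,q})}\lesssim \|M_gR\|_{\gamma(L^2,L^\eta)}$. The latter equals $\|(\sum|\mu_n|^2|g\varphi_n|^2)^{1/2}\|_{L^\eta}$, which is at most $\|g\|_{L^\eta}\,(\sum|\mu_n|^2\|\varphi_n\|_\infty^2)^{1/2}$.

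\emph{Case $\zeta=\infty$.} Here the key is that $\varphi_n$ is orthonormal. By the square-function formula, $\sum_n|\mu_n|^2|(-\Delta_D)^{-s/2}(g\varphi_n)(x)|^2 \le \|\mu\|_\infty^2\sum_n|\langle g\,K_s(x,\cdot),\varphi_n\rangle|^2$, where $K_s$ is the kernel of $(-\Delta_D)^{-s/2}$. Bessel's inequality bounds this by $\|\mu\|_\infty^2\int |K_s(x,y)|^2|g(y)|^2\,dy$. Since $|K_s(x,y)|\lesssim|x-y|^{s-d}$, this is a Riesz potential of order $2s-d$ applied to $|g|^2$. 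This needs $s>d/2$, which indeed follows from the relations given. Hardy–Littlewood–Sobolev from $L^{\eta/2}$ to $L^{q/2}$ with $2/q=2/\eta-(2s-d)/d$, i.e.\ $s/d+1/q=1/\eta+1/2$, yields the claim. The strict conditions $2<\eta<q$ are exactly what HLS needs.

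\emph{Case $\zeta\in(2,\infty)$.} We interpolate: the map $(\mu,g)\mapsto M_gR_\mu$ is bilinear. The spaces $\ell^\zeta(\Sf)$ are weighted $\ell^p$ spaces, namely $\ell^\zeta$ with weight $\|\varphi_n\|_\infty^{2}$. To make them fit complex interpolation, rescale by $\nu_n=\mu_n\|\varphi_n\|_\infty^{2/\zeta}$. This gives a family analytic in the exponent, handled via Stein's interpolation with $\mu_n = |\nu_n|^{\cdots}\|\varphi_n\|_\infty^{-2(1-\theta)/2}$. Complex interpolation of the target scale $\HD^{-s,q}$ (both $s$ and $q$ vary) and of $L^\eta$ then gives the intermediate estimate. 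Checking that the affine relations interpolate correctly gives exactly $s/d+1/q=1/\eta+1/2-1/\zeta$, with $1/\zeta=\theta\cdot 0+(1-\theta)/2$.

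\emph{Main obstacle.} We expect the main obstacle to be this multilinear/Stein interpolation with the $n$-dependent weight and varying $\gamma$-spaces. One also has to verify that the constraint $1/\eta-1/\zeta<1/2$ lets us pick endpoints meeting the HLS strictness conditions. If the interpolation is awkward, the fallback is a direct proof. Split $|\mu_n|^2=|\mu_n|^{2\theta'}|\mu_n|^{2-2\theta'}$ and apply Hölder in $n$ pointwise, using $\ell^\zeta$–$\ell^{\zeta/(\zeta-2)}$ duality. Then use the Bessel/HLS argument above on a fractional power of the kernel.
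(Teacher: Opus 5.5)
The paper does not prove this statement. It is imported from \cite[Theorem 4.1]{AGV} and used as a black box, so there is no in-text argument to compare with, and your proposal has to stand on its own. Your main route does.

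\textbf{Main route.} The square-function reduction and both endpoints are correct as written:
\begin{itemize}
\item for $\zeta=2$, the Sobolev embedding $L^\eta\hookrightarrow\HD^{-s,q}(\cO)$ plus the ideal property;
\item for $\zeta=\infty$, Bessel's inequality applied to $K_s(x,\cdot)g$, followed by Hardy--Littlewood--Sobolev for $I_{2s-d}$ from $L^{\eta/2}$ to $L^{q/2}$.
\end{itemize}
One slip: the isomorphism your formula actually uses is $(-\Delta_D)^{-s/2}\colon\HD^{-s,q}(\cO)\to L^q(\cO)$, not the other direction.

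The interpolation for $\zeta\in(2,\infty)$ is easier than you expect. Put the measure $\nu(\{n\})=\|\varphi_n\|_{L^\infty(\cO)}^2$ on $\N$. Then $\ell^\zeta(\Sf)=L^\zeta(\N,\nu)$ isometrically for every $\zeta\in[2,\infty]$, including $\ell^\infty(\Sf)=\ell^\infty$. Hence $[\ell^2(\Sf),\ell^\infty]_\theta=\ell^\zeta(\Sf)$ with $\theta=1-\frac2\zeta$ is ordinary complex interpolation of $L^p$-spaces, and no rescaling or Stein analytic family is needed.

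The parameter search also closes. One can write $\frac1\eta=\frac2\zeta\frac1{\eta_0}+(1-\frac2\zeta)\frac1{\eta_1}$ with $\eta_0>1$, $\eta_1>2$ exactly when $\frac1\eta-\frac1\zeta<\frac12$. The condition $\eta<q$ then leaves room to also arrange $\eta_0\le q_0$ and $2<\eta_1<q_1<\infty$. For $q>2$ you may take $q_0=q_1=q$; for $q\le 2$ you need $q_0<q<q_1$.

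What remains is standard:
\begin{itemize}
\item $[\gamma(L^2(\cO),X_0),\gamma(L^2(\cO),X_1)]_\theta\hookrightarrow\gamma(L^2(\cO),[X_0,X_1]_\theta)$;
\item $[\HD^{-s_0,q_0}(\cO),\HD^{-s_1,q_1}(\cO)]_\theta=\HD^{-s,q}(\cO)$, via fractional powers of $-\Delta_D$, which is also how the scale must be defined for $s>2$;
\item truncating $\mu$ to identify the bilinear extension with $M_gR^j$.
\end{itemize}

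\textbf{Fallback.} As stated, it would not work. After H\"older in $n$ you must bound $\sum_n|\langle h,\varphi_n\rangle|^{p}\|\varphi_n\|_{L^\infty(\cO)}^{2-p}$ with $p=\frac{2\zeta}{\zeta-2}$ and $h=K_s(x,\cdot)g$. Bessel only covers $p=2$. Moreover, any Bessel-based bound needs $K_s(x,\cdot)\in L^2_{\rm loc}$, i.e.\ $s>\frac d2$, which fails in part of case (1).

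The missing ingredient is the Paley--Hausdorff--Young inequality
\[
\sum_n|\langle h,\varphi_n\rangle|^p\|\varphi_n\|_{L^\infty(\cO)}^{2-p}\le\|h\|_{L^{p'}(\cO)}^p .
\]
It follows from Riesz--Thorin applied to $h\mapsto(\langle h,\varphi_n\rangle/\|\varphi_n\|_{L^\infty(\cO)})_n$, interpolating between Bessel ($L^2(\cO)\to L^2(\N,\nu)$) and the trivial bound ($L^1(\cO)\to\ell^\infty$).

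Then split the weight rather than $\mu_n$: apply H\"older to $\big(|\mu_n|^2\|\varphi_n\|_{L^\infty(\cO)}^{4/\zeta}\big)\big(\|\varphi_n\|_{L^\infty(\cO)}^{-4/\zeta}|\langle h,\varphi_n\rangle|^2\big)$ with exponents $\frac\zeta2$ and $\frac p2$. This gives
\[
\Big(\sum_n|\mu_n|^2\big|(-\Delta_D)^{-s/2}(g\varphi_n)(x)\big|^2\Big)^{1/2}\le\|\mu\|_{\ell^\zeta(\Sf)}\Big(\int_{\cO}|K_s(x,y)|^{p'}|g(y)|^{p'}\,\dd y\Big)^{1/p'} .
\]
HLS for the Riesz potential of order $d-(d-s)p'$, from $L^{\eta/p'}$ to $L^{q/p'}$, finishes the proof. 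Its hypotheses are exactly $s\in(\frac d2-\frac d\zeta,d)$, $\eta<q$ and $\eta>p'$, the last being $\frac1\eta-\frac1\zeta<\frac12$. The same computation reproduces your $\zeta=\infty$ argument ($p=2$) and, when $\eta<q$, the $\zeta=2$ case ($p=\infty$).

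\textbf{Comparison.} The corrected direct route is elementary, handles all three cases at once, and shows where each hypothesis of case (1) comes from. The interpolation route avoids the Paley inequality, but pays with interpolation of $\gamma$-spaces over a two-parameter scale and the endpoint search.
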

Similar results hold for other boundary conditions and are immediate from \cite[Theorem 4.1]{AGV}. 
Through Theorem \ref{thm:MgTmu delta}, one can often check that $M_{g_i^j(\cdot, u)} R^j$ is in $L^2(0,t;\gamma(L^2(\cO), \HD^{-s,q}(\cO)))$ a.s.\ under suitable conditions on $u$ and $g_i$. This will be used in Section \ref{ss:locreg} below to obtain the well-definedness of the stochastic integral in \eqref{eq:stochintWB}.

In the existing literature on non-trace-class noise, it is common practice to assume that the  functions $(\varphi_n^j)_{n\geq 1}$ are the $L^2$-normalized eigenfunctions of a leading differential operator. A key advantage of our framework is that the sharp $\gamma$-radonifying bounds in Theorem \ref{thm:MgTmu delta} above rely exclusively on the $\ell^\zeta(\mathcal{S}_\varphi^j)$-norm and suitable norm of $g$. By decoupling the noise basis from the differential operator, our approach provides a highly flexible local theory that easily accommodates general orthonormal systems and domains where explicit eigenfunction bounds are unavailable.

Let us comment on a common choice in the literature to see what $\mu\in \ell^\zeta(\Sf^j)$ means. 
\begin{example}[Eigenfunctions of the Dirichlet Laplacian]
\label{ex:laplacian_eigenfunctions}
A canonical choice in the literature is to take the orthonormal system $\mathcal{S}_\varphi = (\varphi_n)_{n \ge 1}$ as the $L^2(\mathcal{O})$-normalized eigenfunctions of the negative Dirichlet Laplacian $-\Delta$ on a bounded smooth domain $\mathcal{O} \subset \mathbb{R}^d$, associated with eigenvalues $0 < \lambda_1 \le \lambda_2 \le \dots \to \infty$. 

According to standard spectral estimates (see, e.g., \cite{grieser2002uniform}) and Weyl's law ($\lambda_n \sim n^{2/d}$), the eigenfunctions satisfy the asymptotic bound:
\begin{equation}
    \|\varphi_n\|_{L^\infty(\mathcal{O})} \lesssim \lambda_n^{(d-1)/4} \sim n^{\frac{d-1}{2d}}.
\end{equation}
In this setting, the spatial coloring condition $\mu \in \ell^\zeta(\mathcal{S}_\varphi)$ requires that the sequence of scalars $(\mu_n)_{n \ge 1}$ satisfies the summability constraint:
\begin{equation}
    \sum_{n \ge 1} |\mu_n|^\zeta n^{\frac{d-1}{d}} < \infty.
\end{equation}
For further comparison with the literature the reader is referred to \cite[Section 6.3]{AGV}.
\end{example}

\subsection{Local existence, uniqueness, and regularity}\label{ss:locreg}

Henceforth we view equation \eqref{eq:reaction_diffusion_system} as an abstract evolution equation of the form \eqref{eq:SEE}, with the following choice of spaces:
\begin{align}
\label{eq: choice spaces}
        X_0 = \HD^{-1-s,q}(\cO),\quad X_1 = \HD^{1-s,q}(\cO),
\end{align}
in which case the interpolation spaces take the form
\begin{align}
    X_{1-\tfrac{1+\kappa}{p},p} =  \BD^{1-s-2\tfrac{1+\kappa}{p}}_{q,p}(\cO),\quad
    X_\theta = \HD^{-1-s+2\theta,q}(\cO),\quad\text{for $\theta\in (0,1)$.}
\end{align}
For simplicity in the notation we write $\HD^{s,q}(\cO)$ instead of $\HD^{s,q}(\cO;\R^\ell)$. The latter is needed since we use systems.

\begin{definition}[Strong $(s,q,p,\kappa)$-solution]
\label{def:strong_sqpkappa_solution}
Let $s \in [0, 1)$, $q \in [2, \infty)$, $p \in (2, \infty)$, and $\kappa \in [0, p/2 - 1)$. Let $\tau$ be a finite stopping time, and let $u_0 \in L^0_{\cF_0}(\Omega; \BD^{1-s-2\frac{1+\kappa}{p}}_{q,p}(\cO))$ be a given initial condition. 

A strongly progressively measurable process $u = (u_i)_{i=1}^\ell : [0, \tau] \times \Omega \to \HD^{-1-s,q}(\cO)$ is called a \emph{strong $(s,q,p,\kappa)$-solution} to \eqref{eq:reaction_diffusion_system} on $[0, \tau]$ if, almost surely,
\begin{align*}
    u & \in L^p(0, \tau, w_\kappa; \HD^{1-s,q}(\cO)) \cap C([0, \tau]; \BD^{1-s-2\frac{1+\kappa}{p}}_{q,p}(\cO)),\\ 
    \dv(F_i(\cdot, u)), & \, f_i(\cdot, u)\in L^p(0,\tau,w_\a; \HD^{-1-s,q}(\cO)),\\ 
    g_i^j(\cdot, u)R^j&\in  L^p(0,\tau,w_\a;\gamma(L^2(\cO),\HD^{-s,q}(\cO))),
\end{align*}
and for all $t \in [0, \tau]$ and $i \in \{1, \dots, \ell\}$, the following holds as an equality in $\HD^{-1-s,q}(\cO)$ almost surely:
\begin{align*}
    u_i(t) - u_{0,i} - \int_0^t \dv(a_i \cdot \nabla u_i(r)) \diff r &= \int_0^t \Big[\dv(F_i(\cdot, u(r))) + f_i(\cdot, u(r))\Big] \diff r \\
    &\quad + \sum_{j=1}^{\ell}\int_0^t \one_{[0,\tau]} g_i^j(\cdot, u(r)) \diff W^j(r).
\end{align*}
\end{definition}

Since $\HD^{-1-s,q}(\cO)=(\HD^{1+s,q'}(\cO))^*$ and $s\in [0,1)$, the conditions on $\dv F_i(\cdot,u)$ are understood via the duality form
$$
\langle \dv F_i(\cdot,u), \phi\rangle =-\int_{\cO} F_i(\cdot,u)\cdot \nabla \phi\,\dd x, \qquad \phi\in \HD^{1+s,q'}(\cO).
$$
Since the elements of $\HD^{1-s,q}(\cO)$ are functions for $s\in [0,1)$, the composition $F_i(\cdot,u)$ is well-defined in a pointwise sense. The same applies to $f_i(\cdot,u)$ and $g^j_i(\cdot,u)$, where for the latter, due to Theorem \ref{thm:MgTmu delta}, it is enough to prove $g_i^j(u)\in L^p(0,\tau,w_{\a};L^{\eta}(\Dom))$ a.s.\ for a suitable $\eta\in (1, q]$. 
From the conditions in Theorem \ref{thm:mainlocal} below, one can check that the integrability assumptions of the above Definition \ref{def:strong_sqpkappa_solution} are fulfilled. This will be clear from the proofs.

Due to the above integrability conditions, and standard measurability properties, the deterministic integrals exist as Bochner integrals in $\HD^{-1-s,q}(\cO)$ (see \cite[Chapter 1]{Analysis1}), and since $q\in[2, \infty)$, the stochastic integral exists as $\HD^{-s,q}(\cO)$-valued It\^o integral (see \cite[Section 4]{NVW13}).

An advantage of working with spaces of distributions is that it allows to consider strong solutions even though the objects are not defined pointwise. Under the right regularity conditions on $u$, it is equivalent to use analytically weak solutions (see Remark \ref{rem:solconcept}\eqref{it1:solconcept}).  The advantage of the latter is that the integrals all become real-valued. In special cases, one can also consider mild solutions (see Remark \ref{rem:solconcept}\eqref{it2:solconcept}). The advantage of mild solutions is that the integrals do take values in spaces with positive spatial smoothness at least for almost all $t$.

\begin{remark}[Other solution concepts]\label{rem:solconcept}
The strong equality in $\HD^{-1-s,q}(\cO)$ formulated in Definition \ref{def:strong_sqpkappa_solution} is closely related to other standard solution concepts in the SPDE literature \cite{DPZ,VThesis}.
\begin{enumerate}[{\rm(i)}]
    \item\label{it1:solconcept} \emph{Analytically weak solutions:} Because the SPDE holds as a strong equality in the base space $X_0 = \HD^{-1-s,q}(\cO)$, it automatically satisfies the equation in the classical analytically weak (or distributional) sense. By taking the duality pairing with any test function $\phi \in \HD^{2,q'}(\cO)\stackrel{{\rm d}}{\hookrightarrow} \HD^{1+s,q'}(\cO)$ (where $\frac{1}{q} + \frac{1}{q'} = 1$), one can integrate by parts to transfer all the spatial derivatives onto $\phi$. Almost surely, for all $t \in [0, \sigma]$ and $i \in \{1, \dots, \ell\}$,
    \begin{align*}
        \int_{\cO}u_i(t)\phi \,\dd x  - \int_{\cO} u_{0,i}\phi \,\dd x  
        &= \int_0^t \int_{\cO}  u_i(r) \dv (a_i^T \cdot\nabla \phi)\,\dd x \,\dd r \\
        &\quad + \int_0^t \int_{\cO}\big( f_i(\cdot, u(r)) \phi - F_i(\cdot, u(r))\cdot \nabla \phi  \big) \,\dd x\, \diff r \\
        &\quad + \sum_{j=1}^\ell\sum_{n=1}^\infty\mu_n^j \int_0^t\one_{[0,\sigma]}(r) \int_{\cO}   g_i^j(\cdot, u(r))\varphi_n^j \phi \diff w_n^j(r),
    \end{align*}
    where 
    $$
    w^j_n(t)=B^{j}(\one_{[0,t]}\otimes e^j_n),
    $$
    where $(B^j)_{j=1}^\ell$ are the $L^2(\cO)$-cylindrical Brownian motions as in Subsection \ref{sss:noise}. 
    Thus, a solution obtained through our $L^p(L^q)$-framework satisfies the SPDE in the standard weak sense commonly used in PDEs without the need to artificially weaken the solution definition itself. If $u, \dv (F_i(\cdot,u)),f_i(\cdot,u)$ and $g_i^j(\cdot,u)$ satisfy the integrability conditions in Definition \ref{def:strong_sqpkappa_solution} and $u$ satisfies the above integral identity, then it is a strong $(s,q,p,\kappa)$-solution in the sense of Definition \ref{def:strong_sqpkappa_solution}. 
    
    \item\label{it2:solconcept} \emph{Mild solutions:} In the special case that $a_i$ is independent of $(t,\omega)$, one has that $-A_i$ generates a strongly continuous analytic semigroup on $L^q(\cO)$. Moreover, using suitable extrapolation theory, one can check that it extends to a semigroup on $\HD^{-1-s,q}(\cO)$ which regularizes to spaces of functions. In that case, assuming enough smoothness of $u$ one can equivalently formulate the equation component-wise as a.s.\ for all $t\in [0,\sigma]$ 
    \begin{align*}
    u_i(t) = e^{-tA_i}u_{0,i} & + \int_0^t e^{-(t-r)A_i} \Big[\dv(F_i(\cdot, u(r))) + f_i(\cdot, u(r))\Big] \diff r \\
    & + \sum_{j=1}^\ell \int_0^t e^{-(t-r)A_i} \one_{[0,\sigma]}(r) g_i^j(\cdot, u(r)) \diff W^j(r).
    \end{align*}
\end{enumerate}
\end{remark}

Next we continue to define local and maximal solutions. 
\begin{definition}[Maximal local $(s,q,p,\kappa)$-solution]
\label{def:maximal_sqpkappa_solution}
Let the parameters and initial data $u_0$ be as in Definition \ref{def:strong_sqpkappa_solution}.
\begin{enumerate}[{\rm(1)}]
    \item \emph{(Local Solution)} A pair $(u, \sigma)$ is called a \emph{local $(s,q,p,\kappa)$-solution} to \eqref{eq:reaction_diffusion_system} if $\sigma : \Omega \to [0, \infty]$ is a stopping time, $u : [0, \sigma) \to \HD^{-1-s,q}(\cO)$ is strongly progressively measurable, and there exists an increasing localizing sequence of stopping times $(\sigma_n)_{n \ge 1}$ such that $\lim_{n \to \infty} \sigma_n = \sigma$ almost surely, and for each $n \ge 1$, the stopped process $u|_{[0, \sigma_n]}$ is a strong $(s,q,p,\kappa)$-solution on $[0, \sigma_n]$.
    
    \item \emph{(Uniqueness)} A local $(s,q,p,\kappa)$-solution $(u, \sigma)$ is called \emph{unique} if for any other local $(s,q,p,\kappa)$-solution $(v, \tau)$ with the same initial data, $u = v$ almost surely on $[0, \sigma \wedge \tau)$.
    
    \item \emph{(Maximality)} A unique local $(s,q,p,\kappa)$-solution $(u, \sigma)$ is called \emph{maximal} if for any other unique local solution $(v, \tau)$ with the same initial data, it holds that $\tau \le \sigma$ almost surely, and $u = v$ on $[0, \tau)$.
\end{enumerate}
\end{definition}

The following is our main result on existence and uniqueness of \eqref{eq:reaction_diffusion_system}.  To describe the precise time-regularity we need the weighted vector-valued Bessel potential spaces. For details the reader is referred to \cite{AV19_QSEE_1, AV19_QSEE_2, AVsurvey}. We recall the definition here. Define the weight $w_\kappa^a(t):=(t-a)^\kappa$ for real $a$ and $\kappa\geq 0$.
For $-\infty\leq a<b\leq \infty$, $I = (a,b), p\in (1,\infty)$, $\theta\in (0,1)$ and a Banach space $X$, we define the vector-valued Bessel potential spaces by
\begin{align}
    H^{\theta,p}(I,w^a_\kappa;X):=[L^p(I,w_\kappa^a;X),W^{1,p}(I,w^a_\kappa;X)]_\theta,
\end{align}
where $W^{1,p}(I,w^a_\kappa;X)$ is the Sobolev space of regularity $1$ and integrability $p$. If $w^a_\kappa = 1$, then we omit it from the notation.

Next we state a general local well-posedness result for $\zeta<\infty$. The case $\zeta=\infty$ and $d=1$ (i.e.\ multiplicative white noise) will be considered in Theorem \ref{thm:mainlocal zeta infty} below. 
\begin{theorem}[Local existence and uniqueness]
    \label{thm:mainlocal}
    Let Assumption \ref{ass:reaction_diffusion_global}$(s,q,p,\rho_1,\rho_2,\rho_3)$
    hold, let $\kappa\in [0,\tfrac{p}{2}-1)$ 
    and consider \eqref{eq:reaction_diffusion_system} for an initial condition $u_0\in \BD_{q,p}^{1-s-2\tfrac{1+\kappa}{p}}(\cO)$ almost surely. Let  Assumption \ref{ass: zeta f} hold for $\zeta\in [2,\infty)$. 
 Suppose that the smoothness parameter $s$ satisfies
 \[\frac{d}{2} - \frac{d}{\zeta}< s<\min\Big\{1,d-\frac{d}{q}\Big\},\]
 where we allow $s=0$ if $\zeta=2$. 
\begin{enumerate}[$\bullet$]
\item If $f\neq 0$, then suppose
    \begin{align}
    \hspace{-1cm}
        \tag{Cf1}
        \label{eq: mainlocal combined ineq f 1}
        1+\frac{1}{\rho_1+1}\frac{d}{q}
        <s + 2\frac{1+\kappa}{p} + \frac{d}{q}&< \min\Big\{d+1,\,\, 1+\frac{d(\rho_1+1)}{q}\Big\},
        \qquad s+\frac{d}{q}-1 < \frac{d}{\rho_1 + 1},\\
        \label{eq: mainlocal combined ineq f 2}
        \tag{Cf2}
        s + 2\frac{1+\kappa}{p}+\frac{d}{q} &\leq  \frac{\rho_1+2}{\rho_1}.
    \end{align}
    \item If $F\neq 0$, then suppose
    \begin{align}
    \label{eq: mainlocal combined ineq F}
    \tag{CF}
    1+\frac{1}{\rho_2+1}\frac{d}{q}<  s+2\frac{1+\kappa}{p}+\frac{d}{q}\leq   \frac{\rho_2+1}{\rho_2},
    \qquad s+\frac{d}{q}-1<\frac{d}{\rho_2+1}.
    \end{align}
    \item For the growth parameter $\rho_3$ of $g$ we suppose that
        \begin{align}
        \tag{Cg}
            \label{eq: thm local s ineq 3}
        s + 2\frac{1+\kappa}{p} + \frac{d}{q} &\leq 1+\frac{1}{\rho_3}\Big(1-\frac{d}{2}+\frac{d}{\zeta} \Big).
        \end{align}
        \end{enumerate}
    Then there exists a maximal $(s,q,p,\kappa)$-solution $(u,\sigma)$ to \eqref{eq:reaction_diffusion_system} such that $\sigma>0$ a.s.\ and 
    \begin{enumerate}[{\rm(1)}]
        \item\label{it1:localwellposed} {\em (Regularity)} There exists $\theta^*\in (0,1/2]$ such that for each localizing sequence $(\sigma_n)_{n\geq 1}$ for $(u,\sigma)$ one has
for all $n\geq 1$ and all $\theta\in [0,\theta^*)$
                \[u\in H^{\theta,p}(0,\sigma_n,w_{\kappa};\HD^{1-s-2\theta,q}(\cO))\cap C([0,\sigma_n];\BD^{1-s-2\tfrac{1+\kappa}{p}}_{q,p}(\cO)).\] 
        \item\label{it2:localwellposed} {\em (Localization)} Let $v_0\in L^0_{\cF_0}(\Omega;\BD^{1-s-2\tfrac{1+\kappa}{p}}_{q,p}(\cO))$. If $(v,\tau)$ is the maximal $(s,q,p,\kappa)$-solution to \eqref{eq:reaction_diffusion_system} with initial value $v_0$, then a.s.\ on the set $\{u_0=v_0\}$ one has $\sigma=\tau$ and $u = v$ on $[0,\sigma\wedge \tau)$.
    \end{enumerate}
\end{theorem}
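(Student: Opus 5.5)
We will cast \eqref{eq:reaction_diffusion_system} as an abstract stochastic evolution equation $\dd u + A(t)u\,\dd t = G(t,u)\,\dd t + B(t,u)\,\dd W$ on $X_0=\HD^{-1-s,q}(\cO)$, $X_1=\HD^{1-s,q}(\cO)$, as in \eqref{eq: choice spaces}. Here $A(t)u=(-\dv(a_i\cdot\nabla u_i))_i$, $G(u)=(\dv F_i(\cdot,u)+f_i(\cdot,u))_i$ and $B(u)=(g_i^j(\cdot,u)R^j)_{i,j}$. The conclusion will then follow from the critical local well-posedness theorem of \cite{AV-ob}, which combines the abstract theory in \cite{AV19_QSEE_1,AV19_QSEE_2} with the improvements of \cite{AV-ob}. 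That result requires two ingredients.

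\emph{Linear part.} First, $A$ must have stochastic maximal $L^p_\kappa$-regularity on $(X_0,X_1)$. Second, $G$ and $B$ must satisfy the local Lipschitz estimates with respect to a family of intermediate spaces $X_{\beta_j}$ and growth exponents $\rho_j$. These must obey the (sub)criticality condition
\[
\frac{1+\kappa}{p}\le \frac{\rho_j+1}{\rho_j}(1-\beta_j).
\]
For the linear part we proceed as follows. The $C^1$-regularity in $x$ and the uniform ellipticity from Assumption \ref{ass:reaction_diffusion_global}\eqref{it:reaction_diffusion_global0}--\eqref{it:reaction_diffusion_global1} give an $H^\infty$-calculus for the divergence-form Dirichlet operator on $\HD^{-1-s,q}$, for $s\in[0,1)$, by duality and interpolation. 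Measurability in $(t,\omega)$ is then handled by the known stochastic maximal regularity results for such operators on $q\ge2$ spaces, as in \cite{AVsurvey}. The diagonal structure in $i$ makes the system case immediate.

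\emph{Nonlinear estimates.} For each term we choose $\beta$ and use Sobolev embedding $X_\beta=\HD^{-1-s+2\beta,q}\hookrightarrow L^{r}$, where $\frac dr=\frac dq+1+s-2\beta$.
\begin{itemize}
\item[(a)] For $f$, we use $L^{\xi}\hookrightarrow \HD^{-1-s,q}$ by duality, which needs $\frac d\xi\le \frac dq+1+s$ and $\xi>1$. Hölder gives $\|f(u)-f(v)\|_{L^\xi}\lesssim (1+\|u\|_{L^r}^{\rho_1}+\|v\|_{L^r}^{\rho_1})\|u-v\|_{L^r}$ with $\xi=r/(\rho_1+1)$, and the bounded term $f(\cdot,0)$ is harmless because $\cO$ is bounded.
\item[(b)] For $F$, we use $\dv: L^{\xi}\to \HD^{-1-s,q}$ whenever $\frac d\xi\le \frac dq+s$.
\item[(c)] For $g$, we invoke Theorem \ref{thm:MgTmu delta} with target $\HD^{-s,q}$. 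It bounds $\|B(u)-B(v)\|_{\gamma}$ by $\|\mu\|_{\ell^\zeta}\|g(u)-g(v)\|_{L^\eta}$, where $\frac sd+\frac1q=\frac1\eta+\frac12-\frac1\zeta$. The lower bound $s>\frac d2-\frac d\zeta$ is exactly what makes $\eta<q$ admissible (for $\zeta=2$, $s=0$ is allowed). Hölder again gives $\eta=r/(\rho_3+1)$.
\end{itemize}
In each case we optimise $\beta$ by taking equality in the embedding. Substituting $\beta$ into the criticality inequality then reduces it to \eqref{eq: mainlocal combined ineq f 2}, \eqref{eq: mainlocal combined ineq F} (upper bound) and \eqref{eq: thm local s ineq 3} respectively. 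For instance, for $f$, equality $\frac{d(\rho_1+1)}{r}=\frac dq+1+s$ yields
\[
2\beta=1+s+\frac dq-\frac{1}{\rho_1+1}\Big(\frac dq+1+s\Big),
\]
and inserting this gives \eqref{eq: mainlocal combined ineq f 2}.

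\emph{Admissibility and expected obstacle.} The remaining inequalities in (Cf1), (CF) and the constraint $s<d-d/q$ encode admissibility. We need $\beta_j\in(1-\frac{1+\kappa}{p},1)$, so that the nonlinearity is not linear-order and the trace space lies below $X_\beta$. We also need $r\in(1,\infty)$, $\xi>1$, the duality embeddings, and the restrictions $\eta\in(1,q)$, $\frac1\eta-\frac1\zeta<\frac12$ in Theorem \ref{thm:MgTmu delta}. I expect this bookkeeping to be the main obstacle. One must check carefully that the stated lower bounds such as $1+\frac{1}{\rho_1+1}\frac dq<s+2\frac{1+\kappa}p+\frac dq$ are exactly $\beta<1$, equivalently that some admissible $\beta$ exists. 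Where $\beta$ would fall below the trace level, the improved theory of \cite{AV-ob} allows a slightly larger $\beta$ with strict subcriticality. The case $F$ with $\rho_2$ and divergence runs parallel to $f$ with one derivative less.

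\emph{Conclusions.} Once the hypotheses are verified, \cite{AV-ob} yields a maximal solution with $\sigma>0$. The regularity in \eqref{it1:localwellposed} comes from the embedding of the maximal regularity space $H^{\theta,p}(w_\kappa;X_{1-\theta})$ for $\theta<1/2$ together with the continuity into the trace space. Localization \eqref{it2:localwellposed} is the standard localization property of the abstract theory, which applies since $G$ and $B$ act pointwise in $\omega$.
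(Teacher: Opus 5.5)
Your route has a genuine gap in the estimates for $f$ and $F$. You estimate $\Phi_1,\Phi_2$ only into $X_0=\HD^{-1-s,q}(\cO)$, with criticality $\frac{1+\kappa}{p}\le\frac{\rho_j+1}{\rho_j}(1-\beta_j)$; this is the case $\alpha_j=0$ of \eqref{eq:subcritical}--\eqref{eq:subcritical2}. Your algebra for $\beta$ is correct: with sharp embeddings the critical relation is exactly (Cf2). But it presupposes that the embedding into $X_0$ can be taken sharp with admissible exponents.
\begin{itemize}
\item For $f$ this needs $\xi=\frac{dq}{q+qs+d}>1$ and $(\rho_1+1)\xi\ge q$, i.e.\ $1+s+\frac{d}{q}<d$ and $1+s\le\rho_1\frac{d}{q}$.
\item For $F$ it needs $s\le\rho_2\frac{d}{q}$.
\end{itemize}
These fail on large parts of the range allowed by the theorem; for $f$ the first one always fails when $d=1$. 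A strictly positive loss then enters the criticality inequality, and (Cf1)--(Cf2), (CF) no longer suffice.

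Concretely, take $d=1$, $\zeta=2$, $F=0$, $\rho_1=2$, $\rho_3\le\frac{10}{7}$, $q=4$, $s=0.7$, $\kappa=0$, $p=\frac{8}{3}$. All hypotheses hold, with $s+2\frac{1+\kappa}{p}+\frac{d}{q}=1.7$. In your scheme $\xi\ge1$ forces $r=3\xi\ge 3$, so $X_\beta\hookrightarrow L^r$ requires at least $-1-s+2\beta\ge0$, i.e.\ $\beta\ge 0.85$. Your criticality condition $\frac{3}{8}\le\frac{3}{2}(1-\beta)$ requires $\beta\le 0.75$. Enlarging $\beta$, which you propose as the fix, makes this worse. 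It is already possible in the old theory and only handles the constraint $\beta>1-\frac{1+\kappa}{p}$.

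The missing idea is the range shift, which is the actual improvement of \cite{AV-ob}. In Theorem \ref{thm:local} one may take $\Phi_j:X_{\beta_j}\to X_{\alpha_j}$ with $\alpha_j\in[0,1-\frac{1+\kappa}{p})$, and the criticality condition gains a $+\alpha_j$. The paper estimates $f(u)$ in $X_{\alpha_1}=\HD^{-1-s+2\alpha_1,q}(\cO)$ via the sharp embedding $L^{\eta_1}\hookrightarrow X_{\alpha_1}$, $\frac{d}{\eta_1}=1+s-2\alpha_1+\frac{d}{q}$. Raising the target by $2\alpha_1$ costs exactly what the $+\alpha_1$ in \eqref{eq:subcritical} gives back, so the critical relation is still (Cf2), independent of $\alpha_1$. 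Meanwhile $\eta_1>1$ and $(\rho_1+1)\eta_1\ge q$ can now be arranged by taking $\alpha_1$ large. Non-emptiness of the resulting window \eqref{eq: range alpha case I} for $\alpha_1$ is precisely (Cf1). In particular, its lower bound expresses the existence of $\alpha_1<\frac{1+s}{2}$ with $\beta_1>1-\frac{1+\kappa}{p}$; it is not the condition $\beta<1$. The same happens for $F$ with \eqref{eq: range alpha 2} and (CF). In the example above, any $\alpha_1\in(0.6,0.625)$ works.

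A secondary point concerns the linear part. A frozen-time $H^\infty$-calculus does not give maximal regularity for coefficients that are merely measurable in $(t,\omega)$. The paper proves deterministic maximal $L^p_\kappa$-regularity on $\HD^{-1-s,q}(\cO)$ in Theorem \ref{thm:LM-interpolation-fractional-dirichlet}: \cite{DK18} for $s=-1$, a Lions--Magenes duality argument for $s=1$, then interpolation. It then combines this with the reference operator $A_0=-\Delta_D$ and transference.
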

The reader is encouraged to check what the conditions become in the special case of Remark \ref{rem:simplifyrho}. This will also be discussed in the critical situations below in Corollary \ref{cor: critical localrho1}.

The explicit value of $\theta^*$ is known; we elaborate on this in Remark \ref{rem: theta star}

\begin{remark}
\label{rem: theta star}
    We remark here that, for a given set of parameters $(s,d,p,q,\kappa,\zeta,\rho_1,\rho_2)$, the value of $\theta^*$ is explicitly known. Indeed, by Theorem \ref{thm:local}, we know that $\theta^*\leq\min\{\tfrac{1}{2},1-\alpha^*_1,1-\alpha^*_2\}$, where $\alpha^*_1$ is the smallest admissible number in \eqref{eq: range alpha case I}, and $\alpha^*_2$ is the smallest admissible number in \eqref{eq: range alpha 2}. Plugging in these lower bounds on $\alpha_1,\alpha_2$ gives that
    \begin{align}
    \label{eq: theta star bound}
        \theta^*\leq \min\Big\{ &\frac{1}{2},\,
        (\rho_1+1)\frac{1+\kappa}{p}+\frac{\rho_1}{2}\Big(s+\frac{d}{q}-1\Big),\,
        \frac{1}{2}\Big(1-s+\frac{\rho_1d}{q}\Big),\,
        \\
        &\frac{1}{2}+\frac{\rho_2}{2}\Big(s+\frac{d}{q}-1\Big)+(\rho_2+1)\frac{1+\kappa}{p},\,1-\frac{s}{2}+\frac{\rho_2 d}{2q}
        \Big\}.
    \end{align}
    Due to the length of the expression, we omitted this information in the statement of Theorem \ref{thm:mainlocal}.
    Following the proof of Theorem \ref{thm:mainlocal} it is also clear that $\theta^*\in (0,1/2]$, due to the range of values of $\alpha_1,\alpha_2$ in \eqref{eq: range alpha case I} and \eqref{eq: range alpha 2}.\newline
    Moreover, the general dependence of $\theta^*$ on $(s,q,p,\rho_1,\rho_2,\rho_3,\kappa)$ and hence, the (positive) degree of smoothness in time of $u$, is not relevant here. It is more useful when dealing with specific applications and hence, more specific parameters $(s,q,p,\rho_1,\rho_2,\rho_3,\kappa)$. Indeed, we will see in Theorem \ref{thm: regularization} that since $\theta^*>0$, the function $u$ will always be in $C^{\theta_1,\theta_2}((0,\sigma)\times\overline{\cO})$ a.s.\ for $\theta_1\in (0,1/2)$ and strictly positive values of $\theta_2$, independent of $\theta^*$.
\end{remark}

\begin{remark}
In the regime of Theorem \ref{thm:mainlocal}, we consider highly irregular initial data. This forces $s$ to be large and thus the solutions to live in spaces of distributions (or spaces with very low fractional regularity). If one takes $s\in (0,1)$ smaller, then the proof of the local well-posedness becomes simpler since more regularity can be used. However, in that situation one can often not reach criticality and thus the natural scaling. In Proposition \ref{prop:mainlocal_bounded} we will actually present another well-posedness result for small $s$ as it will be required later on. 
\end{remark}

Due to parabolic regularization, the solution $u$ of Theorem \ref{thm:mainlocal} instantaneously becomes smoother. In order to state the regularity result also in H\"older sense we introduce the space-time H\"older spaces. For $\theta_1,\theta_2\in (0,1)$, the space $C^{\theta_1,\theta_2}([a,b]\times\overline{\cO})$ denotes the space of functions $u:[a,b]\times\overline{\cO}\to\bR^\ell$ such that there is a constant $C$ such that for all $t,s\in [a,b]$ and $x,y\in \overline{\cO}$
\begin{align}
    |u(t,x)-u(s,y)|\leq C |t-s|^{\theta_1} + C|x-y|^{\theta_2},
    \quad\text{for $s,t\in (a,b)$ and $x,y\in\overline{\cO}$.}
\end{align}
Moreover, we say that $u\in C_{\rm loc}^{\theta_1,\theta_2}((a,b)\times\overline{\cO})$ if for all $a<c<d<b$ it holds that $u|_{[c,d]}$ is in $C^{\theta_1,\theta_2}([c,d]\times\overline{\cO})$. 

\begin{theorem}[Instantaneous regularization]
\label{thm: regularization}
    Suppose that the conditions of Theorem \ref{thm:mainlocal} hold. 
    Let $(u,\sigma)$ be the maximal $(s,q,p,\kappa)$-solution to \eqref{eq:reaction_diffusion_system} with initial condition $u_0\in \BD^{1-s-2\tfrac{1+\kappa}{p}}_{q,p}(\cO)$ almost surely. 
    Then we have almost surely 
    \begin{align}
    \label{eq: reg u result 1}
        u&\in H^{\theta,\bar p}_{\loc}\big((0,\sigma),\HD^{1-\tfrac{d}{2}+\tfrac{d}{\zeta} -2\theta,\bar q}(\cO)\big),\quad\text{for $\overline q\geq 2$, $\overline p>2$ and $\theta\in (0,1/2)$},\\
        \label{eq: reg u result 2}
        u&\in C^{\theta_1,\theta_2}_{\loc}((0,\sigma)\times \overline{\cO}),
        \quad\text{for $\theta_1\in [0,\lambda/2)$ and $\theta_2\in [0,\lambda)$ with $\lambda = 1-\tfrac{d}{2}+\tfrac{d}{\zeta}$,}
    \end{align}
    where the statement \eqref{eq: reg u result 1} also holds for $\theta = 0$ if $\zeta = 2$.
\end{theorem}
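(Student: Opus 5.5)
The plan is a bootstrap that repeatedly improves the parameters $(s,q,p,\kappa)$, combining the instantaneous regularization available in the abstract critical framework of \cite{AV-ob} with the localization property of Theorem \ref{thm:mainlocal}\eqref{it2:localwellposed}. First I use Theorem \ref{thm:mainlocal}\eqref{it1:localwellposed}. For $\theta\in(0,\theta^*)$ and any $0<t_0$, the solution lies in $H^{\theta,p}(t_0,\sigma_n;\HD^{1-s-2\theta,q})$. By the weighted-to-unweighted trick, since $w_\kappa$ is bounded above and below away from $t=0$, it also lies in $C([t_0,\sigma_n];\BD^{1-s-2/p}_{q,p})$ without weight. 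Hence $u(t_0)$ belongs to a trace space with strictly better smoothness than the initial space, namely $\kappa=0$ instead of $\kappa>0$.

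Second, I restart the equation at a time $t_0>0$, which is a stopping time argument: one conditions on $\cF_{t_0}$ and uses truncation $\one_{\{t_0<\sigma\}}$. The restarted problem is taken with new parameters $(s',q',p',\kappa')$ that still satisfy \eqref{eq: mainlocal combined ineq f 1}--\eqref{eq: thm local s ineq 3} and for which $u(t_0)$ lies in the new trace space. This requires Sobolev embeddings $\BD^{1-s-2/p}_{q,p}\hookrightarrow\BD^{1-s'-2(1+\kappa')/p'}_{q',p'}$, i.e.\ a gain in the Sobolev index. The crucial observation is that all constraints in Theorem \ref{thm:mainlocal} are \emph{upper} bounds on $s+2\frac{1+\kappa}{p}+\frac dq$ (criticality), together with mild lower bounds. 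Lowering $s$ towards $\frac d2-\frac d\zeta$ and raising $q,p$ only makes the data subcritical, which is allowed. Uniqueness and localization then identify the new maximal solution with $u$ on $[t_0,\sigma)$, and show that the new blow-up time coincides with $\sigma$. For the latter one needs the analogue of the blow-up criterion, or simply the fact that the new solution is also an $(s,q,p,\kappa)$-solution by embedding, so maximality forces equality. Each step improves $1-s-\frac dq$ by a fixed positive amount, so after finitely many iterations I reach $s$ arbitrarily close to $\frac d2-\frac d\zeta$ (or $s=0$ if $\zeta=2$) with $q,p$ arbitrarily large. Since $t_0>0$ is arbitrary, this yields \eqref{eq: reg u result 1} for all $\bar q\ge2$ and $\bar p>2$. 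It also gives $\theta$ up to $1/2$, because once the parameters are subcritical and $\kappa=0$ one can take $\theta^*=1/2$ via \eqref{eq: theta star bound}: the terms involving $\rho_i$ become large when $d/q\to0$.

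Third, \eqref{eq: reg u result 2} follows from \eqref{eq: reg u result 1} by embeddings. The space $H^{\theta,\bar p}(I;\HD^{\lambda-\varepsilon-2\theta,\bar q})$ embeds into $C^{\theta-1/\bar p}(I;C^{\lambda-\varepsilon-2\theta-d/\bar q}(\overline\cO))$. Choosing $\theta$ close to $\lambda/2$ gives time regularity close to $\lambda/2$, and choosing $\theta$ close to $0$ gives space regularity close to $\lambda$. Here $\bar p,\bar q$ are taken large, and the two statements are combined into a joint $C^{\theta_1,\theta_2}$ estimate.

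The main obstacle is the bookkeeping in the second step. At each iteration one must exhibit admissible parameters satisfying simultaneously the lower bounds in \eqref{eq: mainlocal combined ineq f 1} and \eqref{eq: mainlocal combined ineq F}, the condition $s<d-d/q$, and the trace embedding. I would first increase $p$ (making $\kappa=0$ and the trace index nearly $1-s$), then increase $q$ via Sobolev embedding, and only then decrease $s$, checking that the gain per step is bounded below. A further delicate point is the endpoint $\lambda$, where the noise term limits the smoothness to $1-s$ with $s>\frac d2-\frac d\zeta$. This explains why only $\theta>0$ is reached unless $\zeta=2$.
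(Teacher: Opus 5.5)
\textbf{Gap: the constraints of Theorem \ref{thm:mainlocal} are not only upper bounds.} Your bootstrap restarts the \emph{nonlinear} problem in new settings $(s',q',p',\kappa')$ through Theorem \ref{thm:mainlocal}. It breaks down at the step you call the ``crucial observation''. The left inequalities in \eqref{eq: mainlocal combined ineq f 1} and \eqref{eq: mainlocal combined ineq F} are \emph{lower} bounds:
\[
s+2\tfrac{1+\kappa}{p}+\tfrac{\rho_i}{\rho_i+1}\tfrac{d}{q}>1,\qquad i=1 \text{ if } f\neq 0,\quad i=2 \text{ if } F\neq 0.
\]
They come from $\alpha_1<\frac{1+s}{2}$ (resp.\ $\alpha_2<\frac s2$) combined with $\beta_i>1-\frac{1+\kappa}{p}$.
\begin{itemize}
\item Lowering $s$, raising $q$, and taking $\kappa=0$ with $p$ large all decrease the left-hand side. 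So your target $s\approx\frac d2-\frac d\zeta<1$, $\kappa=0$, $p,q\to\infty$ lies outside the admissible region.
\item For instance, for Allen--Cahn with $d=3$ and $\zeta=4$ one needs $\frac2p+\frac2q>1-s\approx\frac14$.
\item Theorem \ref{thm:mainlocal} is built for rough data. When $f\neq0$ its trace spaces always have Sobolev index $<-\frac{1}{\rho_1+1}\frac dq$.
\end{itemize}

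\textbf{Gap: the nonlinear time regularity does not reach $\theta<1/2$.} Even where the theorem applies, the time regularity it gives is capped by $\theta^*$. The bound \eqref{eq: theta star bound} contains the terms $\frac12\big(1-s+\frac{\rho_1 d}{q}\big)$ and $(\rho_1+1)\frac{1+\kappa}{p}+\frac{\rho_1}{2}\big(s+\frac dq-1\big)$. These do not become large as $d/q\to 0$. The first tends to $\frac{1-s}{2}\approx\frac{\lambda}{2}$, which is $<\frac12$ whenever $\zeta>2$. The second becomes negative for $\kappa=0$ and $p$ large. So this route cannot give \eqref{eq: reg u result 1} for $\theta\in[\lambda/2,1/2)$.

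\textbf{Gap: identifying the blow-up times.} The argument ``the new solution is also an $(s,q,p,\kappa)$-solution, so maximality forces equality'' only yields $\tau\le\sigma$. The reverse inequality needs a separate blow-up-criterion argument.

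\textbf{What the paper does instead.} The missing idea is to \emph{linearize}. On $\{\varepsilon<\sigma\}\times[\varepsilon,\sigma_n]$ the paper treats $f(u)$, $\dv F(u)$ and $g(u)$ as given inhomogeneities, controlled by the regularity already known. It then proceeds in two stages.
\begin{itemize}
\item It solves the linear equation with stochastic maximal $L^r$-regularity in $\HD^{-1-s+2j,m+\delta}(\cO)$, $j=0,1$. This bootstraps integrability $m\to m+\delta$ with $\delta$ independent of $m$, until $u$ is continuous.
\item It then works in $\HD^{-1-\bar s+2j,\bar q}(\cO)$ with $\bar s\downarrow\frac d2-\frac d\zeta$, using Theorem \ref{thm:MgTmu delta} for the noise.
\end{itemize}
Finally it identifies $v=u$ via Lemma \ref{lem:compatibility_solutions}. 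Linear stochastic maximal regularity gives every $\theta\in[0,1/2)$ directly. It imposes none of the nonlinear constraints \eqref{eq: mainlocal combined ineq f 1}--\eqref{eq: thm local s ineq 3}, and no blow-up times need to be compared.

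\textbf{A possible repair of your route.} Use heavy weights with $2\frac{1+\kappa}{p}\approx 1-s$ to stay inside the admissible region until $u(t_0)$ becomes H\"older continuous. Then restart with the H\"older-data theory of Proposition \ref{prop:mainlocal_bounded}, where one may take $\alpha_j=0$ and hence $\theta^*=\frac12$. As written, however, your argument does not close.
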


Due to the restrictions on $\zeta$, one can check that $\lambda=1-\tfrac{d}{2}+\tfrac{d}{\zeta}\in (0,1]$, so that the spatial regularity always lies in $(0,1)$. In particular, if $\zeta>2$, then $\nabla u$ only exists in distributional sense. 

Note that \cite{agresti2023reaction} is basically the case $\zeta=2$, and therefore in Theorem \ref{thm: regularization} we recover the regularity result of \cite{agresti2023reaction}. 

\begin{remark}[On Neumann boundary conditions]
\label{r:neumann}
The results of this section also hold true if the boundary condition $(a^{i,j}\nabla u )\cdot n=0$ on $\partial\cO$ is independent of $t$. For instance, this happens if $a^{i,j}$ are time-independent, or if all the operators in \eqref{eq:reaction_diffusion_system} are constant multiples of the Laplacian. More details can be found in Remark \ref{rem:NeumannMR}.
\end{remark}

\subsection{Identification of the critical spaces of initial data}
\label{ss:critical cases}
Next we present corollaries in which we investigate when  the critical space of initial values can be reached. We do this in two parts.
First we present the case $F \neq 0$ and  $f=0$ in Corollary \ref{cor: critical local rho2}, since this case is somewhat simpler and leads to fewer restrictions on $q$.  Then we present the general case with $f\neq 0$ and $F \neq 0$ in Corollary \ref{cor: critical localrho1}.

\begin{corollary}[critical $F$ and $f = 0$]
    \label{cor: critical local rho2}
   Let Assumptions \ref{ass:reaction_diffusion_global}$(s,q,p,\rho_1,\rho_2,\rho_3)$ and  \ref{ass: zeta f} 
   hold with $\zeta\in [2,\infty)$ and
   \begin{align*}
\rho_1 = \rho, \qquad \rho_2 = \frac{\rho}{2}, \qquad \rho_3 = \frac{\rho}{2}\Big(1-\frac{d}{2} + \frac{d}{\zeta}\Big),
\end{align*}
for $\rho>\frac{4}{2+d}$.  Suppose that $f=0$ and that $q$ satisfies
\begin{align}
\label{eq:rho2condq}
\frac{d}{q}<1-\frac{d}{2}+\frac{d}{\zeta}  + \min\Big\{\frac{2d}{\rho+2},\,
\frac{2}{\rho}
\Big\}.
\end{align}
Let $p\in (2, \infty)$, and $\kappa\in [0,p/2-1)$ be such that
\begin{align}
\label{eq:rho2condpkappa}
      \max\Big\{
      \frac{\rho+2}{\rho}-d
      ,\,\frac{2}{\rho} - \frac{d}{q},\,
      \frac{2}{\rho}-\frac{2d}{\rho+2}
      \Big\} 
      &< 2\frac{1+\kappa}{p}< 1 
      -\frac{d}{2}+\frac{d}{\zeta} + \frac{2}{\rho} -\frac{d}{q},
\end{align}
where, if $\zeta = 2$, instead of the upper bound in \eqref{eq:rho2condpkappa}, $2\tfrac{1+\kappa}{p}\leq \tfrac{\rho+2}{\rho} -\tfrac{d}{q}$ is required.
Then for the critical choice 
\[s =  \frac{\rho+2}{\rho} - \frac{d}{q} - 2\frac{1+\kappa}{p}\] 
the conditions of Theorems \ref{thm:mainlocal} and \ref{thm: regularization}   hold with critical space of initial values $\BD_{q,p}^{\frac{d}{q}-\frac{2}{\rho}}(\cO)$.  
\end{corollary}

\begin{remark} \
\begin{enumerate}[\rm (i)]
\item Note that $\rho>\frac{4}{2+d}$ is a necessary condition for $s<1$ to be true for some $(p,\kappa,q)$ in the admissible range.  Indeed, this follows from $1>s =  \frac{\rho+2}{\rho} - \frac{d}{q} - 2\frac{1+\kappa}{p}>1+\frac{2}{\rho} - \frac{d}{2} - 1 = \frac{2}{\rho} - \frac{d}{2}$, which implies the lower bound on $\rho$.
\item We remark also (this is shown in the proof of Corollary \ref{cor: critical local rho2}) that under condition \eqref{eq:rho2condq} there always exist admissible $(p,\kappa)$ such that $\tfrac{1+\kappa}{p}\in (0,1/2)$ satisfies \eqref{eq:rho2condpkappa}. An exception is $d=1$, where one additionally needs $\rho>2$. 
\end{enumerate}
\end{remark}

Next we consider the case where $f$ is critical, and possibly $F$ is critical as well.
\begin{corollary}[critical $f$ and $F$]
    \label{cor: critical localrho1}
   Let Assumptions \ref{ass:reaction_diffusion_global}$(s,q,p,\rho_1,\rho_2,\rho_3)$ and  \ref{ass: zeta f} 
   hold with $\zeta\in [2,\infty)$ and 
\begin{align*}
\rho_1 = \rho, \qquad \rho_2 = \frac{\rho}{2}, \qquad \rho_3 = \frac{\rho}{2}\Big(1-\frac{d}{2} + \frac{d}{\zeta}\Big),
\end{align*}
for $\rho>\max\{\frac{2}{d},\frac{4}{2+d}\}$.
   Suppose that $q$ satisfies
\begin{align}
\label{eq:rho1condq}
\frac{2}{\rho(\rho+1)}
&<\frac{d}{q}<1 -\frac{d}{2}+\frac{d}{\zeta} +\min\Big\{\frac{d}{\rho+1},\,
\frac{2}{\rho}
\Big\},
\end{align}
Let $p\in (2, \infty)$, and $\kappa\in [0,p/2-1)$ be such that
\begin{align}\label{eq:rho1condpkappa}
      \max\Big\{ 
      \frac{\rho+2}{\rho}-d,\,\frac{2}{\rho} - \frac{d}{q},\,
      \frac{2}{\rho} - \frac{d}{\rho+1}
      \Big\} 
      &< 2\frac{1+\kappa}{p}< 1 -\frac{d}{2}+\frac{d}{\zeta} + \frac{2}{\rho}-\frac{d}{q},
\end{align}
where, if $\zeta = 2$, we may suppose $2\tfrac{1+\kappa}{p}\leq \tfrac{\rho+2}{\rho}-\tfrac{d}{q}$ in place of the upper bound in \eqref{eq:rho1condpkappa}.
Then, for the critical choice 
\begin{align}
s =  \frac{\rho+2}{\rho} - \frac{d}{q} - 2\frac{1+\kappa}{p},
\end{align} 
the conditions of Theorem \ref{thm:mainlocal} and \ref{thm: regularization} hold with critical space of initial values $\BD_{q,p}^{\frac{d}{q}-\frac{2}{\rho}}(\cO)$.  
\end{corollary}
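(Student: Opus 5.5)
Since Theorem \ref{thm: regularization} only requires the hypotheses of Theorem \ref{thm:mainlocal}, the whole task is to check that the parameters $(s,q,p,\kappa)$ with $s=\frac{\rho+2}{\rho}-\frac dq-2\frac{1+\kappa}{p}$ satisfy every hypothesis of Theorem \ref{thm:mainlocal}. I will use throughout the abbreviations
\[
\lambda:=1-\tfrac d2+\tfrac d\zeta\in(0,1],\qquad X:=s+2\tfrac{1+\kappa}{p}+\tfrac dq=\tfrac{\rho+2}{\rho}=1+\tfrac2\rho .
\]
The identification of the initial data space is immediate: $1-s-2\frac{1+\kappa}{p}=\frac dq-\frac2\rho$, so the initial data space of Theorem \ref{thm:mainlocal} is $\BD^{\frac dq-\frac2\rho}_{q,p}(\cO)$. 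This is critical in the sense of \eqref{eq:critical_sobolev_index}.

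\textbf{Step 1: the critical inequalities.}
\begin{itemize}
\item \eqref{eq: mainlocal combined ineq f 2} holds with equality, since $X=\frac{\rho_1+2}{\rho_1}$.
\item The upper bound in \eqref{eq: mainlocal combined ineq F} holds with equality, since $\frac{\rho_2+1}{\rho_2}=\frac{\rho/2+1}{\rho/2}=X$.
\item \eqref{eq: thm local s ineq 3} holds with equality, since $1+\lambda/\rho_3=1+\frac2\rho=X$.
\end{itemize}

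\textbf{Step 2: the remaining $X$-inequalities.} These reduce to conditions on $q$ alone.
\begin{itemize}
\item The left inequality in (Cf1) reads $1+\frac{d}{(\rho+1)q}<1+\frac2\rho$, i.e.\ $\frac dq<\frac{2(\rho+1)}{\rho}$. This follows from the upper bound in \eqref{eq:rho1condq}, because $\lambda\le 1$ and $\min\{\cdot\}\le\frac2\rho$.
\item The left inequality in (CF) is $\frac dq<\frac{\rho+2}{\rho}$. Here I use $\frac{d}{\rho+1}<\frac{d}{\rho/2+1}$ together with the upper bound $\frac dq<\lambda+\frac{d}{\rho+1}$. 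If this is not sharp enough, I will combine it with the $s$-conditions below. I expect this step to need some care.
\item The upper bound in (Cf1) has two parts. First, $X<d+1$ is equivalent to $\rho>\frac2d$, which is assumed. Second, $X<1+\frac{d(\rho+1)}{q}$ is equivalent to $\frac2\rho<\frac{d(\rho+1)}{q}$, which is exactly the lower bound $\frac{2}{\rho(\rho+1)}<\frac dq$ in \eqref{eq:rho1condq}.
\end{itemize}

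\textbf{Step 3: conditions on $s$.} Substituting $s=1+\frac2\rho-\frac dq-t$ with $t:=2\frac{1+\kappa}{p}$, each condition becomes one of the bounds in \eqref{eq:rho1condpkappa}.
\begin{itemize}
\item $s>\frac d2-\frac d\zeta$ is equivalent to $t<\lambda+\frac2\rho-\frac dq$, the upper bound in \eqref{eq:rho1condpkappa}. For $\zeta=2$ we need only $s\ge0$, which gives the alternative upper bound stated in the corollary.
\item $s<1$ is equivalent to $t>\frac2\rho-\frac dq$.
\item $s<d-\frac dq$ is equivalent to $t>\frac{\rho+2}{\rho}-d$.
\item $s+\frac dq-1<\frac{d}{\rho+1}$ is equivalent to $t>\frac2\rho-\frac d{\rho+1}$.
\item The analogous condition with $\rho_2$ is implied by the previous one, since $\frac{d}{\rho+1}<\frac{d}{\rho/2+1}$.
\end{itemize}
Thus \eqref{eq:rho1condpkappa} is precisely the list of these constraints.

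\textbf{Step 4: existence of admissible $(p,\kappa)$.} This is the main obstacle. One must show the interval in \eqref{eq:rho1condpkappa}, intersected with $(0,1)$, is nonempty, so that $p>2$ and $\kappa\in[0,p/2-1)$ can be chosen with $t\in(0,1)$. Each of the three lower bounds must be strictly below the upper bound $\lambda+\frac2\rho-\frac dq$:
\begin{itemize}
\item $\frac{\rho+2}{\rho}-d<\lambda+\frac2\rho-\frac dq$ is equivalent to $\frac dq<\lambda-1+d$, which holds because $\frac{d}{\rho+1}+\frac2\rho\dots$ — more simply, I will check it directly using $\frac dq<\lambda+\frac2\rho$ and $\rho>\frac2d$.
\item $\frac2\rho-\frac dq<\lambda+\frac2\rho-\frac dq$ is equivalent to $\lambda>0$, which holds.
\item $\frac2\rho-\frac d{\rho+1}<\lambda+\frac2\rho-\frac dq$ is equivalent to $\frac dq<\lambda+\frac{d}{\rho+1}$, which is part of \eqref{eq:rho1condq}.
\end{itemize}
Finally, the interval must meet $(0,1)$. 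Its lower end is below $1$ because of the conditions $\rho>\frac{4}{2+d}$ and $\frac dq>\frac{2}{\rho(\rho+1)}$. The edge case $d=1$ may need an additional inequality here, which I will verify separately. With such a $t$ fixed, any $p>2/t$ and $\kappa=\frac{tp}{2}-1\in[0,\frac p2-1)$ works.
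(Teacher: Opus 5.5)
Your Steps 1--3 follow the paper's argument and are correct. You substitute the critical $s$, note that (Cf2), the upper bound in (CF) and (Cg) become equalities, and translate every remaining hypothesis of Theorem~\ref{thm:mainlocal} into one of the bounds in \eqref{eq:rho1condq} and \eqref{eq:rho1condpkappa}. You are even slightly more complete than the paper, which does not spell out (CF).

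The step you flag as delicate is immediate. The left inequality of (CF), $\frac dq<\frac{\rho+2}{\rho}$, follows from the $\frac2\rho$-entry of the minimum in \eqref{eq:rho1condq}, exactly as in your preceding bullet: $\frac dq<\lambda+\frac2\rho\le 1+\frac2\rho$. Going through $\frac{d}{\rho+1}$ instead does not work in general, since $\lambda+\frac{d}{\rho+1}$ may exceed $1+\frac2\rho$ when $d$ is large. The comparison $\frac{d}{\rho+1}<\frac{2d}{\rho+2}$ handles the second condition in (CF), as you do in Step~3.

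Step~4 is not needed. The pair $(p,\kappa)$ satisfying \eqref{eq:rho1condpkappa} is a hypothesis of the corollary, so your proof is complete after Step~3. As written, Step~4 would also not go through:
\begin{enumerate}
\item In its first bullet, the required inequality $\frac dq<\frac d2+\frac d\zeta$ follows from $q\ge 2$. Your route, $\frac dq<\lambda+\frac2\rho$ together with $\rho>\frac2d$, only yields $\frac dq<\lambda+d$, which is too weak.
\item You would also need the upper end $\lambda+\frac2\rho-\frac dq$ to be positive. This is the $\frac2\rho$-part of \eqref{eq:rho1condq}, but you never check it.
\item More seriously, the claim that the lower end lies below $1$ is false in general. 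The condition $\frac2\rho-\frac dq<1$ means $\frac dq>\frac2\rho-1$. Since $\frac{2}{\rho(\rho+1)}-(\frac2\rho-1)=\frac{\rho-1}{\rho+1}$, the lower bound in \eqref{eq:rho1condq} implies it only when $\rho\ge1$, and for $d\ge3$ values $\rho<1$ are allowed. For instance, $d=3$, $\zeta=2$, $\rho=\frac{9}{10}$, $q=\frac52$ satisfy every hypothesis on $(\rho,q)$, yet $\frac2\rho-\frac dq=\frac{46}{45}>1$. So no $2\frac{1+\kappa}{p}\in(0,1)$ is admissible; equivalently, $s<1$ cannot hold.
\end{enumerate}
The paper makes the same existence claim and leaves it to the reader; it actually requires the extra condition $\frac dq>\frac2\rho-1$. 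This affects only that side remark, not the corollary, which in such cases is simply vacuous.
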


\begin{remark}
    Once again we refer to Remark \ref{rem: theta star} regarding the precise value of $\theta^*$.
\end{remark}

\begin{remark}
\label{rem:rho1}\
\begin{enumerate}[\rm (i)]
\item Note that $\rho>\frac{4}{2+d}$ is a necessary condition for $s<1$ to be true for some $(p,\kappa,q)$ in the admissible range. Indeed, this follows from $1>s=\frac{\rho+2}{\rho} -\frac{d}{q} - 2\frac{1+\kappa}{p}>1+\frac{2}{\rho} -\frac{d}{2} -1 = \frac{2}{\rho} -\frac{d}{2}$, which implies the necessary condition on $\rho$. 
The condition $\rho>\frac{2}{d}$ stems from the first term of the minimum in \eqref{eq: mainlocal combined ineq f 1}, which we outline in the proof of Corollary \ref{cor: critical localrho1}.
\item We remark also that under condition \eqref{eq:rho1condq} there always exist admissible $(p,\kappa)$ such that $\tfrac{1+\kappa}{p}\in (0,1/2)$ satisfies \eqref{eq:rho1condpkappa}.
\item We remark also that for the Allen Cahn equation with $\rho = 2$, we can consider $d\geq 2$, even in the critical case (see Example \ref{ex:AllenCahn2}), whereas until now (for $p>2$), only $d\geq 3$ was possible in the critical case (compare to \cite{AV19_QSEE_1}). Let us also mention that for $p=q=2$, $s=0$ and $d=2$ the critical case was obtained in \cite{BGV}.
\end{enumerate}
\end{remark}

\begin{remark}
The special case of trace-class noise of Theorem \ref{thm:mainlocal} already appeared in \cite{agresti2023reaction}. Moreover, in the latter paper an additional transport noise term was considered. The techniques of our current manuscript can be used to extend the results of \cite{agresti2023reaction}.  This is due to the fact that instead of \cite{AV19_QSEE_1,AV19_QSEE_2} we can apply its more flexible version as presented in Subsection \ref{sec:loc-well-posed} where there is the additional range shift index $\alpha$. The details will be proved in \cite{AV-ob}. 

Many more critical spaces were identified in Corollaries \ref{cor: critical local rho2} and \ref{cor: critical localrho1}. In particular, no lower bound on $\frac{d}{q}$ appears in Corollary \ref{cor: critical local rho2}. Moreover, the Fujita type condition on $\rho$ appearing in both corollaries is less restrictive than the one in \cite[Section 2.2]{agresti2023reaction}, where we note that $h = \rho+1$. Finally, unlike in \cite{agresti2023reaction} we can now even  identify several critical spaces for $d=1$. 
\end{remark}

\subsection{Blow up criteria}
Theorems \ref{thm:mainlocal} and \ref{thm: regularization} and their consequences discussed in Subsection \ref{ss:critical cases} yield a general framework to obtain maximal $(s,q,p,\a)$-solution $(u,\sigma)$ to the system of SPDEs \eqref{eq:reaction_diffusion_system}, with a.s.\ positive lifetime $\sigma$. In many situations, and especially in applications to physical problems, one is interested in understanding whether $\sigma=\infty$ a.s., that is, whether $u$ is \emph{global} in time. The latter question is very delicate and depends on the fine structure of the SPDE under consideration see e.g.\ \cite{S21_dissipative,S21_superlinear_no_sign,C03,agresti2024reaction,AVsurvey} and the references therein. Moreover, there are many open problems on global well-posedness of reaction-diffusion equations even in the deterministic case \cite{P10_survey, FMT20}.
However, it is possible to give sufficient conditions that, together with suitable a-priori (energy) bounds on the solutions, provide $\sigma=\infty$ a.s.\ These are typically called \emph{blow-up criteria}. For the maximal $(s,q,p,\a)$-solution provided by Theorem \ref{thm:mainlocal}, we can prove the following blow-up criteria.

\begin{theorem}[Blow-up criteria]
\label{thm: blow up}
Let the assumptions of Theorem \ref{thm:mainlocal} be satisfied with parameters $(s,q,p,\kappa,\zeta,\rho_1,\rho_2,\rho_3)$, and let $(u,\sigma)$ be the maximal $(s,q,p,\kappa)$-solution to \eqref{eq:reaction_diffusion_system} provided by Theorem \ref{thm:mainlocal}.
Let  
$(s_0,q_0,p_0,\kappa_0,\zeta,\rho_{0},\rho_{0}/2,\rho_{0,3})$ be another set of parameters satisfying the conditions of Corollary \ref{cor: critical local rho2} or Corollary \ref{cor: critical localrho1} where   $\rho_{0,3}=\frac{\rho_{0}}{2}(1-\frac{d}{2}+\frac{d}{\zeta})$.
Then the following assertion holds for all $r>0$:
\begin{enumerate}[{\rm(1)}]
    \item\label{it:eq: blow up 1} For $\beta_0:=\tfrac{d}{q_0}-\tfrac{2}{\rho_0}$ and any $q_1>q_0$,
\begin{align*}
    \P\Big(r<\sigma<\infty,\, \sup_{t\in [r,\sigma)}\|u(t)\|_{B^{\beta_0}_{q_1,\infty}(\cO)}<\infty \Big) = 0.
\end{align*}
\item\label{it:eq: blow up 2} For $\nu_0:=\tfrac{d}{q_0}-\tfrac{2}{\rho_0}+\tfrac{2}{p_0}$, 
\begin{align*}
    \P\Big(r<\sigma<\infty,\, \sup_{t\in [r,\sigma)}\|u(t)\|_{B^{\beta_0}_{q_0,p_0}(\cO)} + \|u\|_{L^{p_0}(r,\sigma;H^{\nu_0,q_0}(\cO))}<\infty \Big) = 0.
\end{align*}
\end{enumerate}
\end{theorem}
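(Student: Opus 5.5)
We argue by combining instantaneous regularization with the abstract blow-up criteria of the critical framework (as in \cite{AV19_QSEE_2, AV-ob}), applied after restarting at time $r$. First, by Theorem \ref{thm: regularization}, for any $r>0$ the solution satisfies a.s.\ on $\{\sigma>r\}$ that $u(r)\in C^{\theta_2}(\overline{\cO})$ with $\theta_2>0$. Hence $u(r)\in \BD^{\beta_0}_{q_0,p_0}(\cO)$, since $\beta_0=\frac{d}{q_0}-\frac{2}{\rho_0}<1$ and $u(r)$ is bounded, continuous and vanishes on $\partial\cO$. It also lies in every Besov space of small positive smoothness. We then consider the problem restarted at time $r$ with $\cF_r$-measurable initial datum $u(r)$, in the critical setting $(s_0,q_0,p_0,\kappa_0)$ of Corollary \ref{cor: critical local rho2} or \ref{cor: critical localrho1}. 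The key compatibility step is to show that the maximal $(s_0,q_0,p_0,\kappa_0)$-solution $(v,\tau)$ started at $r$ coincides with $u$ on $[r,\sigma)$ and that $\tau=\sigma$ a.s.\ on $\{\sigma>r\}$. This follows from uniqueness in the larger, rougher class together with the regularization result applied to $v$: by Theorem \ref{thm: regularization}, both $u$ and $v$ are locally H\"older and hence solutions in both settings. A standard maximality argument, using the localization property Theorem \ref{thm:mainlocal}\eqref{it2:localwellposed}, then gives equality of the lifetimes.

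Second, we invoke the abstract blow-up criteria for critical stochastic evolution equations in the spaces $X_0=\HD^{-1-s_0,q_0}$, $X_1=\HD^{1-s_0,q_0}$. These state that, on $\{\tau<\infty\}$, the limit $\lim_{t\uparrow\tau}v(t)$ does not exist in $X_{1-\frac{1+\kappa_0}{p_0},p_0}$. They also state that if $\kappa_0=0$ and the nonlinearity is critical, then
$$\sup_{t}\|v(t)\|_{X_{1-\frac{1}{p_0},p_0}}+\|v\|_{L^{p_0}(r,\tau;X_{1-\frac{s_0}{\cdot}})}$$
cannot be finite, with the $L^{p_0}$ norm taken in the appropriate intermediate space. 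For \eqref{it:eq: blow up 2}, we identify these spaces. The critical trace space is $\BD^{\beta_0}_{q_0,p_0}$, because $1-s_0-2\frac{1+\kappa_0}{p_0}=\beta_0$ by the choice of $s_0$. The space $H^{\nu_0,q_0}$ corresponds to $X_{1-\frac{\kappa_0}{p_0}}$ with unweighted $p_0$, since $1-s_0-\frac{2\kappa_0}{p_0}=\beta_0+\frac{2}{p_0}$. To remove the weight we restart at $r$ with $\kappa_0=0$, which is permitted because $u(r)$ is smooth. We then use the embeddings $\BD\hookrightarrow B$ and $H\hookrightarrow$ on the positive-smoothness range to pass between Dirichlet and non-Dirichlet norms; for negative $\beta_0$ the duality definition handles this.

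For \eqref{it:eq: blow up 1}, the key step is a perturbation. Given $q_1>q_0$, we choose new parameters $(\tilde q,\tilde p,\tilde\kappa)$ with $q_0<\tilde q<q_1$ and $\tilde p$ large, still satisfying the (open) conditions of the relevant corollary; this is possible by strictness of \eqref{eq:rho1condq}--\eqref{eq:rho1condpkappa}. We then use
$$B^{\beta_0}_{q_1,\infty}\hookrightarrow \BD^{\tilde\beta-\varepsilon'}_{\tilde q,\tilde p}$$
with $\tilde\beta=\frac{d}{\tilde q}-\frac{2}{\rho_0}>\beta_0$ on bounded $\cO$. This embedding needs care: it uses $\tilde\beta<\beta_0+d(\frac1{\tilde q}-\frac1{q_1})$, so boundedness in the critical space of larger integrability controls a subcritical-in-integrability, non-critical trace norm. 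The corresponding abstract criterion then gives that boundedness in a space strictly better than critical forces $\tau=\infty$. In practice, we apply the subcritical criterion with initial space $X_{1-\frac{1+\kappa}{p},p}$ compactly contained in the data space.

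The main obstacle is this last step together with the identification $\tau=\sigma$. The critical scale must be shifted so that a $\sup$-bound in $B^{\beta_0}_{q_1,\infty}$ is strictly subcritical for some admissible parameter set. Doing so requires checking that small perturbations of $(q_0,p_0,\kappa_0)$ keep all of (Cf1), (Cf2), (CF) and (Cg) valid, and that the Dirichlet/non-Dirichlet spaces match for negative smoothness.
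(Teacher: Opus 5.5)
Your architecture matches the paper's: restart at $r$ using Theorem \ref{thm: regularization}, apply Theorems \ref{thm: blow up subcritical} and \ref{thm: blow up critical 2} to the restarted solution, and identify it with $u$ via Proposition \ref{prop: compatibility}. However, the perturbation step for part (1) is set up in the wrong direction and fails as written. Since $\tilde q>q_0$, you have $\tilde\beta=\frac{d}{\tilde q}-\frac{2}{\rho_0}<\beta_0$, not $>\beta_0$. Your target $\BD^{\tilde\beta-\varepsilon'}_{\tilde q,\tilde p}(\cO)$ therefore lies \emph{below} the critical smoothness at integrability $\tilde q$. Any $(s,\tilde q,\tilde p,\tilde\kappa)$ with this trace space satisfies $s+2\frac{1+\tilde\kappa}{\tilde p}+\frac{d}{\tilde q}=\frac{\rho_0+2}{\rho_0}+\varepsilon'$. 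This violates \eqref{eq: thm local s ineq 3}, and also \eqref{eq: mainlocal combined ineq f 2} and \eqref{eq: mainlocal combined ineq F} whenever $f$, $F$ are present, since all of them have right-hand side $\frac{\rho_0+2}{\rho_0}$ for the exponents $(\rho_0,\rho_0/2,\rho_{0,3})$. So in that space there is no local theory, let alone a subcritical blow-up criterion.

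Your justification $\tilde\beta<\beta_0+d(\frac{1}{\tilde q}-\frac{1}{q_1})$ uses the Sobolev embedding backwards: lowering integrability on a bounded domain gains no smoothness. Likewise, the space carrying the a priori bound must embed \emph{into} the trace space, not the other way round. The missing observation is that $B^{\beta_0}_{q_1,\infty}$ is itself strictly subcritical, since $\beta_0=\frac{d}{q_0}-\frac{2}{\rho_0}>\frac{d}{q_1}-\frac{2}{\rho_0}$. The paper keeps $s_0,p_0$ and takes $q_1>q_0$ close to $q_0$ and $\kappa_1>\kappa_0$ close to $\kappa_0$, with $2\frac{1+\kappa_1}{p_0}<\frac{\rho_0+2}{\rho_0}-\frac{d}{q_1}-s_0$. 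Then:
\begin{itemize}
\item $\beta_1:=1-s_0-2\frac{1+\kappa_1}{p_0}$ lies in $(\frac{d}{q_1}-\frac{2}{\rho_0},\beta_0)$;
\item the setting $(s_0,q_1,p_0,\kappa_1)$ is subcritical and still satisfies Theorem \ref{thm:mainlocal}, because the remaining conditions are open;
\item $\BD^{\beta_0}_{q_1,\infty}(\cO)\hookrightarrow\BD^{\beta_1}_{q_1,p_0}(\cO)$, so Theorem \ref{thm: blow up subcritical} applies.
\end{itemize}
Arbitrary larger $q_1$ then follow from boundedness of $\cO$.

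For part (2), your identifications of the trace space $\BD^{\beta_0}_{q_0,p_0}(\cO)$ and of $X_{1-\kappa_0/p_0}=\HD^{\nu_0,q_0}(\cO)$ are exactly the paper's. The step ``restart with $\kappa_0=0$ to remove the weight'', however, is unnecessary and, as stated, wrong. Theorem \ref{thm: blow up critical 2} already involves the \emph{unweighted} norm of $L^{p}(r,\sigma;X_{1-\kappa/p})$ for every admissible $\kappa$, so it applies directly to the $(s_0,q_0,p_0,\kappa_0)$-solution restarted at $r$. Setting $\kappa=0$ while keeping $s_0$ makes the setting subcritical. The norms then become $\BD^{\beta_0+2\kappa_0/p_0}_{q_0,p_0}$ and $\HD^{\nu_0+2\kappa_0/p_0,q_0}$, which your hypothesis does not control. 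Restoring criticality through $s=s_0+2\kappa_0/p_0$ may break $s<1$ or the lower bounds on $2\frac{1+\kappa}{p}$ in \eqref{eq:rho2condpkappa} and \eqref{eq:rho1condpkappa}. Smoothness of $u(r)$ does not help with either issue.

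Two smaller points. First, $u(r)\in\BD^{\beta_0}_{q_0,p_0}(\cO)$ requires $\beta_0<1-s_0<1-\frac{d}{2}+\frac{d}{\zeta}$, so that Theorem \ref{thm: regularization} provides a H\"older exponent above $\beta_0$; $\beta_0<1$ and continuity are not enough. Second, equality of lifetimes does not follow from the localization property alone. It is the content of Proposition \ref{prop: compatibility}, whose proof uses the abstract blow-up criteria.
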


In light of the scaling argument in Subsection \ref{ss:scaling_intro}, the blow-up criterion in \eqref{it:eq: blow up 2} is sharp since the space-time Sobolev index of the space $L^{p_0}(r,\sigma;H^{\nu_0,q_0}(\cO))$ is given by 
$$
-\frac{2}{p_0}+\nu_0-\frac{d}{q_0}=-\frac{2}{\rho_0},
$$
which coincides with the \emph{critical} Sobolev index, see \eqref{eq:critical_sobolev_index}. 
With a similar argument one can check that \eqref{it:eq: blow up 1} is subcritical. 

\smallskip

In many situations (see e.g.\ \cite{agresti2024reaction,milesis2026global}), it is possible to obtain $L^r$-estimates for solutions to the system of SPDEs \eqref{eq:reaction_diffusion_system}. In case of Lebesgue spaces, the above result immediately yields the following.

\begin{corollary}[Blow-up criteria in Lebesgue spaces]
\label{cor: blow up}
Let the assumptions of Theorem \ref{thm:mainlocal} be satisfied with parameters $(s,q,p,\kappa,\zeta,\rho_1,\rho_2,\rho_3)$, and let $(u,\sigma)$ be the maximal $(s,q,p,\kappa)$-solution to \eqref{eq:reaction_diffusion_system} provided by Theorem \ref{thm:mainlocal}.
Let  
$(s_0,q_0,p_0,\kappa_0,\zeta,\rho_{0},\rho_{0}/2,\rho_{0,3})$ be another set of parameters satisfying the conditions  of Corollary \ref{cor: critical local rho2} or Corollary \ref{cor: critical localrho1} where  $\rho_{0,3}=\frac{\rho_{0}}{2}(1-\frac{d}{2}+\frac{d}{\zeta})$.
    Let $m_0:=\tfrac{d\rho_0}{2}$. Then the following assertions hold for all $r>0$:
    \begin{enumerate}[{\rm(1)}]
        \item\label{it:cor_blow_up1} If $m_0 = q_0$, then for any $m>q_0$,
    \begin{align}
        \P\Big( r<\sigma<\infty,\, \sup_{t\in [r,\sigma)}\|u(t)\|_{L^m(\cO)}<\infty \Big)=0.
    \end{align}
    \item\label{it:cor_blow_up2} If $q_0>m_0$, $\max\{q_0,\rho_0\}<p_0<\infty$ and $\tfrac{2}{p_0}+\tfrac{d}{q_0} = \tfrac{2}{\rho_0}$, then
    \begin{align}
        \P\Big( r<\sigma<\infty,\,\sup_{t\in [r,\sigma)}\|u(t)\|_{L^{m_0}(\cO)} &+ \|u\|_{L^{p_0}(r,\sigma;L^{q_0}(\cO))}<\infty\Big) = 0.
    \end{align}
    \end{enumerate}
\end{corollary}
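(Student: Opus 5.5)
The plan is to reduce both assertions of the corollary to Theorem \ref{thm: blow up} via Sobolev embeddings of Lebesgue spaces into the Besov and Bessel potential spaces that appear there. Throughout, the parameters $(s_0,q_0,p_0,\kappa_0,\rho_0)$ are those of Corollary \ref{cor: critical local rho2} or \ref{cor: critical localrho1}, so Theorem \ref{thm: blow up} applies with $\beta_0=\frac{d}{q_0}-\frac{2}{\rho_0}$ and $\nu_0=\beta_0+\frac{2}{p_0}$. It then suffices to show that, on the events considered in the corollary, the corresponding norms in Theorem \ref{thm: blow up} are finite a.s. The probability of the smaller event then vanishes by monotonicity.

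For \eqref{it:cor_blow_up1}: if $m_0=q_0$ then $\beta_0=\frac{d}{q_0}-\frac{2}{\rho_0}=0$. For $m>q_0$ we use the embedding $L^m(\cO)\hookrightarrow L^{q_1}(\cO)\hookrightarrow B^0_{q_1,\infty}(\cO)$ with $q_1:=m>q_0$; the second embedding holds since $L^{q_1}=H^{0,q_1}\hookrightarrow B^0_{q_1,\infty}$ on bounded domains. Hence $\sup_{t\in[r,\sigma)}\|u(t)\|_{L^m}<\infty$ implies finiteness of the $B^{\beta_0}_{q_1,\infty}$ supremum, and assertion \eqref{it:eq: blow up 1} of Theorem \ref{thm: blow up} gives the claim.

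For \eqref{it:cor_blow_up2}: the scaling relation $\frac{2}{p_0}+\frac{d}{q_0}=\frac{2}{\rho_0}$ gives $\nu_0=0$ and $\beta_0=-\frac{2}{p_0}<0$. The $L^{p_0}(r,\sigma;H^{0,q_0})=L^{p_0}(r,\sigma;L^{q_0})$ part is then immediate. For the supremum part we need $L^{m_0}(\cO)\hookrightarrow B^{-2/p_0}_{q_0,p_0}(\cO)$. Since $q_0>m_0$ and $-\frac{d}{m_0}=-\frac{2}{\rho_0}=-\frac{2}{p_0}-\frac{d}{q_0}$, the Sobolev indices match, so this is a Jawerth–Franke type embedding $H^{0,m_0}\hookrightarrow B^{-2/p_0}_{q_0,p}$. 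That embedding holds for $p\ge m_0$, and in particular for $p=p_0$, which is why the assumption $p_0>q_0>m_0$ is imposed. The condition $p_0>\rho_0$ is presumably what makes the parameters admissible in the corollaries, and we check it against \eqref{eq:rho2condpkappa} or \eqref{eq:rho1condpkappa}.

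The main obstacle is the Dirichlet setting. Theorem \ref{thm: blow up} is stated with the usual Besov spaces $B$, but the embeddings must hold on the bounded domain $\cO$ and at negative smoothness. I would first derive them on $\R^d$ (Jawerth–Franke, or elementary Sobolev plus $B^0_{q,2}\hookrightarrow B^0_{q,p}$ if needed), then transfer them to $\cO$ via extension operators or restriction/duality. If $B$ in Theorem \ref{thm: blow up} is meant as the Dirichlet scale $\BD$, I would instead dualize: $\HD^{2/p_0,q_0'}\hookrightarrow L^{m_0'}$ combined with real-interpolation reiteration. Finally, I would check that the parameters in \eqref{it:cor_blow_up2} are consistent with the corollaries' admissibility, which holds by hypothesis.
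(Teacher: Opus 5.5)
Your proposal is correct and is essentially the paper's argument: both items are reduced to Theorem \ref{thm: blow up}, item (1) via $\beta_0=0$ and $L^m(\cO)\hookrightarrow L^{q_1}(\cO)\hookrightarrow B^0_{q_1,\infty}(\cO)$, and item (2) via $\nu_0=0$ together with the embedding $L^{m_0}(\cO)\hookrightarrow B^{\beta_0}_{q_0,p_0}(\cO)$. The differences are minor. The paper reaches $B^{\beta_0}_{q_0,p_0}$ through $\BD^{\beta_0}_{q_0,q_0}(\cO)$ using $p_0\ge q_0$, whereas you invoke Jawerth's embedding, which only needs $p_0\ge m_0$. Also, $p_0>\rho_0$ is automatic from $\frac{2}{p_0}+\frac{d}{q_0}=\frac{2}{\rho_0}$, so nothing has to be checked there.
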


We remark that $\tfrac{2}{p_0}+\tfrac{d}{q_0} = \tfrac{2}{\rho_0}$ implies that $p_0>\rho_0$.

The comments on items \eqref{it:eq: blow up 1} and \eqref{it:eq: blow up 2} of Theorem \ref{thm: blow up} readily extend to Corollary \ref{cor: blow up} \eqref{it:cor_blow_up1}-\eqref{it:cor_blow_up2}. In particular, \eqref{it:cor_blow_up1} and \eqref{it:cor_blow_up2} are sub-optimal and optimal from a scaling point of view
(see Subsection \ref{ss:scaling_intro}), respectively.

\smallskip

The proofs of Theorem \ref{thm: blow up} and Corollary \ref{cor: blow up} are postponed to Subsection \ref{ss:proof_blow_up_criteria}.

\subsection{Special case: space-time white noise in 1D}
\label{ss: additional wellposedness}
Next we present a version of the theory in Subsection \ref{ss:locreg} for the case $\zeta=\infty$ for $d=1$ which corresponds to space-time white noise. 

\begin{theorem}[$\zeta = \infty$ and $d=1$]
\label{thm:mainlocal zeta infty}
     Let Assumption \ref{ass:reaction_diffusion_global}$(s,q,p,\rho_1,\rho_2,\rho_3)$
    hold with $q>2$, let $\kappa\in [0,\tfrac{p}{2}-1)$
    and consider \eqref{eq:reaction_diffusion_system} for $u_0\in \BD_{q,p}^{1-s-2\tfrac{1+\kappa}{p}}(\cO)$ almost surely. Suppose that 
    \[\frac{1}{2}<s<1-\frac{1}{q}\]
    \begin{enumerate}[\rm (1)]
    \item\label{it1:mainlocal zeta infty} If $f\neq 0$ let \eqref{eq: mainlocal combined ineq f 1} and \eqref{eq: mainlocal combined ineq f 2} hold, and if $F\neq 0$ let \eqref{eq: mainlocal combined ineq F} hold. Let Assumption \ref{ass: zeta f} hold with $\zeta = \infty$
    and for $2<q<\infty$,
        \begin{align}
            \label{eq: thm local s ineq 3 zeta infty}
        s + 2\frac{1+\kappa}{p} + \frac{1}{q} &\leq 1+\frac{1}{2\rho_3}.
        \end{align}
         Then the assertions of Theorem \ref{thm:mainlocal} continue to hold true.

        \item\label{it2:mainlocal zeta infty}  Let $\rho_1 = \rho$, $\rho_2 = \rho/2$ and $\rho_3 = \rho/4$         for a $\rho>2$.
        If $(s,q,p,\kappa,\rho,\rho_3)$ are such that
        \begin{align}
            \frac{2}{\rho(\rho+1)}<\frac{1}{q},
            \quad\text{and}\quad
            \frac{2}{\rho}<2\frac{1+\kappa}{p}<\frac{1}{2}+\frac{2}{\rho}-\frac{1}{q},
        \end{align}
        then the assertions of Theorem \ref{thm:mainlocal} hold for the critical choice $s = \tfrac{\rho+2}{\rho}-\tfrac{1}{q}-2\tfrac{1+\kappa}{p}$, where the (critical) space of initial conditions is $\BD_{q,p}^{\tfrac{1}{q}-\tfrac{2}{\rho}}(\cO)$. Moreover, if $f=0$, then the lower restriction on $\frac{1}{q}$ can be omitted.  
        \item\label{it3:mainlocal zeta infty} If either \eqref{it1:mainlocal zeta infty} or \eqref{it2:mainlocal zeta infty} are satisfied, then for the maximal $(s,q,p,\kappa)$-solution $(u,\sigma)$ we have almost surely 
        \begin{align}
            u&\in H^{\theta,\bar p}_{\loc}\big((0,\sigma),\HD^{\frac{1}{2} -2\theta,\bar q}(\cO)\big),\quad\text{for $\overline q\geq 2$, $\overline p>2$ and $\theta\in (0,1/2)$},\\
            u&\in C^{\theta_1,\theta_2}_{\loc}((0,\sigma)\times \overline{\cO}),
            \quad\text{for $\theta_1\in [0,1/4),\theta_2\in [0,1/2)$.}
        \end{align}
        \item\label{it4:mainlocal zeta infty} If in addition to \eqref{it2:mainlocal zeta infty}  the parameters $(d,q,\rho)$ satisfy $\tfrac{1}{2}>\tfrac{1}{q}-\tfrac{2}{\rho}$, then the assertions of Corollary \ref{cor: blow up} hold with $m_0:=\tfrac{\rho}{2}$.
        \end{enumerate}
\end{theorem}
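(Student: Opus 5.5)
The plan is to repeat the proof of Theorem \ref{thm:mainlocal}, changing only the estimate for the noise term: for $d=1$, $\zeta=\infty$ we use case \eqref{it3:MgTmu delta} of Theorem \ref{thm:MgTmu delta} in place of case \eqref{it1:MgTmu delta}. We place \eqref{eq:reaction_diffusion_system} in the abstract framework of Subsection \ref{sec:loc-well-posed} with $X_0=\HD^{-1-s,q}(\cO)$ and $X_1=\HD^{1-s,q}(\cO)$. The linear part and the deterministic nonlinearities $f$ and $F$ do not involve $\zeta$. Hence the estimates for $f$ and $F$ under \eqref{eq: mainlocal combined ineq f 1}, \eqref{eq: mainlocal combined ineq f 2} and \eqref{eq: mainlocal combined ineq F} carry over verbatim, including the choice of the shift parameters $\alpha_1,\alpha_2$. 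Only the stochastic nonlinearity $G(u)=(g_i^j(\cdot,u)R^j)$ needs a new bound, as a map from $X_\beta$ into $\gamma(L^2(\cO),X_{1/2})=\gamma(L^2(\cO),\HD^{-s,q}(\cO))$.

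For this bound, Theorem \ref{thm:MgTmu delta}\eqref{it3:MgTmu delta} with $d=1$ requires $2<\eta<q$ and $s+\frac1q=\frac1\eta+\frac12$, that is, $\frac1\eta=s+\frac1q-\frac12$. The hypothesis $\frac12<s<1-\frac1q$ gives $\frac1\eta\in(\frac1q,\frac12)$, which is exactly this admissibility range. With this $\eta$ we get
\[
\|(g(u)-g(v))R\|_{\gamma(L^2,\HD^{-s,q})}\lesssim \|(1+|u|^{\rho_3}+|v|^{\rho_3})|u-v|\|_{L^\eta}.
\]
By H\"older's inequality the right-hand side is bounded by $(1+\|u\|_{L^{\eta(\rho_3+1)}}^{\rho_3}+\|v\|_{L^{\eta(\rho_3+1)}}^{\rho_3})\|u-v\|_{L^{\eta(\rho_3+1)}}$. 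Sobolev embedding then gives $X_\beta=\HD^{-1-s+2\beta,q}\hookrightarrow L^{\eta(\rho_3+1)}$ as soon as $-1-s+2\beta-\frac1q\ge -\frac{1}{\eta(\rho_3+1)}$. Substituting $\frac1\eta=s+\frac1q-\frac12$ determines $\beta$. The criticality/subcriticality condition of the abstract theory, namely $\frac{1+\kappa}{p}\le\frac{\rho_3+1}{\rho_3}(1-\beta)$ (with the usual $\beta<1$), then reduces exactly to \eqref{eq: thm local s ineq 3 zeta infty}. This matches \eqref{eq: thm local s ineq 3} with $1-\frac d2+\frac d\zeta$ replaced by $\frac12$. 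Once this is checked, the abstract local well-posedness theorem gives \eqref{it1:mainlocal zeta infty}.

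Part \eqref{it2:mainlocal zeta infty} is the parameter bookkeeping from Corollary \ref{cor: critical localrho1}, taken with $d=1$ and $\lambda=\frac12$. Set $s=\frac{\rho+2}{\rho}-\frac1q-2\frac{1+\kappa}{p}$. The bound $s<1-\frac1q$ is equivalent to $2\frac{1+\kappa}{p}>\frac2\rho$, and $s>\frac12$ is equivalent to the upper bound $2\frac{1+\kappa}{p}<\frac12+\frac2\rho-\frac1q$. With $\rho_3=\rho/4$ we have $1+\frac1{2\rho_3}=\frac{\rho+2}{\rho}$, so \eqref{eq: thm local s ineq 3 zeta infty} holds with equality. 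Conditions \eqref{eq: mainlocal combined ineq f 1}, \eqref{eq: mainlocal combined ineq f 2} and \eqref{eq: mainlocal combined ineq F} are checked as in that corollary. This is where $\frac{2}{\rho(\rho+1)}<\frac1q$ enters: it is the lower bound in \eqref{eq: mainlocal combined ineq f 1}, and when $f=0$ only the weaker \eqref{eq: mainlocal combined ineq F} remains. The condition $\rho>2$ makes the admissible interval for $\frac{1+\kappa}{p}$ nonempty.

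Part \eqref{it3:mainlocal zeta infty} follows the bootstrap proof of Theorem \ref{thm: regularization}: we repeatedly apply the local result with smaller $s$ (close to $\frac12$) and larger $\bar q,\bar p$, restarting at positive times and using the localization property. The ceiling on spatial smoothness is $\frac12=\lambda$, because $s>\frac12$ is forced by case \eqref{it3:MgTmu delta}. Sobolev embedding then gives H\"older regularity $C^{1/4-,1/2-}$. Part \eqref{it4:mainlocal zeta infty} copies the proof of Theorem \ref{thm: blow up} and Corollary \ref{cor: blow up}; the extra condition $\frac12>\frac1q-\frac2\rho$ ensures that the critical space embeds into spaces reached by regularization, so that blow-up criteria in $L^{m_0}$ with $m_0=\rho/2$ follow. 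I expect the main obstacle to be the $\beta$-computation for $G$: one has to verify that the strict inequalities $2<\eta<q$ (which exclude the endpoints) are compatible with the critical equality and with $\beta<1$ throughout the bootstrap.
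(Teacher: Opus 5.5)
Your proposal is correct and matches the paper's proof. For \eqref{it1:mainlocal zeta infty} the only change is to use Theorem \ref{thm:MgTmu delta}\eqref{it3:MgTmu delta} with $\frac{1}{\eta_3}=s+\frac1q-\frac12$, whose admissibility $2<\eta_3<q$ is exactly $\frac12<s<1-\frac1q$, and substituting this into the condition on $\beta_3$ gives \eqref{eq: thm local s ineq 3 zeta infty}; parts \eqref{it2:mainlocal zeta infty}--\eqref{it4:mainlocal zeta infty} are the earlier parameter bookkeeping, bootstrap and blow-up arguments with $\lambda=\frac12$. One small slip: for $d=1$ and the critical $s$, the left-hand bound in \eqref{eq: mainlocal combined ineq f 1} holds automatically, and the constraint $\frac{2}{\rho(\rho+1)}<\frac1q$ actually comes from its right-hand term $1+\frac{\rho_1+1}{q}$.
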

The proof of Theorem \ref{thm:mainlocal zeta infty} \eqref{it1:mainlocal zeta infty} is given in Section \ref{sec:proofs}. The proofs of Theorem \ref{thm:mainlocal zeta infty} \eqref{it2:mainlocal zeta infty}, \eqref{it3:mainlocal zeta infty} and \eqref{it4:mainlocal zeta infty} are a verbatim repetition of the proofs of the analogous results, Corollaries \ref{cor: critical local rho2}, \ref{cor: critical localrho1} and \ref{cor: blow up}, respectively, with the adjustment that $\zeta = \infty$.

\subsection{Special case: general covariance structures without spatial bounds}
The following result is a version of Theorem \ref{thm:mainlocal} for the case where we know only $\mu^j\in\ell^\zeta$, without imposing a relation of $\mu$ to $(\|\varphi_n^j\|_{L^\infty})_{n\geq 1}$ and where we do not even require that $\varphi_n^j\in L^\infty$, $n\geq 1$. 

\begin{proposition}[$\mu\in \ell^\zeta$ for $\zeta\in [2,\infty)$]
\label{prop:mainlocal unweighted}
     Let Assumption \ref{ass:reaction_diffusion_global}$(s,q,p,\rho_1,\rho_2,\rho_3)$
    hold, let $\kappa\in [0,\tfrac{p}{2}-1)$ 
    and consider \eqref{eq:reaction_diffusion_system} for an initial condition $u_0\in \BD_{q,p}^{1-s-2\tfrac{1+\kappa}{p}}(\cO)$ almost surely. Let 
    \[ \frac{d}{2}-\frac{d}{\zeta}<s<\min\Big\{1,d-\frac{d}{q}\Big\},\]
    where $s=0$ is allowed if $\zeta=2$. 
    If $f\neq 0$ let \eqref{eq: mainlocal combined ineq f 1} and \eqref{eq: mainlocal combined ineq f 2} hold, and if $F\neq 0$ let \eqref{eq: mainlocal combined ineq F} hold. 
    Let $\mu^1, \ldots, \mu^\ell\in\ell^\zeta$ and
     suppose that for $\zeta\in [2,\infty)$ and $d\geq 1$ we have
        \begin{align}
            \label{eq: thm local s ineq 3 no f}
        s + 2\frac{1+\kappa}{p} + \frac{d}{q} &\leq \frac{\rho_3+1}{\rho_3}  -\frac{d}{2\rho_3}
        \qquad\text{and}\qquad
        s>\frac{d}{2}-\frac{d}{q}.
        \end{align}
        Then the assertions of Theorem \ref{thm:mainlocal} continue to hold true.
\end{proposition}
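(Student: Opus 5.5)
The proof reuses the argument for Theorem \ref{thm:mainlocal} unchanged, except for the estimate of the stochastic nonlinearity. In the proof of Theorem \ref{thm:mainlocal}, the drift parts $\dv(F_i(\cdot,u))$ and $f_i(\cdot,u)$ are handled through \eqref{eq: mainlocal combined ineq f 1}, \eqref{eq: mainlocal combined ineq f 2} and \eqref{eq: mainlocal combined ineq F}. These conditions do not involve the noise, so that part of the argument carries over verbatim. What has to be replaced is the $\gamma$-radonifying estimate for $M_{g}R^j$ given by Theorem \ref{thm:MgTmu delta}, which is the only place where $\mu^j\in\ell^\zeta(\Sf^j)$ and $\varphi_n^j\in L^\infty$ were used. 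In its place I would prove a substitute bound valid for $\mu^j\in\ell^\zeta$ and an arbitrary orthonormal system. Then I would rerun the abstract local theory from Subsection \ref{sec:loc-well-posed}, together with the nonlinear estimates for $G(u)=(g_i^j(\cdot,u)R^j)$ in the space $\gamma(L^2(\cO),\HD^{-s,q}(\cO))$.

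\textbf{Substitute $\gamma$-bound.} The key step is the estimate
\[
\|M_gR^j\|_{\gamma(L^2(\cO),\HD^{-s,q}(\cO))}\lesssim \|\mu^j\|_{\ell^\zeta}\|g\|_{L^{\eta}(\cO)},\qquad \frac{1}{\eta}=\frac{1}{q}+\frac{s}{d}-\frac12,
\]
which I would obtain in three steps.

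\emph{Reduction to Hilbert--Schmidt norms.} $R^j$ factors as a Schatten-$\zeta$ operator followed by an isometry, because $\mu^j\in\ell^\zeta$ and the $\varphi_n^j$ are orthonormal. The square-function characterization of $\gamma(L^2,L^q)$, applied after the Bessel potential $(-\Delta_D)^{-s/2}$, converts the left-hand side into an $L^q_x$-norm of $\big(\sum_n|\mu_n^j|^2|K_s(g\varphi_n^j)(x)|^2\big)^{1/2}$. Here $K_s$ denotes the kernel operator of $(-\Delta_D)^{-s/2}$, and $|K_s(x,y)|\lesssim|x-y|^{s-d}$.

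\emph{The case $\zeta=2$.} Pointwise in $x$, Bessel's inequality over the orthonormal system gives $\sum_n|\mu_n|^2|\langle K_s(x,\cdot)g,\varphi_n\rangle|^2\le\|\mu\|_{\ell^\infty}^2\|K_s(x,\cdot)g\|_{L^2}^2$. By Hardy--Littlewood--Sobolev, this is bounded in $L^q_x$ by $\|g\|_{L^\eta}$ with $\frac1\eta=\frac1q+\frac sd-\frac12$, provided $s>\frac d2-\frac dq$. This is exactly why the second condition in \eqref{eq: thm local s ineq 3 no f} appears.

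\emph{General $\zeta\in[2,\infty)$.} Interpolate between $\mu\in\ell^2$ (trace class) and $\mu\in\ell^\infty$, or use Hölder in $n$, to get the same bound with $\|\mu\|_{\ell^\zeta}$.

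I expect this step to be the main obstacle. Without $L^\infty$ bounds on $\varphi_n^j$, no gain of integrability from the colouring is available. One therefore has to work with the worst case $\frac1\eta=\frac1q+\frac sd-\frac12$ uniformly in $\zeta$, and the real issue is to check that the $\ell^\zeta$-interpolation does not cost more than this. If the direct interpolation fails, my fallback is to simply use $\ell^\zeta\subset\ell^\infty$ together with the $\zeta=\infty$-type estimate above, which is what the stated condition seems to encode.

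\textbf{Nonlinear estimate and conclusion.} With the substitute bound in hand, the growth assumption on $g$ and Hölder's inequality give
\[
\|g(u)-g(v)\|_{L^\eta}\lesssim (1+\|u\|_{L^{\xi}}^{\rho_3}+\|v\|_{L^\xi}^{\rho_3})\|u-v\|_{L^\xi},\qquad \xi=(\rho_3+1)\eta .
\]
I would then embed $\HD^{1-s-2\beta,q}(\cO)\hookrightarrow L^\xi$ and choose $\beta$ so that the critical-weight condition $(\rho_3+1)(\beta-1+\frac{1+\kappa}{p})\le \frac12-\frac{1+\kappa}{p}$ of the abstract theorem holds. A routine computation shows this reduces exactly to the first inequality in \eqref{eq: thm local s ineq 3 no f}, since the factor $1-\frac d2+\frac d\zeta$ in \eqref{eq: thm local s ineq 3} is replaced by $1-\frac d2$. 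Existence, maximality, the regularity in item \eqref{it1:localwellposed} and the localization in item \eqref{it2:localwellposed} then follow exactly as in the proof of Theorem \ref{thm:mainlocal}.
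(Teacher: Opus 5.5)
\textbf{Verdict.} Your reduction matches the paper's proof, but your own proof of the substitute $\gamma$-bound contains a step that fails in every dimension $d\ge 2$.

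\textbf{What agrees with the paper.} You keep the drift estimates from the proof of Theorem \ref{thm:mainlocal} unchanged. In step (i) of the $g$-estimate you replace Theorem \ref{thm:MgTmu delta} by a bound in terms of $\|\mu^j\|_{\ell^\zeta}$ with the $\zeta$-independent relation $\frac1\eta=\frac sd+\frac1q-\frac12$. You then redo the computation leading to \eqref{eq: wellposedness thm g combined condition with beta}. The paper does not prove that substitute bound, however. It imports it as Theorem \ref{thm:generalONB} from \cite{AGV}.

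\textbf{The gap: your $\zeta=2$ step.} You bound $\sum_n|\mu_n|^2|\langle K_s(x,\cdot)g,\varphi_n\rangle|^2$ by $\|\mu\|_{\ell^\infty}^2\|K_s(x,\cdot)g\|_{L^2}^2$, which discards the summability of $\mu$ entirely.
\begin{itemize}
\item The quantity $\|K_s(x,\cdot)g\|_{L^2}^2\approx\int|x-y|^{2(s-d)}|g(y)|^2\,\dd y$ is finite only if $s>\frac d2$. Correspondingly, Hardy--Littlewood--Sobolev for the Riesz potential of order $2s-d$ needs $2s-d>0$. Since $s<1$, this is empty for every $d\ge 2$.
\item The step has to fail. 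A bound depending only on $\|\mu\|_{\ell^\infty}$ would give the space-time white-noise estimate ($\varphi_n$ an orthonormal basis, $\mu_n\equiv1$) in all dimensions. That estimate is false: already $M_1R$ is then not $\gamma$-radonifying into $\HD^{-s,q}(\cO)$ when $s\le\frac d2$.
\item The condition $s>\frac d2-\frac dq$ that you attach to Hardy--Littlewood--Sobolev is merely $\frac1\eta>0$. In the paper, that is exactly what the second condition in \eqref{eq: thm local s ineq 3 no f} encodes, namely $\eta_3<\infty$.
\item The proposition's $\zeta$-dependent hypothesis $s>\frac d2-\frac d\zeta$ never enters your argument. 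In Theorem \ref{thm:generalONB} it appears as $\frac1\eta+\frac1\zeta>\frac1q$. Its absence is another sign that your argument cannot be right.
\item Your variants do not help. Both ``H\"older in $n$'' and the fallback $\ell^\zeta\subset\ell^\infty$ plus the $\zeta=\infty$ estimate reduce to the same pointwise bound and fail for the same reason. Even in $d=1$ the fallback loses the $\zeta$-dependence and forces $s>\frac12$.
\end{itemize}

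\textbf{How to repair it.} The trace-class case should be done at operator level rather than pointwise in $x$.
\begin{itemize}
\item $R^j$ is Hilbert--Schmidt with norm $\|\mu^j\|_{\ell^2}$.
\item For $\eta>2$, multiplication $M_g$ maps $L^2(\cO)\to L^r(\cO)$ boundedly, where $\frac1r=\frac12+\frac1\eta$.
\item By the ideal property, $\|M_gR^j\|_{\gamma(L^2(\cO),L^r(\cO))}\le\|g\|_{L^\eta(\cO)}\|\mu^j\|_{\ell^2}$.
\item Sobolev embedding $L^r(\cO)\hookrightarrow\HD^{-s,q}(\cO)$ holds precisely when $\frac dr= s+\frac dq$, which is the stated relation.
\end{itemize}
The range $\zeta\in(2,\infty)$ then needs an argument that exploits $\mu\in\ell^\zeta$ at the operator level. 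That is the content of Theorem \ref{thm:generalONB}, which you can simply cite as the paper does.

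\textbf{Secondary point: the critical-weight condition.} The condition you write is not \eqref{eq:subcritical3}; no $\frac12$ enters for $\Psi$. In your parametrization $X_{\beta_3}=\HD^{1-s-2\beta,q}(\cO)$, the correct condition reads $\beta\ge\frac{\rho_3}{\rho_3+1}\frac{1+\kappa}{p}$. Combining it with the embedding $\HD^{1-s-2\beta,q}(\cO)\hookrightarrow L^{(\rho_3+1)\eta}(\cO)$ and $\frac d\eta=s+\frac dq-\frac d2$ does yield the first inequality of \eqref{eq: thm local s ineq 3 no f}.
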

However, as the reader can check the conditions on the parameters are more restrictive in the above theorem when compared to Theorem \ref{thm:mainlocal}. Moreover, because of the $\zeta$-independence of the condition in \eqref{eq: thm local s ineq 3 no f} it does not capture critical scaling  in SPDEs.

\subsection{Positivity}\label{s:pos}

Positivity of solutions to stochastic reaction-diffusion equations has been studied by many authors (see e.g.\ \cite{Ass99,CPT16,CES13,K92_positivity,Mar19,MS19} and the references therein). Our contribution establishes positivity even for rough initial data and noise. Leveraging our instantaneous regularization result (Theorem \ref{thm: regularization}), we provide a streamlined proof by reducing it to the maximum principle in \cite{Krylov13}. 

Let $\D_+(\Dom)$ be the subset of test functions such that $\varphi\geq 0$ on $\Dom$.
Below, we say that a distribution $v\in\D'(\cO)$ is non-negative in distribution (or briefly, $v\geq 0$ in $\D'(\cO)$) if 
$$
\langle v,\varphi\rangle \geq 0 \ \text{ for all } \ \varphi\in \D_+(\cO).
$$

\begin{theorem}[Positivity]
\label{thm:positity_nonlinear_eq}
In the setting of Theorems \ref{thm:mainlocal} or \ref{thm:mainlocal zeta infty}, suppose further that for all $i\in\{1,\dots,\ell\}$, and $y\in [0,\infty)^\ell$, a.e.\ on $\R_+\times\Omega$ it holds that 
\begin{equation}
\label{eq:assumption_positivity}
\begin{aligned}
&f_i(\cdot, y_1,\dots,y_{i-1},0,y_{i+1},\dots,y_\ell)\geq 0,  \qquad 
F_i(\cdot, y_1,\dots,y_{i-1},0,y_{i+1},\dots,y_\ell)=c_i,\\ 
&\qquad\qquad\qquad \qquad \qquad g_i^j(\cdot, y_1,\dots,y_{i-1},0,y_{i+1},\dots,y_\ell)=0,
\end{aligned}
\end{equation}
where $c_i:[0,\infty)\times \O\to \R^d$ are progressively measurable (hence independent of $x,y$), and  
$$
u_{0,i}\geq 0 \text{ in }\D'(\cO) \text{ a.s. } 
$$
Then $u_i(t)\geq 0$ a.s.\ for all $t\in (0,\sigma)$ and $i\in \{1, \ldots, \ell\}$. 
\end{theorem}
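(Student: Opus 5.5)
The plan is to reduce to positivity for smooth data on a positive time interval and then let the starting time tend to zero. First, approximate the initial datum. Choose nonnegative smooth $u_0^{(n)}$, say $u_0^{(n)}=e^{\frac1n\Delta_D}u_{0}$ componentwise. These are nonnegative because the Dirichlet heat semigroup is positivity preserving, which extends by duality to distributions that are nonnegative in $\D'(\cO)$. They converge to $u_0$ in $\BD^{1-s-2\frac{1+\kappa}{p}}_{q,p}(\cO)$, using density for $p<\infty$, and they lie in $L^\infty$ and in better Besov spaces.

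Second, treat smooth nonnegative data. By the localization property in Theorem \ref{thm:mainlocal}\eqref{it2:localwellposed} and Theorem \ref{thm: regularization}, the solution $u^{(n)}$ with datum $u_0^{(n)}$ is continuous on $[0,\sigma^{(n)})\times\overline{\cO}$. Positivity is then obtained by the classical trick of considering the equation for $u_i^{(n)}$ as a linear equation:
\[
\dd u_i-\dv(a_i\nabla u_i)\,\dd t=\big[\dv(b_i u_i)+c_iu_i+f_i(\cdot,\tilde u^{(i)})\big]\dd t+\sum_j h_i^j u_i\,\dd W^j .
\]
Here $\tilde u^{(i)}$ is $u$ with the $i$-th entry replaced by $0$. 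The coefficients $b_i,c_i,h_i^j$ are difference quotients of $F_i,f_i,g_i^j$ in the $i$-th variable. The divergence of the constant $c_i$ vanishes, and $g_i^j(\tilde u^{(i)})=0$ by \eqref{eq:assumption_positivity}.

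Third, use stopping and truncation. Up to a stopping time $\tau_N$ where $\|u^{(n)}\|_{L^\infty}\le N$, these coefficients are bounded, progressively measurable and Lipschitz-type. The term $f_i(\tilde u^{(i)})$ is nonnegative provided the other components are nonnegative. To break this circularity, replace each $u_k$ by $u_k^+$ inside the nonlinearities, i.e.\ solve the modified system with $f_i(\cdot,u^+)$, and so on. The maximum principle of \cite{Krylov13} then gives $u_i\ge0$ for the modified system, which therefore coincides with the original one by uniqueness.

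I expect the main obstacle to be this step: \cite{Krylov13} requires the multiplicative noise coefficient $h_i^j u_i$ to be handled in its setting. With non-trace-class noise, $\sum_n\mu_n^j h_i^j\varphi_n^j$ is only bounded in the $\gamma$-norm of Theorem \ref{thm:MgTmu delta}. I would first try to verify Krylov's assumptions in $L^2$ after the regularization. If this fails, I would instead apply Itô's formula to $\|u_i^-\|_{L^2}^2$, which requires $\zeta$ small, and for the rough case approximate the noise by finite truncations $R^j_N$. This approximation needs continuous dependence of the solution on $R^j$, which should follow from the same fixed-point estimates.

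Finally, pass to the limit. Stability of maximal solutions with respect to the initial data, from the abstract theory in \cite{AV-ob} underlying Theorem \ref{thm:mainlocal}, gives $u^{(n)}\to u$ in probability in $C([0,\tau];\BD^{1-s-2\frac{1+\kappa}{p}}_{q,p})$ for stopping times $\tau<\sigma$. Nonnegativity in $\D'$ is closed under this convergence. The limit is a distribution, and for $t>0$ it is a continuous function by Theorem \ref{thm: regularization}, so $u_i(t)\ge0$ pointwise.
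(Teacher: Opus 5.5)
Your architecture matches the paper's: approximate $u_0$ by the heat semigroup, pass to the modified system with $f_i(\cdot,y\vee 0)$, $F_i(\cdot,y\vee 0)$, $g_i^j(\cdot,y\vee 0)$, linearize in the $i$-th variable, and apply Krylov's maximum principle after a finite-rank truncation of the noise. However, two steps do not go through as written.

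\emph{The passage to the limit.} What is available is only \emph{local} continuity in the initial data (Proposition \ref{prop:local_continuity_abstract}). It says $u^{(n)}$ and $u$ are close up to a stopping time $\sigma_0\wedge\tau_0$ with $\P(\sigma_0\wedge\tau_0\le t)\le C_0\E\|u_0-u_0^{(n)}\|^p+R(t)$, and $R(t)\to0$ only as $t\downarrow0$. Convergence in $C([0,\tau];\BD^{1-s-2\frac{1+\kappa}{p}}_{q,p}(\cO))$ up to arbitrary stopping times $\tau<\sigma$ is not available here, and proving it in the critical setting would require a nontrivial iteration.

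The missing idea is a restart at a small positive time.
\begin{enumerate}[\rm (i)]
\item First localize to $u_0\in L^p(\O)$. Letting $n\to\infty$, local continuity gives $u\ge0$ on $[0,t]$ outside an event of probability at most $R(t)$.
\item On the complementary $\cF_t$-measurable event $A$, the datum $\one_A u(t)$ is nonnegative and H\"older continuous by Theorem \ref{thm: regularization}.
\item The smooth-data result, started at time $t$, then yields $u\ge0$ on $[t,\sigma)$ on $A$. Finally let $t\downarrow0$.
\end{enumerate}
This is how the paper, following \cite[Theorem 2.13]{agresti2023reaction}, combines Proposition \ref{prop:local_continuity_abstract} with Theorem \ref{thm: regularization}.

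Two smaller points in your smooth-data step. Continuity of $u^{(n)}$ up to $t=0$ is needed to bound the difference quotients near $t=0$. It comes from solving in the subcritical setting of Proposition \ref{prop:mainlocal_bounded}, not from Theorem \ref{thm: regularization}, which only gives continuity on $(0,\sigma)$. Also, uniqueness alone gives only $\sigma^+\le\sigma$ and $u=u^+$ on $[0,\sigma^+)$. To get $\sigma^+=\sigma$ you need the blow-up criterion of Proposition \ref{prop:mainlocal_bounded} for the modified system.

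\emph{The noise approximation.} Krylov's theorem needs $\sum_n|\mu_n^j|^2\|\varphi_n^j\|_{L^\infty(\cO)}^2<\infty$. So no regularity of $u$ rescues your first attempt, and truncation is unavoidable. But ``continuous dependence on $R^j$ from the same fixed-point estimates'' only yields bounds in terms of $\|\mu^j-\mu^{j,(N)}\|_{\ell^\zeta(\Sf^j)}$. This does not tend to zero when $\zeta=\infty$ (e.g.\ space-time white noise, $\mu_n\equiv 1$), and the statement covers that case via Theorem \ref{thm:mainlocal zeta infty}.

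The paper truncates at the level of the \emph{linear} equation with frozen bounded coefficients (Lemma \ref{l:positivity_linear_SPDEs}). There $v^{(N)}\ge0$ by Krylov, since $\mu^{(N)}\in\ell^2(\Sf)$. The difference $w^{(N)}=v-v^{(N)}$ solves a globally well-posed linear SPDE forced by $M_{\chi v}R_\mu(I-P_N)$ with $v$ \emph{fixed}. The maximal regularity estimate behind Lemma \ref{l:well_posedness_linear} bounds $\E\sup_{t\le T}\|w^{(N)}(t)\|_{C(\overline{\cO})}^p$ by $\E\int_0^T\|M_{\chi v}R_\mu(I-P_N)\|^p_{\gamma(L^2(\cO),\HD^{-s,q}(\cO))}t^\kappa\,\dd t$.
\begin{enumerate}[\rm (i)]
\item For $\zeta<\infty$ this is $O(\|\mu-\mu^{(N)}\|_{\ell^\zeta(\Sf)}^p)$.
\item For $\zeta=\infty$ one instead uses that $T(I-P_N)\to0$ in $\gamma$-norm for each fixed $\gamma$-radonifying $T$. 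Combine this with the domination $\|M_{\chi v}R_\mu(I-P_N)\|_\gamma\le\|M_{\chi v}R_\mu\|_\gamma$ and dominated convergence.
\end{enumerate}
Working at the linear level also spares you from controlling the lifetimes of truncated nonlinear solutions.
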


Note that, due to the instantaneous regularization Theorem \ref{thm: regularization}, $u(t)\in C(\overline{\cO})$ a.s.\ for all $t\in (0,\sigma)$, and the conclusion of Theorem \ref{thm:positity_nonlinear_eq} is well-defined pointwise in $\cO$.

\section{Applications}\label{sec:appl}
In this section, we apply our abstract well-posedness and regularization framework to several prototypical stochastic partial differential equations (SPDEs). While our main theorems are stated in terms of general growth parameters $\rho_i$, these bounds are deeply connected to the canonical nonlinearities that drive physical, biological, and fluid-dynamical models. 

In particular, we focus on polynomial nonlinearities of cubic ($\rho_1 = 2$) and quadratic ($\rho_1 = 1$ and $\rho_2=1$) type. These structures are not merely mathematical test cases, but rather represent the fundamental leading-order interactions in nonlinear systems:
\begin{itemize}
    \item {\em Cubic non-divergence nonlinearities ($\rho_1 = 2$):} Arising naturally in models of phase separation and quantum field theory (such as the Ginzburg-Landau or Allen-Cahn equations), cubic drifts model systems driven by symmetric double-well potentials, where the nonlinearity drives the separation of states. See Subsection \ref{ss: AllenCahn}.
    \item {\em Quadratic non-divergence nonlinearities ($\rho_1 = 1$):} In biological and chemical models (like the Fisher-KPP or coupled Lotka--Volterra systems), quadratic terms representing binary interactions dictate population dynamics and reaction rates.  See Subsection \ref{ss: FisherKPP}.
    \item {\em Quadratic divergence nonlinearities ($\rho_2=1$):} In fluid dynamics,  advective transport fundamentally generates quadratic divergence-form nonlinearities, driving the energy cascade in the Burgers and Navier-Stokes equations. See Subsection \ref{ss: Burgers}.
\item Finally, in Subsection \ref{ss:applwhite} we consider more general $\rho_1$, and focus on the case  $\zeta=\infty$ (white noise). We present applications to some rather academic superlinear examples in which we can establish global existence and uniqueness for rough initial data.
\end{itemize}

The interplay between polynomial growth and singular stochastic forcing typically restricts solutions to distribution spaces of very low regularity. The examples below illustrate how our abstract framework translates directly to these specific physical models. Based on  Theorems \ref{thm:mainlocal} and \ref{thm: regularization}, we identify the exact critical scaling regimes for the Allen-Cahn, Burgers, and Fisher-KPP equations, thereby guaranteeing local well-posedness for highly irregular initial states. 

In a forthcoming paper, we will improve our current local results to global ones. This will be done through the regularization and blow-up criteria of Theorem \ref{thm: regularization} and Corollary \ref{cor: blow up} and sophisticated energy bounds. 

\subsubsection*{Well-posedness for SPDEs with $L^r(\cO)$-valued initial data} While this section
collects applications of our main result, Theorem \ref{thm:mainlocal}, to a selection of well known SPDEs, the following table extracts the respective special cases of $L^r$-valued initial data. More precisely, it displays the lowest required integrability $r$ of the initial data $u_0$ in order to obtain local well-posedness  in $L^r(\cO)$. 

Explanations of the $L^r$-case for the models listed in the table are given in remarks or comments in their respective subsections, see, in particular, Remarks \ref{rem:L r Allen Cahn}, \ref{rem:L r Fisher} and \ref{rem:L r stwn}.

We also recall from Subsection \ref{ss:critical cases} on the critical setting that,  if $\zeta\in [2,\infty]$ and $\rho_1$ or $\rho_2$ are given, then the largest possible value of $\rho_3$ is determined by \eqref{eq:scaling_parameter4}.

\begin{table}[H]
    \centering
    \renewcommand{\arraystretch}{1.3}
    \begin{tabular}{|l|c|c|c|}
        \hline
        \textbf{Equation} & $\boldsymbol{\rho_1}$ & $\boldsymbol{\rho_2}$ & \textbf{Initial value in $L^r(\cO)$} \\
        \hline
        Allen-Cahn  & 2 &  & $r = \max\{d, 1+\varepsilon\}$ \\
        Gray-Scott  & 2 &  & $r = \max\{d, 1+\varepsilon\}$ \\
        Fisher-KPP & 1 &  & $r = \max\{\frac{d}{2}, 1+\varepsilon\}$ \\
        Predator-Prey & 1 &  & $r = \max\{\frac{d}{2}, 1+\varepsilon\}$ \\
        Burgers &  & 1 & $r = \max\{d, 1+\varepsilon\}$ \\
        Space-time white noise in 1D & $\rho>2$ & & $r = \frac{\rho}{2}$ \\
        \hline
    \end{tabular}
    \caption{Summary of growth parameters and spatial integrability bounds for zero-smoothness initial data in $L^r(\cO)$ with $\cO\subseteq \R^d$, with $d\geq 1$. Here $\varepsilon>0$ is arbitrary.}
    \label{tab:growth_parameters}
\end{table}

\subsection{Allen-Cahn type nonlinearities: cubic $f$}
\label{ss: AllenCahn}

To see the following results, it suffices to translate the specific physical nonlinearities into the growth bounds of Theorem \ref{thm:mainlocal} and Corollary \ref{cor: critical localrho1} in the critical case. These results immediately give local existence and uniqueness results for the equations below. All problems are considered on a domain $\cO$ and with Dirichlet boundary conditions.

\begin{example}[stochastic Allen-Cahn equation]\label{ex:AllenCahn}
Consider the stochastic Allen-Cahn equation, which arises in the modeling of phase transitions:
\begin{align}
\label{eq:ex_allen_cahn}
    \dd u - \Delta u \, \dd t = (u - u^3) \, \dd t + g(u) \, \dd W(t).
\end{align}
The drift is given by the cubic nonlinearity $f(u) = u - u^3$. This implies the local Lipschitz bound $|f(u)-f(v)| \lesssim (1+|u|^2+|v|^2)|u-v|$, which perfectly matches our framework with $\rho_1 = 2$ and $F = 0$. We apply Theorem \ref{thm:mainlocal} with the space of initial values  $\BD_{q,p}^{1-s-2\tfrac{1+\kappa}{p}}(\cO)$. For this we obtain the following restrictions on the regularity index $s$, the spatial integrability $q \in [2, \infty)$, and the temporal parameters $p \in (2, \infty)$ and $\kappa \in [0, p/2 - 1)$:
\begin{itemize}
    \item In dimension $d=1$: For $s \in (0, 1 - 1/q)$, the parameter space is partitioned by $q$:
    \begin{align*}
        1+\frac{1}{3q} < s  + 2\frac{1+\kappa}{p}+\frac{1}{q} &< 2 \quad &&\text{for } q \in [2, 3), \\
         1+\frac{1}{3q} < s+ 2\frac{1+\kappa}{p}+\frac1q &< 1+\frac{3}{q} \quad &&\text{for } q \in [3, \infty).
    \end{align*}
    \item In dimension $d=2$: For $s \in (0, 1)$,
    \begin{align*}
        1+\frac{2}{3q} &< s+2\frac{1+\kappa}{p} + \frac{2}{q} \leq 2, \quad s+\frac{2}{q}<\frac{5}{3} \quad &&\text{for } q \in [2, 6), \\
        1+\frac{2}{3q} &< s+2\frac{1+\kappa}{p} + \frac{2}{q} < 1+\frac{6}{q} \quad &&\text{for } q \geq 6.
    \end{align*}
    \item In dimension $d=3$: For $s \in (0, 1)$,
    \begin{align*}
        1+\frac{1}{q} &< s + \frac{3}{q} + 2\frac{1+\kappa}{p} \leq 2 \quad &&\text{for } q \in [2, 9), \\
        1 &< s + 2\frac{1+\kappa}{p} + \frac{2}{q} < 1 + \frac{8}{q} \quad &&\text{for } q \in [9, \infty).
    \end{align*}
\end{itemize}
For $g$ and the noise we require 
\begin{itemize}
\item $\zeta<\frac{2d}{d-2}$ if $d\geq 2$;
\item  $s>\tfrac{d}{2}-\tfrac{d}{\zeta}$ if $\zeta>2$ ;
\item the condition \eqref{eq: thm local s ineq 3}. Due to the above estimates, a sufficient condition for this is 
\[\rho_3 \leq 1-\frac{d}{2}+\frac{d}{\zeta}.\]
\end{itemize}
\end{example}

\begin{example}[critical stochastic Allen-Cahn equation]\label{ex:AllenCahn2}
For the Allen-Cahn equation ($\rho_1=2$) in dimensions $d \geq 2$, the critical scaling balances at
\begin{align*}
    s + \frac{d}{q} + 2\frac{1+\kappa}{p} = 2.
\end{align*}
Corollary \ref{cor: critical localrho1} yields the critical initial data space $\BD_{q,p}^{\frac{d}{q}-1}(\cO)$ under the constraints
\begin{align*}
    \frac{1}{3} < \frac{d}{q} &< \min\Big\{1-\frac{d}{6}+\frac{d}{\zeta}, \, 2-\frac{d}{2}+\frac{d}{\zeta}\Big\}, \qquad
    \max\Big\{1 - \frac{d}{q}, 1-\frac{d}{3}\Big\} < 2\frac{1+\kappa}{p} < 2-\frac{d}{q}-\frac{d}{2}+\frac{d}{\zeta}.
\end{align*}
By letting $q \uparrow 3d$, the initial data regularity approaches $-2/3$. Provided we select $q$ sufficiently close to $3d$ and optimize the temporal parameters $(p, \kappa)$, the only restriction on the noise coloring is the baseline geometric constraint $\zeta < \frac{2d}{d-2}$ (for $d \geq 2$) from Assumption \ref{ass: zeta f}.
Smaller choices of $q$ lead to additional restrictions on $\zeta$.
\end{example}

\begin{remark}[$L^r$-initial data for Allen-Cahn]
\label{rem:L r Allen Cahn}
    By inspecting the initial value spaces $ \BD_{q,p}^{\sigma_d}(\cO)$ for $\sigma_1 = 1-s-2\frac{1+\kappa}{p}$ in
    $d=1$ and $\sigma_d = \tfrac{d}{q}-1$ for $d\geq 2$, it is clear that,
    for any $r\geq\max\{d,1+\varepsilon\}$ and $\varepsilon>0$ arbitrarily small,
    one can find $q,p,s,\kappa$  such that the Sobolev embedding $L^r(\cO)\hookrightarrow \BD_{q,p}^{\sigma_d}(\cO)$ holds for $d\geq 1$ and such that \eqref{eq:ex_allen_cahn} is well-posed in $L^r(\cO)$. 
\end{remark}

Global well-posedness of the Allen--Cahn equation in the above setting will be considered in a future paper.

\begin{example}[The Stochastic Gray-Scott model]\label{ex:GrayScott}
To illustrate our framework's capacity to handle coupled cubic systems, we consider the stochastic Gray-Scott model. This system is famous for its rich array of pattern-forming behaviors (such as Turing spots, stripes, and spirals) driven by an autocatalytic reaction ($A + 2B \to 3B$):
\begin{equation}
\left\{
\begin{aligned}
    \dd u - \dv(a_1 \cdot \nabla u) \, \dd t &= (-u v^2 + r_f(1-u)) \, \dd t + g_1^1(u,v) \, \dd W^1(t) + g_1^2(u,v) \, \dd W^2(t), \\
    \dd v - \dv(a_2 \cdot \nabla v) \, \dd t &= (u v^2 - (r_f+r_k)v) \, \dd t + g_2^1(u,v) \, \dd W^1(t) + g_2^2(u,v) \, \dd W^2(t).
\end{aligned}
\right.
\end{equation}
Here, $r_f$ and $r_k$ are dimensionless constants representing feed and kill rates. The critical nonlinearities are the cubic autocatalytic cross-terms $\mp u v^2$. For the joint state vector $(u, v) \in \mathbb{R}^2$, this cubic cross-product structure perfectly matches our abstract local Lipschitz condition with  $\rho_1 = 2$ and $F = 0$. 

Because the growth parameter ($\rho_1 = 2$) is mathematically identical to that of the scalar Allen-Cahn equation of Examples \ref{ex:AllenCahn} and \ref{ex:AllenCahn2}, the exact same geometric scaling restrictions on the parameter space apply. 
Thus, we obtain the same local theory and critical spaces. Global existence for the stochastic Gray-Scott model is much harder to obtain than for fully coercive systems. For the case of transport noise and $\zeta=2$, global well-posedness was recently established in \cite{agresti2024reaction} for the more general Brusselator model for $d=2$ and $d=3$. Global well-posedness for certain triangular systems for the rougher situation $\zeta>2$ was recently considered in \cite{milesis2026global}. The blow-up criteria of Corollary \ref{cor: blow up} set the stage to obtain many further global well-posedness results, for rough classes of initial data. 

Finally, Remark \ref{rem:L r Allen Cahn} also applies to the presented Gray-Scott model.
\end{example}

\subsection{Fisher-KPP type nonlinearities: quadratic $f$}
\label{ss: FisherKPP}

\begin{example}[The Stochastic Fisher-KPP Equation]\label{ex:FisherKPP}
Consider the stochastic Fisher-KPP equation, a classical model for population dynamics and chemical kinetics:
\begin{align}
\label{eq:ex_fisher}
    \dd u - \Delta u \, \dd t = u(1-u) \, \dd t + g(u) \, \dd W(t).
\end{align}
Here, the drift is driven by a quadratic polynomial $f(u) = u - u^2$. The local Lipschitz geometry yields $|f(u)-f(v)| \lesssim (1+|u|+|v|)|u-v|$, which corresponds exactly to our abstract framework with $\rho_1 = 1$ and $F = 0$. 
We apply Theorem \ref{thm:mainlocal} with the space of initial values $\BD_{q,p}^{1-s-2\tfrac{1+\kappa}{p}}(\cO)$. The regularity index $s$, the spatial integrability $q \in [2, \infty)$, and the temporal parameters $p \in (2, \infty)$ and $\kappa \in [0, p/2 - 1)$ are subject to the following bounds:
\begin{itemize}
    \item In dimension $d=1$: For $s \in (0, 1 - 1/q)$, the combined scaling parameters must satisfy:
    \begin{align*}
        1 + \frac{1}{2q} < s + \frac{1}{q} + 2\frac{1+\kappa}{p} < 1 + \frac{2}{q}.
    \end{align*}
    
    \item In dimension $d=2$: For $s \in (0, 1)$ the parameter space is bounded by:
    \begin{align*}
        1 + \frac{1}{q} < s + \frac{2}{q} + 2\frac{1+\kappa}{p} < 1 + \frac{4}{q}.
    \end{align*}
    \item In dimension $d=3$: For $s \in (0, 1)$ the parameter space partitions at $q=3$:
    \begin{align*}
        1 + \frac{3}{2q} &< s + \frac{3}{q} + 2\frac{1+\kappa}{p} \leq 3 \quad &&\text{for } q \in [2, 3), \\
        1 + \frac{3}{2q} &< s + \frac{3}{q} + 2\frac{1+\kappa}{p} < 1 + \frac{6}{q} \quad &&\text{for } q \in [3, \infty).
    \end{align*}
\end{itemize}
For $g$ and the noise we require 
\begin{itemize}
\item $\zeta<\frac{2d}{d-2}$ if $d\geq 2$;
\item  $s>\tfrac{d}{2}-\tfrac{d}{\zeta}$ if $\zeta>2$ ;
\item the condition \eqref{eq: thm local s ineq 3}. Due to the above estimates, a sufficient condition for this is 
\begin{align}
\rho_3 \leq \frac{1}{2}\Big( 1-\frac{d}{2}+\frac{d}{\zeta}\Big).
\end{align}
\end{itemize}
\end{example}

\begin{example}[Critical Stochastic Fisher-KPP Equation]\label{ex:CriticalFisherKPP}
For the Fisher-KPP equation ($\rho_1=1$), the criticality condition balances at:
\begin{align*}
    s + \frac{d}{q} + 2\frac{1+\kappa}{p} = 3.
\end{align*}
Remarkably, as observed in Remark \ref{rem:rho1}, the structural requirement $s < 1$ forces the condition $\rho_1 > \frac{4}{2+d}$. For our quadratic nonlinearity ($\rho_1=1$), this dictates that $d \geq3$. Therefore, the critical space is mathematically inaccessible within this framework for $d \in \{1, 2\}$, and we restrict our attention to dimensions $d \geq 3$.

Applying Corollary \ref{cor: critical localrho1}, we achieve local well-posedness in the highly singular critical initial data space $\BD_{q,p}^{\frac{d}{q}-2}(\cO)$, provided the parameters satisfy:
\begin{align*}
    1 < \frac{d}{q} < \min\Big\{1+\frac{d}{\zeta}, \, 3-\frac{d}{2}+\frac{d}{\zeta}\Big\}, \qquad
    \max\Big\{2 - \frac{d}{q}, \, 2-\frac{d}{2}\Big\} < 2\frac{1+\kappa}{p} < 3-\frac{d}{q}-\frac{d}{2}+\frac{d}{\zeta}.
\end{align*}
The condition $\zeta<\frac{2d}{d-2}$ if $d\geq 2$ is also needed.
Because the initial data regularity is $\frac{d}{q}-2$, taking $q \uparrow d$ (which satisfies the strict lower bound $d/q > 1$) pushes the permissible initial data regularity arbitrarily close to $-1$. 
\end{example}

\begin{remark}[$L^r$-initial data for Fisher-KPP equation]
\label{rem:L r Fisher}
    As in Remark \ref{rem:L r Allen Cahn} we see that with the initial value spaces $\BD_{q,p}^{\sigma_d}(\cO)$ with $\sigma_d = 1-s-2\tfrac{1+\kappa}{p}$ for $d=1,2$ and $\sigma_d = \tfrac{d}{q}-2$ for $d\geq 3$,
    for any $r\geq\max\{\tfrac{d}{2},1+\varepsilon\}$,
    one can find $q,p,s,\kappa$  such that the Sobolev embedding $L^r\hookrightarrow \BD_{q,p}^{\sigma_d}(\cO)$ holds for $d\geq 1$ and such that \eqref{eq:ex_fisher} is well-posed in $L^r(\cO)$. 
\end{remark}

As before for the Allen-Cahn equation, global well-posedness of the above model will be considered in a future work. 

The local theory of the following more complicated model fits into our setting in exactly the same manner. 

\begin{example}[Coupled Chemical Kinetics and Predator-Prey Systems]\label{ex:SystemsKinetics}
Our framework naturally accommodates systems of SPDEs ($\ell \ge 2$), which are essential for modeling interactions between multiple distinct species. Consider a stochastic reaction-diffusion system modeling mass transfer in a bimolecular reaction or a Lotka--Volterra predator-prey dynamic:
\begin{equation}
\left\{
\begin{aligned}
    \dd u - \dv(a_1 \cdot \nabla u) \, \dd t &= (- uv + u) \, \dd t + g_1^1(u,v) \, \dd W^1(t) + g_1^2(u,v) \, \dd W^2(t), \\
    \dd v - \dv(a_2 \cdot \nabla v) \, \dd t &= (-v + uv) \, \dd t + g_2^1(u,v) \, \dd W^1(t) + g_2^2(u,v) \, \dd W^2(t).
\end{aligned}
\right.
\end{equation}
Here, the cross-terms have opposite signs, reflecting the conservation of mass or energy transfer between the species (e.g., the predator $v$ grows at the expense of the prey $u$). This still leads to $\rho_1 = 1$ and $F = 0$.

Because the growth parameter ($\rho_1 = 1$) is mathematically identical to the scalar Fisher-KPP equation (Examples \ref{ex:FisherKPP} and \ref{ex:CriticalFisherKPP}), the exact same parameter restrictions on $s$, $q$, $p$, and $\kappa$ apply. Thus, we obtain the same local theory and critical spaces. Global existence results are considerably more challenging to establish. For the case $\zeta=2$, global well-posedness was recently established in \cite{agresti2024reaction}. Global well-posedness in the case $\zeta>2$ is an interesting open problem. 

Remark \ref{rem:L r Fisher} concerning $L^r$ initial data also applies for the predator-prey system.
\end{example}

\subsection{Burgers type nonlinearities: quadratic $F$}
\label{ss: Burgers}

The setting of Examples \ref{ex:Burgers1} and  \ref{ex:Burgers2} below applies to a large class of nonlinearities in which the nonlinearity 
appears as the first spatial derivative of a quadratic expression in $u$. For example, for many fluid dynamics equations this is the case (e.g.\ Navier-Stokes equations). The  conditions on the parameters do not change when applied in such contexts.

\begin{example}[The Stochastic Burgers Equation]\label{ex:Burgers1}
Next, we consider the stochastic Burgers equation, a fundamental model for turbulent fluid flow:
\begin{align}
\label{eq:ex_burgers}
    \dd u - \Delta u \, \dd t = \nabla \cdot F(u) \, \dd t + g(u) \, \dd W(t).
\end{align}
Here, the nonlinearity $F$ is assumed to be quadratic. The local Lipschitz geometry yields $|F(u)-F(v)| \lesssim (1+|u|+|v|)|u-v|$, which corresponds exactly to our abstract framework with $\rho_2 = 1$ and $f = 0$.
We apply Theorem \ref{thm:mainlocal} with the space of initial values $\BD_{q,p}^{1-s-2\tfrac{1+\kappa}{p}}(\cO)$.

For any dimension $d \geq 1$, the restrictions on $s \in (0, 1)$ (with the added structural constraint $s < 1 - 1/q$ if $d=1$) become:
\begin{align*}
    1+\frac{d}{2q} < s + \frac{d}{q} + 2\frac{1+\kappa}{p} \leq 2.
\end{align*}
For $g$ and the noise we require 
\begin{itemize}
\item $\zeta<\frac{2d}{d-2}$ if $d\geq 2$;
\item  $s>\tfrac{d}{2}-\tfrac{d}{\zeta}$ if $\zeta>2$ ;
\item the condition \eqref{eq: thm local s ineq 3}. Due to the above estimates, a sufficient condition for this is 
\[\rho_3 \leq 1-\frac{d}{2}+\frac{d}{\zeta},\]
where this time the inequality does not need to be strict. 
\end{itemize}
\end{example}

\begin{example}[Critical Stochastic Burgers Equation]\label{ex:Burgers2}
To ensure criticality in the Burgers setting ($\rho_2=1$) in dimensions $d \geq 2$, we impose the criticality condition:
\begin{align*}
    s + \frac{d}{q} + 2\frac{1+\kappa}{p} = 2.
\end{align*}
Corollary \ref{cor: critical local rho2} guarantees well-posedness for initial data in the critical space $\BD_{q,p}^{\frac{d}{q}-1}(\cO)$, provided the parameters satisfy:
\begin{align*}
    \frac{d}{q} < 2-\frac{d}{2}+\frac{d}{\zeta}, \quad  \text{and} \quad 1 - \frac{d}{q} < 2\frac{1+\kappa}{p} < 2-\frac{d}{q}-\frac{d}{2}+\frac{d}{\zeta}.
\end{align*}
The condition $\zeta<\frac{2d}{d-2}$ if $d\geq 2$ is also needed.
Remarkably, as we take the spatial integrability $q \to \infty$, the regularity index $\frac{d}{q} - 1$ approaches $-1$. By carefully selecting the temporal parameters $(p, \kappa)$ within the valid range, our framework accommodates initial data whose regularity is arbitrarily close to $-1$.

Since equation \eqref{eq:ex_burgers} scales exactly like \eqref{eq:ex_allen_cahn}, we see that Remark \ref{rem:L r Allen Cahn} concerning $L^r$ initial data also applies to \eqref{eq:ex_burgers}.
\end{example}

Global well-posedness in the above flexible setup will be considered in a future paper.

\subsection{A class of equations with space-time white noise}\label{ss:applwhite}

In this subsection we apply our results to the following equation in $d=1$:
\begin{equation}\label{eq:heatwhite}
\left\{
\begin{aligned}
\dd u - \Delta u \,\dd t &= f(u) \,\dd t + g(u) \,\dd W(t),\\
u(0) &= u_{0}.
\end{aligned}\right.
\end{equation}

\begin{assumption}\label{ass:fsuper0spacetimewhite}
Let $\cO$ be a bounded interval. Let $\rho\in (2, \infty)$. Suppose that $f,g:\R\to \R$ satisfy
\begin{align*}
 |f(y)-f(y')| &\lesssim (1+|y|^{\rho}+|y'|^{\rho})|y-y'|,
\\ |g(y)-g(y')| &\lesssim (1+|y|^{\rho/4}+|y'|^{\rho/4})|y-y'|,
  \end{align*}
  and $F=0$. Suppose that $W$ is a space-time white noise, i.e.\ $R = I$.
\end{assumption}
If necessary, one can replace $\rho$ by a larger number without loss of generality. Thus, if $f,g$ admit the above bounds with a $\rho\in (0,2]$, then we just replace it by $2+\delta$ for $\delta>0$ small. The special relation between $f$ and $g$ will play a role in the argument below.

\begin{theorem}
\label{thm:maximal_initial_data_rho1_rho3}
Suppose that Assumption \ref{ass:fsuper0spacetimewhite} holds.
Then, for every $\varepsilon\in(0,\frac12 - \frac{2}{\rho(\rho + 1)})$, letting the integrability parameter $q_\varepsilon \in (2, \infty)$ be defined by
\begin{equation}\label{eq: q_eps_def}
    \frac{1}{q_\varepsilon} = \frac{2}{\rho(\rho + 1)} + \varepsilon,
\end{equation}
there exists an explicit choice of parameters $p_\varepsilon > 2$ and $\kappa_\varepsilon \in [0, p_\varepsilon/2 - 1)$ and $s_{\varepsilon}\in (\frac{1}{2},1-\frac{1}{q_\varepsilon})$ such that for every strongly $\mathscr{F}_0$-measurable initial value 
\begin{equation}
 u_0\in  \BD^{-\frac{2}{\rho + 1} + \varepsilon}_{q_\varepsilon, p_\varepsilon}(\cO), \text{a.s.},
\end{equation}
there exists a maximal $(s_\varepsilon,q_\varepsilon,p_\varepsilon,\kappa_\varepsilon)$-solution $(u,\sigma)$. Moreover, one has that
\begin{align*}
u&\in C([0,\sigma);\BD^{-\frac{2}{\rho + 1} + \varepsilon}_{q_\varepsilon, p_\varepsilon}(\cO)),\\
u&\in C^{\theta_1,\theta_2}_{\loc}((0,\sigma)\times \overline{\cO}),
        \quad\text{for $\theta_1\in [0,1/4), \ \theta_2\in [0,1/2)$ a.s.},
\end{align*}
$u(0)= u_0$, and $u = 0$ on $(0,\sigma)\times \partial \cO$.
\end{theorem}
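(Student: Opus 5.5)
The plan is to derive the statement from Theorem \ref{thm:mainlocal zeta infty}\eqref{it2:mainlocal zeta infty}--\eqref{it3:mainlocal zeta infty} with $d=1$, $\zeta=\infty$, $\rho_1=\rho$, $\rho_2=\rho/2$ (with $F=0$, so this is void) and $\rho_3=\rho/4$. Assumption \ref{ass:fsuper0spacetimewhite} supplies exactly these growth bounds, and $R=I$ corresponds to $\mu\equiv 1\in\ell^\infty$. This verifies Assumption \ref{ass: zeta f} with $\zeta=\infty$ and Assumption \ref{ass:reaction_diffusion_global}, since the coefficients are constant. The remaining work is to choose explicit parameters $(q_\varepsilon,p_\varepsilon,\kappa_\varepsilon,s_\varepsilon)$ that satisfy the hypotheses of \eqref{it2:mainlocal zeta infty} and put the initial data in the critical space.

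First, for the parameter choice take $q_\varepsilon$ as in \eqref{eq: q_eps_def}. The restriction $\varepsilon<\frac12-\frac{2}{\rho(\rho+1)}$ gives $q_\varepsilon>2$, and the strict inequality $\frac{2}{\rho(\rho+1)}<\frac1{q_\varepsilon}$ holds by construction. The critical smoothness is
\[
\frac1{q_\varepsilon}-\frac2\rho=\frac{2}{\rho(\rho+1)}-\frac{2}{\rho}+\varepsilon=-\frac{2}{\rho+1}+\varepsilon,
\]
so the space of initial data in the statement is exactly the critical space $\BD^{1/q-2/\rho}_{q,p}(\cO)$ from \eqref{it2:mainlocal zeta infty}. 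Next choose $2\frac{1+\kappa}{p}$ strictly inside the interval $\bigl(\frac2\rho,\frac12+\frac2\rho-\frac1{q_\varepsilon}\bigr)$. This interval is nonempty because $\frac1{q_\varepsilon}<\frac12$. For definiteness take the midpoint $2\frac{1+\kappa_\varepsilon}{p_\varepsilon}=\frac2\rho+\frac12\bigl(\frac12-\frac1{q_\varepsilon}\bigr)$. This number lies below $1$ (because $\rho>2$), so one may take $\kappa_\varepsilon=0$ and $p_\varepsilon=2/(\text{that value})>2$. If that $p_\varepsilon$ turns out to be inconvenient, one can instead fix a large $p_\varepsilon$ and solve for $\kappa_\varepsilon\in[0,p_\varepsilon/2-1)$.

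Then set $s_\varepsilon=\frac{\rho+2}{\rho}-\frac1{q_\varepsilon}-2\frac{1+\kappa_\varepsilon}{p_\varepsilon}$. The main check, which I expect to be the only delicate point, is that $s_\varepsilon\in(\frac12,1-\frac1{q_\varepsilon})$ as required in \eqref{it1:mainlocal zeta infty}. This is just a rewriting of the constraint on $2\frac{1+\kappa}{p}$:
\begin{itemize}
\item $s_\varepsilon<1-\frac1{q_\varepsilon}$ is equivalent to $2\frac{1+\kappa}{p}>\frac2\rho$;
\item $s_\varepsilon>\frac12$ is equivalent to $2\frac{1+\kappa}{p}<\frac12+\frac2\rho-\frac1{q_\varepsilon}$.
\end{itemize}
Both hold by the midpoint choice. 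The other conditions \eqref{eq: mainlocal combined ineq f 1}, \eqref{eq: mainlocal combined ineq f 2} and \eqref{eq: thm local s ineq 3 zeta infty} were verified inside the proof of \eqref{it2:mainlocal zeta infty}, which is the $\zeta=\infty$ repetition of Corollary \ref{cor: critical localrho1}; there \eqref{eq: thm local s ineq 3 zeta infty} becomes the equality $\frac{\rho+2}{\rho}=1+\frac{2}{\rho}$. I would double-check \eqref{eq: mainlocal combined ineq f 1} explicitly for $d=1$. Its lower bound $1+\frac{1}{(\rho+1)q}<\frac{\rho+2}{\rho}$ is equivalent to $\frac1q<\frac{2(\rho+1)}{\rho}$, which is trivial. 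Its upper bound $\frac{\rho+2}{\rho}<1+\frac{\rho+1}{q}$ is equivalent to $\frac1q>\frac{2}{\rho(\rho+1)}$, which holds. The side condition $s+\frac1q-1<\frac1{\rho+1}$ is equivalent to $2\frac{1+\kappa}{p}>\frac2\rho-\frac{1}{\rho+1}$, which follows from $2\frac{1+\kappa}{p}>\frac2\rho$.

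With these conditions in place, \eqref{it2:mainlocal zeta infty} yields the maximal $(s_\varepsilon,q_\varepsilon,p_\varepsilon,\kappa_\varepsilon)$-solution $(u,\sigma)$. Item \eqref{it1:localwellposed} of Theorem \ref{thm:mainlocal}, applied on each $[0,\sigma_n]$, gives $u\in C([0,\sigma);\BD^{-\frac{2}{\rho+1}+\varepsilon}_{q_\varepsilon,p_\varepsilon}(\cO))$ and $u(0)=u_0$. Item \eqref{it3:mainlocal zeta infty} gives the H\"older regularity $C^{\theta_1,\theta_2}_{\loc}$ with $\theta_1<1/4$ and $\theta_2<1/2$. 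Finally, $u=0$ on $(0,\sigma)\times\partial\cO$ follows from \eqref{it3:mainlocal zeta infty}: it gives $u(t)\in\HD^{1/2-2\theta,\bar q}(\cO)$ for large $\bar q$, and since $\frac12-2\theta>\frac1{\bar q}$ these Dirichlet spaces encode a vanishing trace. Together with continuity up to the boundary, this yields the pointwise Dirichlet condition.
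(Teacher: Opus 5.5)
Your overall route is the paper's: apply Theorem~\ref{thm:mainlocal zeta infty}\eqref{it2:mainlocal zeta infty} with $\rho_1=\rho$ and $\rho_3=\rho/4$, take $s_\varepsilon$ critical, and note that the trace exponent is $\frac1{q_\varepsilon}-\frac2\rho=-\frac{2}{\rho+1}+\varepsilon$. Your explicit parameter choice, however, does not work for all $\rho>2$. The midpoint value
\[
2\frac{1+\kappa_\varepsilon}{p_\varepsilon}=\frac2\rho+\frac14-\frac{1}{2q_\varepsilon}=\frac2\rho+\frac14-\frac{1}{\rho(\rho+1)}-\frac{\varepsilon}{2}
\]
is not always below $1$. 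It tends to $\frac{13}{12}$ as $\rho\downarrow 2$ and $\varepsilon\downarrow 0$; for instance, for $\rho=2.1$ it is approximately $1.049-\varepsilon/2$. More generally, it is $>1$ for small $\varepsilon$ whenever $3\rho^2-5\rho-4<0$, i.e.\ $\rho<(5+\sqrt{73})/6\approx 2.26$. In that regime $p_\varepsilon=2/(\text{value})\le 2$.

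Your fallback of fixing a large $p_\varepsilon$ and solving for $\kappa_\varepsilon$ does not help. The constraint $\kappa\in[0,\frac p2-1)$ is equivalent to $2\frac{1+\kappa}{p}<1$, whatever $p$ is. So $2\frac{1+\kappa}{p}<1$ is an extra condition your choice must satisfy, and ``because $\rho>2$'' does not give it for the midpoint.

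The fix is what the paper does. Choose
\[
2\frac{1+\kappa_\varepsilon}{p_\varepsilon}\in\Bigl(\frac2\rho,\ \min\Bigl\{1,\ \frac12+\frac2\rho-\frac1{q_\varepsilon}\Bigr\}\Bigr),
\]
which is nonempty because $\rho>2$ and $q_\varepsilon>2$. Concretely, take $\kappa_\varepsilon=0$ and $s_\varepsilon$ close to $1-\frac1{q_\varepsilon}$. Then $\frac{2}{p_\varepsilon}=1+\frac2\rho-\frac1{q_\varepsilon}-s_\varepsilon$ is close to $\frac2\rho<1$, so $p_\varepsilon>2$.

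With this change the rest of your argument is correct:
\begin{itemize}
\item the equivalences giving $s_\varepsilon\in(\frac12,1-\frac1{q_\varepsilon})$;
\item the checks of \eqref{eq: mainlocal combined ineq f 1}--\eqref{eq: mainlocal combined ineq f 2} and \eqref{eq: thm local s ineq 3 zeta infty};
\item the deduction of time continuity in the critical space, H\"older regularity, and the Dirichlet boundary condition.
\end{itemize}
It is in fact more detailed than the paper's proof. One small point: in \eqref{eq: mainlocal combined ineq f 1} with $d=1$, the upper bound is $\min\{2,\,1+\frac{\rho+1}{q}\}$. You checked only the second term; the first, $\frac{\rho+2}{\rho}<2$, again uses $\rho>2$.
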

We point out that the space of initial values is critical. Moreover, for $\varepsilon\downarrow 0$ we have $q_{\varepsilon}\uparrow \frac{\rho(\rho + 1)}{2}$. 
\begin{proof}

We apply Theorem~\ref{thm:mainlocal zeta infty}\eqref{it2:mainlocal zeta infty}
with $\rho_1=\rho, \rho_2=\frac{\rho}{2}$ and $\rho_3=\frac{\rho}{4}$.
Choose \(s_\varepsilon\) such that
$\frac12<s_\varepsilon<1-\frac{1}{q_\varepsilon}$.
Set
\[
    \kappa_\varepsilon:=0,
    \qquad
    \frac{2}{p_\varepsilon}
    :=
    1+\frac{2}{\rho}
    -
    \frac{1}{q_\varepsilon}
    -
    s_\varepsilon,
\]
which coincides with the critical relation in
Theorem~\ref{thm:mainlocal zeta infty}\eqref{it2:mainlocal zeta infty}. 
Note that $\frac{2}{p_\varepsilon}>\frac{2}{\rho}$, and hence
$p_\varepsilon<\rho$. Moreover, since $\rho>2$, by choosing
$s_\varepsilon$ sufficiently close to $1-\frac{1}{q_\varepsilon}$ we may
ensure that $\frac{2}{p_\varepsilon}<1$, and thus \(p_\varepsilon>2\). 

The corresponding trace exponent is
\[
    1-s_\varepsilon-2\frac{1+\kappa_\varepsilon}{p_\varepsilon}
    =
    \frac{1}{q_\varepsilon}-\frac{2}{\rho}
    =
    -\frac{2}{\rho+1}+\varepsilon.
\]
Thus, the initial-value space is as stated in the theorem. 
The asserted local well-posedness follows from
Theorem~\ref{thm:mainlocal zeta infty}.
\end{proof}

\begin{remark}[$L^r(\cO)$-initial data for 1D space-time white noise]
\label{rem:L r stwn}
    Again we see that for any $r\geq \tfrac{\rho}{2}$ we may find $(s,q,p,\kappa)$ such that $L^r\hookrightarrow \BD^{-\frac{2}{\rho + 1} +\varepsilon }_{q_\varepsilon, p_\varepsilon}(\cO)$ and such that \eqref{eq:heatwhite} is well-posed in $L^r(\cO)$.
\end{remark}

In our follow-up paper we will study sufficient conditions for global well-posedness for the typical physical nonlinearities. The following global result is somewhat separate from our future work. Therefore, it is more natural to present it here already. The proof is a  combination of our theory with \cite[Theorem 1.9]{DKZ19}.
\begin{corollary}[Special cases with global existence and uniqueness]
Suppose that the conditions of Theorem \ref{thm:maximal_initial_data_rho1_rho3} are satisfied. Let $(u,\sigma)$ be the maximal $(s_\varepsilon,q_\varepsilon,p_\varepsilon,\kappa_\varepsilon)$-solution given there. 
If $|f(y)| = O(|y|\log(|y|))$ and $g(y)=o(|y|\log(|y|)^{1/4})$ for $|y|\to 
\infty$, then $u$ is the unique global $(s_\varepsilon,q_{\varepsilon},p_\varepsilon,\kappa_\varepsilon)$-solution, i.e.\ $\sigma=\infty$. 
\end{corollary}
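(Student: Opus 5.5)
We combine the local theory of Theorem~\ref{thm:maximal_initial_data_rho1_rho3} with the global result \cite[Theorem 1.9]{DKZ19} by a restarting argument. Fix $r>0$. By Theorem~\ref{thm:mainlocal zeta infty}\eqref{it3:mainlocal zeta infty} (equivalently the regularity part of Theorem~\ref{thm:maximal_initial_data_rho1_rho3}), a.s.\ on $\{\sigma>r\}$ we have $u(r)\in C(\overline{\cO})$ with $u(r)=0$ on $\partial\cO$, and in fact $u(r)\in \HD^{1/2-\delta,\bar q}(\cO)$ for all $\bar q$. On $\{\sigma>r\}$ we consider the problem \eqref{eq:heatwhite} restarted at time $r$ with $\cF_r$-measurable initial datum $u(r)\one_{\{\sigma>r\}}$ (with $W$ shifted to $W(r+\cdot)-W(r)$ and the filtration shifted accordingly). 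Under the growth assumptions $|f(y)|=O(|y|\log|y|)$ and $g(y)=o(|y|\log(|y|)^{1/4})$ together with local Lipschitzness, \cite[Theorem 1.9]{DKZ19} yields a unique global mild (random field) solution $v$ on $[r,\infty)$ with bounded continuous initial data, which is a.s.\ continuous in $(t,x)$ and hence satisfies $\sup_{t\in[r,T]}\|v(t)\|_{L^\infty(\cO)}<\infty$ a.s.\ for every $T<\infty$.

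Next we identify $v$ with $u$ on $[r,\sigma)$. Since both are continuous bounded-in-space processes on compact subintervals, the mild formulation of \cite{DKZ19} and our strong $(s,q,p,\kappa)$-formulation coincide (Remark~\ref{rem:solconcept}\eqref{it2:solconcept}, using $a=I$ so the semigroup is the Dirichlet heat semigroup). To compare them rigorously, we use the local theory in a smooth setting: for bounded data, Theorem~\ref{thm:mainlocal zeta infty}\eqref{it1:mainlocal zeta infty} with small $s>1/2$ and large $q$ gives a maximal solution starting at $r$, and uniqueness there, combined with the localization property of Theorem~\ref{thm:mainlocal}\eqref{it2:localwellposed}, shows $u=v$ on $[r,\sigma)$. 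Conversely, the stopped $v$ up to the exit time from $L^\infty$-balls is a local $(s,q,p,\kappa)$-solution (its regularity is checked via Theorem~\ref{thm:mainlocal zeta infty}\eqref{it3:mainlocal zeta infty}-type estimates), so by maximality it cannot extend beyond $\sigma$ without $\sigma$ being infinite.

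With $u=v$ on $[r,\sigma)$, on $\{r<\sigma<\infty\}$ we get $\sup_{t\in[r,\sigma)}\|u(t)\|_{L^m(\cO)}\le |\cO|^{1/m}\sup_{t\in[r,\sigma]}\|v(t)\|_{L^\infty}<\infty$ for every $m$. The blow-up criterion of Theorem~\ref{thm:mainlocal zeta infty}\eqref{it4:mainlocal zeta infty}, i.e.\ Corollary~\ref{cor: blow up}\eqref{it:cor_blow_up1} with $m>q_0=m_0=\rho/2$ (the hypothesis $\frac12>\frac1q-\frac2\rho$ is satisfied by the choice of $q_\varepsilon$), then gives $\P(r<\sigma<\infty)=0$. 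Since $\sigma>0$ a.s., letting $r\downarrow0$ along rationals yields $\sigma=\infty$ a.s. Uniqueness of the global solution is inherited from the maximal uniqueness in Theorem~\ref{thm:maximal_initial_data_rho1_rho3}.

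The main obstacle is the identification step: one must verify that the DKZ solution and our strong solution coincide, which requires matching solution concepts (mild versus strong in negative-smoothness spaces), checking DKZ's hypotheses (Dirichlet interval, bounded continuous data, filtration shift), and making sure the restart at a deterministic time $r$ is measurable. I would handle it by localizing with stopping times $\tau_n=\inf\{t\ge r:\|v(t)\|_\infty\ge n\}$, where $f,g$ are effectively globally Lipschitz, so that both concepts agree by standard arguments.
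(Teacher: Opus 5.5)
Your proposal follows the paper's proof. You restart at a positive time using instantaneous regularization, obtain a global continuous solution from \cite[Theorem 1.9]{DKZ19}, identify it with $u$ on $[r,\sigma)$, conclude with the blow-up criterion of Corollary \ref{cor: blow up}, and finally let $r\downarrow 0$. The only variation is the direction of the identification step: you show the stopped DKZ solution is a local strong solution and invoke the paper's own uniqueness and maximality, whereas the paper regards $\one_{\{\sigma>\delta\}}u$ as a random field solution and invokes the uniqueness in \cite{DKZ19}; both directions rest on the same standard mild/strong equivalence for locally bounded solutions.
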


\begin{proof}
Next we establish global well-posedness under the additional growth conditions on $f$ and $g$. Choose an arbitrary time $\delta > 0$. By the instantaneous regularization property established above, almost surely on the event $\{\sigma > \delta\}$, the solution is Hölder continuous and satisfies Dirichlet boundary conditions, meaning $u(\delta) \in \CD^\alpha(\overline{\cO})$ for some $\alpha \in (0, 1)$. 
Consider the problem
\begin{equation}\label{eq:heatwhite-delta}
\left\{
\begin{aligned}
\dd v - \Delta v \,\dd t &= f(v) \,\dd t + g(v) \,\dd W(t),\\
v(\delta) &= \one_{\sigma>\delta} u(\delta).
\end{aligned}\right.
\end{equation}
Clearly $\one_{\sigma>\delta} u(\delta)\in \CD^\alpha(\overline{\cO})$ and is strongly $\mathscr{F}_{\delta}$-measurable. 
Because $f$ and $g$ are locally Lipschitz continuous and satisfy the asymptotic growth bounds $|f(y)| = O(|y|\log|y|)$ and $|g(y)| = o(|y|(\log|y|)^{1/4})$, all hypotheses of \cite[Theorem 1.9]{DKZ19} are satisfied. By their result, the stochastic reaction-diffusion equation starting from $t=\delta$ admits a unique, global random field solution $v\in C([\delta,\infty)\times\overline{\cO})$ with $v = 0$ on $\partial \cO$. Since $\one_{\sigma>\delta} u$ is a random field solution to \eqref{eq:heatwhite-delta} on $[\delta,\sigma)$, the uniqueness in \cite[Theorem 1.9]{DKZ19} implies that $u = v$ on $[\delta,\sigma)$. In particular, $u$ does not blow up at $\sigma$, and thus 
\begin{align*}
\P(\{\delta<\sigma<\infty\}) &= \P(\{\delta<\sigma<\infty\} \cap \{ \sup_{t\in [\delta,\sigma)} \|u(t)\|_{C(\cO)}<\infty \}) = 0,
\end{align*}
where in the last step we used Corollary \ref{cor: blow up}. Letting $\delta\downarrow 0$ and using that $\sigma>0$ a.s.\ we obtain that $\sigma=\infty$ a.s.
\end{proof}

\begin{remark}
The above result can be seen as a partial extension of \cite[Theorem 1.9]{DKZ19}. As is clear from the proof our result also relies on the latter. In the latter paper only locally Lipschitzness of $f$ and $g$ is required, whereas we have an assumption on the polynomial growth of order $\rho$ of the Lipschitz constant. On the other hand, they require that the initial data is in $\CD^{\varepsilon}(\overline{\cO})$ and we consider initial data from the much larger space of distributions $\BD^{-\frac{2}{\rho + 1} + \varepsilon}_{q_\varepsilon, p_\varepsilon}(\cO)$ with $q_\varepsilon = \big( \frac{2}{(\rho + 1)\rho}+\varepsilon\big)^{-1}$. By Sobolev embedding and taking $\varepsilon>0$ small enough, one sees that our initial value space contains initial data from $L^{v}(\cO)$ for any $v\in [\rho/2,\infty)\cap (1, \infty)$. In particular, if $\rho=2$, this leads to initial values from $L^v(\cO)$ with $v\in (1, \infty)$. 

In an even more recent paper, \cite{FKN25} showed that the polynomial growth of order $\rho$ of the Lipschitz constant bound can be removed for $L^2(\cO)$-initial data, provided the noise is bounded. In contrast, our work handles initial data that is significantly rougher (distributions in $B^{-2/3+}_{3,\infty}(\cO)$) and allows for unbounded noise. To handle the extreme initial roughness and unbounded noise simultaneously, we demonstrate that a polynomial constraint ($\rho_1$) on the local Lipschitz geometry is sufficient to bridge the solution into the globally well-posed continuous regime. Our results show that for nonlinearities $f$ and $g$ with polynomially growing Lipschitz constants, the stated  Problems 1.4 and 1.5 in \cite{FKN25} have positive answers. The case of general locally Lipschitz functions remains open. A typical example which is not covered by our setting is $f(u) = \sin(e^{u})$. 
\end{remark}

\section{Proofs of the main results}\label{sec:proofs}

\subsection{Abstract stochastic evolution equations in critical spaces}\label{sec:loc-well-posed}

In this subsection, we recall existence and uniqueness results for the stochastic evolution equation from \cite{AV19_QSEE_1,AV19_QSEE_2, AVsurvey,AV-ob}. 
Let $X_0$ and $X_1$ be UMD Banach spaces with type $2$ such that $X_1$ embeds continuously and densely into $X_0$. On $X_0$ we consider the following equation
\begin{equation}
\label{eq:SEE}
\left\{
\begin{aligned}
&\dd u + A u \,\dd t = [\Phi_1(\cdot, u) + \Phi_2(\cdot, u)+\phi]\,\dd  t + [\Psi(\cdot, u) +\psi] \,\dd  W_\mathcal{U}(t),\\
&u(0)=u_{0}.
\end{aligned}
\right.
\end{equation}
Here $A$ is the leading order (differential) operator, $A:X_1\mapsto X_0$. 
The nonlinearities $\Phi_1, \Phi_2$ and $\Psi$ are assumed to be locally Lipschitz, but are allowed to have critical forms. 
The functions $\phi,\psi$ model inhomogeneities and $W_\mathcal{U}$ is an $\mathcal{U}$-cylindrical Brownian motion.

The natural regularity space for the solution to \eqref{eq:SEE} is
\begin{equation}
\label{eq:max_reg_space}
L^p(0,T,w_\kappa;X_1)\cap C([0,T];X_{1-\frac{1+\kappa}{p},p}),
\end{equation}
where we denote $w_{\kappa}^a(t) = (t-a)^{\kappa}$, for $a\geq 0$, and $\kappa\in [0,\frac{p}{2}-1)$ in case $p>2$. If $a=0$ we simply write $w_\kappa^a = w_\kappa$. The same space is used for $p=2$ (and $\kappa = 0$), where we instead use the complex interpolation space $X_{\frac{1}{2}}$ instead of $X_{\frac{1}{2},2}$.

For the leading operator $A$ we will need a form of stochastic maximal $L^p$-regularity, \cite{AVsurvey}. However, since the (non)linearities in the noise term are of lower order, transference results (see \cite{AV-ob} and \cite[Theorem 3.9]{VP18}) imply that we can reduce this to two deterministic conditions: 
\begin{itemize}
\item pathwise maximal $L^p$-regularity of $A$ with uniform constants in $\Omega$;
\item the existence of a  reference operator $A_0$ with a  bounded $H^\infty$-calculus of angle $<\pi/2$.
\end{itemize}
For details on the $H^\infty$-calculus the reader is referred to  \cite{Analysis2}. In particular, the calculus implies that $A_0$ has stochastic maximal $L^p$-regularity (see \cite{MaximalLpregularity}). In applications to second order equations, one can typically take $A_0 = -\Delta$, in case of Dirichlet boundary conditions. If $A$ does not depend on $(t,\omega)$, one could just take $A_0=A$ in most situations. 

For completeness and later reference we include a definition of maximal $L^p$-regularity for time-dependent families.
\begin{definition}[Maximal $L^p$-regularity]\label{def:MaxReg}
Let $p\in (1, \infty)$ and $\kappa\in (-1,p-1)$. A strongly measurable operator family $L:[0,T]\to \calL(X_1, X_0)$ is said to have maximal $L^p(0,T,w_{\kappa})$-regularity if for every $0\leq a<b\leq T$ and any $f\in L^p(a,b,w_{\kappa}^a;X_0)$ there exists a unique $u\in L^p(a,b,w_{\kappa}^a;X_1)$ such that for all $t\in [a,b]$, 
\[u(t)  + \int_a^t L(s) u(s)  \,\dd  s= \int_a^t f(s) \,\dd  s\]
and there is a constant $C_T$ independent of $a,b$ and $f$ such that 
\[\|u\|_{L^p(a,b,w_{\kappa}^a;X_1)}\leq C_T\|f\|_{L^p(a,b,w_{\kappa}^a;X_0)}.\]
\end{definition}

\begin{assumption}\label{ass:FGcritical}
Let $X_0$ and $X_1$ be UMD spaces with type $2$ and suppose that $X_1\hookrightarrow X_0$ densely. 
  Let $p\in (2, \infty)$, $\kappa\in [0,p/2-1)$.
Suppose that 
\begin{itemize}
    \item $A:[0,\infty)\times\Omega\to \calL(X_1, X_0)$ is strongly $\Progress$-measurable and for every $T<\infty$ there exists a constant $C_T$ such that for all $\omega\in \Omega$, $A(\cdot,\omega)$ has maximal $L^p(0,T,w_{\kappa})$-regularity with constant $\leq C_T$.
\item {\rm (Reference operator)}
there exists $A_0:\mathsf{D}(A)=X_1\subseteq X_0\to X_0$ which has a bounded $H^\infty$-calculus of angle $<\pi/2$.
\end{itemize}
Let $(p_f,\alpha_f)$ be such that $\tfrac{1}{p_f} = \tfrac{1}{p}+\tfrac{\alpha_f}{1+\kappa}$. Assume that 
$$
\Phi_1, \Phi_2:[0,\infty)\times \Omega\times X_1\to X_0\quad  \text{ and } \quad \Psi:[0,\infty)\times \Omega\times X_1\to \gamma(\mathcal{U},X_{\frac12})
$$
are $\mathcal{P}\otimes \mathcal{B}(X_1)$-measurable and that the following local Lipschitz conditions are satisfied: 

$\Phi_1(\cdot,0) = \Phi_2(\cdot, 0)=0$ and $\Psi(\cdot, 0) = 0$ and 
there exists $L$ such that for all $u,v\in X_1$,
\begin{align*}
\|\Phi_1(\cdot, u) - \Phi_1(\cdot, v)\|_{X_{\alpha_1}}
&\leq L (1+\|u\|_{X_{\beta_1}}^{\rho_1} + \|v\|_{X_{\beta_{1}}}^{\rho_1}) \|u-v\|_{X_{\beta_{1}}},
\\ \|\Phi_2(\cdot, u) - \Phi_2(\cdot, v)\|_{X_{\alpha_2}}& \leq L (1+\|u\|_{X_{\beta_2}}^{\rho_2} + \|v\|_{X_{\beta_{2}}}^{\rho_2}) \|u-v\|_{X_{\beta_{2}}},
\\ \|\Psi(\cdot, u) - \Psi(\cdot, v)\|_{\gamma(\mathcal{U},X_\frac12)}& \leq 
L (1+\|u\|_{X_{\beta_3}}^{\rho_3} + \|v\|_{X_{\beta_{3}}}^{\rho_3}) \|u-v\|_{X_{\beta_{3}}},
\end{align*}
where $\alpha_1,\alpha_2\in [0,1-\frac{1+\kappa}{p})$, $\beta_1,\beta_2, \beta_3 \in  (1-\frac{1+\kappa}{p},1)$, and $\rho_1,\rho_2, \rho_3>0$ satisfy
\begin{align}\label{eq:subcritical}
\frac{1+\kappa}{p} & \leq \frac{(1+\rho_1)(1-\beta_1)+\alpha_1 }{\rho_1},
\\ \label{eq:subcritical2}
\frac{1+\kappa}{p} & \leq \frac{(1+\rho_2)(1-\beta_2)+\alpha_2 }{\rho_2},
\\ \label{eq:subcritical3} \frac{1+\kappa}{p}&\leq \frac{(1+\rho_3)(1-\beta_3)}{\rho_3}. 
\end{align}
Finally, suppose that $\phi\in L^0(\Omega;L^p(\R_+,w_{\kappa};X_0))$ and $\psi\in L^0(\Omega;L^p(\R_+,w_{\kappa};\gamma(\mathcal{U},X_\frac12)))$ are progressively measurable inhomogeneities, and that the initial data $u_0:\Omega\to X_{1-\frac{1+\kappa}{p},p}$ is strongly $\cF$-measurable
\end{assumption}
The conditions in Assumption \ref{ass:FGcritical} imply that $A$ has stochastic maximal $L^p$-regularity on bounded time-intervals with weight $t^{\kappa}$ (see \cite{MaximalLpregularity} and \cite[Theorem 3.9]{VP18}).

\smallskip

The couple $(p,\a)$ or the setting $(X_0,X_1,p,\a)$ will be called {\em critical} (resp., {\em subcritical}) for \eqref{eq:SEE} if at least one of the \eqref{eq:subcritical}, \eqref{eq:subcritical2}, \eqref{eq:subcritical3} holds with equality (resp., if strict inequality holds in each \eqref{eq:subcritical}, \eqref{eq:subcritical2}, \eqref{eq:subcritical3}). Similar terminology extends naturally to the trace space $X_{1-\frac{1+\kappa}{p},p}$.

\begin{definition}
Suppose Assumption \ref{ass:FGcritical} is satisfied for some $p\in (2, \infty)$ and $\kappa\in [0, p/2-1)$. A pair $(u,\sigma)$ is called an {\em $L^p_{\kappa}$-strong solution} of \eqref{eq:SEE} if $\sigma:\Omega\to [0,\infty)$ is a stopping time and $u:[0,\sigma]\to X_0$ is a strongly progressively measurable process such that
\[u\in L^p(0,\sigma,w_{\kappa};X_1)\cap C([0,\sigma];X_{1-\frac{1+\kappa}{p},p}),\]
and the following identity holds a.s.\ for all $t\in [0,\sigma]$:
\begin{equation}\label{eq:stronsol}
\begin{aligned}
u(t) - u_0 + \int_0^t A u(s) \,\dd s = & \int_0^t \Phi_1(s,u(s)) + \Phi_2(s,u(s)) + \phi(s) \,\dd s 
\\ & + \int_{0}^t \one_{[0,\sigma]}(s) \Psi (s,u(s)) +\psi(s) \,\dd  W(s).
\end{aligned}
\end{equation}
\end{definition}
Let $\frac{1}{p_j} = \frac{1}{p} + \frac{\alpha_j}{1+\kappa}$ for $j\in \{1, 2\}$. Using interpolation estimates one can check that for \[u\in L^p(0,\sigma,w_{\kappa};X_1)\cap C([0,\sigma];X_{1-\frac{1+\kappa}{p},p}),\] one has
\begin{align*}\Phi_1(\cdot,u)&\in L^{p_1}(0,\sigma,w_{\kappa};X_{\alpha_1}),\\  
\Phi_2(\cdot,u)&\in L^{p_2}(0,\sigma,w_{\kappa};X_{\alpha_2}), 
\\ \Psi(\cdot, u) & \in L^p(0,\sigma,w_{\kappa};\gamma(\mathcal{U},X_{1/2}))
\end{align*}
Moreover, this implies that all the integrals are well-defined in $X_0$.

\begin{definition}\label{def:localsol}
Suppose Assumption \ref{ass:FGcritical} is satisfied for some $p\in (2, \infty)$ and $\kappa\in [0, p/2-1)$.
\begin{enumerate}[{\rm(1)}]
\item\label{it1:localsol} A pair $(u,\sigma)$ is called an {\em $L^p_{\kappa}$-local solution} to \eqref{eq:SEE} if $\sigma:\Omega\to [0,\infty]$ is a stopping time and $u:[0,\sigma)\to X_0$ is a strongly progressively measurable process, and there exists an increasing sequence of stopping times $(\sigma_{n})_{n\geq 1}$ such that $\lim_{n\to \infty} \sigma_n = \sigma$ a.s., and $(u|_{[0,\sigma_n]}, \sigma_n)$ is a $L^p_{\kappa}$-strong solution to \eqref{eq:SEE}. The sequence $(\sigma_{n})_{n\geq 1}$  is called a {\em localizing sequence} for $(u, \sigma)$.
\item\label{it2:localsol} An $L^p_{\kappa}$-local solution $(u,\sigma)$ to \eqref{eq:SEE} is called {\em unique} if for every $L^p_{\kappa}$-local solution $(v,\tau)$ one has that a.s.\ $u=v$ on $[0,\sigma\wedge \tau)$.
\item\label{it3:localsol} An $L^p_{\kappa}$-local solution $(u,\sigma)$ to \eqref{eq:SEE} is called {\em $L^p_{\kappa}$-maximal} if for any other unique $L^p_{\kappa}$-local solution $(v, \tau)$ to \eqref{eq:SEE} one has that a.s.\ $\tau\leq \sigma$ and $u = v$ on $[0,\tau)$.
\item An $L^p_{\kappa}$-local solution $(u,\sigma)$ of \eqref{eq:SEE} is called {\em global} if $\sigma=\infty$ a.s.
\end{enumerate}
\end{definition}

The following theorems are results from the forthcoming paper \cite{AV-ob}, which are improvements of analogous results in \cite{AV19_QSEE_1} and \cite{AV19_QSEE_2}.

\begin{theorem}[Local existence and uniqueness for evolution equations]
\label{thm:local}
Suppose that Assumption \ref{ass:FGcritical} holds. Let $\theta^* = \min\{1-\alpha_1, 1-\alpha_2, \frac{1}{2}\}$. Then for every $u_0\in L^0_{\cF_0}(\Omega;X_{1-\frac{1+\kappa}{p},p})$, there exists an $L^p_{\kappa}$-maximal solution $(u,\sigma)$ to \eqref{eq:SEE} with $\sigma>0$ a.s. Moreover, the following properties hold:
\begin{enumerate}[{\rm(1)}]
\item\label{it1:localwellposedabs} {\em (Regularity)} For each localizing sequence $(\sigma_n)_{n\geq 1}$ for $(u,\sigma)$ one has
for all $n\geq 1$ and all $\theta\in [0,\theta^*)$
  \[u\in H^{\theta,p}(0,\sigma_n,w_{\kappa};X_{1-\theta})\cap C([0,\sigma_n];X_{1-\frac{1+\kappa}{p},p}) \ \ \text{a.s.},\]
  and one has the following instantaneous regularity
  \begin{align}\label{eq:instant}
  u\in H^{\theta,p}_{\rm loc}((0,\sigma);X_{1-\theta})\cap C((0,\sigma);X_{1-\frac{1}{p},p}) \ \ \ \text{a.s.}
  \end{align}
\item\label{it2:localwellposedabs} {\em (Localization)} Let $v_0\in L^0_{\cF_0}(\Omega;X_{1-\frac{1+\kappa}{p},p})$. If $(v,\tau)$ is the $L^p_{\kappa}$-maximal solution to \eqref{eq:SEE} with initial value $v_0$, then a.s.\ on the set $\{u_0=v_0\}$ one has $\tau=\sigma$ and $u = v$ on $[0,\sigma\wedge \tau)$.
\end{enumerate}
\end{theorem}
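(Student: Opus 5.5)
The plan follows the strategy of \cite{AV19_QSEE_1,AV19_QSEE_2}. The one new ingredient needed to accommodate the range shifts $\alpha_1,\alpha_2$ is a linear estimate trading spatial regularity of the forcing for temporal integrability. Throughout I use the $H^\infty$-calculus of $A_0$, the uniform pathwise maximal $L^p(0,T,w_\kappa)$-regularity of $A$, and the transference results cited before Definition \ref{def:MaxReg}, which give stochastic maximal $L^p$-regularity of $A$ with weight $w_\kappa$.

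\emph{Step 1: the linear estimate with range shift.} I would prove the following. Let $\alpha\in[0,1-\frac{1+\kappa}{p})$ and $\frac1{p_\alpha}=\frac1p+\frac{\alpha}{1+\kappa}$. Then the solution $w$ of $\dd w+Aw\,\dd t=\phi\,\dd t$, $w(0)=0$, satisfies
\begin{equation*}
\|w\|_{L^p(0,T,w_\kappa;X_1)}+\|w\|_{C([0,T];X_{1-\frac{1+\kappa}{p},p})}\le C_T\|\phi\|_{L^{p_\alpha}(0,T,w_\kappa;X_\alpha)},
\end{equation*}
with $C_T$ nondecreasing in $T$. For the reference operator $A_0$ the argument runs in three steps.
\begin{enumerate}
\item Maximal $L^{p_\alpha}(w_\kappa)$-regularity on the shifted couple $(X_\alpha,X_{1+\alpha})$, obtained from the $H^\infty$-calculus via fractional powers, gives $w\in H^{1,p_\alpha}(w_\kappa;X_\alpha)\cap L^{p_\alpha}(w_\kappa;X_{1+\alpha})$.
\item The mixed-derivative theorem then gives $w\in H^{\alpha,p_\alpha}(w_\kappa;X_1)$.
\item The weighted Sobolev embedding $H^{\alpha,p_\alpha}(w_\kappa)\hookrightarrow L^p(w_\kappa)$ holds precisely because $\alpha-\frac{1+\kappa}{p_\alpha}=-\frac{1+\kappa}{p}$.
\end{enumerate}
The trace estimate follows similarly. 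To pass to the $(t,\omega)$-dependent $A$, write $\dd w+Aw\,\dd t=\phi\,\dd t$ as $\dd w+A_0w\,\dd t=\phi\,\dd t+(A_0-A)w\,\dd t$. I would use the $A_0$-estimate for the $\phi$-part and the uniform maximal $L^p$-regularity of $A$ for the $(A_0-A)w\in L^p(w_\kappa;X_0)$-part. This requires a short bootstrap, since the right-hand side involves $w$ itself, but this is routine.

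\emph{Step 2: nonlinear estimates and the truncated fixed point.} For $u,v$ in $\mathbb{E}_T:=L^p(0,T,w_\kappa;X_1)\cap C([0,T];X_{1-\frac{1+\kappa}{p},p})$, I would estimate $\|\Phi_j(u)-\Phi_j(v)\|_{L^{p_j}(w_\kappa;X_{\alpha_j})}$ and $\|\Psi(u)-\Psi(v)\|_{L^p(w_\kappa;\gamma(\mathcal U,X_{1/2}))}$. The tools are the reiteration inequality $\|x\|_{X_{\beta}}\lesssim\|x\|_{X_{1-\frac{1+\kappa}{p},p}}^{1-\vartheta}\|x\|_{X_1}^{\vartheta}$, Hölder's inequality in time with exponents dictated by $p_j$, and weighted Sobolev embeddings of $\mathbb E_T$ into $L^r(w_\kappa;X_\beta)$. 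Conditions \eqref{eq:subcritical}--\eqref{eq:subcritical3} say exactly that the total power of the $X_1$-type norms is at most one. In the critical case one obtains a bound of the form $C(1+R^{\rho_j})\|u-v\|_{\mathbb E_T}$, in which $R$ is an $L^p(X_1)$-type norm that becomes small on short intervals (no small power of $T$ is available). I would therefore truncate the nonlinearities with a cutoff $\Theta_n$ applied to the running norm $\|u\|_{\mathbb E_t}$, as in \cite{AV19_QSEE_1}. The solution of the linear problem with data $u_0$ has small $L^p(0,T;X_1)$-norm for small $T$, uniformly on sets where $\|u_0\|$ is bounded, and with Step 1 and the stochastic maximal regularity this makes the truncated map a contraction on $L^2(\Omega;\mathbb E_T)$. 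The inhomogeneities $\phi,\psi$ are treated as forcing, first localized so that they lie in $L^2(\Omega)$. Removing the truncation with the stopping times $\sigma_n=\inf\{t:\|u\|_{\mathbb E_t}\ge 1/n\ \text{or}\ \dots\}$ and gluing the solutions obtained on the sets $\{\|u_0\|\le k\}$ yields a local solution with $\sigma>0$ a.s. A Zorn-type or countable supremum argument over unique local solutions then produces the maximal solution, and uniqueness follows from applying the same contraction estimate to the difference of two solutions.

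\emph{Step 3: localization and regularity.} The localization statement \eqref{it2:localwellposedabs} follows from the construction. The fixed points for $u_0\one_{\Gamma}$ and $v_0\one_\Gamma$ with $\Gamma=\{u_0=v_0\}\in\cF_0$ agree, and $\one_\Gamma$ commutes with the stochastic integral. For \eqref{it1:localwellposedabs}, by Steps 1--2 we have $\Phi_j(u)\in L^{p_j}(w_\kappa;X_{\alpha_j})$, so $\Phi_j(u)\in H^{1,p_j}(X_{\alpha_j})\cap L^{p_j}(X_{1+\alpha_j})$ at the level of the solution operator. The stochastic part lies in $H^{\theta,p}(w_\kappa;X_{1-\theta})$ for $\theta<\frac12$. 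Combining these via mixed derivatives and Sobolev embedding gives $u\in H^{\theta,p}(w_\kappa;X_{1-\theta})$ for $\theta<\min\{1-\alpha_1,1-\alpha_2,\frac12\}=\theta^*$. For \eqref{eq:instant}, on $[\delta,\sigma)$ the weight $w_\kappa$ is comparable to $1$, so restarting at $t=\delta$ with the $\kappa=0$ estimates gives the unweighted statement, and continuity in $X_{1-\frac1p,p}$ follows from the trace embedding.

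\emph{Main obstacle.} I expect Step 1, combined with the critical nonlinear bound of Step 2, to be the main difficulty. The shifted-regularity argument must hold for a merely measurable-in-$(t,\omega)$ operator $A$, not just for $A_0$, and the endpoint embedding $H^{\alpha,p_\alpha}(w_\kappa)\hookrightarrow L^p(w_\kappa)$ has no room to spare. If the perturbation argument from $A_0$ fails, I would first try to establish maximal regularity of $A$ directly on the extrapolated couple $(X_\alpha,X_{1+\alpha})$, using the uniformity of $C_T$ in $\omega$.
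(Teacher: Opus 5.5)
\paragraph{Context and what already works.} The paper does not prove this theorem; it imports it from the forthcoming \cite{AV-ob} as a range-shifted version of \cite{AV19_QSEE_1,AV19_QSEE_2}. Your architecture is the natural one: a linear estimate that absorbs the shift $\alpha_j$, followed by the truncation/fixed-point scheme of \cite{AV19_QSEE_1}.

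Step 1 is sound. The constraint $\alpha<1-\frac{1+\kappa}{p}$ is exactly $\kappa<p_\alpha-1$, i.e.\ $w_\kappa\in A_{p_\alpha}$, which is what your weighted maximal regularity on $(X_\alpha,X_{1+\alpha})$ needs. The trace then lands in $X_{1-\frac{1+\kappa}{p},p_\alpha}\hookrightarrow X_{1-\frac{1+\kappa}{p},p}$. Passing to $A$ needs no bootstrap: take $w=w_1+w_2$, where $w_1$ is the $A_0$-solution with forcing $\phi$ and $w_2$ is the $A$-solution with forcing $(A_0-A)w_1\in L^p(w_\kappa;X_0)$.

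Step 3 is also fine. The bound $\theta<1-\alpha_j$ is exactly what the embedding $H^{\theta+\alpha_j,p_j}(w_\kappa;X_{1-\theta})\hookrightarrow H^{\theta,p}(w_\kappa;X_{1-\theta})$ requires. For \eqref{eq:instant}, do not restart at $\delta$, since that would need $u(\delta)\in X_{1-\frac1p,p}$, which is not yet known. Instead apply the linear theory to $\chi u$ with $\chi$ smooth and vanishing near $0$.

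\paragraph{The gap: the critical mechanism in Step 2.} There are three problems.
\begin{enumerate}
\item Your claim that the free evolution of $u_0$ has small $L^p(0,T,w_\kappa;X_1)$-norm uniformly on $\{\|u_0\|_{X_{1-\frac{1+\kappa}{p},p}}\le k\}$ is false. Take $A=A_0=-\Delta_D$ and an eigenfunction $u_0$ with eigenvalue $\lambda_n\to\infty$. The semigroup characterization of the trace space gives $\|e^{-\lambda_n t}u_0\|_{L^p(0,T,w_\kappa;X_1)}\eqsim\|u_0\|_{X_{1-\frac{1+\kappa}{p},p}}$ once $\lambda_nT\ge 1$. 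In the critical setting the existence time cannot be bounded below in terms of $\|u_0\|$ alone.
\item Your cutoff acts on the full running norm $\|u\|_{\mathbb E_t}\ge\|u_0\|_{X_{1-\frac{1+\kappa}{p},p}}$. At level $1/n$ the truncated nonlinearities therefore vanish from time $0$, and $\sigma_n=0$ whenever $\|u_0\|\ge 1/n$.
\item A Lipschitz factor $C(1+R^{\rho_j})$ is never a contraction. The linear ``$1+$'' part must be shown to contribute $CT^{\varepsilon}$, which it does because $r_j>p_j$.
\end{enumerate}

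\paragraph{The repair.} Use the device of \cite{AV19_QSEE_1} and truncate on the norm your H\"older step actually produces: $\|u\|_{L^{r_j}(0,t,w_\kappa;X_{\beta_j})}$ with $r_j=(\rho_j+1)p_j$, and $r_3=(\rho_3+1)p$ for $\Psi$. In this norm the critical part is homogeneous of degree $\rho_j+1$.

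Set $\vartheta_j=1-\frac{p(1-\beta_j)}{1+\kappa}\in(0,1)$. Your reiteration inequality, together with the fact that \eqref{eq:subcritical} is exactly $r_j\le p/\vartheta_j$, bounds this norm of $u-v$ by $\|u-v\|_{\mathbb E_T}$, uniformly for $T\le 1$. The truncated nonlinearities are then Lipschitz with constant $\lesssim\lambda^{\rho_j}+T^{\varepsilon}$, independently of $u_0$. This includes the cross term $(\Theta_\lambda(u)-\Theta_\lambda(v))\Phi_j(v)$, which costs $\lambda^{-1}\cdot\lambda^{\rho_j+1}$.

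Your interpolation bound suggests cutting off $\sup_s\|u(s)\|_{X_{1-\frac{1+\kappa}{p},p}}$ at a large level $M$ and $\|u\|_{L^p(0,t,w_\kappa;X_1)}$ at a small level separately. That does not close. When equality holds with $\alpha=0$, which is always the case for $\Psi$, the available bound on the cross term is only $M^{(1-\vartheta_3)(\rho_3+1)}\|u-v\|_{\mathbb E_T}$, which is not small.

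With the correct truncation, the local solution lives until $\|u\|_{L^{r_j}(0,t,w_\kappa;X_{\beta_j})}$ reaches $\lambda$. This time is a.s.\ positive by pathwise dominated convergence, but it has no deterministic lower bound. Uniqueness of local solutions then follows from uniqueness for the truncated problem.
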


\begin{theorem}[Blow-up criteria I: subcritical case]
\label{thm: blow up subcritical}
Suppose that the conditions of Theorem \ref{thm:local} hold and that \eqref{eq:subcritical}, \eqref{eq:subcritical2} and \eqref{eq:subcritical3} hold with strict inequalities. Let $(u,\sigma)$ be the maximal $L^p_{\kappa}$-solution to \eqref{eq:SEE}. For all $r\geq 0$, 
\[\P\Big(\{r<\sigma<\infty\}\cap \{\sup_{t\in [r,\sigma)} \|u(t)\|_{X_{1-\frac{1+\kappa}{p},p}}<\infty\} \Big) = 0.\]
\end{theorem}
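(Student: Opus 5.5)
We argue by contradiction combined with a restarting argument, using only Theorem \ref{thm:local} together with its localization property. Fix $r\geq 0$ and set
\[
\Omega_0:=\{r<\sigma<\infty\}\cap\Big\{\sup_{t\in[r,\sigma)}\|u(t)\|_{X_{1-\frac{1+\kappa}{p},p}}<\infty\Big\},
\]
and suppose $\P(\Omega_0)>0$. By taking countable unions it suffices to prove $\P(\Omega_0\cap\{\sup_{t\in[r,\sigma)}\|u(t)\|\le N,\ \sigma\le T\})=0$ for fixed $N,T$. The point of strict subcriticality is that the local existence time can be bounded from below uniformly on bounded sets of initial data in $X_{1-\frac{1+\kappa}{p},p}$. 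In the critical case one only gets smallness via $\|e^{-\cdot A}u_0\|$ in $L^p(X_1)$-type norms, and that quantity is not controlled uniformly on bounded sets.

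\textbf{Step 1: a quantitative local existence time.} We revisit the fixed-point construction behind Theorem \ref{thm:local}. Truncate the nonlinearities: replace $\Phi_j(\cdot,u)$ by $\Phi_j(\cdot,\Theta_\lambda(u)u)$ with a cutoff depending on $\|u\|_{L^p(0,t,w_\kappa;X_1)}+\sup_{s\le t}\|u(s)\|_{X_{1-\frac{1+\kappa}{p},p}}$, and similarly for $\Psi$. We then apply stochastic maximal regularity, which is available under Assumption \ref{ass:FGcritical} by \cite{VP18,MaximalLpregularity}, to estimate the truncated problem on $[0,T_0]$. With strict inequalities in \eqref{eq:subcritical}–\eqref{eq:subcritical3}, the interpolation estimates for $\|\Phi_j(u)-\Phi_j(v)\|_{L^{p_j}(X_{\alpha_j})}$ and $\|\Psi(u)-\Psi(v)\|_{L^p(\gamma(\mathcal U,X_{1/2}))}$ carry a factor $T_0^{\varepsilon}$ with $\varepsilon>0$. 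This factor comes from Hölder in time, since the weighted mixed-derivative embedding leaves slack. As a result, for every $N$ there is a deterministic $T_0(N)>0$ such that for any $\cF_\tau$-measurable initial datum $v_0$ at a stopping time $\tau$ with $\|v_0\|\le N$, the equation started at $\tau$ has a solution on $[\tau,\tau+T_0(N)]$. The estimates are pathwise uniform because the maximal regularity constants $C_T$ are uniform in $\omega$.

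\textbf{Step 2: restart and contradiction.} We take restarting times $\tau_k$ (suitable stopping times $\sigma_n$ on $\Omega_0$ approaching $\sigma$ from below), and we may pass to a time grid where $u(\tau)$ is bounded by $N$. We solve the equation from $\tau$ with initial datum $\one_{\{\|u(\tau)\|\le N\}}u(\tau)$. This restarted equation is posed with weight $\kappa$ at time $\tau$; we note that $u(\tau)$ even lies in $X_{1-\frac1p,p}$ by \eqref{eq:instant}, which we can use if needed. Gluing $u$ on $[0,\tau]$ with the new solution on $[\tau,\tau+T_0(N)]$ produces a local solution. By the localization and uniqueness part of Theorem \ref{thm:local} and the maximality of $(u,\sigma)$, we get $\sigma\ge\tau+T_0(N)$ on that event. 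On $\Omega_0$, however, we can choose $\tau>\sigma-T_0(N)$, which is a contradiction, so $\P(\Omega_0)=0$.

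\textbf{Main obstacle.} The main difficulty is Step 1: obtaining the uniform-in-bounded-sets lower bound on the existence time. This requires tracking the constants in the truncated fixed-point argument, in particular that the constants in the maximal regularity estimates, and in the trace embedding for weighted spaces started at a stopping time, do not depend on the restart time. It also requires that the gain $T_0^\varepsilon$ holds simultaneously for all three nonlinearities, including the shifted ranges $X_{\alpha_j}$ with $\alpha_j>0$.
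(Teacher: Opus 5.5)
\textbf{Gap: there is no deterministic lower bound on the existence time.} Your Step 1 claims more than is true, and Step 2 relies on exactly the false part. For stochastic equations there is no deterministic $T_0(N)>0$ such that every solution started at a stopping time $\tau$ from data with $\|v_0\|_{X_{1-\frac{1+\kappa}{p},p}}\le N$ lives at least until $\tau+T_0(N)$.

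The truncated fixed-point argument does give a solution of the \emph{truncated} problem on a deterministic interval. It solves the original equation only up to the random time $\tau_\lambda$ at which $\|v\|_{L^p(\tau,t,w^\tau_\kappa;X_1)}+\sup_{s\in[\tau,t]}\|v(s)\|_{X_{1-\frac{1+\kappa}{p},p}}$ reaches the cutoff $\lambda$. Your estimates are not pathwise, contrary to what you state. Stochastic maximal regularity controls the stochastic convolution only in $L^p(\Omega)$, and $\phi,\psi$ are merely in $L^0(\Omega;\cdot)$. So the contraction yields moment bounds, and $\tau_\lambda-\tau$ can be arbitrarily small with positive probability.

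A concrete instance already inside Assumption \ref{ass:FGcritical} is obtained with $X_0=X_1=\R$, $A=0$, $A_0=\mathrm{Id}$. Consider $\dd u=u^2\,\dd t+u\,\dd W$, $u(0)=1$, which is strictly subcritical. The substitution $v=1/u$ gives $\dd v=(v-1)\,\dd t-v\,\dd W$, so $v(t)=e^{-W_t+t/2}\bigl(1-\int_0^t e^{W_r-r/2}\,\dd r\bigr)$. This hits $0$ before any fixed $T_0>0$ with positive probability, so $\P(\sigma<T_0)>0$ for every $T_0$. Hence ``$\sigma\ge\tau+T_0(N)$ on that event'' fails, and the contradiction in Step 2 does not close. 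The obstacle you flag (tracking constants) is therefore not the real one.

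\textbf{Repair.} What your construction actually yields, and what suffices, is a lower bound \emph{in probability}, uniform over restart times and data. For each $N$ there is $\varepsilon_N(T)\to0$ as $T\downarrow0$ with the following property: for every stopping time $\tau$ and every $\cF_\tau$-measurable $v_0$ with $\|v_0\|_{X_{1-\frac{1+\kappa}{p},p}}\le N$, the lifetime $\tau^*$ of the problem restarted at $\tau$ satisfies $\P(\tau^*\le\tau+T)\le\varepsilon_N(T)$.

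To prove it, do three things. Take $\lambda$ larger than twice the deterministic bound $CN$ on the linear part $v_{\rm lin}$. Use strict subcriticality to get $\E\|v-v_{\rm lin}\|^p\le C_\lambda T^{\varepsilon p}$, plus $\phi,\psi$-terms that vanish uniformly in $\tau$ as $T\downarrow0$ once $\phi,\psi$ are localized to have finite moments. Then Chebyshev bounds the probability that the cutoff is reached before $\tau+T$. This has the same form as Proposition \ref{prop:local_continuity_abstract}, where existence times are also controlled only through $R(t)\to0$.

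Step 2 then becomes a limiting argument rather than a contradiction. Take a localizing sequence $\sigma_n\uparrow\sigma$ with $\sigma_n<\sigma$ (possible since $\sigma$ is predictable), and set $\tau_n:=\sigma_n\vee r$. Then
\[
\Omega_0\cap\Bigl\{\sup_{t\in[r,\sigma)}\|u(t)\|_{X_{1-\frac{1+\kappa}{p},p}}\le N\Bigr\}\subseteq\bigl\{\tau_n<\sigma,\ \|u(\tau_n)\|_{X_{1-\frac{1+\kappa}{p},p}}\le N,\ \sigma\le\tau_n+T\bigr\}\cup\bigl(\Omega_0\cap\{\sigma>\tau_n+T\}\bigr).
\]
The first set has probability at most $\varepsilon_N(T)$, because localization and maximality identify the restarted lifetime with $\sigma$ there. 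The second set has probability tending to $0$ as $n\to\infty$. Finally let $T\downarrow0$.

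Do not switch the restart to $X_{1-\frac1p,p}$ via \eqref{eq:instant}, since the hypothesis gives the uniform bound $N$ only in $X_{1-\frac{1+\kappa}{p},p}$. Instead, keep the weight $w^\tau_\kappa$ and identify the restarted solution with $u|_{[\tau,\sigma)}$ by uniqueness in that weighted class, to which $u|_{[\tau,\sigma)}$ belongs. For reference, the paper itself does not prove this statement; it quotes it from \cite{AV-ob}.
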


\begin{theorem}[Blow-up criteria II: critical case]
\label{thm: blow up critical 2}
Suppose that the conditions of Theorem \ref{thm:local} hold. Let $(u,\sigma)$ be the maximal $L^p_{\kappa}$-solution to \eqref{eq:SEE}. For all $r\geq 0$, 
\[\P\Big(\{r<\sigma<\infty\}\cap \{\sup_{t\in [r,\sigma)} \|u(t)\|_{X_{1-\frac{1+\kappa}{p},p}} + \|u\|_{L^p(r,\sigma;X_{1-\frac{\kappa}{p}})}<\infty\} \Big) = 0.\]
\end{theorem}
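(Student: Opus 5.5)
The plan is to argue by contradiction, following the strategy of the critical blow-up criteria in \cite{AV19_QSEE_2}. Assume that for some $r\geq 0$ the event
\[
\mathcal{V}:=\{r<\sigma<\infty\}\cap\Big\{\sup_{t\in [r,\sigma)} \|u(t)\|_{X_{1-\frac{1+\kappa}{p},p}} + \|u\|_{L^p(r,\sigma;X_{1-\frac{\kappa}{p}})}<\infty\Big\}
\]
has positive probability. First we reduce to bounded quantities. Intersecting $\mathcal{V}$ with $\{\sigma\le T\}$ and with the event that the above norms are at most $N$, we may work on a set $\mathcal{V}_{N,T}$ of positive probability. We introduce the stopping times
\[
\tau_{n}:=\inf\Big\{t\in[r,\sigma): \|u(t)\|_{X_{1-\frac{1+\kappa}{p},p}}+\|u\|_{L^p(r,t;X_{1-\frac{\kappa}{p}})}\ge N\Big\}\wedge \sigma_n,
\]
where $(\sigma_n)$ is a localizing sequence. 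Using the localization property of Theorem~\ref{thm:local}\eqref{it2:localwellposedabs} and the instantaneous regularity \eqref{eq:instant}, we may replace the initial time $0$ by $r$. We may also take $\kappa=0$ after time $r$, since $u(r)\in X_{1-\frac1p,p}$; this is exactly why the criterion only involves the unweighted norm $X_{1-\frac{\kappa}{p}}$ at the weight level $\kappa$.

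The key step is an a priori bound in the maximal regularity space $L^p(r,\sigma;X_1)$ on $\mathcal{V}_{N,T}$. We view $u$ on $[r,\tau_n]$ as the solution of the linear problem with forcing $\Phi_1(\cdot,u)+\Phi_2(\cdot,u)+\phi$ and $\Psi(\cdot,u)+\psi$. Stochastic maximal $L^p$-regularity (obtained from Assumption~\ref{ass:FGcritical} via \cite{VP18}) then bounds $\|u\|_{L^p(r,\tau_n;X_1)\cap C(X_{1-\frac1p,p})}$ by the initial norm plus the norms of these forcings. The nonlinearities are estimated using the Lipschitz bounds with $\beta_j$ together with the interpolation inequality
\[
\|v\|_{X_{\beta_j}}\lesssim \|v\|_{X_{1-\frac{\kappa}{p}}}^{1-\vartheta_j}\|v\|_{X_1}^{\vartheta_j}.
\]
By \eqref{eq:subcritical}--\eqref{eq:subcritical3}, the exponent of the $X_1$-norm is at most $1$ in the critical case, and the remaining factor is a power of $\|u\|_{L^p(X_{1-\kappa/p})}$ on short intervals. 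The $\alpha_j$-shift is handled by the mixed-integrability estimate $L^{p_j}(X_{\alpha_j})\hookrightarrow$ forcing space, which is what the definition of $p_f$ encodes.

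The main obstacle is this step: in the critical case the bound is not a Gronwall-type inequality, but an absorption argument. The plan here is to split $[r,\tau_n]$ into finitely many random subintervals $[t_k,t_{k+1}]$ on each of which $\|u\|_{L^p(t_k,t_{k+1};X_{1-\kappa/p})}\le\varepsilon$. This is possible on $\mathcal{V}_{N,T}$ with a number of pieces bounded by $(N/\varepsilon)^p$. On each piece the critical product term becomes $C\varepsilon^{\rho}\|u\|_{L^p(X_1)}$ and can be absorbed. Because these subintervals are random and not adapted, we would implement this with the stochastic Gronwall lemma / tail-estimate technique of \cite{AV19_QSEE_2}. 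Concretely, we define the $t_k$ as stopping times and apply maximal regularity estimates conditioned on $\cF_{t_k}$, obtaining $\P(\mathcal{V}_{N,T}\cap\{\|u\|_{L^p(r,\tau_n;X_1)}>R\})\to 0$ as $R\to\infty$, uniformly in $n$.

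Once $u\in L^p(r,\sigma;X_1)$ a.s.\ on $\mathcal{V}$, we show that $u$ has a limit in $X_{1-\frac1p,p}$ as $t\uparrow\sigma$. The nonlinearities are then in the forcing spaces on all of $[r,\sigma)$, and the maximal regularity trace embedding $L^p(X_1)\cap W^{1,p}(X_0)$, respectively its stochastic analogue, into $C([r,\sigma];X_{1-\frac1p,p})$ gives the limit. Finally, we restart Theorem~\ref{thm:local} at the stopping time $\sigma$ with initial value $\one_{\mathcal{V}}\lim_{t\uparrow\sigma}u(t)$. Gluing the two solutions produces a local solution living strictly beyond $\sigma$ on $\mathcal{V}$, which contradicts maximality of $(u,\sigma)$. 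Hence $\P(\mathcal{V})=0$.
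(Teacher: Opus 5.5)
The paper does not prove this statement. It quotes it from the forthcoming \cite{AV-ob}, which refines \cite{AV19_QSEE_2}, so your outline has to stand on its own. The architecture is sound: stopping times $t_k$ cut $[r,\sigma)$ into finitely many pieces with $\|u\|_{L^p(t_k,t_{k+1};X_{1-\kappa/p})}\le\varepsilon$; you restart unweighted stochastic maximal regularity at each $t_k$, absorb, obtain $\lim_{t\uparrow\sigma}u(t)$, restart, and contradict maximality. However, the estimate you give for the step you yourself call the main obstacle does not hold.

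Take $\alpha_j=0$. Criticality gives $(1+\rho)(1-\beta)=\rho\frac{1+\kappa}{p}$. So $X_\beta$ lies between $X_{1-\kappa/p}$ and $X_1$ only if $\kappa\ge\rho$. In that case your two-point interpolation reads $\|u\|_{X_\beta}^{1+\rho}\lesssim \|u\|_{X_{1-\kappa/p}}^{\rho(1+\kappa)/\kappa}\|u\|_{X_1}^{1-\rho/\kappa}$. After taking the $L^p$-norm in time, you face a product of quantities that are only $p$-integrable in time, with total degree $1+\rho>1$. H\"older then requires $u\in L^{(1+\kappa)p}(t_k,t_{k+1};X_{1-\kappa/p})$, which you do not have. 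Hence the bound $C\varepsilon^{\rho}\|u\|_{L^p(X_1)}$ does not follow.

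Your estimate also never uses the hypothesis $\sup_t\|u(t)\|_{X_{1-\frac{1+\kappa}{p},p}}\le N$. That cannot be right. In a bound obtained from pointwise interpolation and H\"older in time, the $L^p$-in-time norms can enter with total degree at most one. The remaining degree, roughly $\rho$, must therefore be carried by $L^\infty$-in-time norms.

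The repair is a multi-point interpolation that also uses the two sup-norms you actually control:
\begin{itemize}
\item the bound $N$ on $\sup_t\|u(t)\|_{X_{1-\frac{1+\kappa}{p},p}}$ from the hypothesis;
\item the trace part $\sup_{t\in[t_k,t_{k+1}]}\|u(t)\|_{X_{1-\frac1p,p}}$ of the unweighted maximal regularity norm $\|u\|_{\mathrm{MR}(t_k,t_{k+1})}$, which is the norm you want to absorb.
\end{itemize}
Let $\alpha_j=0$ and $0<\theta\le\min\{1,\rho\}$. The exponents below add up to $1+\rho$, and the weighted average of the regularity indices is exactly $\beta$ at criticality, so
\[
\|u\|_{X_{\beta}}^{1+\rho}\lesssim \|u\|_{X_{1-\frac{1+\kappa}{p},p}}^{\rho-\theta}\,\|u\|_{X_{1-\frac{1}{p},p}}^{\theta}\,\|u\|_{X_{1-\frac{\kappa}{p}}}^{\theta}\,\|u\|_{X_1}^{1-\theta}.
\]
H\"older in time then gives $\|\Phi_j(\cdot,u)\|_{L^p(t_k,t_{k+1};X_0)}\lesssim N^{\rho-\theta}\varepsilon^{\theta}\|u\|_{\mathrm{MR}(t_k,t_{k+1})}$, up to subcritical lower-order terms. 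The same works for $\Psi$ with $(\beta_3,\rho_3)$. With this estimate your finite chain of absorptions closes. Each restart needs $u(t_k)\in X_{1-\frac1p,p}$, hence $t_k>0$; the case $r=0$ follows from $r>0$ by taking a union over rational $r$.

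For $\alpha_j>0$, the same degree count shows that on the pieces you must use the unweighted trade-off $\frac1{p_j}=\frac1p+\alpha_j$. The relation defining $p_f$ in Assumption~\ref{ass:FGcritical} is tied to the weight at the initial time and is too weak there. You therefore also need the unweighted version of the $\alpha$-shifted maximal regularity estimate.
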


Even in this abstract framework it is possible to obtain the following powerful temporal regularization result. In Subsection \ref{ss:regbootstrap} we show that with this result one can bootstrap space regularity.

\begin{theorem}[Temporal regularization]
\label{thm: temporal regularization}
Suppose that the conditions of Theorem \ref{thm:local} hold with $p\in (2, \infty)$, $\kappa\in [0,p/2-1)$ and let $(u,\sigma)$ be the $L^p_{\kappa}$-maximal solution to \eqref{eq:SEE}. Let $\theta^*= \min\{1/2, 1-\alpha_1, 1-\alpha_2\}$. Then the following path regularity holds a.s.\
\begin{align*}
u\in H^{\theta,r}_{\rm loc}((0,\sigma);&X_{1-\theta})\cap C^{\theta-\varepsilon}_{\rm loc}((0,\sigma);X_{1-\theta}), \ \ \theta\in [0,\theta^*), r\in [2, \infty), \varepsilon\in (0,\theta).
  \end{align*}
\end{theorem}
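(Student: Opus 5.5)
The plan is to bootstrap time integrability out of the instantaneous regularity \eqref{eq:instant} of Theorem \ref{thm:local}, using the localization property and a restart at positive times.

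\textbf{Step 1: restart with an unweighted problem.} Fix a rational $r_0>0$ and work on $\{\sigma>r_0\}$. By \eqref{eq:instant}, $u(r_0)\in X_{1-\frac1p,p}$ a.s. Moreover $X_{1-\frac1p,p}\hookrightarrow X_{1-\frac{1+\kappa}{p},p}$, since $\kappa\geq 0$.

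Consider the shifted equation on $[r_0,\infty)$ with initial datum $\one_{\{\sigma>r_0\}}u(r_0)$. This datum is $\cF_{r_0}$-measurable. We solve it in the setting $(X_0,X_1,\tilde p,\tilde\kappa)$ with larger $\tilde p\geq p$ and with $\tilde\kappa$ chosen so that $\frac{1+\tilde\kappa}{\tilde p}=\frac{1+\kappa}{p}$. This choice is admissible because $\kappa<p/2-1$ gives $\frac{1+\kappa}{p}<\frac12$, so $\tilde\kappa<\tilde p/2-1$.

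\textbf{Step 2: the hypotheses transfer.} The conditions \eqref{eq:subcritical}--\eqref{eq:subcritical3} depend on $(p,\kappa)$ only through $\frac{1+\kappa}{p}$, so they are unchanged. The restrictions on $\alpha_j,\beta_j$ are unchanged for the same reason. Maximal $L^{\tilde p}(w_{\tilde\kappa})$-regularity also transfers: it holds for all $\tilde p\in(1,\infty)$ and all admissible weights by the $p$-independence and weight-independence of maximal regularity (via the $H^\infty$-calculus reference operator and the extrapolation results cited around Assumption \ref{ass:FGcritical}). I would simply assume it is part of the $\Omega$-uniform setup. The trace spaces do differ: $X_{1-\frac{1+\kappa}{p},\tilde p}\supseteq X_{1-\frac{1+\kappa}{p},p}$ for $\tilde p\geq p$, so the datum is admissible.

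Theorem \ref{thm:local} then gives a maximal solution $(v,\tau)$ with $v\in H^{\theta,\tilde p}_{\rm loc}$ on the relevant interval. Its instantaneous part \eqref{eq:instant} gives $v\in H^{\theta,\tilde p}_{\rm loc}((r_0,\tau);X_{1-\theta})$ for $\theta<\theta^*$. Note that $\theta^*$ depends only on $\alpha_1,\alpha_2$, so it is the same as before.

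\textbf{Step 3: identify $v$ with $u$.} Both $u|_{[r_0,\sigma)}$ and $v$ are local solutions of the restarted problem. This is the step I expect to be the main obstacle. One must check that $u$ on $[r_0,\sigma_n]$ lies in the $(\tilde p,\tilde\kappa)$ class, namely $L^{\tilde p}(r_0,\sigma_n,w^{r_0}_{\tilde\kappa};X_1)$. Here I would use a two-step approach. First take $\tilde p=p$ and $\tilde\kappa=0$: then $u\in L^p(r_0,\cdot;X_1)\cap C(X_{1-1/p,p})$ follows from $u\in L^p(w_\kappa)$ away from $0$. So $u$ is a solution in the unweighted class, and uniqueness and maximality give $u=v$ and $\tau=\sigma$.

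For larger $\tilde p$, I would compare the $(\tilde p,\tilde\kappa)$-maximal solution $v$ with $u$ the other way round. Since $\tilde p\geq p$, weighted $L^{\tilde p}$ embeds locally into weighted $L^{p}$ near $r_0$ when $\frac{1+\tilde\kappa}{\tilde p}=\frac{1+\kappa}{p}$, by Hölder's inequality on bounded intervals. Hence $v$ is a $(p,\kappa)$-solution from $r_0$, and uniqueness there gives $v=u$ on $[r_0,\sigma\wedge\tau)$. Maximality in both settings, together with the blow-up criteria of Theorem \ref{thm: blow up critical 2} (applied to rule out $\tau<\sigma$ while the $(p,\kappa)$-norms stay finite), yields $\tau=\sigma$.

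\textbf{Step 4: Hölder regularity and conclusion.} Conclude $u\in H^{\theta,\tilde p}_{\rm loc}((r_0,\sigma);X_{1-\theta})$ for every $\tilde p\geq p$, and hence for every $r\in[2,\infty)$ by nesting on compact intervals. The Hölder statement follows from the Sobolev embedding $H^{\theta,r}\hookrightarrow C^{\theta-1/r}$: given $\varepsilon$, choose $r>1/\varepsilon$. Finally, let $r_0\downarrow0$ along rationals.
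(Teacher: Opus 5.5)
The gap is in Step 3, where you identify the $(\tilde p,\tilde\kappa)$-solution $v$ restarted at $r_0$ with $u$ for $\tilde p>p$.

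Your key claim is that $L^{\tilde p}(r_0,T,w^{r_0}_{\tilde\kappa};X_1)$ embeds into $L^{p}(r_0,T,w^{r_0}_{\kappa};X_1)$ when $\frac{1+\tilde\kappa}{\tilde p}=\frac{1+\kappa}{p}=:\gamma$. This is false exactly at this equal-ratio choice. In H\"older's inequality the leftover weight is $(t-r_0)^{e}$ with $e=(\kappa-\tilde\kappa\frac{p}{\tilde p})\frac{\tilde p}{\tilde p-p}=-1$, which is not integrable. Concretely, take $t\mapsto t^{-\gamma}|\log t|^{-b}x$ with $x\in X_1\setminus\{0\}$ and $\frac1{\tilde p}<b\le\frac1p$. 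It belongs to $L^{\tilde p}(0,\frac12,w_{\tilde\kappa};X_1)$ but not to $L^{p}(0,\frac12,w_{\kappa};X_1)$. Lemma~\ref{lem:compatibility_solutions} works only because its target exponent lies strictly below $p_i/(1+\kappa_i)$. Likewise, $v\in C(X_{1-\gamma,\tilde p})$ does not give $v\in C(X_{1-\gamma,p})$, since $X_{1-\gamma,\tilde p}$ is the larger space.

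So you have not shown that $v$ is an $L^p_\kappa$-solution. The converse, that $u$ is an $L^{\tilde p}_{\tilde\kappa}$-solution from $r_0$, is essentially the statement you are trying to prove. There is therefore no common class in which nonlinear uniqueness can be invoked. The blow-up criteria do not rescue this, because you only use them after $u=v$ on $[r_0,\sigma\wedge\tau)$ is known. Your unweighted restart with $\tilde p=p$ is fine, but it gives nothing beyond \eqref{eq:instant}.

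The paper does not prove this theorem itself; it is quoted from \cite{AV-ob}. The mechanism that avoids the problem is nevertheless visible in Step 2 of the proof of Theorem~\ref{thm: regularization}. Do not compare two nonlinear solutions. Instead, freeze the nonlinearities at the known $u$ and solve the \emph{linear} problem in the $(r,\kappa_r)$-setting with inhomogeneities $\one_{[r_0,\sigma_n]}(\Phi_1(\cdot,u)+\Phi_2(\cdot,u)+\phi)$ and $\one_{[r_0,\sigma_n]}(\Psi(\cdot,u)+\psi)$. Then identify the result with $u$ via Lemma~\ref{lem:compatibility_solutions}, where uniqueness holds in a common unweighted class with no criticality constraint.

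The price is that you must first show these frozen terms have the higher time integrability. This has to be bootstrapped in finitely many steps from three ingredients:
\begin{itemize}
\item the regularity $u\in H^{\theta,p}(X_{1-\theta})$ for $\theta<\theta^*$, given by Theorem~\ref{thm:local};
\item Sobolev embeddings in time;
\item the Lipschitz bounds of Assumption~\ref{ass:FGcritical}.
\end{itemize}
That bootstrap is where the real content of the theorem lies.

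Separately, maximal $L^{\tilde p}(w_{\tilde\kappa})$-regularity of $A$ for $\tilde p\neq p$ is not part of Assumption~\ref{ass:FGcritical}. For operators depending only measurably on time, it does not follow automatically from the case $p$. You would have to add it as a hypothesis; in the paper's application it is provided by Theorem~\ref{thm:LM-interpolation-fractional-dirichlet}.
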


\begin{proposition}[Local continuity]
\label{prop:local_continuity_abstract}
Let $u_0\in L^p_{\cF_0}(\O;X_{1-\frac{1+\kappa}{p},p})$, and let $(u,\sigma)$ be the maximal solution to \eqref{eq:SEE}. Then there exist constants $C_0,T_0,\varepsilon_0>0$, a stopping time $\sigma_0\in (0,\sigma]$ a.s., and a family of constants $(R(t))_{t\in [0,T_0]}$ such that
$\lim_{t\downarrow 0} R(t)=0$ and for which the following assertion holds:
For all 
$$
v_0\in L^p_{\cF_0}(\O;X_{1-\frac{1+\kappa}{p},p})\qquad \text{ with }\qquad 
\E\|u_0-v_0\|_{X_{1-\frac{1+\kappa}{p},p}}^p\leq \varepsilon_0,
$$
the maximal solution $(v,\tau)$ to \eqref{eq:SEE} with initial data $v_0$ has the property that there exists a stopping time $\tau_0\in (0,\tau]$ a.s.\ such that, for all $t\in [0,T_0]$ and $\g>0$, it holds that 
\begin{align*}
\P\Big(\sup_{s\in [0,t]}\|u(s)-v(s)\|_{X_{1-\frac{1+\kappa}{p},p}}\geq \g ,\, \sigma_0\wedge \tau_0> t \Big)
&\leq \frac{C_0}{\g^p} \E\|u_0-v_0\|_{X_{1-\frac{1+\kappa}{p},p}}^p,\\
\P\Big(\|u-v\|_{L^p(0,t;X_{1})}\geq \g ,\, \sigma_0\wedge \tau_0> t \Big)
&\leq \frac{C_0}{\g^p} \E\|u_0-v_0\|_{X_{1-\frac{1+\kappa}{p},p}}^p,\\
\P(\sigma_0\wedge \tau_0 \leq t) 
&\leq 
C_0 \E\|u_0-v_0\|_{X_{1-\frac{1+\kappa}{p},p}}^p+R(t).
\end{align*}
\end{proposition}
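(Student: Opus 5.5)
The plan is to follow the truncation strategy of \cite{AV19_QSEE_1}, adapted to the range-shifted setting of Assumption~\ref{ass:FGcritical}. Let
\[
E_t:=L^p(0,t,w_\kappa;X_1)\cap C([0,t];X_{1-\frac{1+\kappa}{p},p}),
\]
and for $v\in E_t$ let $N_t(v)$ be the sum of the ``critical'' norms $\|v\|_{L^{r_j}(0,t,w_\kappa;X_{\beta_j})}$, $j\in\{1,2,3\}$. The exponents $r_j$ are chosen so that, via interpolation between $L^p(X_1)$ and $C(X_{1-\frac{1+\kappa}{p},p})$ and the conditions \eqref{eq:subcritical}--\eqref{eq:subcritical3}, one has
\begin{align*}
\|\Phi_j(\cdot,u)-\Phi_j(\cdot,v)\|_{L^{p_j}(0,t,w_\kappa;X_{\alpha_j})}
\lesssim (t^{\epsilon}+N_t(u)^{\rho_j}+N_t(v)^{\rho_j})\,N_t(u-v),
\end{align*}
and an analogous bound holds for $\Psi$ in $L^p(0,t,w_\kappa;\gamma(\mathcal U,X_{1/2}))$. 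Here $\epsilon\geq 0$ comes from the subcritical part and the term $1$ in the Lipschitz constant. The key point of using only these time-integrated norms, and not the $\sup$ of the trace norm, is that for any fixed $U\in L^p(\Omega;E_T)$ one has $N_t(U)\to 0$ as $t\downarrow 0$ in $L^p(\Omega)$, by dominated convergence.

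\emph{Step 1 (truncated problem).} Fix a smooth cutoff $\xi\in C^\infty_c([0,2))$ with $\xi=1$ on $[0,1]$, and for $\lambda>0$ replace $\Phi_j(\cdot,u)$ by $\xi(N_t(u)/\lambda)\Phi_j(\cdot,u)$, and similarly for $\Psi$. By the standard cutoff lemma, the truncated nonlinearities are globally Lipschitz from $E_T$ into the data spaces, with Lipschitz constant $\lesssim T^\epsilon+\lambda^{\rho}$. Combining this with stochastic maximal $L^p$-regularity of $A$ (Assumption~\ref{ass:FGcritical} and \cite[Theorem 3.9]{VP18}), we choose $\lambda$ and $T_0$ small so that the solution map is a contraction on $L^p(\Omega;E_{T_0})$. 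This yields truncated solutions $U$ and $V$ with data $u_0$ and $v_0$, respectively, and the linear stability estimate
\[
\E\|U-V\|_{E_{T_0}}^p\leq C_0\,\E\|u_0-v_0\|^p_{X_{1-\frac{1+\kappa}{p},p}}.
\]
Both the initial data and the inhomogeneities $\phi,\psi$ enter affinely, so they cancel in the difference.

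\emph{Step 2 (stopping times and identification).} Define
\[
\sigma_0:=\inf\{t\in[0,T_0]:N_t(U)\geq \lambda/2\}\wedge T_0,
\qquad
\tau_0:=\inf\{t\in[0,T_0]:N_t(V)\geq\lambda/2\}\wedge T_0 .
\]
On $[0,\sigma_0]$ the truncation is inactive, so $U$ is an $L^p_\kappa$-strong solution of \eqref{eq:SEE}. Maximality and uniqueness from Theorem~\ref{thm:local} then give $\sigma_0\leq\sigma$ and $u=U$ on $[0,\sigma_0]$; the same argument gives $\tau_0\le\tau$ and $v=V$ on $[0,\tau_0]$. Positivity $\sigma_0>0$ a.s.\ follows from continuity of $t\mapsto N_t(U)$ at $t=0$. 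On $\{\sigma_0\wedge\tau_0>t\}$ we have $u-v=U-V$ on $[0,t]$, so the first two estimates follow from Chebyshev's inequality and Step~1.

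\emph{Step 3 (probability of early stopping).} Set $R(t):=\P(N_t(U)\geq\lambda/4)$; by Chebyshev, $R(t)\lesssim \lambda^{-p}\E N_t(U)^p\to 0$ as $t\downarrow 0$. If $\tau_0\le t$, then $N_t(V)\ge \lambda/2$, so either $N_t(U)\geq\lambda/4$ or $N_t(U-V)\geq\lambda/4$. Hence
\[
\P(\sigma_0\wedge\tau_0\leq t)\leq 2R(t)+C\lambda^{-p}\,\E\|U-V\|^p_{E_{T_0}},
\]
which gives the third estimate after renaming constants. The parameter $\varepsilon_0$ only needs to ensure $v_0\in L^p(\Omega)$ and is otherwise harmless; $C_0$ and $T_0$ depend only on $u_0$ through $\lambda$.

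I expect the main obstacle to be Step~1 in the critical case, where at least one of \eqref{eq:subcritical}--\eqref{eq:subcritical3} holds with equality and no positive power of $T$ is available. There, smallness of the Lipschitz constant must come entirely from $\lambda$. I would first try to obtain it from $\lambda$ alone via the cutoff lemma. The delicate bookkeeping is to check that the range shift $\alpha_j$, which forces the $L^{p_j}(X_{\alpha_j})$-data spaces, is still covered by the (weighted) maximal regularity estimate with the same target space $E_T$. I would handle this using the transference result of \cite{AV-ob} together with the mixed-derivative/Sobolev embedding $L^{p_j}(w_\kappa;X_{\alpha_j})\hookrightarrow$ the appropriate extrapolated data space, which relies on the relation $\frac1{p_j}=\frac1p+\frac{\alpha_j}{1+\kappa}$.
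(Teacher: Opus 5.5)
Your proposal is correct and is essentially the standard argument behind this result. The paper gives no proof of its own: it imports the statement from \cite{AV-ob} (cf.\ \cite[Proposition 4.8]{AVsurvey} and \cite{AV19_QSEE_1}), and the argument there also truncates the nonlinearities by a cutoff in the critical $L^{r_j}(w_\kappa;X_{\beta_j})$-norms, solves the truncated problem by a contraction with Lipschitz constant $\lesssim T^{\epsilon}+\lambda^{\rho}$, stops when these norms reach a fraction of $\lambda$, and concludes with Chebyshev. Two details to tidy: your Step~1 controls $L^p(0,t,w_\kappa;X_1)$, which is how the second estimate has to be read (the unweighted norm can be infinite for $\kappa>0$), and since $\phi,\psi$ are only in $L^0(\Omega)$ you should first subtract the linear solution driven by them (or localize them) before running the fixed point in $L^p(\Omega;E_{T_0})$.
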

A similar local continuity result can be found in \cite[Proposition 4.8]{AVsurvey}.

\begin{lemma}[Compatibility of solutions to linear equations]
\label{lem:compatibility_solutions}
Suppose that Assumption \ref{ass:FGcritical} holds. 
Let $p_i > 2$ and $\kappa_i \in [0, p_i/2 - 1)$, $i=0,1$, and 
assume $\phi \in L^{p_i}(a,\sigma,w_{\kappa_i}^a,X_0)$ and $\psi\in L^{p_i}(a,\sigma,w_{\kappa_i}^a,\gamma(\mathcal{U},X_{1/2}))$, for $i=0,1$.
Let $(u^i, \sigma)$, $i=0,1$, be $L^{p_i}_{\kappa_i}$-strong solutions to 
\begin{equation}
\left\{
\begin{aligned}
&\dd u + A u \,\dd t = \phi\,\dd  t + \psi \,\dd  W,\\
&u(a)=u_{a}.
\end{aligned}
\right.
\end{equation}
Then $u^0=u^1$ almost surely on $[a, \sigma]$.
\end{lemma}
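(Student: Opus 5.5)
The strategy is to reduce the statement to uniqueness for the linear problem in a single, larger maximal-regularity class in which both $u^0$ and $u^1$ live. The difference $w=u^0-u^1$ then solves the homogeneous linear equation $\dd w + Aw\,\dd t = 0$ with $w(a)=0$. The noise and the inhomogeneity cancel, so $w$ solves a \emph{pathwise deterministic} linear problem, and Definition \ref{def:MaxReg} gives $w=0$ once $w$ is shown to lie in an admissible space on which maximal regularity holds.

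\textbf{Step 1 (common space).} Without loss of generality order the parameters so that the weighted space $L^{p_1}(a,T,w^a_{\kappa_1};X_1)$ is the "weaker" one near $t=a$. Both $L^{p_i}(a,b,w_{\kappa_i}^a;X_1)$ embed into $L^{\bar p}(a,b,w_{\bar\kappa}^a;X_1)$ for $\bar p=\min\{p_0,p_1\}$ and suitable $\bar\kappa\in(-1,\bar p-1)$, using Hölder's inequality on the bounded interval $(a,b)$. Concretely, if $p_0\ge p_1$, then $L^{p_0}(w_{\kappa_0})\hookrightarrow L^{p_1}(w_{\bar\kappa})$ whenever $\frac{1+\bar\kappa}{p_1}>\frac{1+\kappa_0}{p_0}$, and one may take $\bar\kappa=\max$ of the resulting requirements, still $<p_1-1$ since $\frac{1+\kappa_0}{p_0}<\frac12<1$. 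The same holds for $X_0$-valued data $\phi$ and for $\partial_t$ in the sense of the integrated identity. To avoid integrability issues over $[a,\sigma]$ with a random $\sigma$, localize with stopping times $\sigma_n\uparrow\sigma$ on which all norms are bounded, and work pathwise on $[a,\sigma_n(\omega)]$.

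\textbf{Step 2 (pathwise equation).} Subtracting the two strong formulations (\eqref{eq:stronsol} with $\Phi=\Psi=0$) gives, a.s. and for all $t\in[a,\sigma_n]$, the identity $w(t)+\int_a^t A(s)w(s)\,\dd s=0$, because the stochastic integrals of the same $\psi$ coincide. Fix $\omega$ and extend to a deterministic interval. Let $b=\sigma_n(\omega)$ and apply Definition \ref{def:MaxReg} with $f=0$ on $(a,b)$ in the class $L^{\bar p}(a,b,w^a_{\bar\kappa};X_1)$. Assumption \ref{ass:FGcritical} gives maximal $L^{p}(w_\kappa)$-regularity only for the original $(p,\kappa)$, so here I would invoke the independence of maximal regularity from $p$ and from power weights $\kappa\in(-1,p-1)$. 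This holds via Prüss–Simonett type extrapolation for the time-dependent family, or, since maximal regularity in $(p_i,\kappa_i)$ is implicitly used by the existence of the $L^{p_i}_{\kappa_i}$ solutions, by treating it as part of the hypothesis. Uniqueness in that class then forces $w=0$ on $[a,\sigma_n]$, and letting $n\to\infty$ gives $u^0=u^1$ on $[a,\sigma]$.

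\textbf{Main obstacle.} The delicate point is Step 2's transfer of maximal regularity (hence uniqueness) to the common class $(\bar p,\bar\kappa)$ for the $(t,\omega)$-dependent operator $A$. If extrapolation in $p$ is not available, I would instead argue via duality or an energy-free route. Since $w\in L^{p_1}(w_{\kappa_1})$ away from $a$ is unweighted, on $[a+\varepsilon,b]$ both classes reduce to unweighted $L^{\bar p}$. Uniqueness there, together with a short-time argument near $a$ that controls $w$ by the trace estimate $w\in C([a,b];X_{1-\frac{1+\bar\kappa}{\bar p},\bar p})$ with $w(a)=0$, closes the argument.
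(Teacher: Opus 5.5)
Your argument is correct, but it gets uniqueness differently from the paper.

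\textbf{The paper's route.} The paper also uses H\"older's inequality, but lands in an \emph{unweighted} class. For $2<p<\min_i p_i/(1+\kappa_i)$ (a nonempty range since $(1+\kappa_i)/p_i<\frac12$), both $u^i$ lie in $L^p(a,\sigma;X_1)$. It then cites uniqueness for the linear \emph{stochastic} problem from stochastic maximal $L^p$-regularity, \cite[Prop.~3.11]{AVsurvey}.

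\textbf{Your route.} You keep a weighted common class $L^{\bar p}(a,b,w^a_{\bar\kappa};X_1)$. You observe that $\phi$, $\psi$ and the identical stochastic integrals cancel, so $w=u^0-u^1$ satisfies $w(t)+\int_a^t A(s)w(s)\,\dd s=0$ pathwise, and Definition~\ref{def:MaxReg} gives $w=0$. This is more elementary: it uses only the pathwise deterministic maximal regularity that Assumption~\ref{ass:FGcritical} postulates, with no transference to stochastic maximal regularity. The paper's version is a one-line citation and avoids weights altogether.

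\textbf{The shared issue.} Both routes need maximal regularity for an exponent or weight other than the $(p,\kappa)$ fixed in Assumption~\ref{ass:FGcritical}; the paper takes this for granted as well. Your justifications for it are weak.
\begin{itemize}
\item The existence of the $u^i$ is assumed in the lemma, not derived from maximal regularity in $(p_i,\kappa_i)$.
\item Even granting maximal regularity in $(p_i,\kappa_i)$, you only get $(\bar p,\bar\kappa)$ for free in the nested case where one weighted space embeds into the other. Your two orderings, $p_0\ge p_1$ and ``$L^{p_1}(w^a_{\kappa_1})$ weaker near $a$'', cannot always be arranged together. In general you need $\frac{1+\bar\kappa}{\bar p}\in\bigl(\max_i\frac{1+\kappa_i}{p_i},\frac12\bigr)$, which differs from both $(p_i,\kappa_i)$.
\item Your fallback still needs uniqueness in unweighted $L^{\bar p}$, so it does not avoid the exponent change. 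Its step near $t=a$ is also not worked out, since uniqueness on $[a+\varepsilon,b]$ presupposes $w(a+\varepsilon)=0$.
\end{itemize}
The clean fix is to invoke Theorem~\ref{thm:LM-interpolation-fractional-dirichlet}. For the operators to which the lemma is actually applied, it gives maximal $L^p(0,T,w_\kappa)$-regularity for every $p\in(1,\infty)$ and $\kappa\in(-1,p-1)$. Alternatively, add this to the hypotheses of the lemma.
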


\begin{proof}
To prove that $u^0$ and $u^1$ coincide, we set for simplicity $a:=0$ and show that $L^{p_i}(0, \sigma, w_{\kappa_i}; X_1)$ embeds into an unweighted space $L^p(0, \sigma; X_1)$ for a sufficiently small choice of $p > 2$. 

Using H\"older's inequality one can check that this holds for exponents $p$ such that
$$2 < p < \min\left\{ \frac{p_0}{1+\kappa_0}, \frac{p_1}{1+\kappa_1} \right\}.$$
Indeed, a simple calculation gives
$$\int_0^\sigma \|u^i(t)\|_{X_1}^{p} \diff t = \int_0^\sigma \|u^i(t)\|_{X_1}^{p_i} t^{\frac{\kappa_i p}{p_i}} t^{-\frac{\kappa_i p}{p_i}} \diff t \leq \left( \int_0^\sigma \|u^i(t)\|_{X_1}^{p_i} t^{\kappa_i} \diff t \right)^{\frac{p}{p_i}} \left( \int_0^\sigma t^{-\frac{\kappa_i p}{p_i - p}} \diff t \right)^{1 - \frac{p}{p_i}}.$$
Hence, both $u^0$ and $u^1$ belong to $L^p(0, \sigma; X_1)$ almost surely. 
Since $A$ has stochastic maximal $L^p$-regularity on bounded time-intervals (see below Assumption \ref{ass:FGcritical}),  it follows from \cite[Prop. 3.11]{AVsurvey} that $u^0 = u^1$ a.s.\ on $[0, \sigma]$.
\end{proof}

\subsection{Maximal regularity for divergence form operators in extrapolation spaces}\label{subsec:MR}
In this subsection we give conditions which imply that a second order divergence form operator $L$ has maximal $L^p$-regularity as defined in Definition \ref{def:MaxReg}. 
This is used in the proof of Theorem \ref{thm:mainlocal} in Subsection \ref{ss:local_wp_proof_critical}.
For simplicity, we only consider scalar equations in this subsection, which is enough for our purposes. However, it is possible to extend it to (complex) systems using the appropriate extension of the ellipticity condition (see \cite[Section 5]{DK18}). As the deterministic setting allows more flexibility in the parameters, we will allow $p,q\in (1, \infty)$ and $\kappa\in (-1, p-1)$ in this subsection.

There is an extensive literature on maximal regularity for second order differential operators both in divergence and non-divergence form (see \cite{Analysis3,DHP,KuWe}). The main new aspect below is that we consider the differential operator on the space
    \begin{align}
        X_0 &=\HD^{-1-s,q}(\cO)
        \quad\text{with domain}\quad
        X_1 = \HD^{1-s,q}(\cO),
        \end{align}
with $s\in [-1,1]$ and $q\in (1, \infty)$. Here $\cO\subseteq \R^d$ is a bounded $C^2$-domain. 

\begin{assumption}\label{ass:DETMR}
Let $\mathcal O\subset \mathbb R^d$ be a bounded $C^2$-domain. Let
$1<p,q<\infty$ and $\kappa\in (-1,p-1)$. Let $a=(a^{j,k})_{j,k=1}^d:[0,T]\times \cO\to \R^{d\times d}$ be measurable and assume there exist $M>0$, $\ellip>0$ such that
\begin{align*}
\|a^{j,k}(t,\cdot)\|_{C^1(\overline{\mathcal{O}})} & \leq M, \ \ t\in [0,T],
\\ \sum_{j,k=1}^d a^{j,k}(t,x)
 \xi_j \xi_k
&\geq  \ellip |\xi|^2, \ \ \ t\in [0,T], x\in \overline{\cO}, \xi\in \R^d.
\end{align*}
\end{assumption}
Under Assumption \ref{ass:DETMR}, we define $L(t):\HD^{2,q}(\cO)\to L^q(\cO)$ 
by $L(t)u = -\operatorname{div}(a(t,\cdot)\nabla u)$. 
Integrating by parts twice, one finds
\[        \langle L(t)u,\psi\rangle = -\int_{\mathcal O}
        u(x)\,\operatorname{div}\big(a(t,x)^T\nabla\psi(x)\big)\,\dd x,
        \qquad
        \psi\in \HD^{2,q'}(\cO).\] 
This implies that we can also view $L$ as a $\calL(L^q(\cO),\HD^{-2,q}(\cO))$-valued function. By complex interpolation we see that $L:[0,T]\to \calL(\HD^{1-s,q}(\cO),\HD^{-1-s,q}(\cO))$ for any $s\in [-1,1]$. 

Using $s=1$, one can define very weak solutions as in \cite{LionsMagenesI, LionsMagenesII}. Moreover, using the latter setting we will obtain a result on maximal $L^p$-regularity for any $s\in [-1,1]$. 
\begin{theorem}[A Lions--Magenes type result]
\label{thm:LM-interpolation-fractional-dirichlet}
Suppose that Assumption \ref{ass:DETMR} holds.  
Let $s\in[-1,1]$ and let the family $L:[0,T]\to \calL(\HD^{1-s,q}(\mathcal O), \HD^{-1-s,q}(\mathcal O))$ be defined as  above. Then $L$ has maximal $L^p(0,T,w_{\kappa})$-regularity with constant $C$ which only depends on $T,M, \nu, p, q, \kappa,d,\cO$.  
\end{theorem}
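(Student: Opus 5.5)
The approach is to establish maximal regularity at the two endpoints of the Dirichlet scale, $s=-1$ and $s=1$, and then recover every intermediate $s\in[-1,1]$ by interpolation. At $s=-1$ the pair is $X_0=L^q(\cO)$, $X_1=\HD^{2,q}(\cO)$. Here $L(t)u=-\operatorname{div}(a(t,\cdot)\nabla u)=-\sum a^{j,k}\partial_j\partial_k u-\sum(\partial_j a^{j,k})\partial_k u$ is a non-divergence form operator with coefficients measurable in $t$ and $C^1$ (hence uniformly continuous) in $x$. Maximal $L^p(0,T,w_\kappa)$-regularity on $L^q(\cO)$ with Dirichlet condition therefore follows from known results for time-measurable coefficients, e.g.\ Krylov and Dong--Kim \cite{DK18}. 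In that setting the weights $w_\kappa$ with $\kappa\in(-1,p-1)$ are $A_p$-weights and are admissible. The first-order term is a lower-order perturbation and is absorbed on bounded intervals in the standard way. The constants depend only on $T,M,\nu,p,q,\kappa,d,\cO$, and they are uniform in the starting point $a$ because the weight $w_\kappa^a$ is translation invariant.

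At $s=1$ the pair is $X_0=\HD^{-2,q}(\cO)=(\HD^{2,q'}(\cO))^*$ and $X_1=L^q(\cO)$. I would argue by duality in the spirit of Lions--Magenes. The adjoint family $L(t)^*=-\operatorname{div}(a(t)^T\nabla\cdot)$ satisfies the same assumptions, so the $s=-1$ result applies to the backward problem $-v'+L^*(t)v=g$ on $L^{q'}$, with the time-reversed weight. Writing $f=v'+L(t)v$ for $v\in W^{1,p}(w_\kappa;\HD^{-2,q})\cap L^p(w_\kappa;L^q)$ and $v(a)=0$, the duality pairing gives $\|v\|_{L^p(w_\kappa;L^q)}\lesssim\|f\|_{L^p(w_\kappa;\HD^{-2,q})}$, where one uses the dual weight $w_\kappa^{-p'/p}$, which is again a power weight in $A_{p'}$. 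Existence follows from the a priori estimate combined with the method of continuity from $a\equiv I$. Alternatively, one can approximate $f$ by $L^q$-valued data and pass to the limit using the uniform estimate.

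\textbf{Main obstacle.} The hard step is the dual backward problem, because reversing time turns the weight $t^\kappa$ into $(T-t)^\kappa$. The required estimate is therefore maximal regularity with a weight singular at the \emph{final} time and with a zero final condition. For the $A_p$-weighted theory this is still covered, since $A_p$ constants are invariant under reflection, so I would rely on the $A_p$-weighted results of \cite{DK18} rather than on trace-based arguments.

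For intermediate $s$, the solution operator $f\mapsto u$ is the same at both endpoints: by uniqueness, the $s=-1$ solution is also the $s=1$ solution. Complex interpolation of $L^p(w_\kappa;X_0)\to L^p(w_\kappa;X_1)$ then gives boundedness for $s\in(-1,1)$, because complex interpolation commutes with $L^p(w_\kappa;\cdot)$. Uniqueness at level $s$ follows from uniqueness at $s=1$, since $\HD^{1-s,q}\hookrightarrow L^q$. Finally, the identity $u(t)+\int_a^t Lu=\int_a^t f$ holds in $\HD^{-1-s,q}$ because it holds in $\HD^{-2,q}$ and both sides lie in $\HD^{-1-s,q}$.
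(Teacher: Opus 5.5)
Your proposal is correct and follows essentially the same route as the paper: the endpoint $s=-1$ from the $A_p$-weighted results of \cite{DK18} plus localization, the endpoint $s=1$ by duality against the backward adjoint problem with the time-reversed dual weight $w'(T-\cdot)$ (which is exactly why the paper works with general $A_p$ weights), and then complex interpolation of the compatible solution operators. The only minor difference is how the $s=1$ solution is produced. The paper constructs it directly by Riesz representation of $\Lambda_f(g)=\int_0^T\langle f,v_g\rangle\,\dd t$ (transposition) and proves uniqueness separately by testing against $v_g$. You instead derive an a priori bound from the same duality, which gives uniqueness, and obtain existence by density of $L^q$-valued data or by the method of continuity; this is equally valid.
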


\begin{proof}
In the proof we use general $A_p$-weights in time, since we need duality and time-reversal arguments in the proof. One can check that the definition of maximal $L^p$-regularity extends to general $A_p$-weights in time. For the theory of $A_p$-weights, the reader is referred to \cite[Chapter 7]{Grafakos1}. Recall that the dual weight $w' = w^{-1/(p-1)}$ is in $A_{p'}$. Moreover, $w(t) = t^{\kappa}$ is in $A_p$ if and only if $\kappa\in (-1, p-1)$. 

Maximal $L^p(0,T,w)$-regularity can be equivalently stated as the boundedness of 
\[\mathcal{M}:L^p(0,T,w;\HD^{-1-s,q}(\mathcal O))\to L^p(0,T,w;\HD^{1-s,q}(\mathcal O)),\]
where $\mathcal{M} f = u$, is defined as the unique $u\in L^p(0,T,w;\HD^{1-s,q}(\cO))$ such that for all $t\in [0,T]$, 
\[u(t) = \int_0^t - L(r) u(r) + f(r) \,\dd r. \]
Therefore, by complex interpolation and \cite[Theorem 2.2.6]{Analysis2}, it is enough to prove $\mathcal{M}$ is well-defined and bounded for $s=-1$ and $s=1$. 

{\em Step 1 -- Case $s=-1$:} For $\cO=\R^d$ and $\cO = \R^d_+$, maximal $L^p(0,T,w)$-regularity follows from \cite[Theorems 5.2 and 5.4]{DK18}. The case of bounded $C^2$-domains can be deduced from this by standard localization/partition of unity arguments (see \cite{Evans, Krylov2008_book, LSU}). In these references, the passage from the whole-space and half-space estimates to bounded $C^2$-domains is obtained by the usual localization and boundary-flattening argument. The change of variables preserves uniform ellipticity and produces only lower-order terms with coefficients bounded in terms of $M$ and $\mathcal O$; these are absorbed by the standard perturbation theorem for maximal regularity. The resulting constants depend only on $T,M,\nu,p,q,\kappa,d$ and $\mathcal O$.

{\em Step 2 -- Case $s=1$:} 
Let $\wt{L}:[0,T]\to \calL(\HD^{2,q'}(\cO), L^{q'}(\cO))$  be defined by
\[\wt{L}(t)\psi
        :=
        -\operatorname{div}\big(a(t,\cdot)^T\nabla\psi\big)
        =
        -\sum_{j,k=1}^d a^{k,j}(t,\cdot)\partial_{j}\partial_k\psi
        -
        (\partial_j a^{k,j})(t,\cdot)\partial_k\psi.
\]
From Step 1 with weight $\wt{w}(t) := w'(T-t)$, a bounded perturbation, and a time-reversal one can see that 
the backward problem
\begin{align}\label{eq:backward}
        -v'(t)+\wt{L}(t)v(t)=g(t),
        \qquad
        v(T)=0,
\end{align}
with $g\in L^{p'}(0,T,w';L^{q'}(\mathcal O))$
has a unique solution $v\in L^{p'}(0,T,w';\HD^{2,q'}(\cO))$, where a solution is defined in the strong sense as
\[        v(t)-\int_t^T \wt{L}(r)v(r) \,\dd r =\int_t^T g(r) \,\dd r.
\]
Moreover, by the maximal regularity of Step 1:
\[\|v\|_{L^{p'}(0,T,w';\HD^{2,q'}(\cO))}\le C\|g\|_{L^{p'}(0,T,w';L^{q'}(\cO))}.
\]
Furthermore, the above identity also implies that $v\in W^{1,p'}(0,T,w';L^{q'}(\cO))$.

Now, to prove maximal $L^{p}$-regularity, choose an arbitrary $f\in L^p(0,T,w;\HD^{-2,q}(\cO))$. For $g\in L^{p'}(0,T,w';L^{q'}(\cO))$ let $v_g\in L^{p'}(0,T,w';\HD^{2,q'}(\cO))$ denote the solution to \eqref{eq:backward}. Define 
\begin{align}\label{eq:Lambda1}
\Lambda_f(g) = \int_0^T \langle f(t),v_g(t)\rangle_{\HD^{-2,q}(\cO),\HD^{2,q'}(\cO)}
        \,\dd t.
\end{align}
Then $\Lambda_f$ is linear and by the regularity estimate for the backward problem 
\begin{align*}
|\Lambda_f(g)|&\leq \|f\|_{L^p(0,T,w;\HD^{-2,q}(\cO))}   \|v_g\|_{L^{p'}(0,T,w',\HD^{2,q'}(\cO))} 
\\ & \le C \|f\|_{L^p(0,T,w;\HD^{-2,q}(\cO))} \|g\|_{L^{p'}(0,T,w';L^{q'}(\cO))}.
\end{align*}
Therefore, since $(L^{p'}(0,T,w';L^{q'}(\cO)))^* = L^p(0,T,w;L^q(\cO))$ (with respect to the Lebesgue measure), there exists a unique $u_f\in L^p(0,T,w;L^q(\cO))$ such that 
\begin{align}\label{eq:Lambda2}
\Lambda_f(g) = \int_0^T \langle u_f(t),g(t)\rangle_{L^q(\cO),L^{q'}(\cO)} \,\dd t.
\end{align}
and $\|u_f\|_{L^p(0,T,w;L^q(\cO))} = \|\Lambda_f\| \leq C \|f\|_{L^p(0,T,w;\HD^{-2,q}(\cO))}$. 

It remains to check that $u_f$ is the solution to the original parabolic problem. For $u_f$ to be a solution we need to show that for all $t\in [0,T]$, 
\[u_f(t) = \int_0^t - L(r) u_f(r) + f(r) \,\dd r.\]
For this it is enough to check that for all $\zeta \in C^\infty_c(0,T)$ 
\[\int_0^T u_f(t) \zeta'(t) \,\dd t
        = \int_0^T L(t) u_f(t) \zeta(t) - f(t) \zeta(t) \,\dd t.
\]
This is equivalent to for all $\psi\in \HD^{2,q'}(\cO)$
\[\int_0^T \lb u_f(t), \psi\rb \zeta'(t) \,\dd t
        =  \int_0^T [\lb u_f(t),\wt{L}(t)\psi \rb  - \lb f(t),\psi\rb] \zeta(t) \,\dd t.
\]
To prove the latter identity, it suffices to set $g(t) = -\zeta'(t) \psi + \wt{L}(t) \psi \zeta(t)$, and note that $v_g = \zeta \psi$ and combine this with \eqref{eq:Lambda1} and \eqref{eq:Lambda2}.  

By linearity it is enough to prove uniqueness for $f=0$. Let $u\in L^p(0,T,w;L^q(\mathcal O))$ be such that for all $t\in [0,T]$
\[ u(t)+\int_0^t L(r)u(r) \,\dd r=0. \] 
Then an approximation argument shows that for every 
\begin{align}\label{eq:generalclass}
 \varphi\in W^{1,p'}(0,T,w';L^{q'}(\mathcal O)) \cap L^{p'}(0,T,w';\HD^{2,q'}(\mathcal O)) 
 \end{align} 
 with \(\varphi(T)=0\) 
one has 
\[ \int_0^T \langle u(t), \varphi'(t)-\widetilde L(t)\varphi(t) \rangle_{L^q,L^{q'}} \,\dd t =0. 
\] 
Indeed, this can first be checked for $\varphi\in C^1([0,T])\otimes \HD^{2,q'}(\cO)$ and then extended by density, first to $C^1([0,T];L^{q'}(\mathcal{O}))\cap C([0,T];\HD^{2,q'}(\cO))$ and then to the general case \eqref{eq:generalclass}. 

Now, let $g\in L^{p'}(0,T,w';L^{q'}(\mathcal O))$ be arbitrary, and let \(v_g\) be the solution of the backward adjoint problem \[ -v_g'(t)+\widetilde L(t)v_g(t)=g(t), \qquad v_g(T)=0. \] Taking \(\varphi=v_g\) in the preceding identity gives (in the beginning of Step 2 we have seen that $v_g$ has the required regularity) 
\[ \int_0^T \langle u(t),g(t)\rangle_{L^q,L^{q'}} \,\dd t = -\int_0^T \langle u(t), v_g'(t)-\widetilde L(t)v_g(t) \rangle_{L^q,L^{q'}} \,\dd t = 0. \] Since $g$ was arbitrary, it follows that $u=0$. 

The same argument applies on every subinterval $(a,b)\subset(0,T)$ after the translation $t\mapsto t-a$, with the weight $w_\kappa^a(t)=(t-a)^\kappa$. Since the $A_p$-characteristic of $w_\kappa^a$ is independent of $a$ and $b$, the constants are uniform. This gives maximal regularity in the sense of Definition \ref{def:MaxReg}.
\end{proof}

\begin{remark}
For $s=0$ many sufficient conditions are known under which an elliptic second order operator $A$ with Dirichlet or Neumann boundary conditions has maximal $L^p$-regularity in the case the coefficients are $(t,x)$-dependent. See for instance \cite[Theorem 8.1]{DongKimCalc11} and \cite[Theorem 3.3]{CK19} for $p,q\in (1, \infty)$, $C^1$-domains $\cO$, and with coefficients  measurable in time and VMO in space (at least unweighted).

One would expect that if the coefficients $a^{j,k}\in C^{|s|+}(\cO)$, then the maximal $L^p$-regularity results remain valid on $\HD^{-1-s,q}(\cO)$. Such results for space and time-dependent $a^{j,k}$ seem unavailable in the case of Dirichlet boundary conditions. In the periodic case and on $\R^d$ such results can be found in \cite[Theorem 5.2]{AVtorus}, where even the stochastic setting is covered. It might be possible to apply a similar technique as in the latter paper (but without the stochastic part) and reduce to the case $s=0$ by applying $(-\Delta)^{-(1+s)/2}$ to the equation and use perturbation arguments. 
\end{remark}

\begin{remark}\label{rem:NeumannMR}
In case the coefficients $a^{j,k}$ are time-independent, we can also consider $L(t) u = \dv(a \nabla u)$ with the Neumann-type boundary condition $(a\nabla u)\cdot n=0$ on $\partial \cO$. Indeed, this follows from the same method as before again applying a deformed version of \cite[Theorem 6.4]{DK18}. 

Time-dependent coefficients would lead to time-dependent boundary conditions. In principle, this could be something one could study for some values of $s$ such as $s=0$. Then the domain of the operator remains time-independent which is something we need in Subsection \ref{sec:loc-well-posed}.
\end{remark}

\subsection{Proofs of the local well-posedness and identification of critical spaces}
\label{ss:local_wp_proof_critical}

Before we turn to the proof of Theorem \ref{thm:mainlocal}, we introduce some notation. 
    We define the following spaces on the domain $\cO$ 
    \begin{align}
        X_0 &=\HD^{-1-s,q}(\cO)
        \quad\text{and}\quad
        X_1 = \HD^{1-s,q}(\cO)
        \end{align}
        so that by interpolation
        \begin{align}
        X_{1-\tfrac{1+\kappa}{p},p} &= \BD^{1-s-2\tfrac{1+\kappa}{p}}_{q,p}(\cO)
        \quad\text{and}\quad
        \quad X_\theta = \HD^{-1-s+2\theta,q}(\cO),
        \quad\text{for $\theta\in (0,1)$.}
    \end{align}
Often we will use the same notation for the $\R^\ell$-valued spaces. 

The idea will be to apply Theorem \ref{thm:local}, for which we need to verify that
    Assumption \ref{ass:FGcritical} holds, with the mappings
    \begin{align}
        (Au)_i:=-\dv(a_i\cdot\nabla u_i),\quad (\Phi_2(\cdot,u))_i:=\dv(F_i(\cdot,u) - F_i(\cdot, 0)), \quad       (\Phi_1(\cdot,u))_i:&=f_i(\cdot,u) - f_i(\cdot, 0)
    \end{align}
    and $\phi_i = \dv( F_i(\cdot, 0)) + f_i(\cdot, 0)$  for $i=1,\dots,\ell$. 

    For the noise term we use $\cU = L^2(\cO)^\ell$ and define $\Psi(t,\omega,u)\in \gamma(\mathcal{U},X_{1/2})$ as
\begin{equation}\label{eq:defPsi}
    (\Psi(\cdot,u)h)_i = \sum_{j=1}^\ell
    M_{g_i^j(\cdot,u) - g_i^j(\cdot,0)}R^jh^j,
    \qquad h:=(h^1, \ldots, h^\ell)\in\mathcal U,
\end{equation}
and $(\psi h)_i =\sum_{j=1}^\ell M_{g_i^j(\cdot,0)}R^jh^j$ where we used Assumption \ref{ass: zeta f}. Recall that $R^j (R^j)^*$ is the covariance operator of $W^j(1)$.

Note that an $(s,q,p,\kappa)$-solution to  \eqref{eq:reaction_diffusion_system} is the same as an
$L^p_\kappa$-solution to \eqref{eq:SEE} with the above choices of the spaces.

\begin{proof}[Proof of Theorem \ref{thm:mainlocal}]
Since $q\in [2, \infty)$, the spaces $X_0$ and $X_1$ are UMD and have type $2$ (see \cite{Analysis1,Analysis2}).
For the reference operator $A_0$ appearing in Assumption \ref{ass:FGcritical}, we can just take the Dirichlet Laplacian $-\Delta$ (see \cite[Example A.4]{AVsurvey}). The maximal $L^p$-regularity condition on $A$ follows from Theorem \ref{thm:LM-interpolation-fractional-dirichlet} applied pointwise in $\Omega$, and using that $A$ has a diagonal structure.

    For the $\Phi_1$-estimate, we compute, for $\alpha_1\in [0,1-\tfrac{1+\kappa}{p})$ and $i=1,\dots,\ell$,
    \begin{align}
    \|f_i(u)-f_i(v)\|_{X_{\alpha_1}} 
    &\stackrel{(i)}{\lesssim} \|f_i(u)-f_i(v)\|_{L^{\eta_1}(\cO)}\\
    &\lesssim \|(1+|u|^{\rho_1}+|v|^{\rho_1})(u-v)\|_{L^{\eta_1}(\cO)}\\
    &\stackrel{(ii)}{\lesssim} 
    (1+\|u\|_{L^{(\rho_1+1)\eta_1}(\cO)}^{\rho_1} + \|v\|_{L^{(\rho_1+1)\eta_1}(\cO)}^{\rho_1})\|u-v\|_{L^{(\rho_1+1)\eta_1}(\cO)}\\
    \label{eq: estimate f main local}
    &\stackrel{(iii)}{\lesssim} 
    (1+\|u\|_{X_{\beta_1}}^{\rho_1} + \|v\|_{X_{\beta_1}}^{\rho_1})\|u-v\|_{X_{\beta_1}},
\end{align}
where in (ii) we applied H\"older's inequality and in (i) and (iii) we used the Sobolev embeddings $L^{\eta_1}(\cO)\hookrightarrow \HD^{-1-s+2\alpha_1,q}(\cO)$  and $\HD^{-1-s+2\beta_1,q}(\cO)\hookrightarrow L^{(\rho_1+1)\eta_1}(\cO)$ applied to $X_{\alpha_1} = \HD^{-1-s+2\alpha_1,q}(\cO)$ and $X_{\beta_1} = \HD^{-1-s+2\beta_1,q}(\cO)$, respectively, requiring
\begin{align}
\label{eq: wellposedness thm sobolev f 1}
    -1-s+2\alpha_1&\leq 0, \qquad\, -\frac{d}{\eta_1} = -1-s+2\alpha_1-\frac{d}{q}\\
    \label{eq: wellposedness thm sobolev f 2}
    -1-s+2\beta_1&\geq  0, \qquad\,
    -1-s+2\beta_1-\frac{d}{q}= -\frac{d}{(\rho_1+1)\eta_1}.
\end{align}
The special cases where one does not have equality could be considered separately. However, we do not consider them here since these cases are not relevant for the critical case, which is a main point of interest in this paper.
The criticality condition \eqref{eq:subcritical} in Assumption \ref{ass:FGcritical} requires further that
\begin{align}
\label{eq: wellposedness thm crit ineq f}
    \rho_1\big(\beta_1 -1 +\frac{1+\kappa}{p}\big)+\beta_1\leq 1 + \alpha_1,\quad \alpha_1\in \big[0,1-\frac{1+\kappa}{p}\big),\,\beta_1\in \big(1-\frac{1+\kappa}{p},1\big).
\end{align}
Solving for $2\beta_1$ in the (in)equalities \eqref{eq: wellposedness thm sobolev f 2} and \eqref{eq: wellposedness thm crit ineq f} and combining them yields  
\begin{align}
\label{eq: range beta_1}
   1+  s+\frac{d}{q}-\frac{d}{(\rho_1+1)\eta_1} = 2\beta_1\leq  2-2\frac{\rho_1}{\rho_1+1}\cdot \frac{1+\kappa}{p} + \frac{2\alpha_1}{\rho_1+1}.
\end{align}
Expressing $\eta$ from \eqref{eq: wellposedness thm sobolev f 1} yields
\begin{align}
\label{eq: def eta_1}
    \eta_1 = \frac{dq}{q+qs-2\alpha_1 q+d}.
\end{align}
We note that by \eqref{eq: wellposedness thm sobolev f 2} we need that $\eta_1\geq\tfrac{q}{\rho_1 + 1}$.
Plugging the expression for $\frac{d}{\eta_1}$ from \eqref{eq: wellposedness thm sobolev f 1} in \eqref{eq: range beta_1}, and requiring $2\beta_1\in (2-2\tfrac{1+\kappa}{p},2)$, as well as $\eta_1>1$ and $\alpha_1 <1-\tfrac{1+\kappa}{p}$ yields the bound
\begin{align}
\label{eq: range alpha case I}
    \max\Big\{(\rho_1+1)\Big(1-\frac{1 + \kappa}{p}\Big) - \frac{\rho_1}{2}\Big(1+s+\frac{d}{q}\Big),
    \,&\frac{1}{2}+\frac{s}{2}-\frac{(q-1)d}{2q},\frac{1}{2}+\frac{s}{2}-\frac{\rho_1 d}{2q}\Big\}\\
    <\alpha_1&<\min\Big\{    
    \frac{\rho_1 + 2}{2}-\frac{\rho_1 s}{2}-\frac{\rho_1 d}{2q},\, \frac{1}{2}+\frac{s}{2},\,\, 1-\frac{1+\kappa}{p}\Big\}.
\end{align}
Requiring that the range for $\alpha_1$ above is non-empty gives the bounds
\begin{align}
\label{eq: s extra bound alpha}
   1-2\frac{1+\kappa}{p}-\frac{\rho_1}{\rho_1+1}\frac{d}{q}<
    s<\min\Big\{1+\frac{d}{\rho_1+1}-\frac{d}{q},\,
    d+1-\frac{d}{q}-2\frac{1+\kappa}{p}, 1+\frac{d\rho_1}{q} -2\frac{1+\kappa}{p}\Big\},
\end{align}
which gives \eqref{eq: mainlocal combined ineq f 1}.
Plugging the expression from \eqref{eq: wellposedness thm sobolev f 1} for $\tfrac{d}{\eta_1}$ in \eqref{eq: range beta_1} gives
\begin{align}
    s\frac{\rho_1}{\rho_1+1}+\frac{d}{q}\frac{\rho_1}{\rho_1+1}-\frac{1}{\rho_1+1}
    \leq 1-2\frac{\rho_1}{\rho_1+1}\cdot \frac{1+\kappa}{p},
\end{align}
so that we obtain 
\begin{align}
\label{eq: s bound f term}
    s\leq \frac{\rho_1+2}{\rho_1}-\frac{d}{q} - 2\frac{1+\kappa}{p},
\end{align}
which is  \eqref{eq: mainlocal combined ineq f 2}. 
Similarly, for $\Phi_2$ we calculate
\begin{align}
    \|&\dv(F_i(\cdot,u) - F_i(\cdot,v))\|_{\HD^{-1-s+2\alpha_2,q}(\cO)}\\
    &\stackrel{(i)}{\lesssim} \|\dv(F_i(\cdot,u) - F_i(\cdot,v))\|_{\HD^{-1,\eta_2}(\cO)}\\
    &\lesssim \|F_i(\cdot,u) - F_i(\cdot,v)\|_{L^{\eta_2}(\cO)}\\
    &\stackrel{(ii)}{\lesssim} (1+\|u\|_{L^{(\rho_2+1)\eta_2}(\cO)}^{\rho_2} + \|v\|_{L^{(\rho_2+1)\eta_2}(\cO)}^{\rho_2})\|u-v\|_{L^{(\rho_2+1)\eta_2}(\cO)}\\
    &\stackrel{(iii)}{\lesssim} 
    (1+\|u\|_{\HD^{-1-s+2\beta_2,q}(\cO)}^{\rho_2} + \|v\|_{\HD^{-1-s+2\beta_2,q}(\cO)}^{\rho_2})\|u-v\|_{\HD^{-1-s+2\beta_2,q}(\cO)},
\end{align}
where in (ii) we again used H\"older's inequality and for (i) and (iii) we used the Sobolev embeddings
$\HD^{-1,\eta_2}(\cO)\hookrightarrow \HD^{-1-s+2\alpha_2,q}(\cO)$  and $\HD^{-1-s+2\beta_2,q}(\cO)\hookrightarrow L^{(\rho_2+1)\eta_2}(\cO)$, for which we require $\eta_2<q<(\rho_2+1)\eta_2$ and
\begin{align}
\label{eq: wellposedness thm sobolev F 1}
    -1-s+2\alpha_2&\leq -1,\qquad
    -1-\frac{d}{\eta_2} = -1-s+2\alpha_2-\frac{d}{q},\\
    \label{eq: wellposedness thm sobolev F 2}
    -1-s+2\beta_2&\geq 0,\qquad
    -1-s+2\beta_2 - \frac{d}{q} =  -\frac{d}{(\rho_2+1)\eta_2},
\end{align}
together with the criticality condition from Assumption \ref{ass:FGcritical}, which reads 
\begin{align}
\label{eq: wellposedness thm crit F}
    \rho_2\big(\beta_2-1+\frac{1+\kappa}{p}\big)+\beta_2\leq 1+\alpha_2.
\end{align}
Solving for $2\beta_2$ in \eqref{eq: wellposedness thm sobolev F 2} and \eqref{eq: wellposedness thm crit F} gives
\begin{align}
\label{eq: range beta 2}
   1+  s+\frac{d}{q}-\frac{d}{(\rho_2+1)\eta_2} = 2\beta_2\leq  2-2\frac{\rho_2}{\rho_2+1}\cdot \frac{1+\kappa}{p} + \frac{2\alpha_2}{\rho_2 +1}.
\end{align}
To obtain an admissible range of $\alpha_2$, we first use \eqref{eq: wellposedness thm sobolev F 1} to express 
\begin{align}
\label{eq: def eta_2}
    \eta_2 = \frac{dq}{d+qs-2q\alpha_2}.
\end{align}
We note again that from \eqref{eq: wellposedness thm sobolev F 2} we require $\eta_2> \tfrac{q}{\rho_2+1}$.
Plugging the expression for $\tfrac{d}{\eta_2}$ into \eqref{eq: range beta 2} and requiring that $2\beta_2\in (2-2\tfrac{1+\kappa}{p},2)$, as well as
 $\max\{1,\tfrac{q}{\rho_2+1} \}<\eta_2<q$, $\alpha_2<1-\tfrac{1+\kappa}{p}$ and that the inequality in \eqref{eq: wellposedness thm sobolev F 1} holds, yields 
\begin{align}
\label{eq: range alpha 2}
    \max\Big\{\frac{\rho_2+1}{2}\Big(1-2\frac{1+\kappa}{p}\Big)-\frac{\rho_2}{2}\Big(s+\frac{d}{q}\Big),\,
    &\frac{s}{2}-\frac{(q-1)d}{2q},\, 
    \frac{s}{2}-\frac{d\rho_2}{2q}
    \Big\}  \\
    < \alpha_2&<
    \min\Big\{ 
    \frac{\rho_2+1}{2}-\frac{\rho_2s}{2}-\frac{\rho_2d}{2q},\,
    \frac{s}{2},\,
    1-\frac{1+\kappa}{p}
    \Big\}.
\end{align}
We note that $\alpha_2<\frac{s}{2}$ 
stems from the embedding in \eqref{eq: wellposedness thm sobolev F 1} and that it ensures
that the denominator in the expression for $\eta_2$ is positive. We note further that, since $s<d-\tfrac{d}{q}$ by assumption, we have $\tfrac{s}{2}-\tfrac{(q-1)d}{2q}<0$, so that we may omit this term.
Requiring that the range for $\alpha_2$ is non-empty gives the first strict inequality in \eqref{eq: mainlocal combined ineq F}, as well as the second condition in \eqref{eq: mainlocal combined ineq F}.
Plugging in the expression for $\tfrac{d}{\eta_2}$ from \eqref{eq: wellposedness thm sobolev F 1} in \eqref{eq: range beta 2} then yields
\begin{align}
\label{eq: s bound F term}
    s\leq \frac{\rho_2+1}{\rho_2}-\frac{d}{q}-2\frac{1+\kappa}{p},
\end{align}
which is the remaining inequality in \eqref{eq: mainlocal combined ineq F}. 
To treat the $g$ term, we recall that Assumption \ref{ass: zeta f} holds and apply Theorem \ref{thm:MgTmu delta} for $\zeta\in [2,\infty)$. In this case we obtain
\begin{align}
    \|&M_{g_i^j(\cdot,u)}R^j - M_{g_i^j(\cdot,v)}R^j\|_{\gamma(L^2(\cO),\HD^{-s,q}(\cO))} \\
    \label{eq: loc Lipschitz calculation G AC1D}
    &\stackrel{\text{(i)}}{\lesssim} \|g_i^j(\cdot,u)-g_i^j(\cdot,v)\|_{L^{\eta_3}(\cO)}\|\mu^j\|_{\ell^\zeta(\Sf^j)}\\
    &\lesssim\|(1+|u|^{\rho_3}+|v|^{\rho_3})|u-v|\|_{L^{\eta_3}(\cO)}\|\mu^j\|_{\ell^\zeta(\Sf^j)}
    \\
    &\lesssim (1+\|u\|_{L^{\eta_3(\rho_3+1)}(\cO)}^{\rho_3}+\|v\|_{L^{\eta_3 (\rho_3+1)}(\cO)}^{\rho_3})\|u-v\|_{L^{\eta_3 (\rho_3+1)}(\cO)}\|\mu^j\|_{\ell^\zeta(\Sf^j)}\\
    &\stackrel{\text{(ii)}}{\lesssim} (1+\|u\|_{\HD^{-1-s+2\beta_3,q}(\cO)}^{\rho_3}+\|v\|_{\HD^{-1-s+2\beta_3,q}(\cO)}^{\rho_3})\|u-v\|_{\HD^{-1-s+2\beta_3,q}(\cO)}\|\mu^j\|_{\ell^\zeta(\Sf^j)},
\end{align}
where in (i) we apply Theorem \ref{thm:MgTmu delta} with the condition
\begin{align}
\label{eq: wellposedness thm g new condition}
    \frac{s}{d} + \frac{1}{q} = \frac{1}{\eta_3} + \frac{1}{2} - \frac{1}{\zeta},\quad\text{and}\quad \frac{1}{\eta_3} - \frac{1}{\zeta}<\frac{1}{2},
\end{align}
and in (ii) we  require $X_{\beta_3} = \HD^{-1-s+2\beta_3,q}(\cO)\hookrightarrow L^{\eta_3 (\rho_3+1)}(\cO)$, or 
\begin{align}
\label{eq: wellposdness thm sobolev g}
-1-s+2\beta_3\geq 0,\qquad
-1-s+2\beta_3-\frac{d}{q}\geq -\frac{d}{\eta_3 (\rho_3+1)}    
\end{align}
 To apply Theorem \ref{thm:local} we require additionally the criticality condition \eqref{eq:subcritical3} from Assumption \ref{ass:FGcritical}, i.e.
 \begin{align}
 \label{eq: wellposedness thm crit g}
     \rho_3(\beta_3-1+\frac{1+\kappa}{p})+\beta_3\leq 1,\quad\beta_3\in \big(1-\frac{1+\kappa}{p},1\big).
 \end{align}
 Solving for $2\beta_3$ in \eqref{eq: wellposdness thm sobolev g} and \eqref{eq: wellposedness thm crit g}
 yields
 \begin{align}
 \label{eq: wellposedness thm g combined condition with beta}
     1+s+\frac{d}{q}-\frac{d}{\eta_3(\rho_3+1)}\leq 2\beta_3\leq 2-2\frac{\rho_3}{\rho_3 +1}\frac{1+\kappa}{p}
 \end{align}
From \eqref{eq: wellposedness thm g new condition}, we can express
\begin{align}
\label{eq: wellposedness thm eta expression}
    \frac{d}{\eta_3} = s + \frac{d}{q} - \frac{d}{2}+\frac{d}{\zeta}. 
\end{align}
Substituting $\tfrac{d}{\eta_3}$ into 
 \eqref{eq: wellposedness thm g combined condition with beta} we see that we ensure existence of $\beta_3\in (1-\tfrac{1+\kappa}{p},1)$ if it holds that
\begin{align}
\label{eq: s rho_3 condition proof wellposedness thm}
    s\leq \frac{\rho_3+1}{\rho_3} - 2\frac{1+\kappa}{p} - \frac{d}{q}-\frac{d}{2\rho_3} + \frac{d}{\zeta\rho_3}.
\end{align}
From the second condition in \eqref{eq: wellposedness thm g new condition} we additionally get
\begin{align}
    s<d-\frac{d}{q} 
\end{align}
which ensures, using \eqref{eq: wellposedness thm eta expression}, that $\tfrac{1}{\eta_3}<1$. 
We note moreover that the last displayed inequality is only more restrictive than $s<1$ if $d=1$. Since by assumption $s>\tfrac{d}{2}-\tfrac{d}{\zeta}$ we know that also $\tfrac{1}{\eta_3}>\tfrac{1}{q}$ is satisfied. 

Next, we consider the case $\zeta = 2$, i.e.\ we apply Theorem \ref{thm:MgTmu delta}\eqref{it2:MgTmu delta} in (i) in \eqref{eq: loc Lipschitz calculation G AC1D}. Then, we may suppose that the first inequality in \eqref{eq: wellposedness thm g new condition} holds with $\zeta = 2$, we may disregard the second inequality in \eqref{eq: wellposedness thm g new condition} and we require that $s\geq 0$ and $\eta_3\in (1,q]$. Then the only changes to the calculations above are that instead of $s>\tfrac{d}{2}-\tfrac{d}{\zeta}$ we get $s\geq 0$ and we get \eqref{eq: s rho_3 condition proof wellposedness thm} with $\zeta =2$.

The conditions on $\phi$ and $\psi$ are easy to check due to the conditions on $f(\cdot, 0)$ and $F(\cdot, 0)$ in Assumption \ref{ass:reaction_diffusion_global}. This finishes the proof.
\end{proof}

\begin{proof}[Proof of Corollary \ref{cor: critical local rho2}]
Recall that $f=0$ in this case and $\rho_2 = \rho/2$. From now on we set $s = \frac{\rho+2}{\rho} - \frac{d}{q} - 2\gamma$, where for simplicity we denote $\gamma = \frac{1+\kappa}{p}$. We need to check that, for the given choices one has 
\begin{align}
\label{eq: cor rho_1=0 s condition}
    \frac{d}{2}-\frac{d}{\zeta} <s< 1
    ,\quad\text{for $\zeta\in (2,\infty)$, or}\quad
    0\leq s<1,\quad\text{for $\zeta = 2$,}
\end{align}
and that the inequalities \eqref{eq: mainlocal combined ineq F} and \eqref{eq: thm local s ineq 3} are satisfied.
To check $\tfrac{d}{2}-\tfrac{d}{\zeta}<s<1$, note that the upper bound on $s$ is equivalent to 
$\frac{2}{\rho} - \frac{d}{q} < 2\gamma$, which is one of the terms in  \eqref{eq:rho2condpkappa}. Requiring $s>\tfrac{d}{2}-\tfrac{d}{\zeta}$ for $\zeta>2$ immediately gives the upper bound on $\gamma$ in \eqref{eq:rho2condpkappa}, whereas requiring $s\geq 0$ for $\zeta=2$ gives the analogous bound as non-strict inequality. From the requirement $s<d-\tfrac{d}{q}$  we get
$\frac{\rho+2}{\rho}-d<2\gamma$,
which is one of the lower bounds in \eqref{eq:rho2condpkappa}. 
Next we check inequalities
 \eqref{eq: mainlocal combined ineq F} and \eqref{eq: thm local s ineq 3}.
Indeed, \eqref{eq: mainlocal combined ineq F} simplifies to
\begin{align}
    1+\frac{2}{\rho+2}\frac{d}{q}<\frac{\rho+2}{\rho},\quad\text{and}\quad
    \frac{2}{\rho}-2\frac{1+\kappa}{p}<\frac{2d}{\rho+2},
\end{align}
 which yields one of the terms in \eqref{eq:rho2condpkappa} and 
 \begin{align*}
 \frac{d}{q}<1+\frac{2}{\rho}.
 \end{align*}
The latter is implied by \eqref{eq:rho2condq}. The estimate \eqref{eq: thm local s ineq 3} is satisfied with equality, by our choice of $\rho_3$. 
It remains to show that due to 
\eqref{eq:rho2condq}, one can indeed find $\gamma\in (0,1/2)$ such that
\eqref{eq:rho2condpkappa}
holds. First we note that for $d=1$ trivially
\begin{align}
\frac{\rho+2}{\rho}-d > \frac{2}{\rho}-\frac{d}{q}
\quad\text{and}\quad
\frac{\rho+2}{\rho}-d > \frac{2}{\rho}-\frac{2d}{\rho+2}
\end{align}
so that in $d=1$ only the lower bound $\tfrac{\rho+2}{\rho}-d$ in \eqref{eq:rho2condpkappa} is relevant. However, this bound $\tfrac{\rho+2}{\rho}-d$ does not lead to any restrictions on $q$, as
\begin{align}
    \frac{\rho+2}{\rho}-1 <\frac{\rho+2}{\rho}-\frac{1}{q}-\frac{1}{2}+\frac{1}{\zeta}
    \quad\text{is equivalent to}\quad
    \frac{1}{q}<\frac{1}{2}+\frac{1}{\zeta}
\end{align}
which is always satisfied for $q\geq 2$. 
Requiring, in $d\geq 2$,
\begin{align}
    \frac{2}{\rho}-\frac{d}{q}<\frac{\rho+2}{\rho}-\frac{d}{q}-\frac{d}{2}+\frac{d}{\zeta}
    \quad\text{is equivalent to}\quad
    0<1-\frac{d}{2}+\frac{d}{\zeta},
\end{align}
which is also satisfied, owing to Assumption \ref{ass: zeta f}.
The condition
\begin{align}
    \frac{2}{\rho}- \frac{2d}{\rho+2}<\frac{\rho+2}{\rho}-\frac{d}{q}-\frac{d}{2}+\frac{d}{\zeta}
\end{align}
is equivalent to the first minimum in the upper bound on $\tfrac{d}{q}$ in \eqref{eq:rho2condq}. 
Finally we see that the second minimum in \eqref{eq:rho2condq} implies that the right-hand side in \eqref{eq:rho2condpkappa} is positive.
This finishes the proof.
\end{proof}

\begin{proof}[Proof of Corollary \ref{cor: critical localrho1}]
From now on we set $s = \frac{\rho+2}{\rho} - \frac{d}{q} - 2\gamma$, where we denote again $\gamma = \frac{1+\kappa}{p}$. We need to check that for the given choices one has 
\eqref{eq: cor rho_1=0 s condition}, that $s<d-\tfrac{d}{q}$,
and that \eqref{eq: mainlocal combined ineq f 1} and \eqref{eq: thm local s ineq 3} hold. 

The condition $\tfrac{d}{2}-\tfrac{d}{\zeta}<s<1$ if $\zeta\in (2,\infty)$ gives the first term in the maximum in \eqref{eq:rho1condpkappa} and the upper bound in \eqref{eq:rho1condpkappa}. The condition $s<d-\tfrac{d}{q}$ gives the second term in the maximum in \eqref{eq:rho1condpkappa}.
If $\zeta= 2$ and we impose $s\geq 0$, we instead get the constraint $2\gamma\leq \tfrac{\rho+2}{\rho}-\tfrac{d}{q}$, as mentioned in the Corollary statement. 
For the present choice of $s$ condition \eqref{eq: mainlocal combined ineq f 1}
is equivalent to
\begin{align}
    \frac{1}{\rho+1}\frac{d}{q}<\frac{2}{\rho}<\min\Big\{d,\,\frac{d(\rho+1)}{q}\Big\}
    \quad\text{and}\quad
    \frac{2}{\rho}-2\frac{1+\kappa}{p}<\frac{d}{\rho+1},
\end{align}
which
yields the constraint $\rho>\tfrac{2}{d}$, the lower bound on $\tfrac{d}{q}$ in \eqref{eq:rho1condq}, the third term in the maximum in \eqref{eq:rho1condpkappa} and the
bound 
\[\frac{d}{q}<2+\frac{2}{\rho}.\] 
The latter is implied by \eqref{eq:rho1condq}. 

Again \eqref{eq: thm local s ineq 3} holds by definition of $\rho_3$. 
It remains to show that due to  \eqref{eq:rho1condq}, one can find $\gamma\in (0,1/2)$ such that
\eqref{eq:rho1condpkappa} holds, which introduces the first term in the minimum in \eqref{eq:rho1condq}. These checks are analogous to the ones in the proof of Corollary \ref{cor: critical local rho2}, which we leave to the reader. 
\end{proof}

\begin{proof}[Proof of Theorem \ref{thm:mainlocal zeta infty}]
     We may repeat the proof of Theorem \ref{thm:mainlocal}, where
     we apply Theorem \ref{thm:MgTmu delta} \eqref{it3:MgTmu delta}  in \eqref{it1:mainlocal zeta infty} in \eqref{eq: loc Lipschitz calculation G AC1D}, with $\eta:=\eta_3$ and $d:=1$. From Theorem \ref{thm:MgTmu delta}\eqref{it3:MgTmu delta} we get the condition
     \begin{align}
         \frac{1}{\eta_3}=s+\frac{1}{q}-\frac{1}{2}.
     \end{align}
      together with $2<\eta_3<q<\infty$. Using this in \eqref{eq: wellposedness thm crit g} we immediately obtain \eqref{eq: thm local s ineq 3 zeta infty}. We see also
that the condition $2<\eta_3<q<\infty$ is satisfied precisely when $\tfrac{1}{2}<s<1-\tfrac{1}{q}$.
\end{proof}

In order to prove Proposition \ref{prop:mainlocal unweighted}, we need the following version of Theorem \ref{thm:MgTmu delta}, which does not make any assumptions on $(\|\varphi_n\|_{L^\infty})_{n\geq 1}$. We refer to \cite{AGV} for a comparison of Theorem \ref{thm:generalONB} to Theorem \ref{thm:MgTmu delta}.

\begin{theorem}[$\mu\in\ell^\zeta$]
\label{thm:generalONB}
Let $s\geq 0$ and  $\eta\in [2, \infty)$. Suppose that either
\begin{enumerate}[(1)]
\item  $\zeta\in (2, \infty)$ and $q\in (1, \infty)$ satisfy
\begin{align}
\label{eq:parametersnew}
\frac{s}{d} + \frac{1}{q} = \frac{1}{\eta} +\frac{1}{2}, 
\quad  \text{and}  \quad  
\frac{1}{\eta} +\frac{1}{\zeta}>\frac{1}{q},
\end{align}
\item $\zeta = 2$ and $q\in [1,\infty)$ satisfy
\begin{align}
    \frac{s}{d} + \frac{1}{q} = \frac{1}{\eta}+\frac{1}{2}.
\end{align}
\end{enumerate}
Then for each $g\in L^{\eta}(\mathcal{O})$ 
the operator $M_g R:L^2(\mathcal{O})\to \HD^{-s,q}(\mathcal{O})$ is $\gamma$-radonifying and 
\begin{align}
\|M_g R\|_{\gamma(L^2(\mathcal{O}),\HD^{-s,q}(\mathcal{O}))}\lesssim \|\mu\|_{\ell^\zeta} \|g\|_{L^\eta(\mathcal{O})},
\end{align}
where the constant depends on the parameters $(d,s,q,\eta,\zeta)$. 
\end{theorem}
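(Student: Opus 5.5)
We prove the estimate by complex interpolation in the coefficient sequence $\mu$. The function $g\in L^\eta(\cO)$ stays fixed, and we interpolate between a trace-class endpoint ($\zeta=2$) and a white-noise endpoint ($\zeta=\infty$, i.e.\ $R$ merely bounded). For fixed $g$ and fixed orthonormal systems, the map $\mu\mapsto M_gR_\mu$, with $R_\mu=\sum_n\mu_n e_n\otimes\varphi_n$, is linear. We will use two standard facts. First, $[\ell^2,\ell^\infty]_\theta=\ell^\zeta$ with $\theta=1-\frac{2}{\zeta}$. Second, $\gamma(L^2(\cO),\cdot)$ commutes with complex interpolation of the spaces $\HD^{-s,q}(\cO)$ (see \cite{Analysis2}). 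It therefore suffices to prove the bound at two endpoints, each with the same $\eta$ and chosen so that the interpolated indices are the prescribed $(s,q)$. At both endpoints we will use the square-function description $\|T\|_{\gamma(L^2,L^q)}\eqsim\|(\sum_n|Te_n|^2)^{1/2}\|_{L^q}$.

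\emph{Endpoint $\zeta=2$.} Let $\frac1r=\frac1\eta+\frac12$. The Sobolev embedding $L^r(\cO)\hookrightarrow\HD^{-s_0,q_0}(\cO)$, valid when $\frac{s_0}{d}+\frac1{q_0}=\frac1r$, extends to $\ell^2$-valued functions; this follows because the Bessel-potential kernel is positive and the Dirichlet scale consists of UMD spaces. Hence
\[
\|M_gR_\mu\|_{\gamma(L^2,\HD^{-s_0,q_0})}\lesssim\Big\|\Big(\sum_n|\mu_n g\varphi_n|^2\Big)^{1/2}\Big\|_{L^r}\le\|g\|_{L^\eta}\Big\|\Big(\sum_n|\mu_n\varphi_n|^2\Big)^{1/2}\Big\|_{L^2}=\|g\|_{L^\eta}\|\mu\|_{\ell^2},
\]
by H\"older's inequality and orthonormality of the $\varphi_n$. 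This also proves case (2) directly.

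\emph{Endpoint $\zeta=\infty$.} Since $\|R_\mu\|\le\|\mu\|_{\ell^\infty}$ and $\gamma$-norms satisfy the right ideal property, it suffices to bound $\|M_g\|_{\gamma(L^2,\HD^{-s_1,q_1})}$. Write the negative-order potential as an integral operator whose kernel obeys $|K_{s_1}(x,y)|\lesssim|x-y|^{s_1-d}$. Then
\[
\|M_g\|_{\gamma}\eqsim\Big\|\Big(\int|K_{s_1}(\cdot,y)|^2|g(y)|^2\,\dd y\Big)^{1/2}\Big\|_{L^{q_1}},
\]
which is a Riesz potential of order $2s_1-d$ applied to $|g|^2$. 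We require $s_1>\frac d2$. Applying Hardy--Littlewood--Sobolev from $L^{\eta/2}$ to $L^{q_1/2}$ then gives the bound exactly when $\frac{s_1}{d}+\frac1{q_1}=\frac1\eta+\frac12$, with $\eta>2$ and $q_1>\eta$.

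\emph{Interpolation and main obstacle.} Both endpoints obey the same affine relation, $\frac{s_i}{d}+\frac1{q_i}=\frac1\eta+\frac12$. Interpolating with $\theta=1-\frac2\zeta$, and using $s=(1-\theta)s_0+\theta s_1$ and $\frac1q=\frac{1-\theta}{q_0}+\frac\theta{q_1}$, therefore lands on the required relation automatically. The real work, and the step I expect to be the obstacle, is choosing the pair $(s_0,q_0)$, $(s_1,q_1)$ admissible. This means $s_0\ge0$, $s_1>\frac d2$, $q_1\in(\eta,\infty)$, all $q_i\in(1,\infty)$, and $s_i$ in the range where the Dirichlet scale and the kernel bounds are available. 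The endpoint $\eta=2$ needs a separate (degenerate HLS, i.e.\ $L^1$) treatment or a perturbation of $\eta$. I would first try the choice $s_0=0$, so that $\frac1{q_0}=\frac1\eta+\frac12$ (for $\eta=2$ this gives $q_0=1$, which is the reason for the separate treatment just mentioned), and solve for $(s_1,q_1)$. The linear constraint then reduces to $\frac1{q_1}=\frac1\theta\big(\frac1q-\frac{1-\theta}{q_0}\big)$. The requirement $q_1>\eta$ together with $s_1>\frac d2$ should translate precisely into the hypothesis $\frac1\eta+\frac1\zeta>\frac1q$, and I would verify this by direct algebra. If the choice $s_0=0$ is too rigid, I would allow $s_0>0$ and use the nonemptiness of the resulting open parameter set. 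The constant depends only on $(d,s,q,\eta,\zeta)$ through the endpoint constants.
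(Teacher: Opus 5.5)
The paper contains no proof of this statement; it is quoted from \cite{AGV}. So there is no argument in the text to compare yours with. Judged on its own, your interpolation in $\mu$ is a sound route for $\eta\in(2,\infty)$.

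Both endpoint bounds are correct. At $\zeta=2$, H\"older plus $\|\varphi_n\|_{L^2}=1$ gives $M_gR_\mu\in\gamma(L^2,L^r)$ with $\frac1r=\frac1\eta+\frac12$. The Sobolev embedding then applies by the left ideal property of $\gamma$-radonifying operators, so no positivity or UMD argument is needed. At $\zeta=\infty$, Parseval in the kernel followed by Hardy--Littlewood--Sobolev from $L^{\eta/2}$ to $L^{q_1/2}$ works exactly as you describe.

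The parameter bookkeeping closes. Both endpoints obey the same affine relation, so the admissible choices are $s_0\in[0,d(\frac1\eta+\frac12))$ and $s_1\in(\frac d2,d(\frac1\eta+\frac12))$. Their convex combinations with $\theta=1-\frac2\zeta$ fill exactly $s\in(\frac d2-\frac d\zeta,\,d(\frac1\eta+\frac12))$. This is precisely the hypothesis $\frac1\eta+\frac1\zeta>\frac1q$ together with $q\in(1,\infty)$.

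Two technical remarks:
\begin{itemize}
\item You only need the easy inclusion $[\gamma(H,Y_0),\gamma(H,Y_1)]_\theta\hookrightarrow\gamma(H,[Y_0,Y_1]_\theta)$.
\item For $d\ge 4$ one has $s_1>\frac d2\ge 2$, which is outside the scale of Subsection \ref{ss:function_spaces_Dir}. Define $\HD^{-\sigma,q}(\cO)$ through $(-\Delta_D)^{-\sigma/2}$ instead. Gaussian bounds for the Dirichlet heat kernel give $|K_\sigma(x,y)|\lesssim|x-y|^{\sigma-d}$ for $0<\sigma<d$. Bounded imaginary powers give the complex interpolation identity for different $q_0,q_1$.
\end{itemize}

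The endpoint $\eta=2$ cannot be rescued by a separate $L^1$ argument or by perturbing $\eta$, because the estimate is false there whenever $s>0$. In case (1) this is always so, and in case (2) it holds unless $q=1$. For $\eta=2$ the relation forces $s=d(1-\frac1q)=\frac{d}{q'}$. This is the critical index at which $\HD^{s,q'}(\cO)\not\hookrightarrow L^\infty(\cO)$.

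Counterexample: take $\mu=(1,0,0,\dots)$, so $\|\mu\|_{\ell^\zeta}=1$ for every $\zeta$. Then $M_gR$ has rank one and $\|M_gR\|_{\gamma(L^2(\cO),\HD^{-s,q}(\cO))}=\|g\varphi_1\|_{\HD^{-s,q}(\cO)}$. Choose $g=\varphi_1=|B_\varepsilon|^{-1/2}\one_{B_\varepsilon(x_0)}$ with $\overline{B_\varepsilon(x_0)}\subset\cO$. Then $\|g\|_{L^2}=1$, while $g\varphi_1$ is an approximate identity at $x_0$. Its $\HD^{-s,q}$-norm blows up as $\varepsilon\downarrow 0$. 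For instance, with $d=q=2$ and $s=1$, this is the fact that $\delta_{x_0}\notin H^{-1}$ in two dimensions.

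For the same reason, your sentence ``this also proves case (2) directly'' holds only for $\eta>2$ (or the trivial case $s=0$, $q=1$). When $\eta=2$ the H\"older step lands in $L^1$, and $L^1(\cO)\not\hookrightarrow\HD^{-s,q}(\cO)$. The correct statement has $\eta\in(2,\infty)$ in both cases, and your argument proves it. This is also the only range the paper uses: in Proposition \ref{prop:mainlocal unweighted}, the assumption $s<d-\frac dq$ gives $\frac{1}{\eta_3}=\frac sd+\frac1q-\frac12<\frac12$.
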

A similar result holds for the Neumann setting. 
\begin{proof}[Proof of Proposition \ref{prop:mainlocal unweighted}]
    To prove this, we may repeat the same calculations as in the proof of Theorem \ref{thm:mainlocal}, where instead of applying Theorem \ref{thm:MgTmu delta} in (i) in \eqref{eq: loc Lipschitz calculation G AC1D}, we apply Theorem \ref{thm:generalONB}. If $\zeta\in (2,\infty)$  we then get instead of \eqref{eq: wellposedness thm g new condition} the condition
\begin{align}
\label{eq: wellposedness thm general ONB condition}
    \frac{s}{d}+\frac{1}{q} = \frac{1}{\eta_3} + \frac{1}{2}
    \quad\text{and}\quad
    \frac{1}{\eta_3}+\frac{1}{\zeta}>\frac{1}{q}.
\end{align}
Solving for $\tfrac{d}{\eta_3}$ in \eqref{eq: wellposedness thm general ONB condition} and plugging it into \eqref{eq: wellposedness thm g combined condition with beta}
 we get instead of \eqref{eq: thm local s ineq 3} the first inequality in \eqref{eq: thm local s ineq 3 no f}. Requiring that $\eta_3\in [2,\infty)$, or $\tfrac{1}{\eta_3}\in (0,\tfrac{1}{2}]$ then yields the second condition in \eqref{eq: thm local s ineq 3 no f}. 
\end{proof}

\subsection{Proof of the regularization Theorem \ref{thm: regularization}}\label{ss:regbootstrap}

The proof of the regularization is divided in three steps:
\begin{enumerate}[\rm (1)]
\item temporal regularization through Theorem \ref{thm: temporal regularization}.
\item bootstrapping spatial integrability and removing the $\theta^*$ restriction.
\item bootstrapping spatial regularity. 
\end{enumerate}

\begin{proof}[Proof of Theorem \ref{thm: regularization}]
  \textit{Step 1: Temporal regularization.} By an application of Theorem \ref{thm: temporal regularization} we immediately get
   \begin{align}
   \label{eq: u temporal regularization}
   u\in H^{\theta,r}_{\loc}((0,\sigma);\HD^{1-s-2\theta,q}(\cO))\quad\text{for $\theta\in [0,\theta^*)$ and $r\in [2,\infty)$,}
\end{align}    
which for fixed $\theta\in (0,\theta^*)$ and $r$ sufficiently large implies
\begin{align}
    u\in
        C_{\loc}((0,\sigma),\HD^{1-s-2\theta,q}(\cO)).
\end{align}
This means that for each $\varepsilon>0$, on the set $\Gamma_\varepsilon:=[\varepsilon<\sigma]$ we have 
$\1_{\Gamma_\varepsilon}u(\varepsilon)\in \HD^{1-s-2\theta,q}(\cO)$ almost surely, for $\theta\in (0,\theta^*)$.\newline
\textit{Step 2: Bootstrapping spatial integrability.}
The strategy of this step (and the next) is as follows. Let $(\sigma_n)_{n\geq 1}$ be a localizing sequence for $(u,\sigma)$ satisfying $\sigma_n<\sigma$ almost surely (this is possible due to the predictability of $\sigma$, see, for instance, \cite[Prop. 4.12]{AV19_QSEE_2}). Instead of \eqref{eq:reaction_diffusion_system}, we consider for $\varepsilon>0$ and $n\geq 1$,
    \begin{equation}
\label{eq: reg lemma v eps spde}
\left\{
\begin{aligned}
\dd v_{i} -\dv(a_i\cdot\nabla v_{i}) \,\dd t& = \1_{\Gamma_\varepsilon}\1_{[\varepsilon,\sigma_n]}\Big[\dv(F_i(\cdot, u)) +f_i(\cdot, u)\Big]\,\dd t +  \sum_{j=1}^\ell \1_{\Gamma_\varepsilon}\1_{[\varepsilon,\sigma_n]}g_{i}^j(\cdot,u) \,\dd W^j(t),\\
v_{i}(\varepsilon)&=\1_{\Gamma_\varepsilon}u_i(\varepsilon).
\end{aligned}\right.
\end{equation}
In this sense, we treat the functions $f(u),F(u),g(u)$ as inhomogeneities and prove the result for a solution $v$ of the linear equation \eqref{eq: reg lemma v eps spde}, which we then identify $v=u$ on $\Gamma_\varepsilon\times [\varepsilon,\sigma_n]$ for all $n\geq 1$, using that $u$ and $v$ satisfy the same equation on this set. 

We now prove that
\begin{align}
\label{eq: assertion Step 2}
u\in H^{\theta,r}_{\loc}((0,\sigma),\HD^{1-s-2\theta,\overline{q}}(\cO))\quad \text{for $r>p$, $\overline{q}\geq q$ and $\theta\in[0,1/2)$.}
\end{align}
To do so, we apply stochastic maximal regularity to \eqref{eq: reg lemma v eps spde} with the inhomogeneities $f(u), \dv F(u)$ and $g(u)$ iteratively. 
We fix $m\geq q$ and suppose that we have shown that almost surely\newline
$u\in~H^{\theta,r}(\varepsilon,\sigma_n;\HD^{1-s-2\theta,m}(\cO))$, for $\theta\in [0,\theta^*)$ for a $\theta^*>0$ and for all $r\in (p,\infty)$. In what follows we show that for a $\delta>0$ independent of $m$ we obtain $v\in H^{\theta,r}_{\rm loc}([\varepsilon,\infty);\HD^{1-s-2\theta,m+\delta}(\cO))$ a.s.\ for $\theta\in [0,1/2)$. 
Identifying $v=u$ on $\Gamma_\varepsilon\times [\varepsilon,\sigma_n]$ we have then shown the validity of the implication  
\begin{align}
\label{eq: reg corollary implication}
\hspace{-1cm} \1_{\Gamma_\varepsilon}u\in 
\bigcap_{\theta\in [0,\theta^*), r>p} &H^{\theta,r}(\varepsilon,\sigma_n;\HD^{1-s-2\theta,m}(\cO))\\
    \quad&\Rightarrow\quad
    \1_{\Gamma_\varepsilon}u\in\bigcap_{\theta\in [0,1/2), r>p} H^{\theta,r}(2\varepsilon,\sigma_n;\HD^{1-s-2\theta,m+\delta}(\cO)),
\end{align}
a.s. Since $\delta$  can be chosen independent of $m\geq q$ and $\varepsilon>0$ and $n\geq 1$ were arbitrary, this then implies \eqref{eq: assertion Step 2}.

We begin by estimating $f(u)$ in detail, $\dv F(u)$ and $g(u)$ can be treated analogously. 
Let $\delta>0$. 
Then we calculate
\begin{align}
\label{eq: estimate f regularization}
\|f(u)\|_{\HD^{-1-s,m+\delta}(\cO)}\stackrel{(i)}{\lesssim}
\|f(u)\|_{L^{\eta_1}(\cO)}\lesssim 1+\|u\|_{L^{\eta_1(\rho_1 + 1)}(\cO)}^{\rho_1+1}
\stackrel{(ii)}{\lesssim}1+\|u\|_{\HD^{1-s,m}(\cO)}^{\rho_1+1}, 
\end{align}
where for the Sobolev embeddings (i) and (ii) we require $L^{\eta_1}(\cO)\hookrightarrow \HD^{-1-s,m+\delta}(\cO)$ and $\HD^{1-s,m}(\cO)\hookrightarrow L^{\eta_1(\rho_1+1)}(\cO)$, respectively, or 
\begin{equation}
\begin{aligned}\label{eq: regularization f embeddings}
    \text{(i):}&\quad
    -\frac{d}{\eta_1}\geq -1-s-\frac{d}{m+\delta},\qquad 
    \eta_1\leq m+\delta, 
    \\ 
    \quad\text{(ii):}&\quad
    1-s-\frac{d}{m}\geq -\frac{d}{\eta_1(\rho_1+1)}, 
    \qquad m\leq \eta_1(\rho_1+1).
\end{aligned}
\end{equation}
For the range of $\eta_1$ to be non-empty, we hence require
\begin{align}
    -1-s-\frac{d}{m+\delta}\leq (\rho_1+1)\Big( 1-s-\frac{d}{m}\Big),
    \quad\text{or equivalently,}\quad
    \frac{d\rho_1(m+\delta)+d\delta}{m(m+\delta)}\leq 2+\rho_1-\rho_1 s.
\end{align}
Using the bound for $s$ in \eqref{eq: mainlocal combined ineq f 2}, together with the fact that $2\leq q\leq m$ it is easy to verify the above inequality is satisfied if we have that 
\begin{align}
\label{eq: delta_1 condition 1}
    \delta\leq \frac{8\rho_1}{d}\cdot\frac{1+\kappa}{p}.
\end{align}
The reader may further check that for any $\delta$ the condition $\eta_1\in (\max\{1,\tfrac{m}{\rho_1+1}\},m+\delta)$, or $-\tfrac{d}{\eta_1}\in (\max\{-d,-\tfrac{d(\rho_1+1)}{m}\},-\tfrac{d}{m+\delta})$ yields no further restriction due to the second condition in \eqref{eq: mainlocal combined ineq f 1} and hence, it is always possible to find an admissible $\eta_1\in (1,\infty)$ such that \eqref{eq: regularization f embeddings} holds, if  \eqref{eq: delta_1 condition 1} is satisfied. This yields that
\begin{align}
    f(u)\in L^r((\varepsilon,\sigma_n),\HD^{-1-s,m+\delta}(\cO)),
    \quad\text{a.s., for $\delta$ satisfying \eqref{eq: delta_1 condition 1}.}
\end{align}
Similarly, we also obtain that almost surely
\begin{align}
    \dv F(u)\in L^r((\varepsilon,\sigma_n),\HD^{-1-s,m+\delta}(\cO)),\quad
    M_{g(u)}R \in L^{r}((\varepsilon,\sigma_n),\gamma(L^2(\cO),\HD^{-s,m+\delta}(\cO))),
\end{align}
for $\delta$ satisfying
\begin{align}
\label{eq: delta_1 condition 2}
    \delta< \frac{8\min\{\rho_2,\rho_3,1\}}{d}\cdot\frac{1+\kappa}{p}.
\end{align}
Moreover, if $1-s-\tfrac{d}{m}> 1-s-2\tfrac{1+\kappa}{p}-\tfrac{d}{m+\delta}$, which holds, in particular, for $\delta$ satisfying \eqref{eq: delta_1 condition 2}, and if for $r\geq p$ the weight $\kappa_r\in [0,\tfrac{r}{2}-1)$ is chosen such that $\tfrac{1+\kappa}{p} = \tfrac{1+\kappa_r}{r}$,
then we have, for $\theta\in (0,1/2)$ sufficiently small,
\begin{align}
v(\varepsilon) = \1_{{\sigma>\varepsilon}} u(\varepsilon)\in \HD^{1-s-2\theta,m}(\cO)\hookrightarrow \BD_{m+\delta,r}^{1-s-2\tfrac{1+\kappa_r}{r}}(\cO),\quad\text{a.s.}
\end{align}
Stochastic maximal $L^r_{\kappa_r}$-regularity (in the form of \cite[Prop. 3.11]{AVsurvey}) applied with the spaces $Y_j = \HD^{-1-s+2j,m+\delta}(\cO)$, $j=0,1$, then implies that 
\begin{align}
v\in H^{\theta,r}_{\rm loc}([\varepsilon,\infty),w_{\kappa_r}^\varepsilon;\HD^{1-s-2\theta,m + \delta}(\cO)),    
\end{align}
where $w_{\kappa_r}^\varepsilon(t) = (t-\varepsilon)^{\kappa_r}$,
for  $\theta\in [0,1/2)$.
Lemma \ref{lem:compatibility_solutions} gives $u=v$ on $\Gamma_\varepsilon\times [\varepsilon,\sigma_n]$ almost surely. 
This implies \eqref{eq: reg corollary implication}, since the weight $w_{\kappa_r}^\varepsilon$ acts only on $\varepsilon$. Recalling that $\varepsilon>0$, $n\geq 1$ and $r\geq p$ were arbitrary and that $\Gamma_\varepsilon\uparrow\Omega$ as $\varepsilon\rightarrow 0$ finishes the proof of Step 2.\newline
\textit{Step 3: Bootstrapping spatial smoothness.}
In this step we show that for all $\theta\in [0,1/2)$
\begin{align}
\label{eq: assertion Step 3}
    u\in H^{\theta,r}_{\loc}((0,\sigma),\HD^{1-\frac
{d}{2}+\frac{d}{\zeta}-2\theta-\, ,\overline{q}}(\cO)),\quad \text{for $r>2$, $\overline{q}\geq q$.}
\end{align}
First we note that by choosing $\overline{q}>q$ and $r>2$  sufficiently large and $\theta>0$ sufficiently small in Step 2, \eqref{eq: assertion Step 2}, we have for any $n\geq 1$ and $\varepsilon>0$ a.s. 
\begin{align}
    u\in C([\varepsilon,\sigma_n],\HD^{1-s-2\theta,\overline{q}}(\cO))\hookrightarrow C([\varepsilon,\sigma_n],C(\cO)).
\end{align}
The reader is referred to \cite[Corollary 14.4.27 and Proposition 14.6.13]{Analysis3} for the Sobolev embedding we applied here. 
The latter continuity immediately gives, for any 
$\overline{q}> q$,
\begin{align}
\label{eq: f F estimate reg lemma}
    f(u)\in L^\infty(\varepsilon,\sigma_n,L^\infty(\cO)),\quad \dv F(u)\in L^\infty(\varepsilon,\sigma_n,\HD^{-1,\overline{q}}(\cO))
\end{align}
almost surely. Moreover, if $\zeta\in (2,\infty)$, then by Theorem \ref{thm:MgTmu delta}(1),
\begin{align}
\label{eq: g estimate reg lemma}
    \sup_{t\in [\varepsilon,\sigma_n]}\|M_{g_i^j(u(t))}R^j\|_{\gamma(L^2(\cO),\HD^{-\overline{s},\overline{q}}(\cO))}&\lesssim \|\mu^j\|_{\ell^{\zeta}(\Sf^j)} \sup_{t\in [\varepsilon,\sigma_n]}\|g_i^j(u(t))\|_{L^{\eta_3}(\cO)}
    \\ & \lesssim 1+\|u\|_{C([\varepsilon,\sigma_n],C(\cO))}^{\rho_3+1},
\end{align}
almost surely, for any $\tfrac{d}{2}-\tfrac{d}{\zeta}<\overline{s}<1$, $\overline{q}> q$ and $\eta_3\in (2,\overline{q})$, such that 
\begin{align}
\label{eq: equality MgR}
    \frac{\overline{s}}{d} + \frac{1}{\overline{q}} = \frac{1}{\eta_3} + \frac{1}{2} - \frac{1}{\zeta}.
\end{align}
In particular, $\overline{s}>\tfrac{d}{2}-\tfrac{d}{\zeta}$ can be chosen arbitrarily close to $\tfrac{d}{2}-\tfrac{d}{\zeta}$, since we can always find $\overline{q}> q$ and $\eta_3\in (2,\overline{q})$ sufficiently close to $\overline{q}$ such that \eqref{eq: equality MgR} is satisfied.
Next we note that by choosing $\kappa_r\in [0,\tfrac{r}{2}-1)$ sufficiently large such that
    $1-\overline{s}-2\frac{1+\kappa_r}{r}<0$, 
we have 
\begin{align}
     C(\cO)\hookrightarrow \BD^{1-\overline{s}-2\tfrac{1+\kappa_r}{r}}_{\overline{q},r}(\cO)
     \quad\text{and hence a.s.}\quad
     \1_{\Gamma_\varepsilon}u(\varepsilon)\in \BD^{1-\overline{s}-2\tfrac{1+\kappa_r}{r}}_{\overline{q},r}(\cO).
\end{align}
Using stochastic maximal $L^r$-regularity with \eqref{eq: f F estimate reg lemma} and \eqref{eq: g estimate reg lemma} for the linear equation \eqref{eq: reg lemma v eps spde} with the spaces $Y_j = \HD^{-1-\overline{s}+2j,\overline{q}}(\cO)$, $j=0,1$, gives a solution $v\in H^{\theta,r}(\varepsilon,\sigma_n;\HD^{1-\overline{s}-2\theta,\overline{q}}(\cO))$ for any $\theta\in [0,1/2)$. By Lemma \ref{lem:compatibility_solutions} we then know that $u=v$ on $\Gamma_\varepsilon\times [\varepsilon,\sigma_n]$ almost surely.
Since $\overline{s}>\tfrac{d}{2}-\tfrac{d}{\zeta}$, $\varepsilon>0$  and $n\geq 1$ were arbitrary, this proves \eqref{eq: assertion Step 3}.

Finally, by Sobolev embedding, we have almost surely
 \begin{align}
     u\in C^{\theta_1}((0,\sigma),C(\cO))
     \quad\text{and}\quad
     u\in C((0,\sigma),C^{\theta_2}(\cO)),
     \quad\text{for $\theta_1\in [0,\lambda/2)$ and $\theta_2\in [0,\lambda)$,}
 \end{align}
 where $\lambda = 1-\tfrac{d}{2}+\tfrac{d}{\zeta}$.
 This implies \eqref{eq: reg u result 2}. 
 Finally, we see that if $\zeta = 2$, then we may apply Theorem \ref{thm:MgTmu delta}(b) to obtain \eqref{eq: g estimate reg lemma} with $\eta_3 = q$ and hence $\overline{s} =  0$, which gives \eqref{eq: reg u result 1} also with $\theta = 0$. This finishes the proof.
\end{proof}

\subsection{Proof of the blow up criteria}
\label{ss:proof_blow_up_criteria}
We begin this subsection with a preliminary result which is of independent interest, and ensures that solutions provided by Theorem \ref{thm:mainlocal} with different choices of the parameters yield the same solution.

\begin{proposition}[Compatibility of different settings]
\label{prop: compatibility}
    Let the conditions of Theorem \ref{thm:mainlocal} hold for two sets of values $(s_i,q_i,p_i,\kappa_i, \rho_{i,1}, \rho_{i,2}, \rho_{i,3})$, $i=1,2$, and fixed $(d,\zeta)$, 
    and let $(u_i,\sigma_i)$, $i=1,2$, be the associated $L^{p_i}_{\kappa_i}$-solutions with the same initial data $u_0\in \cap_{i\in \{1, 2\}}\BD_{q_i,p_i}^{1-s_i-2\tfrac{1+\kappa_i}{p_i}}(\cO)$ a.s. Then the solutions coincide, i.e.\ $\sigma_1 = \sigma_2$ a.s.\ and $u_1=u_2$ a.e.\ on $ [0,\sigma_1)\times \Omega $.
\end{proposition}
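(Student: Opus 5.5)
The plan is to prove $u_1=u_2$ on $[0,\sigma_1\wedge\sigma_2)$ first, and then deduce $\sigma_1=\sigma_2$ from maximality. For each $i\in\{1,2\}$ we work with a localizing sequence $(\sigma_{i,n})_{n\ge1}$ for $(u_i,\sigma_i)$. Using the localization in Theorem \ref{thm:mainlocal}\eqref{it2:localwellposed}, we may cut $\Omega$ into the pieces $\{\|u_0\|\le k\}$ and hence assume that $u_0$ is bounded.

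\emph{Step 1 (a common setting).} The key point is that the two solution classes can both be embedded into a third class in which uniqueness holds. We pick the setting with the weaker norm: take $\bar s=\max\{s_1,s_2\}$, $\bar q=\min\{q_1,q_2\}$, and a small $\bar p>2$ with $\kappa=0$. Since $\cO$ is bounded, $\HD^{1-s_i,q_i}(\cO)\hookrightarrow \HD^{1-\bar s,\bar q}(\cO)$. By the Hölder computation in the proof of Lemma \ref{lem:compatibility_solutions}, we also get $L^{p_i}(0,\sigma_{i,n},w_{\kappa_i};X_1^{(i)})\hookrightarrow L^{\bar p}(0,\sigma_{i,n};\HD^{1-\bar s,\bar q})$.

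Running this idea directly would require the nonlinear conditions for the new parameters, and these may fail. So instead we argue via linearization, which avoids needing the nonlinear conditions for $(\bar s,\bar q,\bar p)$.

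\emph{Step 2 (linearization and identification).} Set $\tau_n=\sigma_{1,n}\wedge\sigma_{2,n}$ and consider the linear problem
\[
\dd v + Av\,\dd t=\one_{[0,\tau_n]}\bigl[\Phi_1(\cdot,u_1)+\Phi_2(\cdot,u_1)+\phi\bigr]\,\dd t+\one_{[0,\tau_n]}\bigl[\Psi(\cdot,u_1)+\psi\bigr]\,\dd W .
\]
The solution $u_1$ solves this problem in setting $1$. Lemma \ref{lem:compatibility_solutions} then applies across the two settings once the inhomogeneities lie in the intersection of the corresponding spaces.

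This is the main obstacle: it requires mapping properties of $f(u_1),\dv F(u_1),g(u_1)$ in the spaces of setting $2$, which do not follow directly. To get around it, we use Theorem \ref{thm: regularization}. On $[\varepsilon,\tau_n]$, both $u_1$ and $u_2$ are continuous in $C(\overline\cO)$, with $\sup_t\|u_i(t)\|_{C(\overline\cO)}$ finite a.s. Hence all nonlinearities are bounded: $f(u_i)\in L^\infty(L^\infty)$, $\dv F(u_i)\in L^\infty(\HD^{-1,r})$, and $M_{g(u_i)}R$ is bounded in $\gamma(L^2,\HD^{-\bar s,r})$ by Theorem \ref{thm:MgTmu delta}.

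On $[\varepsilon,\tau_n]$ we then compare $u_1$ and $u_2$ as solutions of the same equation with locally Lipschitz coefficients that are now globally Lipschitz after truncation. To do this, we stop at $\tau_{n,R}=\inf\{t\ge\varepsilon:\|u_1(t)\|_\infty+\|u_2(t)\|_\infty\ge R\}$. The difference $w=u_1-u_2$ solves a linear problem in a fixed setting (for instance $s=\bar s$, $q=2$, $p=\bar p$), with Lipschitz-in-$w$ right-hand sides that are controlled in $L^\infty$. Stochastic maximal regularity combined with a Gronwall/stopping argument, in the style of \cite[Prop. 3.11]{AVsurvey}, gives $w=0$ on $[\varepsilon,\tau_{n,R}]$, provided $w(\varepsilon)=0$.

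\emph{Step 3 (the small-time window).} The remaining difficulty is to show $w(\varepsilon)=0$, i.e.\ the behaviour near $t=0$. For this we use the fact that both $u_i$ belong to the common weighted space of Step 1 and are continuous in the common trace space $\BD^{1-\bar s-2/\bar p}_{\bar q,\bar p}$. From Step 1 we then derive uniqueness on $[0,\varepsilon]$ by a contraction-type estimate for $w$ on a short random interval. The nonlinear estimates needed there are exactly those in the proof of Theorem \ref{thm:mainlocal}, taken in whichever of the two settings is weaker, since the stronger setting embeds into it. In other words, we choose the weaker of the two settings as the reference setting, and that setting does satisfy the conditions of Theorem \ref{thm:mainlocal}. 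Its uniqueness class (Definition \ref{def:maximal_sqpkappa_solution}) contains both solutions, after the embedding and the Hölder argument in the weight.

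\emph{Step 4 (conclusion).} Having $u_1=u_2$ on $[0,\sigma_1\wedge\sigma_2)$, the pair $(u_1,\sigma_1)$ is a local solution in setting $2$ on $\{\sigma_1>\sigma_2\}$. On that event, however, $u_2$ stays bounded in $C(\overline\cO)$ up to $\sigma_2$. By the blow-up criterion Theorem \ref{thm: blow up subcritical}, applied after restarting at a positive time with Hölder data (where setting $2$ is subcritical), this event has probability zero. By symmetry we conclude $\sigma_1=\sigma_2$ a.s.
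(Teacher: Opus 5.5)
\textbf{There is a genuine gap in Step~3.} Your treatment of positive times is sound in principle. After Theorem~\ref{thm: regularization}, both solutions are continuous and bounded on compact subintervals of $(0,\sigma_1\wedge\sigma_2)$, so the truncation argument of Step~2 works. Step~4 does not even need a restart: on $\{r<\sigma_2<\sigma_1\}$, the regularity of $u_1=u_2$ on $[r,\sigma_2]$ makes both quantities in Theorem~\ref{thm: blow up critical 2} finite for $(u_2,\sigma_2)$. A restart in an auxiliary subcritical setting would itself require identifying the restarted solution with $u_2$.

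The whole content of the proposition is the identification near $t=0$, and Step~3 does not provide it. You assume that one of the two settings is ``weaker'', i.e.\ that its class $L^{p_j}(0,T,w_{\kappa_j};\HD^{1-s_j,q_j}(\cO))\cap C([0,T];\BD^{\beta_j}_{q_j,p_j}(\cO))$, with $\beta_j=1-s_j-2\frac{1+\kappa_j}{p_j}$, contains the other solution by Sobolev embedding and the H\"older argument of Lemma~\ref{lem:compatibility_solutions}. Nothing in the hypotheses orders the two parameter sets, and this already fails in simple cases. Take Corollary~\ref{cor: critical localrho1} with $d=3$, $\rho=2$, $\zeta=2$, $q_1=q_2=6$, $(p_1,\kappa_1)=(3,\frac{7}{20})$ and $(p_2,\kappa_2)=(6,\frac45)$. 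Both sets are admissible, $\beta_1=\beta_2=-\frac12$, and $s_1=\frac35<s_2=\frac{9}{10}$.
\begin{itemize}
\item In space and at the trace level, setting~2 is the weaker one: $\HD^{2/5,6}(\cO)\hookrightarrow\HD^{1/10,6}(\cO)$ and $\BD^{-1/2}_{6,3}(\cO)\hookrightarrow\BD^{-1/2}_{6,6}(\cO)$.
\item In time, H\"older only yields $L^{6}(0,T,w_{4/5};Y)\hookrightarrow L^{3}(0,T,w_{7/20};Y)$, which is the opposite direction.
\end{itemize}
So neither solution is placed in the other's uniqueness class by the tools you invoke. Your own Step~1 concedes that the genuinely common setting $(\bar s,\bar q,\bar p,0)$ may violate \eqref{eq: mainlocal combined ineq f 1}--\eqref{eq: thm local s ineq 3}. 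Hence no reference setting remains in which to run the short-time contraction. Two further points: if a common uniqueness class were available, maximality would give $u_1=u_2$ on $[0,\sigma_1\wedge\sigma_2)$ at once, making Step~2 superfluous. And uniqueness on a short \emph{random} interval does not give $w(\varepsilon)=0$ at a deterministic $\varepsilon$.

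To close the gap you need an input at $t=0$ that is not a mere embedding. One workable route is approximation:
\begin{itemize}
\item Regularize $u_0$, e.g.\ by the Dirichlet heat semigroup, which converges in both trace spaces since $p_1,p_2<\infty$.
\item For regular data, the solution of Proposition~\ref{prop:mainlocal_bounded}, taken with $s\le\min\{s_1,s_2\}$ and $q,p$ large, lies in both classes from time $0$. Using its blow-up criterion for the lifetimes, it coincides with both approximate solutions.
\item The local continuity of Proposition~\ref{prop:local_continuity_abstract}, applied in each setting, then transfers $u_1=u_2$ to an interval $[0,\tau]$ with $\tau>0$ a.s.
\end{itemize}
From there your Steps~2 and~4 take over. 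For comparison, the paper gives no independent argument. It states that the proof of \cite[Proposition 3.5]{agresti2023reaction} carries over verbatim. The only new ingredient is the blow-up criteria of Theorems~\ref{thm: blow up subcritical} and~\ref{thm: blow up critical 2} in the spaces $X_0=\HD^{-1-s,q}(\cO)$, $X_1=\HD^{1-s,q}(\cO)$.
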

\begin{proof}
    The result is \cite[Proposition 3.5]{agresti2023reaction} and the proof is a verbatim repetition of the proof therein.
    Note that to extend the arguments of \cite[Proposition 3.5]{agresti2023reaction} to the current situation, it suffices to use the blow-up criteria for the solutions in Theorem \ref{thm:mainlocal} given by Theorems \ref{thm: blow up subcritical} and \ref{thm: blow up critical 2} with the choice of the spaces done at the beginning of Subsection \ref{ss:local_wp_proof_critical}.
\end{proof}

\begin{proof}[Proof of Theorem \ref{thm: blow up}]
Let $(u,\sigma)$ be the maximal $(s,q,p,\kappa)$-solution to \eqref{eq:reaction_diffusion_system} provided by Theorem \ref{thm:mainlocal} and let 
the parameters 
$(s_0,q_0,p_0,\kappa_0,\zeta,\rho_{0},\rho_0/2,\rho_{0,3})$ be as in the theorem statement.
From Theorems \ref{thm: blow up subcritical} and \ref{thm: blow up critical 2}, it suffices to consider the case where
$(s_0,q_0,p_0,\kappa_0,\zeta,\rho_{0},\rho_0/2,\rho_{0,3})$ are different from the one used to construct the maximal $(s,q,p,\kappa)$-solution $(u,\sigma)$.
By the assumptions in Corollaries \ref{cor: critical local rho2} or  \ref{cor: critical localrho1}, it holds that
\begin{align}
\label{eq:definition_kappa_0_blow_up_criteria}
    \kappa_0=\frac{p_0}{2}\Big(\frac{\rho_0+2}{\rho_0}-\frac{d}{q_0}-s_0\Big)-1.
\end{align}
Correspondingly, we define
\begin{align}
        \gamma_0:= \frac{1+\kappa_0}{p_0}, 
        \quad\text{so that}\quad
        \beta_0:=1-s_0-2\gamma_0 = \frac{d}{q_0}-\frac{2}{\rho_0}.
    \end{align}
    It is easy to see that if the  parameters 
$(s_0,q_0,p_0,\kappa_0,\zeta,\rho_{0},\rho_0/2\rho_{0,3})$ satisfy the conditions of Theorem \ref{thm:mainlocal}, there exist $q_1>q_0$ and $\kappa_1>\kappa_0$ sufficiently close to $q_0$ and $\kappa_0$, such that the parameters $(s_0,q_1,p_0,\kappa_1,\zeta,\rho_{0},\rho_0/2,\rho_{0,3})$ still satisfy the conditions of Theorem \ref{thm:mainlocal}
and such that 
\begin{align}
\label{eq: def kappa 1}
    \kappa_1<\frac{p_0}{2}\Big(\frac{\rho_0+2}{\rho_0}-\frac{d}{q_1}-s_0\Big)-1.
\end{align}
We fix such values $q_1$ and $\kappa_1$ and see that thanks to \eqref{eq: def kappa 1}, $\BD_{q_1,p_0}^{\beta_1}(\cO)$ is not critical for \eqref{eq:reaction_diffusion_system}.
Setting
\begin{align}
    \beta_1:= 1-s_0-2\frac{1+\kappa_1}{p_0},
\end{align}
we see that  $\beta_1<\beta_0$. 
Let $r>0$ be such that $\Gamma_r:=\{r<\sigma\}$ has positive probability
and consider the equation, for $i=1,\dots,\ell$,
    \begin{equation}
\label{eq: blow up v r spde}
\left\{
\begin{aligned}
\dd v_{i} -\dv(a_i\cdot\nabla v_{i}) \,\dd t& = \1_{\Gamma_r}\Big[\dv(F_i(\cdot, v)) +f_i(\cdot, v)\Big]\,\dd t +  \sum_{j=1}^\ell \1_{\Gamma_r}g_{i}^j(\cdot,v) \,\dd W^j(t),\\
v_{i}(r)&=\1_{\Gamma_r}u_i(r).
\end{aligned}\right.
\end{equation}
By Theorem \ref{thm: regularization} we know that almost surely
\begin{align}
    u\in C_{\loc}([r,\sigma),\CD^{1-\tfrac{d}{2}+\tfrac{d}{\zeta}-}(\overline{\cO})),
    \quad\text{and hence a.s.}\quad
     \one_{\Gamma_r} u(r)\in\BD^{\beta_1}_{q_1,p_0}(\cO),
\end{align}
since  
$\beta_1<\beta_0<1-s_0<1-\tfrac{d}{2}+\tfrac{d}{\zeta}$ as $s_0>\frac{d}{2}-\frac{d}{\zeta}$, 
we have 
$$
\CD^{1-\tfrac{d}{2}+\tfrac{d}{\zeta}-}(\overline{\cO})\hookrightarrow \BD^{\beta_0}_{q_1,p_0}(\cO)\hookrightarrow \BD^{\beta_1}_{q_1,p_0}(\cO).
$$
By Theorem \ref{thm:mainlocal} there hence exists a maximal $(s_0,q_1,p_0,\kappa_1)$-solution $(v,\tau)$ to \eqref{eq: blow up v r spde}, which by Theorem \ref{thm: blow up subcritical} satisfies
\begin{align}
    \P\Big(r<\tau<\infty,\, \sup_{t\in [r,\tau)}\|v(t)\|_{\BD^{\beta_1}_{q_1,p_0}(\cO)}<\infty \Big) = 0,
\end{align}
since the space $\BD^{\beta_1}_{q_1,p_0}(\cO)$ is not critical for \eqref{eq:reaction_diffusion_system}.
Finally, recalling that $\BD^{\beta_0}_{q_1,\infty}(\cO)\hookrightarrow \BD^{\beta_1}_{q_1,p_0}(\cO)$ and $v\in \HD^{1-s_0,q_1}(\cO)$ a.e.\ on $(0,\tau)\times \O$, the previous equation implies
\begin{align}
    \P\Big(r<\tau<\infty,\, \sup_{t\in [r,\tau)}\|v(t)\|_{\BD^{\beta_0}_{q_1,\infty}(\cO)}<\infty \Big) = 0.
\end{align}
Finally it suffices to recall that by Proposition \ref{prop: compatibility} we have
$(v,\tau) = (u,\sigma)$ on $\Gamma_r$ almost surely on $\Gamma_r\times [r,\sigma)$, and that $r>0$ was arbitrary. 
Since $\cO$ is bounded, we could also replace $q_1$ by a larger number.
This proves \eqref{it:eq: blow up 1}.

\smallskip

To see that \eqref{it:eq: blow up 2} holds, one can argue similarly. As above, by Theorem \ref{thm: regularization}, the SPDE \eqref{eq: blow up v r spde} has a $(s_0,q_0,p_0,\a_0)$-solution (see \eqref{eq:definition_kappa_0_blow_up_criteria}). By Theorem \ref{thm: blow up critical 2}, it holds that 
\begin{align}
    \P\Big(r<\tau<\infty,\, \sup_{t\in [r,\tau)}\|v(t)\|_{\BD^{\beta_0}_{q_0,p_0}(\cO)} + \|v\|_{L^{p_0}(r,\tau;H^{\nu_0,q_0}(\cO))}<\infty \Big) = 0.
\end{align}
where we recall that
 $X_{1-\tfrac{\kappa_0}{p_0}} = \HD^{1-s_0-2\tfrac{\kappa_0}{p_0},q_0}(\cO)$ and
\begin{align}
    1-s_0-2\frac{\kappa_0}{p_0} = \frac{d}{q_0}-\frac{2}{\rho_0} + \frac{2}{p_0}.
\end{align}
Since $(u,\sigma) = (v,\tau)$ on $\Gamma_r$ and $v\in \HD^{1-s_0,q_0}(\cO)$ a.e.\ on $(0,\tau)\times \O$, we then obtain \eqref{it:eq: blow up 2}.
\end{proof}

\begin{proof}[Proof of Corollary \ref{cor: blow up}]
We begin by proving \eqref{it:cor_blow_up1}.
    By Theorem \ref{thm: blow up} we have that there exists $q^*>q_0$ such that for any $q_1\in (q_0,q^*)$ we have
    \begin{align}
        \P\Big(r<\sigma<\infty,\,\sup_{t\in [r,\sigma)}\|u(t)\|_{B_{q_1,\infty}^0(\cO)}<\infty \Big)=0,
    \end{align}
    using that $\tfrac{d}{q_0}-\tfrac{2}{\rho_0} = 0$. Then it suffices to recall that for $m>q_1>q_0$ we have $L^m(\cO)\hookrightarrow L^{q_1}(\cO)\hookrightarrow \BD_{q_1,\infty}^0(\cO)$ and that
    \begin{align}
        \Big\{ r<\sigma<\infty,\,\sup_{t\in [r,\sigma)}\|u(t)\|_{L^m(\cO)}<\infty \Big\}
        \subseteq
        \Big\{ r<\sigma<\infty,\,\sup_{t\in [r,\sigma)}\|u(t)\|_{B_{q_1,\infty}^0(\cO)}<\infty \Big\}.
    \end{align}
    Next we show \eqref{it:cor_blow_up2}. We recall that by Theorem \ref{thm: blow up} we have 
    \begin{align}
    \label{eq:cor_blow_up_2_proof}
        \P\Big(r<\sigma<\infty,\, \sup_{t\in [r,\sigma)}\|u(t)\|_{\BD^{\beta_0}_{q_0,p_0}(\cO)} + \|u\|_{L^{p_0}(r,\sigma;L^{q_0}(\cO))}<\infty \Big) = 0,
    \end{align}
    where we used that by assumption $\nu_0 = \tfrac{2}{p_0}+\tfrac{d}{q_0}-\tfrac{2}{\rho_0} = 0$. Next, due to $m_0 = \tfrac{d\rho_0}{2}$ and $q_0>m_0$ we have $\beta_0 = \tfrac{d}{q_0}-\tfrac{2}{\rho_0}<0$ and hence,
$L^{m_0}(\cO)\hookrightarrow \BD^{\beta_0}_{q_0,q_0}(\cO)\hookrightarrow \BD^{\beta_0}_{q_0,p_0}(\cO)$, where for the second embedding we also used $p_0\geq q_0$. Then it suffices to observe that
    \begin{align}
        \Big\{ r<\sigma<\infty,\,\sup_{t\in [r,\sigma)}\|u(t)\|_{L^{m_0}(\cO)} &+ \|u\|_{L^{p_0}(r,\sigma;L^{q_0}(\cO))}<\infty \Big\}\\
        &\subseteq
        \Big\{ r<\sigma<\infty,\,\sup_{t\in [r,\sigma)}\|u(t)\|_{\BD_{q_0,p_0}^{\beta_0}(\cO)} + \|u\|_{L^{p_0}(r,\sigma;L^{q_0}(\cO))}<\infty \Big\} .
    \end{align}
    The above and \eqref{eq:cor_blow_up_2_proof} yield \eqref{it:cor_blow_up2}.
\end{proof}

\subsection{Additional local well-posedness results for H\"older continuous initial data}\label{ss:AddlocalHolder}

For H\"older continuous initial data it is much easier to obtain local well-posedness. At the same time this  immediately gives H\"older continuous solutions. One actually only needs the nonlinearities to be locally Lipschitz. These results are much simpler than Theorem \ref{thm:mainlocal}, and its corollaries on critical cases. However, in the proof of the positivity result of Theorem \ref{thm:positity_nonlinear_eq} we need exactly this. 

\begin{proposition}[Local existence and uniqueness for H\"older continuous  solutions]
\label{prop:mainlocal_bounded}
Let $s \in [0, 1)$, $q \in [2, \infty)$, $p \in (2, \infty)$, and $\kappa \in [0, p/2 - 1)$. 
Suppose that Assumption \ref{ass:reaction_diffusion_global} and \ref{ass: zeta f} hold for $\zeta \in [2, \infty]$.
Suppose that the parameters satisfy
\begin{equation}
  \frac{d}{2} - \frac{d}{\zeta}<s \ \ \ \text{and} \ \ \  s + 2\frac{1+\kappa}{p} + \frac{d}{q} < 1,
\end{equation}
where we allow $s=0$ if $\zeta=2$. 
Then, for every initial condition $u_0 \in L^0_{\cF_0}(\Omega; \BD_{q,p}^{1-s-2\frac{1+\kappa}{p}}(\cO))$, there exists a  maximal $(s,q,p,\kappa)$-solution $(u, \sigma)$ to \eqref{eq:reaction_diffusion_system} such that $\sigma > 0$ a.s. 

Moreover, the following assertions hold: 
\begin{enumerate}[\rm (1)]
\item for each localizing sequence $(\sigma_n)_{n \ge 1}$ for $(u, \sigma)$, one has for all $n \ge 1$ and all $\theta \in [0, 1/2)$
    \[
        u \in H^{\theta,p}(0, \sigma_n, w_\kappa; \HD^{1-s-2\theta,q}(\cO)) \cap C([0, \sigma_n]; \BD^{1-s-2\frac{1+\kappa}{p}}_{q,p}(\cO)) \quad \text{a.s.}
    \]
\item The following blow-up criterion holds:
    \[\P\Big(\sigma<\infty,\, \sup_{t\in [0,\sigma)}\|u(t)\|_{\BD^{1-s-2\frac{1+\kappa}{p}}_{q,p}(\cO)}<\infty \Big) = 0.\]
\item The solution regularizes in the same way as in Theorem \ref{thm: regularization}.
\item The compatibility of Proposition \ref{prop: compatibility} extends to the above values of $s$. 
\end{enumerate}
\end{proposition}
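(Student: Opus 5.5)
The plan is to apply Theorem \ref{thm:local} with the same spaces and mappings as in the proof of Theorem \ref{thm:mainlocal}: $X_0=\HD^{-1-s,q}(\cO)$, $X_1=\HD^{1-s,q}(\cO)$, $A$, $\Phi_1,\Phi_2$, $\Psi$, $\phi$, $\psi$. UMD/type 2, maximal regularity (Theorem \ref{thm:LM-interpolation-fractional-dirichlet}) and the reference operator are checked exactly as there. The new feature is that the condition $s+2\frac{1+\kappa}{p}+\frac dq<1$ makes the trace space $X_{1-\frac{1+\kappa}{p},p}=\BD^{1-s-2\frac{1+\kappa}{p}}_{q,p}(\cO)$ embed into $C(\overline{\cO})$, since its smoothness minus $d/q$ is positive. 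So the nonlinearities only need to be estimated in sup-norm, and their polynomial growth is irrelevant.

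\textbf{Truncation step.} Since Assumption \ref{ass:FGcritical} asks for global Lipschitz bounds with polynomial weights, I would first replace $f,F,g$ by truncated versions $f^{(k)}(y)=f(\pi_k y)$, where $\pi_k$ is the radial projection onto the ball of radius $k$ (and similarly for $F,g$). These are globally Lipschitz and bounded in $y$. Choose $\beta_1=\beta_2=\beta_3=\beta$ slightly above $1-\frac{1+\kappa}{p}$ so that $X_\beta\hookrightarrow L^\infty(\cO)$, and take $\alpha_1=\alpha_2=0$. Then:
\begin{itemize}
\item $\|f^{(k)}(u)-f^{(k)}(v)\|_{X_0}\lesssim \|f^{(k)}(u)-f^{(k)}(v)\|_{L^q}\lesssim_k\|u-v\|_{X_\beta}$, using $L^q\hookrightarrow \HD^{-1-s,q}$;
\item the divergence term is bounded the same way via $\|F^{(k)}(u)-F^{(k)}(v)\|_{L^q}$, using $\HD^{-1,q}\hookrightarrow \HD^{-1-s,q}$;
\item for the noise, Theorem \ref{thm:MgTmu delta} with $\eta=q$ in case $\zeta=2$, or $\eta$ determined by $\frac sd+\frac1q=\frac1\eta+\frac12-\frac1\zeta$ when $\zeta>2$, gives $\|M_{g^{(k)}(u)-g^{(k)}(v)}R\|_{\gamma(L^2,\HD^{-s,q})}\lesssim_k\|u-v\|_{L^\infty}$. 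Here $s>\frac d2-\frac d\zeta$ and $q$ large ensure $\eta\in(1,q)$ and $\frac1\eta-\frac1\zeta<\frac12$, since $\frac1\eta-\frac1\zeta=\frac sd+\frac1q-\frac12<\frac12$ as $s<1-\frac dq$. The case $\zeta=\infty$ uses Theorem \ref{thm:MgTmu delta}(3) analogously.
\end{itemize}
With $\rho_i$ arbitrarily small the (sub)criticality inequalities \eqref{eq:subcritical}--\eqref{eq:subcritical3} hold strictly, because $\beta<1$. Theorem \ref{thm:local} then gives maximal solutions $(u^{(k)},\sigma^{(k)})$ with $\theta^*=\frac12$, which yields assertion (1).

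\textbf{Gluing step.} This is the main obstacle. Define $\tau_k=\inf\{t:\|u^{(k)}(t)\|_{L^\infty}\ge k\}\wedge\sigma^{(k)}$; this is well defined by continuity into $C(\overline{\cO})$. Uniqueness and localization show $u^{(k)}=u^{(k+1)}$ on $[0,\tau_k)$, and that $\tau_k$ is increasing. Set $\sigma=\lim\tau_k$ and glue; this gives a local solution of the original system. For maximality and uniqueness, any other local solution is bounded in $L^\infty$ before its own exit times, so it solves the truncated problem there and coincides with $u^{(k)}$.

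\textbf{Blow-up criterion.} The subtle point is assertion (2), which excludes the scenario $\sigma=\lim\tau_k<\infty$ with bounded trace norm. Boundedness of $\sup_t\|u(t)\|_{\BD}$ gives a bounded $L^\infty$ norm, say below $k_0$. Hence $\tau_k=\sigma^{(k)}$ for $k>k_0$ on that event. Applying Theorem \ref{thm: blow up subcritical} to $u^{(k)}$ then shows this event has probability zero.

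\textbf{Regularization and compatibility.} Assertion (3) follows by repeating the bootstrap of Theorem \ref{thm: regularization}. It is in fact easier, because $u$ is already continuous into $C(\overline{\cO})$, so one can start directly at Step 3 of that proof. Assertion (4) follows as in Proposition \ref{prop: compatibility}, using the blow-up criterion (2) in place of Theorems \ref{thm: blow up subcritical}--\ref{thm: blow up critical 2}.
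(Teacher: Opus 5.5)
Your argument is correct, but it takes a different route from the paper. You truncate $f,F,g$ radially and solve each truncated problem with Theorem \ref{thm:local}. You then glue along the hitting times $\tau_k$ of $\|u^{(k)}\|_{L^\infty}$, and derive maximality and the blow-up criterion (2) from the truncated problems via a countable union over $k$.

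The paper does no truncation. It invokes \cite[Theorem 4.7]{AVsurvey}, whose local Lipschitz hypothesis allows the constants to depend on a bound for the trace-space norm. The embedding $\BD^{1-s-2\frac{1+\kappa}{p}}_{q,p}(\cO)\hookrightarrow C(\overline{\cO})$ then absorbs the polynomial factors into those constants, so the growth exponents can be taken arbitrarily close to zero. The localization you perform by hand is built into that theorem.

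As the paper also remarks, Theorem \ref{thm:local} in fact applies directly, so your stated reason for truncating does not hold under the hypotheses of the proposition:
\begin{itemize}
\item Assumption \ref{ass:reaction_diffusion_global} already gives bounds of the form $(1+\|u\|_{L^\infty}^{\rho_j}+\|v\|_{L^\infty}^{\rho_j})\|u-v\|_{L^\infty}$.
\item $X_\beta\hookrightarrow L^\infty(\cO)$ holds for every $\beta>1-\frac{1+\kappa}{p}$.
\item \eqref{eq:subcritical}--\eqref{eq:subcritical3} hold strictly for any fixed $\rho_j$ once $\beta_j$ is close enough to $1-\frac{1+\kappa}{p}$, because $\frac{(1+\rho)(1-\beta)}{\rho}>1-\beta$.
\end{itemize}
Assertion (2) is then just Theorem \ref{thm: blow up subcritical} with $r=0$, and no gluing is needed.

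What your route buys is coverage of merely locally Lipschitz nonlinearities (as in Remark \ref{rem:locallyLips}) using only the polynomial-weight abstract result stated in the paper. The price is the extra maximality and blow-up bookkeeping, which you handle correctly.

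One small correction: for $\zeta=2$ and $s>0$, Theorem \ref{thm:MgTmu delta}(2) forces $\frac1\eta=\frac sd+\frac1q$, not $\eta=q$. Alternatively, use $\eta=q$ at $s=0$ and compose with the embedding $L^q(\cO)\hookrightarrow\HD^{-s,q}(\cO)$ via the ideal property.
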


\begin{remark}\label{rem:locallyLips}
In Proposition \ref{prop:mainlocal_bounded} it suffices to assume that the nonlinearities are locally Lipschitz. Indeed, this follows from \cite[Theorem 4.7]{AVsurvey} and the fact that $\BD^{1-s-2\frac{1+\kappa}{p}}_{q,p}(\cO)\hookrightarrow C(\overline{\cO})$ by Sobolev embedding. 
\end{remark}

\begin{proof}[Proof of Proposition \ref{prop:mainlocal_bounded}]
To prove the result we argue as in Theorem \ref{thm:mainlocal}. However, this time we can apply \cite[Theorem 4.7]{AVsurvey} with any $\beta_j>1-\frac{1+\kappa}{p}$ and growth choices arbitrarily close to zero, and $X_0 = \HD^{-1-s,q}(\cO)$. Note that $\BD_{q,p}^{1-s-2\frac{1+\kappa}{p}}(\cO)$ embeds into $C(\overline{\cO})$ (and even into $C^{\theta}(\overline{\cO})$) by Sobolev embedding. The reader is referred to  \cite[Lemma 4.3]{AV20_NS} for a similar calculation. Alternatively, one can also apply Theorem \ref{thm:local} where we take $X_0$ and $\beta_j$ small enough, and $\alpha=0$. As before one can show that the solution regularizes instantaneously. Arguing as in Proposition \ref{prop: compatibility} one sees that the solution coincides with the one obtained in Theorem \ref{thm:mainlocal}. 
\end{proof}

\subsection{Proof of the positivity of Theorem \ref{thm:positity_nonlinear_eq}}
In light of Theorems \ref{thm:mainlocal} and  \ref{thm: regularization}, the proof of Theorem \ref{thm:positity_nonlinear_eq}
follows from a well known linearization procedure in the context of reaction-diffusion equations, see \cite[Theorem 2.13]{agresti2023reaction}.
To this end, we begin by discussing positivity results for linear SPDEs on $\Dom$: 
\begin{equation}
\label{eq:SPDE_linear}
\left\{
\begin{aligned}
&\dd v =\big[\dv (a\cdot \nabla v + b v)+ c v +\phi\big]\,\dd t  +\sum_{j=1}^\ell (\chi_j v+\psi_j)\,\dd W^j,\\
& v(0)=v_0,\qquad v|_{\partial\Dom}=0.
\end{aligned}
\right.
\end{equation}
where $\ell\geq 1$ is an integer.
As in Remark \ref{r:neumann}, the results of the current section also extend to the case of Neumann boundary conditions. 
In the previous, $W^j=R_{\mu^j} B^j$, where $R_{\mu^j}$ and $B^j$ are as in Subsection \ref{sss:noise}. Thus, $(B^j)_{j=1}^\ell$ are $L^2(\cO)$-cylindrical Brownian motions and 
\begin{equation}
\label{eq:Rmu_positivity}
R_{\mu^j}= \sum_{n=1}^\infty \mu_n^j\, e_n^j \otimes \varphi_n^j,
\end{equation}
where $(e_n^j)_{n\geq 1}$, $(\varphi_n^j)_{n\geq 1}$ and $\mu^j$ are as in Assumption \ref{ass: zeta f}. In particular 
\begin{equation}
\label{eq:assumption_mu_positivity}
\mu^j\in \ell^\zeta(\Sf^j) \ \text{ where \ $\zeta\in [2,\infty]$ \  is such that } \
1-\frac{d}{2}+\frac{d}{\zeta}\in (0,1],
\end{equation}
cf.\ \eqref{def:ellzeta}. The coefficients $(a,b,c,\phi,\chi_j,\psi_j)$ satisfy the following assumption. Below, we use the function spaces with Dirichlet boundary conditions as introduced in Subsection \ref{ss:function_spaces_Dir}.

\begin{assumption}
\label{ass:positivity_linear}
There exist positive constants $M,\nu,\alpha$ for which the following assertions hold for all $j\in \{1,\dots,\ell\}$.
\begin{itemize}
\item {\rm (Regularity of the coefficients)} $a: \R_+\times \O\times \cO\to \R^{d\times d}$, $b: \R_+\times \O\times \cO\to \R^{d}$, $c,\chi_j: \R_+\times \O\times \cO\to \R$ are $\Progress\otimes \Borel(\cO)$-measurable and a.s.\ for all $t\in [0,\infty)$, 
\[\|a^{j,k}(t,\cdot)\|_{C^1(\overline{\mathcal{O}})}  \leq M, \ \ |b(t,\cdot)|\leq M, \ \ |c(t,\cdot)|\leq M, \ \ |\chi_j(t,\cdot)|\leq M. \]
   \item {\rm (Parabolicity)} a.e.\ on $\R_+\times \O$ and all $\xi\in \R^d$,
   \begin{align*}   
    \sum_{i,j=1}^d a^{i,j}\xi_i \xi_j \geq \nu |\xi|^2.
   \end{align*}
   \item $\phi,\psi_1, \ldots, \psi_\ell$ and $v_0$ are $\Progress\otimes \Borel(\cO)$-measurable and $\cF_0$-measurable, respectively.
   \item a.s., it holds that $v_0\in \CD^\alpha(\overline{\cO})$ and 
   $\phi,\psi_1, \ldots, \psi_\ell \in L^\infty_{\loc}([0,\infty)\times \cO)$.
\end{itemize}
\end{assumption}

Clearly, $(s,q,p,\a)$-solutions to the SPDE \eqref{eq:SPDE_linear} can be defined as in Definitions \ref{def:strong_sqpkappa_solution} and \ref{def:maximal_sqpkappa_solution}. 
Below, we need the following consequence of Theorems \ref{thm:MgTmu delta} and \ref{thm:LM-interpolation-fractional-dirichlet}.

\begin{lemma}
\label{l:well_posedness_linear}
Let $s \in [0, 1)$, $q \in [2, \infty)$, $p \in (2, \infty)$, and $\kappa \in [0, p/2 - 1)$.
Let Assumption \ref{ass:positivity_linear} be satisfied. Suppose that \eqref{eq:assumption_mu_positivity} holds and 
\begin{equation}
\label{eq:choice_sq_linear}
s>\frac{d}{2}-\frac{d}{\zeta},\qquad 
1-s-2\frac{1+\a}{p}<\alpha,\quad  \text{ and }\quad 
1-s-2\frac{1+\a}{p}>\frac{d}{q}.
\end{equation}
Then there exists a maximal $(s,q,p,\a)$-solution $(v,\tau)$ to \eqref{eq:SPDE_linear} that is global in time (i.e.\ $\tau=\infty$) and satisfies
\begin{align}
\label{eq:pathwise_regularity_linear_SPDE_positivity}
v\in  L^p_{\loc}([0,\infty);\HD^{1-s,q}(\cO))\cap C([0,\infty)\times \overline{\cO}) \ \text{ a.s. }
\end{align}
Moreover, for all $T<\infty$,
\begin{align}
\label{eq:pathwise_regularity_linear_SPDE_positivity_estimate}
\E \Big[ \sup_{t\in [0,T]}\|v(t) \|_{C(\overline{\cO})}^p\Big]
+ \E\int_0^T \|v\|_{\HD^{1-s,q}(\cO)}^p\,t^\a \,\dd t 
&\lesssim \E\|v_0\|_{C^\alpha(\cO)}^p \\
\nonumber
&+
\E\|\phi\|_{L^\infty((0,T)\times \cO)}^p+ 
\sum_{j=1}^\ell \E\|\psi_j\|_{L^\infty((0,T)\times \cO)}^p,
\end{align}
whenever the right-hand side of the above is finite, and the implicit constant depends on $\mu^j$ only through its $\ell^\zeta(\Sf^j)$-norm.
\end{lemma}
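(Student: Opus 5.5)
We cast \eqref{eq:SPDE_linear} as a linear instance of \eqref{eq:SEE}, with $X_0=\HD^{-1-s,q}(\cO)$, $X_1=\HD^{1-s,q}(\cO)$ and $A v=-\dv(a\cdot\nabla v)$. The lower-order terms are treated as linear perturbations:
\[
\Phi_1(\cdot,v)=c v,\qquad \Phi_2(\cdot,v)=\dv(b v),\qquad (\Psi(\cdot,v)h)=\sum_j M_{\chi_j v}R_{\mu^j}h^j .
\]
The inhomogeneities are $\phi$ and $(\psi h)=\sum_j M_{\psi_j}R_{\mu^j}h^j$.

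Maximal $L^p$-regularity of $A$ with constants uniform in $\omega$ is given by Theorem \ref{thm:LM-interpolation-fractional-dirichlet}, and $A_0=-\Delta_D$ serves as reference operator, exactly as in the proof of Theorem \ref{thm:mainlocal}.

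The mapping properties needed are the following.
\begin{itemize}
\item $\|c v\|_{X_0}\lesssim M\|v\|_{L^q}$ and $\|\dv(bv)\|_{X_0}\lesssim \|bv\|_{\HD^{-s,q}}\lesssim M\|v\|_{L^q}$. Since $1-s>0$, $L^q$ is an intermediate space $X_\beta$ with $\beta<1$, so these are lower-order.
\item By Theorem \ref{thm:MgTmu delta} (or its $\zeta=\infty$ version), $\|M_{\chi_j v}R_{\mu^j}\|_{\gamma(L^2,\HD^{-s,q})}\lesssim \|\mu^j\|_{\ell^\zeta(\Sf^j)} M\|v\|_{L^{\eta}}$ with $\tfrac{s}{d}+\tfrac1q=\tfrac1\eta+\tfrac12-\tfrac1\zeta$. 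The condition $s>\tfrac d2-\tfrac d\zeta$ gives $\eta<q$, so $\|v\|_{L^\eta}\lesssim\|v\|_{L^q}$.
\item The same estimate with $\psi_j\in L^\infty$ in place of $\chi_j v$ bounds the $\psi$-term by $\|\psi_j\|_{L^\infty}$.
\item $\phi\in L^\infty\hookrightarrow X_0$.
\end{itemize}

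For the initial data, $v_0\in\CD^\alpha(\overline{\cO})\hookrightarrow \BD^{1-s-2\frac{1+\a}{p}}_{q,p}(\cO)$ because $1-s-2\frac{1+\a}{p}<\alpha$.

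Since the problem is linear with Lipschitz constants uniform in $(t,\omega)$, we do not invoke Theorem \ref{thm:local} merely for a local solution. Instead we use global stochastic maximal $L^p_\kappa$-regularity together with the standard perturbation argument for lower-order linear terms; compare the transference remark after Assumption \ref{ass:FGcritical} and \cite[Theorem 3.9]{VP18}.

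On a short interval $[0,T_0]$ we use the interpolation inequality
\[
\|v\|_{L^q}\le \varepsilon\|v\|_{X_1}+C_\varepsilon\|v\|_{X_0}.
\]
Combined with the $L^p(w_\kappa)$ bounds, this makes the map $v\mapsto$ (solution of the linear problem with the perturbation terms frozen) a contraction in $L^p(\Omega;L^p(0,T_0,w_\a;X_1)\cap C([0,T_0];X_{1-\frac{1+\a}{p},p}))$. Because $T_0$ depends only on $M,\nu,\|\mu^j\|_{\ell^\zeta(\Sf^j)}$ and the parameters, we iterate over $[kT_0,(k+1)T_0]$. On later intervals the initial time is away from $0$, so no weight is needed there. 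This yields a global solution with
\[
\E\|v\|^p_{L^p(0,T,w_\a;X_1)}+\E\sup_{t\le T}\|v(t)\|^p_{X_{1-\frac{1+\a}{p},p}}\lesssim \E\|v_0\|^p_{C^\alpha}+\E\|\phi\|^p_{L^\infty}+\sum_j\E\|\psi_j\|^p_{L^\infty}.
\]

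To obtain the $C(\overline{\cO})$ statements, note that $1-s-2\frac{1+\a}{p}>\frac dq$ gives
\[
X_{1-\frac{1+\a}{p},p}=\BD^{1-s-2\frac{1+\a}{p}}_{q,p}(\cO)\hookrightarrow C(\overline{\cO}).
\]
Hence $v\in C([0,\infty);C(\overline{\cO}))=C([0,\infty)\times\overline{\cO})$, and the $\sup$-norm estimate follows from the trace estimate above. The implicit constants depend on $\mu^j$ only through $\|\mu^j\|_{\ell^\zeta(\Sf^j)}$, because that is the only place the noise enters, via Theorem \ref{thm:MgTmu delta}. Uniqueness in the maximal class follows from the linear uniqueness in Lemma \ref{lem:compatibility_solutions}.

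I expect the main obstacle to be the absorption step for the noise perturbation near $t=0$ in the weighted space. There the $\gamma$-norm must be controlled in $L^p(0,T_0,w_\a;\gamma(\mathcal U,X_{1/2}))$ with a small constant. I would first try to use that $L^q=X_{\beta}$ with $\beta=\frac{1+s}{2}<1$: the weighted $L^p(w_\kappa;X_\beta)$ norm is bounded by $T_0^{\delta}$ times the maximal-regularity norm plus the trace norm. If $\beta\le 1-\frac{1+\a}{p}$, it is instead dominated directly by $T_0^{\delta}\sup_t\|v\|_{X_{1-\frac{1+\a}{p},p}}$. In either case a sufficiently small $T_0$ closes the contraction.
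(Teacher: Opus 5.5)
Your proposal is correct and is essentially the paper's proof. It uses the same spaces $X_0,X_1$, the same reference operator, the same lower-order bounds by $\|v\|_{L^q(\cO)}$ (via Theorem \ref{thm:MgTmu delta} with $\eta<q$), and the same embeddings for $v_0$ and for the trace space. The only difference is that the paper cites the perturbation theorem \cite[Theorem 3.2]{AVtorus} and \cite[Proposition 3.11]{AVsurvey}, whereas you sketch the short-time contraction and iteration by hand.

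In that sketch, the obstacle you anticipate does not arise. The third condition in \eqref{eq:choice_sq_linear} gives $X_{1-\frac{1+\a}{p},p}\hookrightarrow C(\overline{\cO})\hookrightarrow L^q(\cO)$, so the perturbation terms are bounded by $T_0^{(1+\a)/p}$ times the trace norm. When you restart at $kT_0$, keep the shifted weight $w_\a^{kT_0}$ rather than dropping it, or else justify $v(kT_0)\in X_{1-\frac1p,p}$; a priori $v(kT_0)$ only lies in $X_{1-\frac{1+\a}{p},p}$.
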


As $\alpha>0$ and $s<1$, it is always possible to choose $(q,p,\a)$ for which the second and third conditions in \eqref{eq:choice_sq_linear} are satisfied.
Finally, let us note that \eqref{eq:pathwise_regularity_linear_SPDE_positivity_estimate} is not optimal (see \cite[Subsection 1.1]{AGV}), but it is enough for the purposes of Theorem \ref{thm:positity_nonlinear_eq} due to the instantaneous regularization of maximal $(s,q,p,\a)$-solutions provided by Theorem \ref{thm: regularization}.

\begin{proof}
For simplicity, we consider the case $\ell=1$, the other follows analogously. Thus, we write $\chi=\chi_1$, $\psi=\psi_1$ and $\mu=\mu^1$.
We begin by observing that, from Theorem  \ref{thm:LM-interpolation-fractional-dirichlet}, the statement of Lemma \ref{l:well_posedness_linear} immediately follows in the case $b=c=\chi=0$. Here to ensure stochastic maximal $L^p$-regularity, we used the text below Assumption \ref{ass:FGcritical}, where the reference operator $A_0$ is as in the beginning of the proof of Theorem \ref{thm:mainlocal}

To allow for non-trivial $(b,c,\chi,\psi)$ we apply the perturbation result of \cite[Theorem 3.2]{AVtorus} with $X_1=\HD^{1-s,q}(\cO)$ and $X_0=\HD^{-1-s,q}(\cO)$. Indeed, as it follows from Subsection \ref{ss:function_spaces_Dir}, one has 
$$
X_{\theta}=[X_0,X_1]_{\theta}=\HD^{-1-s+2\theta,q}(\cO) \ \ \text{ for }\theta\in (0,1).
$$ 
Hence, $X_{1/2}=\HD^{-s,q}(\cO)$. Moreover, for all $v\in X_1$, it follows from the boundedness of the coefficients in Assumption \ref{ass:positivity_linear} that
$$
\|\dv (b v )+ c\, v \|_{X_{0}}
\lesssim \|v\|_{L^q(\cO)},
$$
and 
$$
\|\chi v\, R_\mu\|_{\g(L^2(\cO),X_{1/2})}
\lesssim\|\mu\|_{\ell^\zeta(\Sf)} \|v\|_{L^\eta(\cO)}\lesssim \|\mu\|_{\ell^\zeta(\Sf)}\|v\|_{L^q(\cO)},
$$
where in the above we applied Theorem \ref{thm:MgTmu delta} with $\eta$ close to $q$ (this choice is allowed due to the first in \eqref{eq:choice_sq_linear}). As $X_{(1+s)/2}=L^q(\cO)$ and $s<1$, it follows that the operators $v\mapsto \dv (b v )+ cv$ and $v\mapsto \chi v \, R_\mu$ are lower-order. The claimed estimates \eqref{eq:pathwise_regularity_linear_SPDE_positivity_estimate} in the case $v_0= 0$ are a consequence of \cite[Theorem 3.2]{AVtorus} and 
$$
(X_0,X_1)_{1-\frac{1+\a}{p},p}= \BD^{1-s-2\frac{1+\a}{p}}_{q,p}(\cO)\hookrightarrow C(\overline{\cO}),
$$
where in the last step we used the third inequality in \eqref{eq:choice_sq_linear}.

In case $v_0\neq 0$, we can apply \cite[Proposition 3.11]{AVsurvey} and use $\CD^\alpha(\cO)\hookrightarrow B^{1-s-2\frac{1+\a}{p}}_{q,p}(\cO) $ by the second condition in \eqref{eq:choice_sq_linear}.
\end{proof}

The main ingredient in the proof of Theorem \ref{thm:positity_nonlinear_eq} is the following result.

\begin{lemma}[Positivity for linear SPDEs with rough noise]
\label{l:positivity_linear_SPDEs}
Suppose that Assumption \ref{ass:positivity_linear} holds with $\psi_j\equiv 0$ for all $j\in \{1,\dots,\ell\}$, and that 
$$  v_0\geq 0 \ \text{ a.e.\ on } \  \O\times \Dom, \quad \text{ and }\quad
\phi\geq 0 \ \text{ a.e.\ on } \ \R_+\times \O\times \Dom.
$$
Let $s \in [0, 1)$, $q \in [2, \infty)$, $p \in (2, \infty)$, and $\kappa \in [0, p/2 - 1)$ be such that \eqref{eq:choice_sq_linear} holds.
Then the maximal $(s,q,p,\a)$-solution $v$ to \eqref{eq:SPDE_linear} satisfies
\begin{equation}
\label{eq:positivity_tested_linear}
 v \geq 0  \ \text{ a.e.\ on } \ \R_+\times \O\times \Dom.
\end{equation}
\end{lemma}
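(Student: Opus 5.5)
The plan is to reduce Lemma \ref{l:positivity_linear_SPDEs} to the known maximum principle for linear SPDEs with regular (trace-class) noise in \cite{Krylov13}, by approximating the noise and passing to the limit with the estimate \eqref{eq:pathwise_regularity_linear_SPDE_positivity_estimate}. Since the equation is linear and $\psi_j\equiv0$, positivity is a closed property under a.e.\ convergence. The main tool for the limit is that the implicit constant in Lemma \ref{l:well_posedness_linear} depends on $\mu^j$ only through $\|\mu^j\|_{\ell^\zeta(\Sf^j)}$.

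\emph{Step 1 (truncation of the noise).} For $N\ge1$ I would set $\mu^{j,N}_n:=\mu^j_n\one_{n\le N}$. Then $R_{\mu^{j,N}}$ has finite rank, its range is spanned by the bounded functions $\varphi^j_1,\dots,\varphi^j_N$, and $\|\mu^{j,N}\|_{\ell^\zeta(\Sf^j)}\le\|\mu^j\|_{\ell^\zeta(\Sf^j)}$. The noise term becomes $\sum_{n\le N}\mu_n^j\chi_j v\,\varphi_n^j\,\dd w_n^j$, which is a finite system of scalar Brownian motions with bounded multiplicative coefficients $\mu^j_n\chi_j\varphi^j_n$.

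\emph{Step 2 (positivity for the truncated problem).} Let $v^N$ be the global solution from Lemma \ref{l:well_posedness_linear} with $\mu^{j,N}$. It is continuous on $[0,\infty)\times\overline{\cO}$, lies in $L^p_{\loc}(\HD^{1-s,q})$, and vanishes on $\partial\cO$. To apply \cite{Krylov13} I need $v^N$ in the $H^1_0$-type (variational) class used there. For that I would rerun Lemma \ref{l:well_posedness_linear} with $s=0$ and $q=2$ type parameters (allowed, since for finitely many bounded $\varphi_n$ the noise is trace class, $\zeta=2$), possibly after also mollifying $v_0$ in $\CD^\alpha$ while keeping it $\ge0$. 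Lemma \ref{lem:compatibility_solutions} (or Proposition \ref{prop: compatibility}) identifies the two solutions. The maximum principle of \cite{Krylov13} for divergence-form equations with drift $\dv(bv)+cv+\phi$, $\phi\ge0$, bounded multiplicative noise coefficients, zero Dirichlet data and $v_0\ge0$ then gives $v^N\ge0$.

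\emph{Step 3 (passage to the limit).} The difference $w^N:=v-v^N$ solves \eqref{eq:SPDE_linear} with $v_0=0$, $\phi=0$, noise $\chi_jw^N\,\dd W^j_N$ plus the inhomogeneity $\psi_j^N:=\chi_j v\,(R_{\mu^j}-R_{\mu^{j,N}})$. I would use the maximal-regularity version with a $\gamma$-valued inhomogeneity instead of an $L^\infty$ one, i.e.\ \cite[Theorem 3.2]{AVtorus} as in the proof of Lemma \ref{l:well_posedness_linear}. Combined with Theorem \ref{thm:MgTmu delta}, this bounds $\|\chi_jv\,(R_{\mu^j}-R_{\mu^{j,N}})\|_{\gamma(L^2,\HD^{-s,q})}\lesssim\|\mu^j-\mu^{j,N}\|_{\ell^\zeta(\Sf^j)}\|v\|_{L^q}$. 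After localizing with stopping times so that $\sup_{t\le T}\|v(t)\|_{C(\overline\cO)}$ is bounded, this yields $\E\|w^N\|^p_{L^p(0,T,w_\kappa;\HD^{1-s,q})}\to0$, since $\|\mu^j-\mu^{j,N}\|_{\ell^\zeta(\Sf^j)}\to0$ for $\zeta<\infty$. Uniformity in $N$ comes from the constant depending only on $\|\mu^{j,N}\|_{\ell^\zeta}$. Along a subsequence $v^N\to v$ a.e.\ on $(0,T)\times\O\times\cO$ (as $1-s\ge0$, so the spaces are function spaces), hence $v\ge0$.

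The case $\zeta=\infty$ (with $d=1$) does not allow tail convergence in $\ell^\infty$. There I would instead approximate by $\mu^{N}$ converging in some $\ell^{\zeta'}$ with $\zeta'<\infty$ large, which costs a slightly larger $s$. Alternatively I would exploit that truncated white noise converges in $\gamma(L^2,\HD^{-s,q})$ with $s>1/2$ via Theorem \ref{thm:MgTmu delta}(3) and dominated convergence.

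I expect the main obstacle to be Step 2. One must check carefully that the truncated solution falls within the precise solution class and hypotheses of \cite{Krylov13}, in particular the coefficient regularity and the variational formulation. Verifying this and the compatibility of solution concepts is where I would spend most care.
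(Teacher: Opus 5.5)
Your proposal is essentially the paper's proof. Both truncate $\mu$ to finitely many modes and get $v^{(N)}\ge 0$ from Krylov's maximum principle; the paper takes $v^{(N)}$ directly from the variational theory of \cite{LR15}. Both then show $w^{(N)}=v-v^{(N)}\to 0$ by maximal regularity, with the inhomogeneity $\chi vR_{\mu-\mu^{(N)}}$ bounded via Theorem~\ref{thm:MgTmu delta} and $\|\mu-\mu^{(N)}\|_{\ell^\zeta(\Sf)}\to 0$ for $\zeta<\infty$. Two corrections. In Step~2 you cannot literally rerun Lemma~\ref{l:well_posedness_linear} with $q=2$ when $d\ge 2$, since $1-s-2\frac{1+\kappa}{p}>\frac{d}{q}$ then fails; building $v^{(N)}$ variationally, as the paper does, avoids this. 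For $\zeta=\infty$ only your second option works: the first fails because, e.g., $\mu_n\equiv 1$ lies in no $\ell^{\zeta'}$ with $\zeta'<\infty$. Your second option is exactly the paper's argument: $M_{\chi v}R_{\mu-\mu^{(N)}}=M_{\chi v}R_\mu(\mathrm{Id}-P_N)\to 0$ in $\gamma(L^2(\cO),\HD^{-s,q}(\cO))$ pointwise in $(t,\omega)$, dominated by $\|M_{\chi v}R_\mu\|_{\gamma(L^2(\cO),\HD^{-s,q}(\cO))}$ via the ideal property, and then dominated convergence.
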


For convenience of the reader, we include a direct proof of this standard result.

\begin{proof}
For simplicity, we consider the case $\ell=1$, the other follows analogously. Thus, we write $\chi=\chi_1$ and $\mu=\mu^1$.
Arguing as in \cite[Lemma A.1]{agresti2023reaction}, the claim follows by combining the maximum principle in \cite[Theorem 4.3]{Krylov13} and an approximation argument.
By replacing $\phi$ by $\one_{[0,\tau_n]} \phi$ where
$$
\tau_n =\inf\{t\in [0,\infty)\,:\, \|\phi\|_{L^2(0,t;L^2(\cO))}\geq n\} \quad \inf\emptyset:=\infty,
$$
and $v_0$ by $\one_{\{\|v_0\|_{C^\alpha(\cO)}\leq n\}} v_0$ for some $n<\infty$, it suffices to consider $\phi$ and $v_0$ with finite moments of any order. From the last assertion in Lemma \ref{l:well_posedness_linear} there exists a constant $C_0>0$ such that 
\begin{equation}
\label{eq:a_priori_bounds_positivity_proof}
\E\Big[\sup_{t\in [0,T]}\|v(t)\|_{C(\overline{\cO})}^p\Big] \leq  C_0,
\end{equation}
where $(s,q,p,\a)$ are as in \eqref{eq:choice_sq_linear}.
Fix $N\geq 1$ and $T\in (0,\infty)$. Let 
$\mu^{(N)}_k:=\mu_k$ for $k\leq N$ and $\mu^{(N)}_k=0$ otherwise. 
Clearly, 
$
\mu^{(N)}\in \ell^2(\Sf)
$.
Hence, from standard regularity theory of SPDEs (see e.g., \cite[Chapter 4]{LR15}), there exists a unique progressively measurable process 
$$
v^{(N)}\in L^2((0,T)\times \O;H^1_0(\Dom))\cap L^2(\O;C([0,T];L^2(\Dom)))
$$ 
satisfying the SPDE:
\begin{equation}
\label{eq:SPDE_linear_N}
\left\{
\begin{aligned}
&\dd v^{(N)} =\big[\dv (a\cdot \nabla v^{(N)} + b v^{(N)})+ c v^{(N)} +\phi \big]\,\dd t  +  \chi v^{(N)} R_{\mu^{(N)}}\,\dd B,\\
&v^{(N)}(0)=v_0,\qquad v^{(N)}|_{\partial\Dom}=0.
\end{aligned}
\right.
\end{equation}
Since $\mu^{(N)}\in \ell^2(\Sf)$, well known positivity results for linear SPDE with trace class noise (see e.g., \cite[Theorem 4.3]{Krylov13}) imply that \eqref{eq:positivity_tested_linear} holds with $v$ replaced by $v^{(N)}$.
Note that 
$$
w^{(N)}:= v-v^{(N)}
$$ 
solves the linear SPDE:
\begin{equation}
\label{eq:SPDE_linear_w_difference}
\left\{
\begin{aligned}
&\dd w^{(N)} =\big[ \dv (a\cdot \nabla w^{(N)} + b w^{(N)})+ c w^{(N)}  \big]\,\dd t+ \big({\chi w^{(N)}} R_{\mu^{(N)}} + \chi v R_{\mu-\mu^{(N)}}\big) \,\dd B,\\
& w^{(N)}(0)=0,\qquad w^{(N)}|_{\partial\Dom}=0,
\end{aligned}
\right.
\end{equation}
Arguing as in Lemma \ref{l:well_posedness_linear}, 
Assumption \ref{ass:positivity_linear} ensures the existence of a constant $C>0$ such that, for all $N\geq 1$, 
\begin{align}
\label{eq:positivity_main_estimate_proof_difference}
\E \big[\sup_{t\in [0,T]}\|w^{(N)}(t)\|_{C(\overline{\cO})}^p\big]  &\leq C \,\E\int_0^T \|\chi v R_{\mu-\mu^{(N)}}\|_{\g(L^2(\cO),\HD^{-s,q}(\cO))}^p \, t^\a \,\dd t.
\end{align}
If $\zeta<\infty$, then from Theorem \ref{thm:MgTmu delta}\eqref{it1:MgTmu delta}-\eqref{it2:MgTmu delta}, it follows from \eqref{eq:a_priori_bounds_positivity_proof} and the choice of $(\mu^{(N)})_{N\geq 1}$ that 
$$
\E \big[\sup_{t\in [0,T]}\|w^{(N)}(t)\|_{C(\overline{\cO})}^p\big] 
\leq C \,\|\mu-\mu^{(N)}\|_{\ell^\zeta(\Sf)}^p \, \E \int_0^T \|\chi v\|_{L^\infty(\cO)}^p \,\dd t \stackrel{N\to \infty}{\to} 0,
$$
In the case $\zeta=\infty$, we need to argue differently. Here, we employ the convergence result for $\g$-radonifying operators in \cite[Theorem 9.1.14]{Analysis2}. 
Let $P_N $ be the orthogonal projection onto $\mathrm{span} (e_1,\dots,e_N)$, where $(e_n)_{n\geq 1}$ is the system appearing in \eqref{eq:Rmu_positivity} with $\ell=1$. Note that 
$$
M_{\chi v } R_{\mu-\mu^{(N)}}
= M_{\chi v } R_{\mu}(\mathrm{Id}-P_N). 
$$
where $M_{\chi v }$ denotes the multiplication by $\chi v$.
Hence, from Theorem \ref{thm:MgTmu delta}\eqref{it3:MgTmu delta} and \cite[Theorem 9.1.14]{Analysis2}, it follows that 
$$
M_{\chi v } R_{\mu-\mu^{(N)}}\stackrel{N\to \infty}{\to} 0 \ \text{ in }\ \g(L^2(\cO),\HD^{-s,q}(\cO)) \ \text{ a.e.\ on }(0,T)\times \O.
$$
Moreover, from the ideal property of $\g$-radonifying operators \cite[Theorem 9.1.10]{Analysis2}, we have
$$
\|
M_{\chi v } R_{\mu-\mu^{(N)}}\|_{\g(L^2(\cO),\HD^{-s,q}(\cO))}
\leq \|
M_{\chi v } R_{\mu}\|_{\g(L^2(\cO),\HD^{-s,q}(\cO))},
$$
as $\|I-P_N\|_{\calL(L^2(\cO))}\leq 1$, and by combining the Lebesgue domination theorem, Theorem \ref{thm:MgTmu delta}\eqref{it3:MgTmu delta} and \eqref{eq:a_priori_bounds_positivity_proof}, we infer
$$
\E \big[\sup_{t\in [0,T]}\|w^{(N)}(t)\|_{C(\overline{\cO})}^p\big] \leq 
C \,\E\int_0^T \|\chi v R_{\mu-\mu^{(N)}}\|_{\g(L^2(\cO),\HD^{-s,q}(\cO))}^p \, t^\a \,\dd t\stackrel{N\to \infty}{\to} 0.
$$
Hence, \eqref{eq:positivity_tested_linear} follows from the above and $v^{(N)}\geq 0$ a.e.\ on $\R_+\times \O\times \cO$ as proven below \eqref{eq:SPDE_linear_N}.
\end{proof}

With the above result at our disposal, we can prove Theorem \ref{thm:positity_nonlinear_eq}.

\begin{proof}[Proof of Theorem \ref{thm:positity_nonlinear_eq}]
One can follow the same method as in  \cite[Theorem 2.13]{agresti2023reaction}, which is based on a linearization procedure. We point out the main differences only. 

\smallskip

\emph{Step 1: (Reduction) It suffices to consider $u_0\in L_{\cF_0}^\infty(\O; \CD^{\alpha}(\cO))$ for some $\alpha>0$ in Theorem \ref{thm:positity_nonlinear_eq}.}
In light of the instantaneous regularization result \eqref{eq: reg u result 2} in Theorem \ref{thm: regularization} and Proposition \ref{prop:local_continuity_abstract} with the choices made at the beginning of Subsection \ref{ss:local_wp_proof_critical}, 
the claim of Step 1 follows verbatim from Steps 1 and 2 of  \cite[Theorem 2.13]{agresti2023reaction}. Here we used the extension of the compatibility result stated in  Proposition \ref{prop:local_continuity_abstract}. Moreover, to approximate $u_0$, we can use the heat semigroup to regularize while preserving positivity. 

\smallskip

\emph{Step 2: Smooth initial data.}
From Step 1, we assume that $u_0\in L_{\cF_0}^\infty(\O; \CD^{\alpha}(\overline{\cO}))$ for some $\alpha>0$. 
As in \cite[Theorem 2.13]{agresti2023reaction}, we consider the following variant of \eqref{eq:reaction_diffusion_system}:
\begin{equation}
\label{eq:reaction_diffusion_system_positivity}
\left\{
\begin{aligned}
\dd u_i^+ -\dv(a_i\cdot\nabla u_i^+) \,\dd t& = \Big[\dv(F_{i}^+(\cdot,u^+)) +f_{i}^+(\cdot, u^+)\Big]\,\dd t + \sum_{j=1}^{\ell} g_{i}^{j,+} (\cdot,u^+) \,\dd W^j(t),\\
u_i^+(0)&=u_{0,i},
\end{aligned}\right.
\end{equation}
where, for $y\in \R^\ell$,
\begin{equation}
\label{eq:nonlinearity_positivity_enforced}
f_{i}^+(\cdot, y)
=
f_{i}(\cdot, y\vee 0),
\quad 
F_{i}^+(\cdot, y)
=
F_{i}(\cdot, y\vee 0),  \ \ \text{ and }\ \ 
g_{i}^{j,+}(\cdot, y)
=
g_{i}^j(\cdot, y\vee 0),
\end{equation}
and $y\vee 0:=(y_i\vee 0)_{i=1}^\ell$.
The idea behind the use of the system of SPDEs in \eqref{eq:reaction_diffusion_system_positivity} is that, for the latter, we can use \eqref{eq:assumption_positivity} as the vector $y\vee 0$ has all non-negative entries. Note that the nonlinearities $(f_i^+,F_i^+,g_i^{j,+})$ defined in \eqref{eq:nonlinearity_positivity_enforced} satisfy Assumption \ref{ass:reaction_diffusion_global} with the same parameters. In particular, the results in Corollary \ref{cor: blow up} and Proposition \ref{prop:mainlocal_bounded} 
are also valid for the system of SPDEs \eqref{eq:reaction_diffusion_system_positivity}.
Let $(u^+,\sigma^+)$ be the maximal $(s_0,q_0,p_0,\a_0)$-solution to \eqref{eq:reaction_diffusion_system_positivity} given by Proposition \ref{prop:mainlocal_bounded} with $(s_0,q_0,p_0,\a_0)$ satisfying 
\begin{equation}
\label{eq:condition_zeros_proof_positivity}
\frac{d}{2} - \frac{d}{\zeta} < s_0,\ \ \  \text{ and }\ \ \  \ 0<1-s_0-2\frac{1+\a_0}{p_0}-\frac{d}{q_0}<\alpha.
\end{equation}
For instance, the above holds for $s_0$ close to $1$, $\kappa_0 =0$ and $p_0,q_0$ large. Then $u_0\in \CD^{\alpha}(\cO) \hookrightarrow \BD^{1-s_0-2\frac{1+\kappa_0}{p_0}}_{q_0,p_0}(\cO)$. By Sobolev embedding and the second condition in \eqref{eq:condition_zeros_proof_positivity}, it follows that $B^{1-s_0-2\frac{1+\a_0}{p_0}}_{q_0,p_0}(\cO)\hookrightarrow \CD(\overline{\cO})$ and therefore
\begin{equation*}
u^+\in C([0,\sigma^+);\CD(\overline{\cO})) \ \text{ a.s. }
\end{equation*}
Due to the pathwise regularity of $u^+$ in the previously displayed formula, we can follow Step 4 from the proof of \cite[Theorem 2.13]{agresti2023reaction} verbatim,  and we get
\begin{equation}
\label{eq:positivity_uplus}
u^+\geq 0\  \text{ a.e.\ on }\ [0,\sigma^+)\times \O\times \cO. 
\end{equation}
Here we use Lemmas \ref{l:well_posedness_linear} and \ref{l:positivity_linear_SPDEs} to get the well-posedness and positivity of the associated linearized problem appearing in Step 4 of \cite[Theorem 2.13]{agresti2023reaction}. 

Arguing as in the argument below \cite[eq.\ (3.49)]{agresti2023reaction}, the positivity of $u^+$ in \eqref{eq:positivity_uplus}, the blow-up criterion in Proposition \ref{prop:mainlocal_bounded}, and the regularity of $u$ in Theorem \ref{thm: regularization} yield
\begin{equation}
\label{eq:uniqueness_u_uplus}
\sigma^+ = \sigma \ \text{ a.s., }\quad \text{ and }\quad 
u=u^+ \ \text{ a.e.\ on }\ [0,\sigma)\times \O.
\end{equation}
The positivity of $u$ clearly follows by combining \eqref{eq:positivity_uplus} and 
\eqref{eq:uniqueness_u_uplus}.
\end{proof}

\bibliographystyle{alpha}
\bibliography{literature}
\end{document}